\title{Algebraic K-theory of finite algebras over higher local fields}
\author{Rixin Fang}
\email{rxfang21@m.fudan.edu.cn}
\address{Shanghai Center for Mathematical Sciences, Fudan University, 2005 Songhu Road 200438, Shanghai, China}
\newtheorem*{claim-star}{Claim}
\newtheorem{theorem}{Theorem}[section] 
\newtheorem{lemma}[theorem]{Lemma}
\newtheorem{lemma-definition}[theorem]{Lemma-Definition}
\newtheorem{prop-def}[theorem]{Proposition-Definition}
\newtheorem{corollary}[theorem]{Corollary}
\newtheorem{fact-eh}[theorem]{Fact(?)}
\newtheorem{conjecture}[theorem]{Conjecture}
\newtheorem{problem}[theorem]{Problem}
\newtheorem{proposition}[theorem]{Proposition}
\newtheorem{proposition-eh}[theorem]{Proposition(?)}
\newtheorem*{theorem-star}{Theorem}
\newtheorem*{conjecture-star}{Conjecture}
\newtheorem*{question-star}{Question}
\newtheorem*{lemma-star}{Lemma}
\theoremstyle{definition}
\newtheorem{definition}[theorem]{Definition}
\newtheorem{construction}[theorem]{Construction}
\newtheorem{variant}[theorem]{Variant}
\newtheorem{convention}[theorem]{Convention}
\newtheorem{question}[theorem]{Question}
\numberwithin{equation}{section}
\newtheorem{remark}[theorem]{Remark}
\theoremstyle{remark}
\newtheorem{claim}[theorem]{Claim}
\newcommand{\Aa}{\mathbb{A}}
\newcommand{\Ff}{\mathbb{F}}
\newcommand{\Zz}{\mathbb{Z}}
\newcommand{\Nn}{\mathbb{N}}
\newcommand{\Cc}{\mathbb{C}}
\newcommand{\Pp}{\mathbb{P}}
\newcommand{\Ee}{\mathbb{E}}
\newcommand{\Ss}{\mathbb{S}}
\newcommand{\Ll}{\mathbb{L}}
\newcommand{\calC}{\mathcal{C}}
\newcommand{\calO}{\mathcal{O}}
\newcommand{\Rom}[1]{\uppercase\expandafter{\romannumeral #1\relax}}
\newcommand{\calA}{\mathcal{A}}
\newcommand{\scrS}{\mathscr{S}}
\newcommand{\suspen}{\Sigma^{\infty}}
\newcommand{\modulo}{\operatorname{mod}}
\newcommand{\Map}{\operatorname{Map}}
\newcommand{\map}{\operatorname{map}}
\newcommand{\Hom}{\operatorname{Hom}}
\newcommand{\cofib}{\operatorname{cofib}}
\newcommand{\fib}{\operatorname{fib}}
\newcommand{\fil}{\operatorname{Fil}}
\newcommand{\gr}{\operatorname{gr}}
\newcommand{\ext}{\operatorname{Ext}}
\newcommand{\tor}{\operatorname{Tor}}
\newcommand{\Alg}{\operatorname{Alg}}
\newcommand{\CAlg}{\operatorname{CAlg}}
\newcommand{\Mod}{\operatorname{Mod}}
\newcommand{\coh}{\operatorname{H}}
\DeclareMathOperator{\desc}{desc}
\DeclareMathOperator{\can}{can}
\DeclareMathOperator{\motfil}{fil_{mot}^*}
\DeclareMathOperator{\Fun}{Fun}
\DeclareMathOperator{\id}{id}
\DeclareMathOperator{\Sp}{Sp}
\DeclareMathOperator{\MU}{MU}
\DeclareMathOperator{\BP}{BP}
\DeclareMathOperator{\HH}{HH}
\DeclareMathOperator{\K}{K}
\DeclareMathOperator{\THH}{THH}
\DeclareMathOperator{\TP}{TP}
\DeclareMathOperator{\TR}{TR}
\DeclareMathOperator{\TC}{TC}
\newenvironment{claimproof}[1][\proofname]
               {
                 \proof[#1]
                 
               }
               {
                 \endproof
               }
\begin{document}
\begin{abstract}
It is known that the truncated Brown--Peterson spectra can be equipped with a certain nice algebra structure, by the work of J. Hahn and D. Wilson, and these ring spectra can be viewed as rings of integers of local fields in chromatic homotopy theory. Furthermore, they satisfy both Rognes' redshift conjecture and the Lichtenbaum--Quillen property. For lower-height cases, the K-theory of the truncated polynomial algebras over these ring spectra is  well understood through the work of L. Hesselholt, I. Madsen, and others. In this paper, we demonstrate that the Segal conjecture fails for truncated polynomial algebras over higher chromatic local fields, and consequently, the Lichtenbaum--Quillen property fails. However, the weak redshift conjecture remains valid. Additionally, we provide some other examples where Segal conjecture holds.
\end{abstract}
\maketitle
\tableofcontents
\section{Introduction}
\subsection{Motivation}\label{sec1.1}
Algebraic K-theory is an important invariant for rings and more generally ring spectra. It captures some arithmetic information of the input ring, and it reflects chromatic information of the input ring spectrum. Furthermore, it is not merely an invariant for ring spectra, but also an universal localizing invariant in the sense of \cite{BGT13}. It also can output the sphere spectrum, which serves as the initial object in stable homotopy theory.

To be more precise about the information that algebra K-theory captures, we let $\calO$ be a $\Zz[1/\ell]$-algebra, where $\ell$ is a prime number. Quillen \cite{Quillenconj} conjectured that there is a spectral sequence \begin{equation*}
      E_2^{*,*}= \coh_{\textup{\'{e}t}}^*(\calO, \Zz/\ell^n(*)) \Longrightarrow
    \K_*(\calO, \Zz/\ell^n)
\end{equation*}
that converges to the target in large degrees. Lichtenbaum \cite{Lichtenbaum} made a conjecture relate algebraic K-theory groups to special values of zeta functions. Wiles proved that the quotients of \'etale cohomology groups of $\calO$ recover special values of zeta functions for certain nice rings \cite{Wiles}. So, the conjecture made by Lichtenbaum coincides with Quillen's conjecture for certain nice rings. The author learned this from \cite{Rognes1999}.

If moreover, $\calO$ contains an $\ell^n$-th root of unity, 
then Thomason \cite{ThomasonetaleK} proved that there is a spectral sequence:
\begin{equation*}
    E_2^{p,q} =\begin{cases}
        \coh^{p}_{\textup{\'et}}(\calO, \Zz/\ell^n(i)), q=2i,\\
        0, q \textup{~odd}.
    \end{cases}
    \Longrightarrow \K_{q-2p}(\calO, \Zz/\ell^n)[\beta^{-1}].
\end{equation*}
Here, the element $\beta\in \K_2(\calO, \Zz/l^n)$ corresponds to an $\ell^n$-th root of unity. In this case, Bott inverted K-theory coincides with topological K-theory (defined in Thomason's work); furthermore, in sufficiently large degrees, all three (Bott inverted, topological, and \'etale) become equivalent. It has been  showed that the Bott inverted K-theory is equivalent to $L_1^f \K(\calO)\otimes \Ss/\ell \simeq \K(\calO)\otimes \Ss/\ell[v_1^{-1}] \simeq L_{K(1)} \K(\calO)\otimes\Ss/\ell$ for $\ell>2$, see \cite[Appendix A]{ThomasonetaleK}.
Moreover, when $\calO$ does not contain $\ell^n$-th roots of unity, there is a spectral sequence with the same $E_2$-page that converges to the \'etale K-theory. 

Thus, the conjecture reduces to comparing \'etale K-theory with K-theory, and comparing $\K(\calO, \Zz/\ell)$ with $L_1^f(\K(\calO ))\otimes \Zz/\ell \simeq L_{K(1)} \K(\calO,\Zz/\ell)$. In fact, there is a motivic spectral sequence
\begin{equation*}
   E_2^{p,q} =\coh^{p-q}_{\textup{mot}} (X, \Zz/\ell(-q))\Longrightarrow \K_{-p-q}(X,\Zz/\ell),
\end{equation*}
where $X$ is a smooth scheme over a field $k$. This can be proved using motivic homotopy theory. Within this framework, motivic cohomology is represented by the motivic Eilenberg\textendash Maclane spectrum, and the spectral sequence above is an analogue of the Atiyah\textendash Hirzebruch spectral sequence. 
If moreover, $\ell \in k^{\times}$, then the $E_2$-page above is equivalent to $\coh^p_{\textup{\'et}} (X, \Zz/\ell(q))$ for all $p\leq q\leq 0$ (norm-residue theorem), see \cite{BlochKato}. Waldhausen reformulated the Lichtenbaum\textendash Quillen conjecture as follows. 

\begin{conjecture}[{\cite{Waldhausenconj}}]\label{Waldhausen}
For a nice ring $\calO$, the map  \begin{equation*}
 \K(\calO,\Zz/\ell) \to L_1^f \K(\calO)\otimes \Ss/\ell\simeq \K(\calO, \Zz/\ell)[v_1^{-1}] \simeq L_{K(1)} (\K(\calO))\otimes \Ss/\ell   
\end{equation*} is an equivalence on large degrees.     
\end{conjecture}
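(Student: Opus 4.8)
The plan is to attack Conjecture~\ref{Waldhausen} by \emph{trace methods}, trading the comparison of $\K(\calO,\Zz/\ell)$ with its $v_1$-periodic localization for an analogous statement about topological cyclic homology. The first step is to $\ell$-complete and invoke the Dundas--Goodwillie--McCarthy theorem: the cyclotomic trace $\K(\calO;\Zz_\ell)\to\TC(\calO;\Zz_\ell)$ becomes an equivalence in high degrees once one knows the corresponding comparison on a ``base case'' --- for a classical DVR this is Quillen's computation of $\K(\Ff_q)$, and for $\calO=\BP\langle n\rangle$ it reduces to $\K$ of the residual/connective-cover data at the prime $p$. Thus it suffices to (a) settle this base case and (b) prove the Lichtenbaum--Quillen property for $\TC(\calO)/\ell$ in place of $\K$.

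For step (b) the decisive input is the \emph{Segal conjecture} for $\calO$: the cyclotomic Frobenius $\varphi\colon \THH(\calO)/\ell \to (\THH(\calO)/\ell)^{tC_p}$ should be an equivalence in all sufficiently large degrees. Granting this, one runs the argument of Bhatt--Clausen--Mathew: the Segal conjecture controls $\TC^-(\calO)/\ell$ and $\TP(\calO)/\ell$ in high degrees, so via the fiber sequence $\TC(\calO)\to\TC^-(\calO)\xrightarrow{\varphi-\can}\TP(\calO)$ one deduces that the $v_1$-periodic localization map $\TC(\calO,\Zz/\ell)\to L_{K(1)}\TC(\calO)\otimes\Ss/\ell$ is an equivalence in large degrees --- that is, $\TC$ satisfies the Lichtenbaum--Quillen property. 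Transporting this back along the cyclotomic trace and combining with (a) yields the desired equivalence $\K(\calO,\Zz/\ell)\to L_{K(1)}\K(\calO)\otimes\Ss/\ell$ in high degrees.

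When $\calO$ is a global object, e.g.\ the ring of integers in a number field rather than a local ring or ring spectrum, I would additionally use the localization and dévissage sequences in $\K$-theory to reduce to the local and residual pieces, together with Thomason's descent theorem, which identifies $L_{K(1)}\K$ of the fraction field with a continuous étale-cohomology descent spectral sequence; the norm residue theorem (Bloch--Kato / Voevodsky--Rost) then identifies the $E_2$-page of the motivic spectral sequence with the étale one in the relevant range, and a term-by-term comparison of the two spectral sequences closes the gap in high degrees.

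The hard part --- the technical core of the whole argument --- is the hypothesis of step (b), namely the Segal conjecture for $\calO$. It is known for regular $\Ff_p$-algebras, for perfectoid rings, and for $\BP\langle n\rangle$ itself by the work of Hahn--Wilson, but it is precisely the place where one must proceed with care: the body of this paper shows that the Segal conjecture \emph{fails} for truncated polynomial extensions of higher chromatic local fields, and hence so does Conjecture~\ref{Waldhausen} for those rings. The ``niceness'' hypothesis on $\calO$ should therefore be read as exactly the requirement that the Segal conjecture hold for $\calO$.
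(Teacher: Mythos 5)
The statement you are treating is not a result proved in the paper: it is Waldhausen's reformulation of the Lichtenbaum--Quillen conjecture, quoted from \cite{Waldhausenconj} as motivation, and the paper neither proves it nor claims to. So there is nothing in the paper to compare your argument against, and your text should be judged as a standalone proof attempt --- which it is not. The decisive gap is that the technical core is simply assumed: you postulate the Segal conjecture for $\calO$ and then declare that the ``niceness'' hypothesis \emph{means} that the Segal conjecture holds. That makes the argument circular (the conjecture becomes true by redefining its hypothesis), and it is not how the conjecture is meant: in Waldhausen's and Quillen's formulations ``nice'' refers to arithmetic hypotheses on an ordinary ring (e.g.\ rings of integers in local or global fields), for which the known results come from motivic/\'etale methods (the norm-residue theorem and Thomason's descent), not from a hypothesis tailored to make the trace-method argument run.

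Even granting your hypothesis, two further steps do not work as stated. First, the Segal conjecture alone does not yield the Lichtenbaum--Quillen property for $\TC$: in the framework the paper actually uses (Theorem \ref{boundedTR}, after \cite{HW22}), one needs \emph{both} the Segal conjecture \emph{and} weak canonical vanishing to conclude that $F\otimes\TR$ is bounded; the implication ``Segal $\Rightarrow$ bounded $\TR$'' is available only under extra structure (e.g.\ for connective $\Ee_{\infty}$-$\BP\langle n\rangle$-algebras, \cite[Proposition 3.3.7]{HW22}), which you have not assumed. Second, the whole trace-method route is intrinsically $p$-typical: the cyclotomic Frobenius and the comparison of $\K$ with $\TC$ (via Dundas--Goodwillie--McCarthy or the henselian rigidity theorems) address mod $p$ coefficients at the residue characteristic, whereas Conjecture \ref{Waldhausen} as stated concerns $\K(\calO,\Zz/\ell)$ in the setting of Section \ref{sec1.1}, where $\ell$ is invertible in $\calO$; in that regime $\THH$-based methods say nothing, and the actual content of the conjecture is the comparison of $\K$ with \'etale K-theory. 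Your final paragraph gestures at this, but as a supplement to the trace argument rather than as the separate (and in the classical case the only relevant) mechanism. In short: the proposal is a plan conditioned on unproved hypotheses, with the key reduction (Segal $\Rightarrow$ LQ) incomplete and the prime $\ell$ versus $p$ distinction conflated; it does not constitute a proof of the statement.
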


The original Lichtenbaum\textendash Quillen conjecture is related to special values of the zeta function, a central topic in number theory. In contrast, Waldhausen’s reformulation of the conjecture adopts a more topological perspective, fitting naturally within chromatic homotopy theory. The K-theory of certain nice rings exhibit $v_1$-periodicity, because the localization is almost an equivalence. A natural question is whether there exists a higher level chromatic analogue. This question is addressed by Rognes' red-shift conjecture. There are actually many forms of this conjecture, the key point of which it that, for a ring spectrum exhibiting $v_n$-periodicity, the K-theory of it should exhibit $v_{n+1}$-periodicity. 

In studying these phenomena, we begin by fixing a prime number $p$. A spectrum $X$ is said to be fp spectrum, if it is $p$-complete, bounded below, and there is a $p$-local finite spectrum $F$ such that $F\otimes X$ is $\pi$-finite (bounded, and the homotopy groups are finite groups). We say $X$ has fp-type  $n$, if $n=\min\{k-1\mid F\otimes X ~\text{is} ~\pi\text{-finite}, F~\text{is fp-type}~k\}$. A spectrum $X$ is said to be pure fp-type $n$, if moreover $F_*X$ is a free finitely generated $P(v)$-module for some finite CW-spectrum of type $n$, with $v_n$-map $v: \Sigma^d F\to F$. In his 2000 Oberwolfach lecture \cite{2000oberwolfachlecture}, Rognes introduced the term "redshift", and the redshift problem is stated as follows.
\begin{problem}[{Chromatic red-shift problem, see \cite[page 8]{2000oberwolfachlecture}}]\label{redshiftprob}
Let $R$ be an $S$-algebra of pure fp type $n$. Does $\TC(R,p)$ have pure fp type $n+1$?
\end{problem}
Related to Problem \ref{redshiftprob}, there is a conjecture stated as follows.
\begin{conjecture}[{\cite[page 14-15]{Guido},\cite[page 1278]{HW22}}]\label{conj:red1}
For suitable $\Ee_1$-rings $R$ of fp-type $n$, $\K(R)^{\wedge}_p$ is of fp-type $n+1$. 
\end{conjecture}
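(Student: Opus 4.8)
The plan is to establish Conjecture \ref{conj:red1} for the $\Ee_1$-rings at issue in this paper, namely $R=\BP\langle n\rangle$ and its truncated polynomial extensions $R[x]/x^k$, by reducing algebraic K-theory to topological cyclic homology and then controlling $\TC$ chromatically. First I would reduce to $\TC$: by Dundas--Goodwillie--McCarthy together with Clausen--Mathew--Morrow rigidity, the cyclotomic trace $\K(R)^{\wedge}_p\to\TC(R)^{\wedge}_p$ has fiber identified with the $p$-completed K-theory of the residue data, which is of fp-type $\le n$ (indeed of fp-type $\le 1$ for our bases). Hence $\K(R)^{\wedge}_p$ has fp-type $n+1$ if and only if $\TC(R)^{\wedge}_p$ does, and it suffices to work with $\TC$, which by Nikolaus--Scholze is the equalizer of $\can,\varphi\colon\TC^-(R)\rightrightarrows\TP(R)$, with $\TC^-(R)=\THH(R)^{hS^1}$ and $\TP(R)=\THH(R)^{tS^1}$.

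For the base $R=\BP\langle n\rangle$, the upper bound (the Lichtenbaum--Quillen property, fp-type $\le n+1$) rests on two inputs, following Hahn--Wilson. (i) The \emph{Segal conjecture}: the cyclotomic Frobenius $\THH(R)\to\THH(R)^{tC_p}$ is an equivalence in large degrees after smashing with a type $n+1$ complex; concretely one checks that $\THH(R)/(p,v_1,\dots,v_n)$ is bounded above and that the Frobenius is an isomorphism above its top nonzero degree. (ii) \emph{Canonical vanishing}: the fiber of $\can\colon\TC^-(R)/(p,v_1,\dots,v_n)\to\TP(R)/(p,v_1,\dots,v_n)$ sits in a bounded range of degrees. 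Given (i) and (ii), the equalizer presentation shows $\TC(R)/(p,v_1,\dots,v_n)$ is bounded, i.e.\ $\TC(R)^{\wedge}_p$ has fp-type $\le n+1$; both inputs are obtained by descent along the $\Ee_3$-map $\BP\to\BP\langle n\rangle$ and the even filtration, reducing to the known structure of $\THH$ of $\mathbb{Z}_p$, $\mathrm{ku}$, and their higher analogues. For the matching lower bound I would exhibit a nonzero $v_{n+1}$-periodic family in $\TC(R)$ — a Bott-type class originating in $\THH(R)$ that survives the relevant descent and motivic spectral sequences — giving $L_{K(n+1)}\TC(R)\neq 0$, hence $L_{K(n+1)}\K(R)\neq 0$. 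Together this yields fp-type exactly $n+1$.

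For the truncated polynomial algebras $R[x]/x^k$, the weak form of Conjecture \ref{conj:red1} — that chromatic complexity does not collapse — is immediate: the projection $R[x]/x^k\to R$, $x\mapsto 0$, is split by the unit $R\to R[x]/x^k$, so $\K(R)^{\wedge}_p$ is a retract of $\K(R[x]/x^k)^{\wedge}_p$, and therefore $L_{K(n+1)}\K(R[x]/x^k)\neq 0$ by the previous paragraph. The subtle point, and the reason the strong form (the full fp-type bound) fails here, is that the relative term $\K(R[x]/x^k,(x))$ is governed by $\THH$ of the pointed monoid algebra on $\langle x\mid x^k=0\rangle$, i.e.\ a wedge of suspensions of $\THH(R)$ indexed by the cyclic bar construction; one shows directly that the cyclotomic Frobenius on this relative piece is \emph{not} an eventual equivalence mod $(p,v_1,\dots,v_n)$ — the Segal-conjecture failure advertised in the abstract — which forces unbounded $v_{n+1}$-inverted homotopy and so breaks the Lichtenbaum--Quillen property even though redshift survives.

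The hard part is twofold. The genuinely difficult input in the base case is the Segal conjecture for $\BP\langle n\rangle$ — the control of $\THH(\BP\langle n\rangle)/(p,v_1,\dots,v_n)$ and of the Frobenius on it — which is the technical heart of Hahn--Wilson and which I would quote rather than reprove. The new difficulty, specific to this paper, is the explicit analysis of $\THH(R[x]/x^k)$ and its cyclotomic structure: one must compute enough of the iterated $C_{p^m}$-Tate constructions on the wedge summands to see that the Frobenius is \emph{not} eventually an equivalence modulo $(p,v_1,\dots,v_n)$, while simultaneously confirming that the retract argument still delivers the persistent $v_{n+1}$-periodic family. Making these two facts coexist — Segal failure but persistence of redshift — is the main obstacle.
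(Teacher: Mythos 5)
The statement you have set out to prove is not a theorem of this paper: it is Conjecture \ref{conj:red1}, quoted from \cite[page 1278]{HW22}, and the paper neither proves it nor claims to. Its only established instance, $R=\BP\langle n\rangle^{\wedge}_p$, is the Hahn--Wilson theorem, and that is precisely the case your proposal handles by citation (the Segal conjecture, canonical vanishing, and the non-nilpotent $v_{n+1}$-class are all quoted from \cite{HW22}); so in the one case where you assert the conclusion you prove nothing new, and your reduction-to-$\TC$ outline is a summary of their argument rather than an independent proof. Note also that the word ``suitable'' in the statement is not a precise hypothesis, so there is nothing well-posed to prove until one specifies the class of rings; the paper treats the conjecture purely as background and motivation.

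More seriously, the unqualified reading --- every $\Ee_1$-ring of fp-type $n$ has $p$-complete K-theory of fp-type $n+1$ --- is refuted by this paper's own results, so no argument covering the truncated polynomial algebras can close: $(\BP\langle n\rangle[x]/x^e)^{\wedge}_p$ has fp-type $n$ (Proposition \ref{type}), yet the Segal conjecture and the Lichtenbaum--Quillen property of Definition \ref{def:LQproperty} fail for it (Proposition \ref{pro:segalfails}, Corollary \ref{cor:TRboundedfails}, Theorem \ref{thmE}), and in height one $\K((\ell[x]/x^e)^{\wedge}_p)^{\wedge}_p$ is not an fp-spectrum of type $2$ (Proposition \ref{pro:TClxe}). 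Your text implicitly concedes this (``the strong form fails''), but then what remains of your argument for the truncated rings is only the weak redshift statement $L_{K(n+1)}\K(\BP\langle n\rangle[x]/x^e)\not\simeq 0$; your unit-splitting retract is a perfectly good mechanism for that and is essentially equivalent to the paper's argument via lax symmetric monoidality of $\K$ and the $\Ee_3$-algebra maps to $\BP\langle n\rangle$ (Propositions \ref{redshift1}, \ref{redshift2}), and your sketch of the Segal failure via the pointed-monoid decomposition of $\THH(\Ss[x]/x^e)$ is the paper's route (Proposition \ref{pro:decomposation}, Lemma \ref{unstablefro}, Proposition \ref{pro:segalfails}). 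But none of this amounts to a proof of Conjecture \ref{conj:red1}: what your proposal actually delivers is a (largely accurate) survey showing the conjecture holds for $\BP\langle n\rangle$ by \cite{HW22} and fails for $\BP\langle n\rangle[x]/x^e$, which is the paper's point --- the genuine open problem, untouched by both your proposal and the paper, is to isolate the ``suitable'' hypotheses that exclude these counterexamples.
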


By Theorem \ref{thm:LQproperty}, Conjecture \ref{conj:red1} will be exactly the analogue of Conjecture \ref{Waldhausen}.
In \cite{HW22}, the authors proved that $\BP\langle n\rangle^{\wedge}_p$ satisfies Conjecture \ref{conj:red1}, for any $n\geq -1$. And this also gives a positive answer to Problem \ref{redshiftprob}, in some cases.

In chromatic homotopy theory, $v_n$ is detected by Morava K-theory, $K(n)$. One could use Morava K-theory to define the height for spectra, although, in general, this is not a very nice concept. We say an $\Ee_{\infty}$-ring $R$ is of height $n$, if $L_{K(n)} R\not\simeq 0$, and $L_{K(n+1)} R\simeq 0$. 
Hahn \cite{heightforEinfty} showed that for an $\Ee_{\infty}$-ring, if $L_{K(i)} R\simeq 0$, then $L_{K(j)} R\simeq 0$, for all $j\geq i$. Thus, in this case, the height is well defined. Let $R$ be a ring spectrum. So, one might define the height of $R$ to be the number $n$ such that $K(n)_*R\ne 0$ and $K(m)_*R=0$ for $m>n$. 
\begin{conjecture}\label{conj:red2}
Let $R$ be a ring spectrum of height $n$. Then $K(R)$ is  of height $n+1$.
\end{conjecture}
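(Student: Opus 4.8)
The plan is as follows. Conjecture \ref{conj:red2} in full generality is out of reach, so I would first isolate the cases that matter here — $R$ a Hahn--Wilson form of a truncated Brown--Peterson spectrum $\BP\langle n\rangle$, or a localization of it playing the role of a higher local field — prove the statement there, and record the general assertion as a guiding problem. The overall strategy is the usual two-sided bound: produce an \emph{upper} bound $L_{K(n+2)}\K(R)\simeq 0$ via trace methods and the Segal conjecture, and a \emph{lower} bound $L_{K(n+1)}\K(R)\not\simeq 0$ via a non-vanishing (``redshift proper'') argument, after which Hahn's theorem on the well-definedness of height (quoted in the excerpt) packages the two into the statement that $\K(R)$ has height exactly $n+1$.

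\textbf{Step 1 (reduce $\K$ to $\TC$).} By the Dundas--Goodwillie--McCarthy theorem together with the now-standard fact that the cyclotomic trace $\K\to\TC$ is an equivalence after $T(m)$-localization for every $m\geq 1$ on connective ring spectra (using that $\K$ of finite fields, and of $\pi_0 R$, is $T(m)$-acyclic for $m\geq 1$), it suffices to locate $\TC(R)$ chromatically. Concretely I would show $L_{K(n+2)}\TC(R)\simeq 0$ and $L_{K(n+1)}\TC(R)\not\simeq 0$: feeding the first back along the trace, and using that $\K(R)$ is bounded below, gives $L_{K(n+2)}\K(R)\simeq 0$, while the second, together with the fact that the image of the trace is ``large enough'' (here the multiplicative structure enters), gives $L_{K(n+1)}\K(R)\not\simeq 0$.

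\textbf{Step 2 (upper bound via the Segal conjecture).} Write $\TC(R)$ as the equalizer of $\mathrm{TC}^-(R)\rightrightarrows\TP(R)$, both built from $\THH(R)$ with its $S^1$-action. The Segal conjecture for $R$ — that the cyclotomic Frobenius $\varphi\colon\THH(R)\to\THH(R)^{tC_p}$ is an equivalence in large degrees — together with the canonical-vanishing (Lichtenbaum--Quillen) property for $\THH(R)$ forces $\TC(R)$ to become bounded above after tensoring with a type-$(n+2)$ finite complex; that is, $\TC(R)$ has fp-type $\leq n+1$. By the Mahowald--Rezk theory of fp spectra this yields $L_{K(n+2)}\TC(R)\simeq 0$. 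For $R=\BP\langle n\rangle$ this is precisely \cite{HW22}, and via Theorem \ref{thm:LQproperty} it is the exact analogue of Waldhausen's Conjecture \ref{Waldhausen}.

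\textbf{Step 3 (lower bound) and the main obstacle.} Showing $L_{K(n+1)}\K(R)\not\simeq 0$ is the genuinely chromatic input: it says $\K$ actually climbs one height. I would obtain it either by detecting a non-nilpotent $v_{n+1}$-class in $\pi_*\K(R)$ — via a B\"okstedt-type class in $\THH(R)$, or a class $\beta$ coming from a primitive root of unity modulo $(p,v_1,\dots,v_n)$ in the style of Ausoni--Rognes computations — or, more cleanly, by invoking recent multiplicative redshift theorems (Yuan; Ben-Moshe--Carmeli--Schlank--Yanovski), which give $L_{T(n+1)}\K(R)\not\simeq 0$ whenever $L_{T(n)}R\not\simeq 0$ for highly structured $R$ of height $n$, since $\BP\langle n\rangle$ admits such a structure. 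The main obstacle is that Step 2 collapses for the algebras at the heart of this paper — truncated polynomial algebras $R[x]/x^e$ over higher chromatic local fields — because, as we show, the Segal conjecture \emph{fails} there, so $\TC$ need not be of fp-type $\leq n+1$ and the clean height bound is lost; recovering the correct (necessarily weaker) redshift statement in that regime without the Segal conjecture is the crux of the argument. For $\Ee_1$-rings that are not sufficiently commutative, moreover, the non-vanishing of Step 3 is not known, so Conjecture \ref{conj:red2} as literally stated remains open beyond the structured case.
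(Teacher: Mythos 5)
The statement you were asked about is Conjecture \ref{conj:red2}, and the first thing to say is that the paper does not prove it --- nor do you; your text is an explicitly conditional strategy sketch. So the comparison is between your outline and what the paper actually does toward the conjecture, which is much narrower: it records that the $\Ee_\infty$ case is known from \cite{exampleschromaticredshift}, \cite{chromaticnullstellensatz}, \cite{CMNN}, and for its own $\Ee_3$-examples it establishes only the lower-bound half, $L_{K(n+1)}\K(R)\not\simeq 0$, by producing $\Ee_3$-algebra maps to $\BP\langle n\rangle$ (resp. $\BP\langle n\rangle/v_n^e\to \BP\langle n-1\rangle$), applying the lax symmetric monoidal functors $\K$ and $L_{K(n+1)}$, and quoting $L_{K(n+1)}\K(\BP\langle n\rangle)\not\simeq 0$ from \cite[Corollary 5.0.2]{HW22} (Propositions \ref{redshift1}, \ref{redshift2}). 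No vanishing statement $L_{K(n+2)}\K(R)\simeq 0$ and no non-vanishing at heights $\leq n$ is claimed for $\K(R)$ of these $\Ee_3$-algebras, which is precisely why Conjecture \ref{conj:red2} remains a conjecture in the paper even for its examples; your two-sided-bound program therefore goes beyond the paper's own treatment rather than reproducing it.

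Within your sketch there are concrete soft spots. (i) The claim that the cyclotomic trace is a $T(m)$-equivalence on connective rings for every $m\geq 1$ is too strong; the Dundas--Goodwillie--McCarthy plus purity argument gives this for $m\geq 2$, and the $m=1$ (hence $n=0$) layer genuinely fails in general (e.g.\ $\K(\Zz)$ versus $\TC(\Zz)$ at height one), so the reduction of the lower bound to $\TC$ needs more care there. (ii) Hahn's theorem \cite{heightforEinfty}, which you use to package the two bounds into an exact height, is a statement about $\Ee_\infty$-rings; for the $\Ee_3$-forms of $\BP\langle n\rangle$ and their finite algebras, $\K(R)$ is only $\Ee_2$, so this step is not available as stated, and likewise the Yuan and Ben-Moshe--Carmeli--Schlank--Yanovski inputs you invoke for the lower bound require more commutativity than $\BP\langle n\rangle$ is known to admit --- the paper instead routes non-vanishing through \cite{HW22} via explicit algebra maps. (iii) The paper's definition of height demands $L_{K(i)}\K(R)\not\simeq 0$ for all $0\leq i\leq n+1$, and without an $\Ee_\infty$-structure you cannot deduce the lower heights from Hahn's theorem, nor does your outline address them directly. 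None of this makes the program wrong --- it is the standard strategy, your Step 2 for $\BP\langle n\rangle$ is exactly \cite{HW22}, and your remark that the Segal-conjecture input collapses for $\BP\langle n\rangle[x]/x^e$ matches Theorem \ref{thmE} --- but as a proof of Conjecture \ref{conj:red2}, even in the structured cases, it has the gaps above and should not be read as the paper's argument, which only ever asserts the $L_{K(n+1)}$ non-vanishing.
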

The conjecture is true for $R$ any $\Ee_{\infty}$-ring, by \cite{exampleschromaticredshift},  \cite{chromaticnullstellensatz}, \cite{CMNN}, \cite{purity}.
For the relation of the first conjecture to $T(n)$-localized theory, see Theorem \ref{thm:LQproperty}. The second conjecture is more related to $K(n)$-localized theory. As known, for $n\geq 2$, \cite{telescope} shows the telescope conjecture fails, using many ingredients in algebraic K-theory and things related to these two conjectures.

The method to prove Conjecture \ref{conj:red1} was presented in \cite{HW22} and also appeared in \cite{AN21}. It mainly reduces to the following two questions.

\begin{question}[Segal conjecture]\label{q:Segal}
Let $R$ be a nice ring spectrum of fp type $n$.
Let $F$ be a $p$-local type $n+2$ finite complex. Does the Frobenius map \begin{equation*}
    F_*(\THH(R)) \to F_*(\THH(R)^{t C_p})
\end{equation*} 
induce isomorphism in large degrees?
\end{question}

\begin{question}[Weak canonical vanishing]\label{q:can}
Let $R$ be a nice ring spectrum of fp type $n$. Let $F$ be a $p$-local type $n+2$ finite complex. Is there a $d\in \Zz$, such that the maps \begin{equation*}
    F_*(\THH(R)^{h C_{p^k}}) \to F_*(\THH(R)^{t C_{p^k}})
\end{equation*} are trivial for $*>d$, and for any $k\geq 1$?
\end{question}

Since the boundedness of $\TR$ would imply the boundedness of $\TC$, one can make following definition.
\begin{definition}\label{def:LQproperty}
Let $F$ be a $p$-local type $n+2$ finite complex,  and $R$ be a ring spectrum of fp type $n$. We say that $R$ satisfies the Lichtenbaum\textendash Quillen property if $F\otimes\TR(R)$ is bounded.
\end{definition}
With this definition, one could also ask the following question related to Problem \ref{redshiftprob}.
\begin{problem}\label{prob:LQforTR}
    Let $R$ be an $\Ee_1$-ring spectrum of (pure) fp type $n$. Does $R$ satisfy the Lichtenbaum--Quillen property?
\end{problem}
The method to prove Conjecture \ref{conj:red2}, is first to construct a nice algebra map $R\to S$, yielding an algebra map $L_{K(n+1)} \K(R) \to L_{K(n+1)} \K(S)$. If one can show that $L_{K(n+1)} \K(S)\not\simeq 0$, then one could conclude that the source is not $0$. 

The K-theory of truncated polynomial algebras over perfect fields were studied in \cite{HM97}. Note that $\Ff_p[x]/x^e$ is just $\BP\langle -1\rangle[x]/x^e$. Thus it is natural to consider the K-theory of $\BP\langle n\rangle[x]/x^e$, which are chromatic higher level cases of $\Ff_p[x]/x^e$.
The goal of this paper is to study Problem \ref{redshiftprob},  Conjecture \ref{conj:red1}, \ref{conj:red2}, Question \ref{q:Segal}, \ref{q:can}, for $R=\BP\langle n\rangle[x]/x^e$, and other rings related to that.  In particular, in the proof of Proposition \ref{pro:segal1} and \cite[Theorem A]{HM97}, shows that $\K(\Ff_p[x]/x^e)$ is not an fp spectrum of type $0$. And Proposition \ref{pro:TClxe}, shows that $\K((\ell[x]/x^e)^{\wedge}_p)^{\wedge}_p$ is not an fp spectrum of type $2$.
\subsection{Main results}
The chromatic redshift problem \ref{redshiftprob} does not assume regularity hypothesis, i.e. does not require $R$ to be "regular" in some sense. In this paper, we show that a regularity hypothesis is in fact necessary, see Theorem \ref{thmE}. The truncated Brown–Peterson spectra may be regarded, in a suitable sense, as regular ring spectra, in light of Theorem \ref{thmC} and \ref{thmD}.

Using the functor $\desc_{\Ff_p}$, we can approximate the Frobenius map of $\THH(\BP\langle n\rangle[x])$ by its associated graded, thus proving the following.
\begin{theorem}[Proposition \ref{pro:polyBPNSegal}]\label{thmC}
Let $F$ be any type $n+1$ finite complex, then the Frobenius\begin{equation*}
    F_*(\THH(\BP\langle n\rangle[x])) \to F_*(\THH(\BP\langle n\rangle[x])^{t C_p})
\end{equation*} 
induces isomorphism on large degrees. In particular, if $F$ is of type $n+2$, the above also holds. If one replaces $\BP\langle n\rangle$ by $\BP\langle n\rangle \otimes \Ss_{W(k)}$ the statement holds still. Here, $k$ is a perfect field of characteristic $p$, $\Ss_{W(k)}$ is the spherical Witt vectors.
\end{theorem}

As a consequence, and combining the $\Aa^1$-invariance of K-theory, we are able to prove the following.
\begin{theorem}[Proposition \ref{cor:LQforpoly}]\label{thmD}
Let $F$ be any $p$-local finite type $3$ complex. Then the following spectra are bounded.
\begin{align*}
&F\otimes \TC(\BP\langle 1\rangle[x]),\\
&F\otimes \K(\BP\langle 1\rangle[x])^{\wedge}_p.
\end{align*}
Consequently, the following maps have bounded above fibers
\begin{align*}
&\TC(\BP\langle 1\rangle[x])\to L_2^f (\TC(\BP\langle 1\rangle[x])),\\
&\K(\BP\langle 1\rangle[x])^{\wedge}_p \to L_2^f(\K(\BP\langle 1\rangle[x])^{\wedge}_p).
\end{align*}
\end{theorem}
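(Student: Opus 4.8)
The plan is to derive Theorem D from the Segal conjecture established in Theorem C (Proposition \ref{pro:polyBPNSegal}), together with $\Aa^1$-invariance of algebraic $K$-theory over the regular ring $\Zz_{(p)}$. Fix a $p$-local finite complex $F$ of type $3$ with a $v_2$-self map (so that $L_2^f$ is defined); since $F$ is $p$-torsion, $F\otimes(-)$ is insensitive to $p$-completion, commutes with products, and kills $L_2^f$-local spectra (indeed $L_2^f F=0$). All spectra in question are bounded below, so ``bounded'' will mean ``bounded above''.

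First I reduce the $\K$-statement to the $\TC$-statement. The map $\BP\langle 1\rangle[x]\to\pi_0(\BP\langle 1\rangle[x])=\Zz_{(p)}[x]$ is an isomorphism on $\pi_0$, so by Dundas--Goodwillie--McCarthy the square of cyclotomic traces for $\BP\langle 1\rangle[x]$ and $\Zz_{(p)}[x]$ is cartesian; hence
\[
\fib\!\bigl(\K(\BP\langle 1\rangle[x])^{\wedge}_p\to\TC(\BP\langle 1\rangle[x])^{\wedge}_p\bigr)\;\simeq\;\fib\!\bigl(\K(\Zz_{(p)}[x])^{\wedge}_p\to\TC(\Zz_{(p)}[x])^{\wedge}_p\bigr).
\]
Now $\K(\Zz_{(p)}[x])\simeq\K(\Zz_{(p)})$ by $\Aa^1$-invariance of $\K$ for the regular Noetherian ring $\Zz_{(p)}$, and $F\otimes\K(\Zz_{(p)})^{\wedge}_p$ is bounded by the classical Lichtenbaum--Quillen theorem for $\Zz$ (as $F$ has type $3\ge 2$); likewise $F\otimes\TC(\Zz_{(p)}[x])^{\wedge}_p$ is bounded, by Lichtenbaum--Quillen for $\Zz$ together with the standard computation of the $NK$-part $\widetilde{\TC}(\Zz_{(p)}[x]\,|\,\Zz_{(p)};p)$ (this is also the ``height $0$'' case of the weight argument below). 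Therefore $F\otimes\K(\BP\langle 1\rangle[x])^{\wedge}_p$ is bounded as soon as $F\otimes\TC(\BP\langle 1\rangle[x])$ is, and it remains to prove the latter.

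To show $F\otimes\TC(\BP\langle 1\rangle[x])$ is bounded I run the argument of \cite{HW22},\cite{AN21}: granting the Segal conjecture (Question \ref{q:Segal}) \emph{and} the weak canonical vanishing (Question \ref{q:can}) for $\BP\langle 1\rangle[x]$ with respect to $F$, the standard analysis of the fibre sequence $\TC\to\THH(-)^{hS^1}\xrightarrow{\varphi-\can}\THH(-)^{tS^1}$ shows $F\otimes\TC(\BP\langle 1\rangle[x])$ is bounded (here no finiteness of $\THH$ itself is needed). The Segal conjecture is exactly Theorem C, so the remaining input is canonical vanishing for $\BP\langle 1\rangle[x]$. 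For this I would use the cyclic-bar (weight) decomposition
\[
\THH(\BP\langle 1\rangle[x])\;\simeq\;\bigoplus_{m\ge 0}\ \THH(\BP\langle 1\rangle)\otimes\Ss[B^{\mathrm{cyc}}\Nn(m)],
\]
where $B^{\mathrm{cyc}}\Nn$ is the cyclic bar construction of the additive monoid $\Nn$, $B^{\mathrm{cyc}}\Nn(0)=\ast$, and $B^{\mathrm{cyc}}\Nn(m)\simeq S^1$ for $m\ge 1$ with the weight-$m$ rotation $S^1$-action; the cyclotomic Frobenius is the tensor of the Frobenius of $\THH(\BP\langle 1\rangle)$ and that of $\Ss[B^{\mathrm{cyc}}\Nn]$, the latter carrying weight $m$ into weight $pm$ and being, weight by weight, a $p$-completion map (hence an equivalence after $\otimes F$). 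The weight-$0$ summand contributes precisely the map $\THH(\BP\langle 1\rangle)^{hC_{p^k}}\to\THH(\BP\langle 1\rangle)^{tC_{p^k}}$, whose eventual triviality uniformly in $k$ is the canonical vanishing for $\BP\langle 1\rangle$ proved in \cite{HW22}. For $m\ge 1$ the factor $\Ss[B^{\mathrm{cyc}}\Nn(m)]$ is built from one free rotation cell, so when $p\nmid m$ the relevant Tate construction vanishes outright, and when $p\mid m$ the geometric fixed points (which divide the weight by $p$) reduce the weight-$m$ contribution to that of weight $m/p$, and so on down to weight $0$. The crux is \emph{uniformity}: one needs the bound in Question \ref{q:can} to be independent of $m$. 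This should follow because $B^{\mathrm{cyc}}\Nn(m)\simeq S^1$ has cells of dimension bounded independently of $m$, so that the estimates from the canonical vanishing for $\BP\langle 1\rangle$ apply uniformly; carrying this bookkeeping out simultaneously over all infinitely many weights is the step I expect to require the most care, and it is the main obstacle.

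Finally, the two ``consequently'' statements are formal. For a bounded-below spectrum $E$ the fibre of $E\to L_2^f E$ is the $F$-cellularization $C_F E$, which is bounded above if and only if $F\otimes E$ is bounded (standard; see e.g.\ \cite{HW22}). Applying this to $E=\TC(\BP\langle 1\rangle[x])$ and $E=\K(\BP\langle 1\rangle[x])^{\wedge}_p$, whose smashings with $F$ were just shown to be bounded, and then smashing with $\Ss/p$ (which commutes with $L_2^f$ and with taking fibres, since $L_2^f$ is smashing and $\Ss/p$ is finite), yields the asserted maps with bounded-above fibres.
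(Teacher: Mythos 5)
Your reduction of the $\K$-statement to the $\TC$-statement matches the paper's: the Dundas--Goodwillie--McCarthy square for $\BP\langle 1\rangle[x]\to\Zz_{(p)}[x]$, $\Aa^1$-invariance of $\K$ over $\Zz_{(p)}$, and boundedness of $F\otimes\K(\Zz_{(p)})^{\wedge}_p$ and of $F\otimes\TC(\Zz_{(p)}[x])$; the final localization statements are also fine in spirit (the paper quotes \cite[Lemma 7.22]{telescope} for them). The genuine gap is in the core step, the boundedness of $F\otimes\TC(\BP\langle 1\rangle[x])$. You correctly note that Theorem \ref{thmC} supplies the Segal conjecture, but your argument then requires the weak canonical vanishing (Question \ref{q:can}) for $\BP\langle 1\rangle[x]$, and you only sketch it via the weight decomposition of $\THH(\Ss[x])$, explicitly flagging the uniformity of the vanishing degree over all weights $m$ (and, implicitly, over all levels $C_{p^k}$ at once) as an unresolved obstacle. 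As it stands this input is not established, so the boundedness of $F\otimes\TC(\BP\langle 1\rangle[x])$ --- and everything downstream of it --- is not proved.

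The missing idea, which is how the paper argues, is that no separate canonical vanishing argument is needed: $\BP\langle 1\rangle[x]$ is an $\Ee_{\infty}$-$\BP\langle 1\rangle$-algebra by Baker--Richter \cite{BR05}, and \cite[Proposition 3.3.7]{HW22} states that for such a connective $\Ee_{\infty}$-algebra over $\BP\langle n\rangle$ the $F$-Segal conjecture alone already implies $F$-bounded $\TR$ (hence bounded $\TC$) for $F$ of type $n+2$. The paper's proof is exactly Corollary \ref{cor:polyBPNSegal} combined with \cite[Proposition 3.3.7]{HW22} and Theorem \ref{thm:LQproperty}; the same mechanism is used in Proposition \ref{pro:segal1}. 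If you prefer your hands-on route, you must actually carry out the uniform-in-$m$ (and in $k$) canonical vanishing estimate you defer, which is precisely the work the $\Ee_{\infty}$-structure lets you avoid.
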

In the classical case, algebraic K-theory has the $\Aa^1$-invariance property for regular rings. This result shows that algebraic K-theory has $\Aa^1$-invariance property for regular ring spectra in the following sense. If $R$ is a regular ring spectrum such that $F\otimes \K(R)$ is bounded for a certain ($p$-local) finite spectrum $F$, then $F\otimes \K(R[x])$ is still bounded.

By decomposing $\THH(\Ss[x]/x^e)$ into a sum of $S^1$-spectra, and analyzing the Frobenius map, we found the following theorem. Actually, the base case is Proposition \ref{pro:segal1}.
\begin{theorem}[see Proposition \ref{pro:segalfails}, \ref{cor:TRboundedfails}, \ref{thm:generaltruncatedpoly}]\label{thmE}
Let $F$ be any $p$-local type $n+2$ finite complex. Let $W$ denote the fiber of Frobenius map of $F\otimes\THH(\BP\langle n\rangle)$. The following statements are true.
\begin{enumerate}
    \item If $W\not\simeq 0$, then the fiber of the Frobenius \begin{equation*}
    F\otimes \THH(\BP\langle n\rangle[x]/x^e) \to F\otimes \THH(\BP\langle n\rangle[x]/x^e)^{t C_p}
\end{equation*}
is not bounded above, where $e>1$.  In other words, $F$-Segal conjecture for $\THH(\BP\langle n\rangle[x]/x^e)$ is true if and only if $W\simeq 0$.

\item If $F$ is a sum of generalized Moore complexes $\Ss/(p^{i_0}, v_1^{i_1}, \ldots v_{n+1}^{i_{n+1}})$, then $W\not\simeq 0$.
\item In fact, for $F$ a $p$-local type $n+2$ finite complex, $W\not\simeq 0$.
\item If $e>1$, the spectrum $F\otimes \TR(\BP\langle n\rangle[x]/x^e)$ is not bounded. In other-words, the Lichtenbaum--Quillen property defined in Definition \ref{def:LQproperty} fails for $\BP\langle n\rangle[x]/x^e$.

\item Let $S$ be a ring spectrum of fp type $n$, $F$ be a $p$-local finite $n+2$ finite complex. We denote the fiber of the Frobenius of $F\otimes \THH(S)$ by $V$. Then $F$-Segal conjecture for $\THH(S[x]/x^e)$ is true if and only if $V \simeq 0$. Here, $e>0$.
\end{enumerate}
\end{theorem}
From the above theorem, one would not expect Problem \ref{redshiftprob} to have a positive answer in general, because $F\otimes\TR(\BP\langle n\rangle[x]/x^e,p)$ is not bounded. In other words, Problem \ref{prob:LQforTR} has a negative answer. Moreover, by Proposition \ref{pro:TClxe}, we know that $\TC(\BP\langle 1\rangle[x]/x^e, p)$ is not an fp spectrum.
So, in the spectral context, one could argue that algebraic K-theory remains sensitive to singularities.
By the work in \cite{fptype1}, in height $1$, $\BP\langle1\rangle[x]/x^e$ are simplest examples of pure fp spectra violating the Lichtenbaum\textendash Quillen property, since as spectra, these spectra are just sums of $\BP\langle 1\rangle$, and $\BP\langle 1\rangle$ generates the thick sub-category of fp spectra of type $1$.
In fact, this phenomenon happens in general. See Proposition \ref{pro:segalfailsforgroupring}, Corollary \ref{cor:segalfailsforgroupring}.
However, we found that if one replaces $\BP\langle n\rangle$ with $\BP$ or $\MU$, the Segal conjecture holds, i.e. we have following theorem. 
\begin{theorem}[Proposition \ref{pro:segalforgroupring}, \ref{pro:segalfortruncated}]
    The Frobenius maps 
\begin{align*}
\THH(\MU[x]/x^e) \to \THH(\MU[x]/x^e)^{t C_p},\\
\THH(\BP[x]/x^e) \to \THH(\BP[x]/x^e)^{t C_p},&\\
\THH(\MU[\Omega S^3]) \to \THH(\MU[\Omega S^3])^{t C_p},&\\
\THH(\BP[\Omega S^3]) \to \THH(\BP[\Omega S^3])^{t C_p}.&
    \end{align*}
    are $p$-adic equivalences. In general, for $Y$ a space satisfying conditions (1), (2), (3) in Proposition \ref{pro:segalfailsforgroupring}, then the Frobenius maps \begin{gather*}
       \THH(\MU[\Omega Y]) \to \THH(\MU[\Omega Y])^{t C_p},\\
        \THH(\BP[\Omega Y]) \to \THH(\BP[\Omega Y])^{t C_p}.
    \end{gather*}
    are $p$-adic equivalences.
\end{theorem}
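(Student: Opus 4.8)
The plan is to reduce every Frobenius map in the statement to the single assertion that the cyclotomic Frobenius $\varphi_A\colon\THH(A)\to\THH(A)^{tC_p}$ is a $p$-adic equivalence for $A=\MU$ and $A=\BP$, and then to invoke the Segal conjecture for the topological Hochschild homology of complex cobordism. Observe first that all rings in question have the form $B=A\otimes_{\Ss}R$ with $A\in\{\MU,\BP\}$ and $R$ an $\Nn$-graded $\Ee_1$-ring with $R_0=\Ss$: for $R=\Ss[x]/x^e$ the grading is by the exponent of $x$, while for $R=\Ss[\Omega Y]$ conditions (1)--(3) on $Y$ let one take $Y=\Sigma Z$ with $Z$ a finite connected complex ($Y=S^3=\Sigma S^2$ being the basic case), so that $R\simeq T_{\Ss}(\Sigma^\infty Z)$, graded by word length. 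Since $\THH$ is symmetric monoidal, $\THH(B)\simeq\THH(A)\otimes\THH(R)$ with the diagonal cyclotomic structure; moreover $\THH(R)$ inherits the $\Nn$-grading, each graded piece $\THH(R)_w$ is a \emph{finite} spectrum whose connectivity tends to $\infty$ with $w$, and --- this is the standard cyclotomic structure on cyclic bar constructions and on tensor algebras --- $C_p$ preserves the grading, with $\Phi^{C_p}(\THH(R)_w)$ canonically $\THH(R)_{w/p}$ if $p\mid w$ and $0$ otherwise, and for $p\mid w$ the inclusion of the $C_p$-fixed part $\THH(R)_{w/p}\to\THH(R)_w$ (with trivial action on the source) has induced cofiber, while for $p\nmid w$ the spectrum $\THH(R)_w$ is itself induced.

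The Tate construction $\THH(B)^{tC_p}$ can then be computed degreewise: in any fixed degree only finitely many weights contribute, so $(-)^{tC_p}$ commutes with the weight wedge. For $p\nmid w$ the spectrum $\THH(R)_w$, hence $\THH(A)\otimes\THH(R)_w$, is induced, so its Tate construction vanishes; for $w=pw'$, killing the induced cofiber yields $\big(\THH(A)\otimes\THH(R)_w\big)^{tC_p}\simeq\big(\THH(A)\otimes\THH(R)_{w'}\big)^{tC_p}$ with $C_p$ acting trivially on $\THH(R)_{w'}$, and since $\THH(R)_{w'}$ is dualizable this is $\THH(A)^{tC_p}\otimes\THH(R)_{w'}$. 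Summing over weights, $\THH(B)^{tC_p}\simeq\THH(A)^{tC_p}\otimes\THH(R)$; unwinding the cyclotomic Frobenius $\THH(B)\xrightarrow{\ \sim\ }\Phi^{C_p}\THH(B)\to\THH(B)^{tC_p}$, using the symmetric monoidality of $\Phi^{C_p}$ and the lax monoidality of $\Phi^{C_p}\Rightarrow(-)^{tC_p}$, identifies $\varphi_B$ with $\varphi_A\otimes\mathrm{id}_{\THH(R)}$. The crucial point is that $\THH(A)$ is only ever tensored against \emph{finite} spectra here, so no monoidality defect of $(-)^{tC_p}$ enters --- this is exactly what turns ``equivalence in large degrees'' into an honest $p$-adic equivalence. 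Hence $\varphi_B$ is a $p$-adic equivalence if and only if $\varphi_A$ is.

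It then remains to see that $\varphi_{\MU}$ and $\varphi_{\BP}$ are $p$-adic equivalences, i.e.\ the Segal conjecture for $\THH(\MU)$ and $\THH(\BP)$ (Lun\o{}e-Nielsen--Rognes). Presenting $\MU$ as the Thom spectrum of the $\Ee_\infty$-map $BU\to BGL_1(\Ss)$ exhibits $\THH(\MU)$ as a Thom spectrum over a simply connected finite-type space carrying the geometric cyclotomic structure, and the assertion follows from Lin's theorem $\Ss\simeq\Ss^{tC_p}$ applied cell by cell along a finite-type filtration, together with a Thom isomorphism for the Tate construction (the topological Singer construction); $\BP$ is deduced from $\MU_{(p)}$, using that $\THH(\BP)$ is a retract of $\THH(\MU_{(p)})$ compatibly with the cyclotomic structure (via the Quillen idempotent together with the $\Ee_3$-$\MU_{(p)}$-algebra structure on $\BP$ of \cite{HW22}).

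The main obstacle is the identification of $\THH(B)^{tC_p}$ and of $\varphi_B$ in the second paragraph: one must cleanly disentangle the two circle-group actions (the cyclotomic action on $\THH(A)$ versus the rotation action on the space factor $\THH(R)$), handle the infinite weight wedge, and --- in the $\MU$ and $\BP$ cases --- deal with the Thom twists, checking that the non-fixed part of $\THH(A)\otimes\THH(R)_w$ is still induced after tensoring with the twisted $\THH(A)$; this last step is formal, since inducedness survives arbitrary tensoring. Granting this bookkeeping, the whole statement rests on two available inputs: the cyclotomic structure of cyclic bar constructions and of tensor algebras, and the Segal conjecture for $\THH(\MU)$ and $\THH(\BP)$.
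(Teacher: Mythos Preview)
Your approach is essentially the paper's: both factor $\THH(B)\simeq\THH(A)\otimes\THH(R)$ with $A\in\{\MU,\BP\}$ and $R\in\{\Ss[x]/x^e,\Ss[\Omega Y]\}$, decompose $\THH(R)$ into finite pieces with connectivity tending to infinity, identify the Frobenius piecewise as $\varphi_A$ tensored with the identity (the paper packages this step as Lemma~\ref{unstablefro} together with the analysis in the proof of Proposition~\ref{pro:segalfailsforgroupring}), and then cite the Lun\o{}e-Nielsen--Rognes theorem that $\varphi_A$ is a $p$-adic equivalence for $A=\MU,\BP$. Your extra sketch of that last input (Lin's theorem plus Thom spectra, and the retract argument for $\BP$) is fine but not needed --- the paper simply cites \cite{segalforMU}.

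There is one genuine gap in your treatment of the general $Y$ case. You write that ``conditions (1)--(3) on $Y$ let one take $Y=\Sigma Z$'', but conditions (1)--(3) of Proposition~\ref{pro:segalfailsforgroupring} are hypotheses on a given $S^1$-equivariant decomposition $\THH(\Ss[\Omega Y])\simeq\bigoplus_m B_m$, not structural hypotheses on $Y$; they do not imply that $Y$ is a suspension, nor that $\Ss[\Omega Y]$ carries an $\Nn$-grading with $R_0=\Ss$. Your grading argument therefore handles $\Ss[x]/x^e$ and $\Ss[\Omega S^3]=T_\Ss(\Sigma^\infty S^2)$ correctly, but does not cover an arbitrary $Y$ satisfying (1)--(3). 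The paper avoids this by arguing directly with the pieces $B_m$: condition (3) guarantees each $B_m$ lies in the thick subcategory generated by $\Ss$ and $(C_p)_+$, so the lax-monoidal comparison $\THH(A)^{tC_p}\otimes B_m^{tC_p}\to(\THH(A)\otimes B_m)^{tC_p}$ is an equivalence, and condition (2) together with the $p$-adic equivalence $\THH(\Ss[\Omega Y])\to\THH(\Ss[\Omega Y])^{tC_p}$ gives the piecewise identification of the Frobenius. Replacing your grading on $R$ with direct use of the $B_m$ decomposition (exactly as in the proof of Proposition~\ref{pro:segalfailsforgroupring}) closes the gap.
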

\subsection{Relation to other works}
\begin{remark}
This is not the first paper to exhibit failures of
the Lichtenbaum--Quillen property for fp ring spectra. There are several papers in the literature doing this, implicitly in many old papers such as in \cite{DGM13}, and more explicitly in \cite{Lee-Levy} and \cite{HLS24}. Nevertheless, the examples in this paper appear to be relatively simpler than those in previous works.
\end{remark}
\begin{remark}
The term "higher local fields" first appeared in \cite{2localfield} (the chromatic height one case).
The author suspected that the truncated Brown--Peterson spectra should be thought as the valuation rings of "higher local fields", a phrase that also appears in the title of this paper. In other words, the truncated Brown--Peterson spectra may be thought as "higher local number rings". 
After the completion of this work, Devalapurkar--Hahn--Rognes defined the notion "higher local number ring", see \cite{higherarithematicduality}.
\end{remark}

\subsection{Future work}
It seems that one could use the method presented in this paper to investigate some other examples of the Segal conjecture, by using the cyclic decomposition, i.e. decomposition of $\THH(\Ss[M])$ or $\THH(\Ss[\Omega Y])$, to carry out more computations, where $M$ is a commutative monoid, $Y$ is a space. For instance, in a forthcoming work,  we aim to compute $\TC(\BP\langle n\rangle[x,y]/(xy))$ at low heights. Similarly, on could replace $\BP\langle n\rangle$ by $K(n)$ or $k(n)=\tau_{\geq 0} K(n)$.

It should be also noted that, there are some similarities between this paper and \cite{HLS24}. In that paper, the authors use the decomposition of $\THH$ for square zero extensions to compute mod $(p, v_1^{p^{n-2}})$ syntomic cohomology of $\Zz/p^n \simeq \BP\langle 0\rangle/(v_0)^n$. The key point is that they found that taking an animated ring $R$ to its mod $(p, v_1^{p^n})$ syntomic cohomology factors through the construction $R\mapsto R/p^{n+2}$. It would be interesting to see this phenomenon extended to more general ring spectra.

In particular, one could try to explore the following question, first by considering $\BP\langle n\rangle$.
\begin{question}[{\cite[Question 1.23]{HLS24}}]
Does mod $(p, v_1, \ldots ,v_{n+1})$ syntomic cohomology for height $n$ ring spectra, factor though its mod $(p^{i_0}, v_i^{i_1}, \ldots ,v_n^{i_n})$ reduction for some sequence $i_0, \ldots,i_n$ ?
\end{question}

Another interesting question is as follows.

\begin{question}
Is the telescope conjecture true for $\TC(\BP\langle n\rangle[x]/x^e)$. Here, $n\geq 1, e>1$. In other words, is the natural map
\begin{equation*}
    L_{n+1}^f \TC(\BP\langle n\rangle[x]/x^e) \to L_{n+1} \TC(\BP\langle n\rangle[x]/x^e)
\end{equation*}
an equivalence?
\end{question}

\subsection{Structure of the paper}
In Section \ref{sec2}, we use several $\Ee_{\infty}$-ring maps to construct several $\Ee_3$-algebras over some base ring spectra other than $\Ss$, and we also use the tower of $\Ee_{\infty}$-algebras in \cite[\S 2]{HW22} to get an $\Ee_3$-algebra $\BP\langle n\rangle/v_n^e$. All these algebras are finite algebras over $\BP\langle n\rangle$, see Proposition \ref{finalgebra}.
In Section \ref{sec3}, we use several $\Ee_3$-algebra maps to prove Conjecture \ref{conj:red2}, and to prove Theorem \ref{thmB}. In Section \ref{sec4}, we use the descent functor $\desc_{\Ff_p}$ to prove the Segal conjecture, and we use the decompositions of $\THH(\Ss[x]/x^e), \THH(\Ss[\Omega S^3])$ to prove the failure of the Segal conjecture for some ring spectra. The primary goal is to prove Theorem \ref{thmC}, \ref{thmD}, \ref{thmE}. In Section \ref{sec5}, we use the evenly free map $\THH(\MU[x])\to \MU[x]$ to compute the $E_2$-page of the descent spectral sequence which computes $\THH(\BP\langle n\rangle[x]/x^e; \Ff_p[x]/x^e)$. We show that the spectral sequence degenerates at the $E_2$-page in some cases. See Proposition \ref{pro:descentE2}. In Section \ref{sec6}, we use the decomposition of $\THH(\Ss[x]/x^e)$ try to carry out some computations for $V(2)_*\TC(\ell[x]/x^e)$. We prove Proposition \ref{pro:TClxe}.
\subsection{Notations and conventions}
We use the language of $\infty$-categories developed in \cite{htt}, and the theory of higher algebra developed in \cite{ha}, for convenience. We denote the category of spectra by $\Sp$, and category of spaces by $\scrS$. And for an $\Ee_n$-monoidal category $\calC$, $n\geq 2$, the category of $\Ee_m$-algebras over $\calC$ is denoted by $\Alg_{\Ee_m}(\calC)$, where $0\leq m< n$. For $R\in \Alg_{\Ee_n}(\Sp)$, we use $\Alg^{(m)}_R$ to denote $\Alg_{\Ee_m} (\Mod_R)$. When $m=1,$ we write $\Alg_R^{(1)}$ as $\Alg_R$, when $m=\infty$, we write $\Alg_R^{(\infty)}$ as $\CAlg_R$. We use the symbol $\otimes^{\Ee_m}_R$ to indicate the tensor product in $\Alg^{(m)}_R$. When $R$ is the unit $\Ss\in \Sp$, we simply omit $R$. We refer to an object in $\Alg^{(m)}_R$ as an $\Ee_m$-$R$-algebra. When $m=1$, we simply refer to an object in $\Alg_R$ as an $R$-algebra. When $m=\infty$, we refer to an object in $\CAlg_R$ as a commutative $R$-algebra. When $R=\Ss$, we simply omit the word '$R$', or we call it $\Ee_m$-ring. 
A (discrete) ring simply indicate a classical ring. Let $R$ be a ring, we use $\otimes^{\Ll}$ to denote the derived tensor product. We use $\Lambda_R(x_1,\ldots, x_n)$ to denote the exterior algebra over $R$ generated by $x_1,\ldots ,x_n$. And we use $\Gamma_R(x_1,\ldots, x_n)$ to denote the divided power algebra generated by $x_1, \ldots ,x_n$, which sometimes we also write as $R\langle x_1,\ldots,x_n\rangle$. For a stable $\infty$-category $\calC$ with a $t$-structure, we use $\tau_{\leq n}, \tau_{\geq n}$ to denote the truncation functors. When $\calC=\Sp$, $X\in \Sp_{\leq n}$, if the localization $X\to \tau_{\leq n} X$ is an equivalence, and $X\in \Sp_{\geq m}$, if $\tau_{\geq m} X\to X$ is an equivalence. When $X\in \Sp_{\leq n}$, for some $n$, we say $X$ is bounded above, and $X\in \Sp_{\geq m}$, for some $m$, we say that $X$ is bounded below. If there are $a,b$ such that $X\in \Sp_{[a,b]}=\Sp_{\geq a}\cap \Sp_{\leq b}$, then $X$ is said to be bounded. Let $\calC$ be a symmetrical monoidal category, and $A\in \Alg(\calC)$. We use \cite[Definition \Rom{3}.2.3]{NS18}, to define $\THH(A)$, for instance, $A\in \Alg$, $\THH(A)=A\otimes_{A\otimes A^{\textup{op}}}A$.
\subsection{Acknowledgments}
The author would like to thank his advisor Guozhen Wang for carefully reading the early drafts. The author would like to thank Chris Brav for helping to improve the writing of this article. The author would like especially to thank Lars Hesselholt, for suggesting the author Proposition \ref{pro:segal1}, and telling the author Proposition \ref{pro:decomposation}. The author would like to thank Gabriel Knoll for detailed feedback. The author would like to thank Oliver Braunling and John Rognes for comments. Finally, the author thanks Jingbang Guo and Wei Yang, for useful conversations. The author is partially supported by Grant NSFC-12226002. The author sincerely thanks the IAS/PCMI Summer School and IWoAT for their inspiring programs. The author benefited greatly from these two wonderful conferences, during which part of this work was completed.
\section{Truncated polynomials and infinitesimal thickening}\label{sec2}
The main goal of this section is to precisely describe the algebras will be concerned in the next sections. Basically all of them should be viewed as finite algebras over $\BP\langle n\rangle$, see Proposition \ref{finalgebra}. We fix a prime number $p$ for the remaining context. First, we recall the definitions and results in \cite[\S 2]{HW22}.
\begin{definition}
    Let $1\leq k\leq \infty$ and $n\geq 0.$ Let $B$ be a $p$-local $\Ee_k$-MU-algebra. We say that $B$ is an $\Ee_k$-MU-algebra form of $\BP\langle n\rangle$ if the composite 
    $$
    \Zz_{(p)}[v_1,v_2,\ldots, v_n] \subseteq \BP_*\subseteq \pi_*\MU_{(p)} \to \pi_* B
    $$
is an isomorphism.
\end{definition}
\begin{theorem}[{\cite[Theorem 2.0.6]{HW22}}]\label{e3form}
    For $n\geq -1,$ there exists an $\Ee_3$-$\MU$-algebra form of $\BP\langle n\rangle$.
\end{theorem}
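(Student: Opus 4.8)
The plan is to construct $\BP\langle n\rangle$ as an iterated $\Ee_3$-$\MU$-algebra quotient of $\MU_{(p)}$ by a regular sequence of polynomial generators. Recall that $\MU$ is an $\Ee_\infty$-ring and, by Milnor--Novikov, $\pi_*\MU_{(p)}=\Zz_{(p)}[c_1,c_2,\dots]$ with $|c_i|=2i$; as in the classical construction of $\BP\langle n\rangle$ as an $\MU$-module, the polynomial generators may be chosen so that Hazewinkel (or Araki) generators $v_1,\dots,v_n$ occur among them. Thus $\pi_*\MU_{(p)}=\Zz_{(p)}[v_1,\dots,v_n][y_1,y_2,\dots]$ with $|y_1|\le|y_2|\le\cdots\to\infty$, the sequence $(y_1,y_2,\dots)$ regular, and $\pi_*\MU_{(p)}/(y_1,y_2,\dots)=\Zz_{(p)}[v_1,\dots,v_n]$. (For small $n$ one may instead use the $\Ee_\infty$-$\MU$-algebras $\coh\Zz_{(p)}$ or $\coh\Ff_p$, so the content is for $n\ge 1$.)

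The technical heart is a \emph{quotient lemma}: if $R$ is an $\Ee_3$-$\MU$-algebra whose homotopy ring $\pi_*R$ is concentrated in even degrees and $p$-torsion-free, and $y\in\pi_{2d}R$ is a nonzerodivisor, then the pushout in $\Ee_3$-$\MU$-algebras
\[
R/\!\!/y\;:=\;R\sqcup_{\Pp^{\MU}_{\Ee_3}(\Ss^{2d})}\MU,
\]
formed along the map classifying $y$ and along the augmentation $\Pp^{\MU}_{\Ee_3}(\Ss^{2d})\to\MU$ (the $0$ class), is again an $\Ee_3$-$\MU$-algebra, its homotopy is even and $p$-torsion-free, and $\pi_*(R/\!\!/y)\cong\pi_*R/(y)$. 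To prove this I would first compute $\pi_*$ of the free $\Ee_3$-$\MU$-algebra $\Pp^{\MU}_{\Ee_3}(\Ss^{2d})$ on a degree-$2d$ generator $y$ via F.\ Cohen's description of the homology of the little $3$-cubes monad: besides the polynomial summand $(\pi_*\MU)[y]$, the only new classes are iterated Browder brackets, of even degree $2$ and hence $2$-torsion by graded antisymmetry, and, at $p=2$, a bottom Dyer--Lashof-type operation. One then runs the bar spectral sequence computing $\pi_*(R/\!\!/y)$: on the polynomial part it is the Koszul computation $\tor^{(\pi_*R)[y]}_*(\pi_*R,\pi_*\MU)=\pi_*R/(y)$, which collapses because $y$ is a nonzerodivisor, and one checks that the auxiliary classes make no further contribution, using that their images in the even torsion-free ring $\pi_*R$ vanish.

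Granting the lemma, set $R_0=\MU_{(p)}$ and inductively $R_j=R_{j-1}/\!\!/y_j$. At each stage the hypotheses persist, since $\pi_*R_j=\Zz_{(p)}[v_1,\dots,v_n][y_{j+1},y_{j+2},\dots]$ is even, $p$-torsion-free, with $y_{j+1}$ a nonzerodivisor, and there are quotient maps $R_0\to R_1\to R_2\to\cdots$ of $\Ee_3$-$\MU$-algebras. Put $\BP\langle n\rangle:=\colim_j R_j$, formed in $\Ee_3$-$\MU$-algebras. Since passing from $R_{j-1}$ to $R_j$ changes homotopy only in degrees $\ge|y_j|$, and $|y_j|\to\infty$, the tower $(\pi_*R_j)_j$ is eventually constant in each fixed degree, so $\pi_*\BP\langle n\rangle=\colim_j\pi_*R_j=\Zz_{(p)}[v_1,\dots,v_n]$; the structure map $\pi_*\MU_{(p)}\to\pi_*\BP\langle n\rangle$ is the evident projection, which restricts to the identity on $\Zz_{(p)}[v_1,\dots,v_n]$, so this $\Ee_3$-$\MU$-algebra is a form of $\BP\langle n\rangle$ in the sense above.

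I expect the main obstacle to be the quotient lemma at the prime $2$: there the Browder bracket and the bottom Dyer--Lashof operation genuinely appear in $\pi_*\Pp^{\MU}_{\Ee_3}(\Ss^{2d})$, and one must verify that they die in the bar construction computing $R/\!\!/y$ rather than producing spurious homotopy classes. This is also exactly why the construction halts at $\Ee_3$: for an $\Ee_4$-quotient the Browder bracket has odd degree, $[y,y]$ need no longer be torsion, and a quotient with polynomial homotopy fails to exist. A less hands-on alternative would be obstruction-theoretic --- proving the vanishing of the relevant topological Andr\'e--Quillen cohomology groups obstructing the promotion of an $\Ee_1$- or $\Ee_2$-$\MU$-algebra form of $\BP\langle n\rangle$ to an $\Ee_3$-one --- but the explicit quotient construction is more transparent and additionally yields the module-level information about $\BP\langle n\rangle$ used later in the paper.
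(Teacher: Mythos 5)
Note first that the paper does not prove this statement: it is quoted verbatim from Hahn--Wilson, and the sketch the paper gives just before Proposition \ref{square0} records what their argument actually is --- an induction on $n$ whose base cases ($n=-1,0$) are Postnikov truncations of $\MU_{(p)}$ and whose inductive step climbs the tower of $\Ee_{\infty}$-$\MU$-algebras $\MU[y]\to\cdots\to\MU[y]/y^k\to\cdots\to\MU$ (with $|y|=2(p^{n+1}-1)$) by square-zero extensions, using cotangent-complex obstruction theory and evenness to produce the lifts; the limit of the resulting $B_k$'s is the form of $\BP\langle n+1\rangle$. Your route --- iterated ``versal'' quotients of $\MU_{(p)}$ along maps out of the free $\Ee_3$-$\MU$-algebra on an even cell --- is genuinely different, and its central ``quotient lemma'' is where it breaks.

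Concretely: (i) the homotopy of the free $\Ee_3$-$\MU$-algebra on a class in degree $2d$ is not $(\pi_*\MU)[y]$ plus a small torsion correction. Cohen's calculation is of the mod $p$ homology of free $E_3$-algebras, and even there an even generator supports Dyer--Lashof-type top operations (and their Bocksteins and composites) at every prime, not only a bracket and a $p=2$ bottom operation; over $\MU$ the relevant object is built from $\MU_*\bigl(\mathrm{Conf}_m(\Rr^3)_+\wedge_{\Sigma_m}S^{2dm}\bigr)$, i.e.\ $\MU$-homology of configuration spaces/extended powers, which contains $p$-torsion in arbitrarily large degrees and is nothing like a polynomial algebra. (ii) Consequently the pushout $R\sqcup_{\Pp_{\Ee_3}(S^{2d})}\MU$ does not have homotopy $\pi_*R/(y)$: the nullhomotopy of $y$ on the $\MU$ leg forces nullhomotopies of all operations on $y$, and these produce new classes in the pushout. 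The standard cautionary example is the versal $E_k$-quotient ($k\geq 2$) of $\Ss$ by $p$, whose homotopy is vastly larger than that of the Moore spectrum. Moreover, pushouts in $\Alg_{\Ee_3}$ are not computed by the two-sided bar construction/Tor argument you invoke; that identification is special to relative tensor products of modules (equivalently $\Ee_{\infty}$-pushouts). This is exactly why Hahn--Wilson do not quotient along a free $\Ee_3$-algebra but instead manufacture an honest polynomial $\Ee_{\infty}$-$\MU$-algebra $\MU[y]$ with $\pi_*\MU[y]=\MU_*[y]$ on the nose (a graded/shearing construction) and kill classes by base change along the augmentation $\MU[y]\to\MU$, after showing the even class can be realized by an $\Ee_3$-$\MU[y]$-algebra structure; that strictification/centrality step, where the $\Ee_3$ hypothesis and evenness do real work, is precisely the content missing from your proposal, and repairing your lemma amounts to supplying it.
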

\begin{construction}\label{trunpoly}
We define $\Ss[x]:=\suspen_+ \Nn.$ In a same fashion, we may consider the truncated version.
    For $e\in \Nn_{\geq 1} $,  $M_e:=\{1,x,\ldots, x^{e-1}\}\simeq \Nn/([e,\infty)\cap\Nn)$ where the multiplication is defined in an obvious way.
    The multiplication of $M_e,\Nn$ are strictly commutative. Unwinding the definitions in \cite[Definition 2.4.2.1, Remark 2.4.2.2]{ha}, we learn that $\Nn, M_e$ are commutative monoid objects in $\scrS$, and hence commutative algebra objects in $\scrS$. 
Let $\Ss[x]/x^e := \suspen_+ M_e \in \Sp.$  There are $\Ee_{\infty}$-ring maps \begin{gather*}
    \Ss[x] \to \Ss[x]/x^e ,\\
    \Ss[x]/x^e \to \Ss.
\end{gather*}
induced by maps of commutative monoid  \begin{gather*}
    \Nn \to \Nn/([e,\infty)\cap \Nn)\\
    n \mapsto [n],\\
    \Nn/[e,\infty)\to \Nn/([1,\infty)\cap \Nn)\\
    [1] \mapsto [0].
\end{gather*}
Then for any spectrum $A \in \Sp,$ we can define the polynomial spectrum and truncated polynomial spectra over $A$ as $A[x]:= A\otimes \Ss[x],$ $A[x]/x^e:= A\otimes \Ss[x]/x^e.$
In generally, one can also define spectral polynomials of multiple variables. Concretely,
\begin{gather*}
    A[x_1,x_2\ldots, x_n]:= A[x_1, x_2, \ldots, x_{n-1}]\otimes \Ss[x_n], \\ 
    A[x_1,x_2,\ldots,x_n]/(x_1^{e_1}, \ldots ,x_n^{e^n}):= A[x_1,x_2,\ldots,x_{n-1}]/(x_1^{e_1}, \ldots ,x_{n-1}^{e^{n-1}})\otimes \Ss[x_n]/x_n^{e_n}.
\end{gather*}
\end{construction}
Now, we take $A$ to be $\MU.$ Since $\suspen_+: \scrS\to \Sp$ is a symmetric monoidal functor \cite[Proposition 4.8.2.18, Corollary 4.8.2.19]{ha}, we know that $\Ss[x], \Ss[x]/x^e$ are commutative algebras in $\Sp.$ Recall that $\MU$ is an $\Ee_{\infty}$-ring, and we have a symmetric monoidal functor $\Mod_{\Ss} \to \Mod_{\MU}$ given by tensor with $\MU$, see \cite[Proposition 7.1.2.7, Proposition 4.6.2.17]{ha}. We thus conclude that $\MU[x], \MU[x]/x^e$ are commutative algebras in $\Mod_{\MU},$ and by \cite[Variant 7.1.3.8]{ha}, $\MU[x], \MU[x]/x^e$ are also commutative algebras in $\Sp$. 

As a spectrum $\Ss[x]$ is nothing but a direct sum of $\Ss$, and this is also true for $\Ss[x]/x^e,$ using the multiplication on $\Nn, M_e,$ we can get \begin{gather*}
    \MU[x]_*\simeq  \MU_*[x],\\
    \pi_*\MU[x]/x^e\simeq \MU_*[x]/x^e.
\end{gather*}
\begin{variant}
    In general, let $M$ be a commutative monoid, one can define $A[M]:=A\otimes \suspen_+ M$, for any spectrum $A.$ For example, $M$ is a sub monoid of $\Nn^{\times m}$, or a quotient monoid of $\Nn^{\times m}.$ 
    In a upcoming paper, we will consider $\BP\langle n\rangle [x, y]/(xy)$. 
\end{variant}
\begin{definition}\label{defe3form}
    Let $1\leq k\leq \infty$ and $n\geq 0$. Let $B$ be a $p$-local $\Ee_k$-$\MU[x]/x^e$-algebra. We say that $B$ is an $\Ee_k$-$\MU[x]/x^e$-algebra form of $\BP\langle n\rangle [x]/x^e$ if the composite 
    \begin{equation*}
        \Zz_{(p)}[v_1,v_2,\dots, v_n] [x]/x^e\subseteq \BP_*[x]/x^e\subseteq \pi_*\MU_{(p)}[x]/x^e\to \pi_*B
    \end{equation*}
is an isomorphism. By convention, $\BP\langle -1\rangle:= \Ff_p$, and $\BP\langle -1\rangle[x]/x^e$ is $\Ff_p[x]/x^e$.
\end{definition}
\begin{definition}
    Let $1\leq k\leq \infty$ and $n\geq 0$. Let $B$ be a $p$-local $\Ee_k$-$\MU[x]$-algebra. We say that $B$ is an $\Ee_k$-$\MU[x]$-algebra form of $\BP\langle n\rangle [x]/x^e$ if the composite 
    \begin{equation*}
        \Zz_{(p)}[v_1,v_2,\dots, v_n] [x]\subseteq \BP_*[x]\subseteq \pi_*\MU_{(p)}[x]\to \pi_*B
    \end{equation*}
is a surjection and has kernel generated by $x^e$.
\end{definition}
We can view $\BP\langle n\rangle^{\wedge}_p$ as height $n$ version of $\Zz_p$, we should think $(\BP\langle n\rangle[x]/x^e)^{\wedge}_p$ as height $n$ version of $\Zz_p[x]/x^e$. Putting all the things above together, we have the following fact.
\begin{proposition}\label{e3form2}
    There is an $\Ee_3$-algebra $A$ such that $A$ is the image of 
   an $\Ee_3$-$\MU[x]$-algebra form of $\BP\langle n\rangle[x]/x^e$ under the forgetful functor.
\end{proposition}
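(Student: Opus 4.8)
The plan is to produce the desired $\Ee_3$-algebra by base change along one of the $\Ee_\infty$-ring maps constructed in Construction \ref{trunpoly}, exactly as one builds the $\Ee_3$-$\MU$-algebra form of $\BP\langle n\rangle$ in Theorem \ref{e3form}. First I would recall that $\Ss[x] = \suspen_+ \Nn$ is an $\Ee_\infty$-ring, and hence so is $\MU[x] = \MU \otimes \Ss[x]$; moreover the quotient map $\Nn \to \Nn/[e,\infty)$ of commutative monoids induces an $\Ee_\infty$-ring map $\MU[x] \to \MU[x]/x^e$. Tensoring the $\Ee_3$-$\MU$-algebra form $B$ of $\BP\langle n\rangle$ (from Theorem \ref{e3form}) with $\Ss[x]$ over $\Ss$ gives an $\Ee_3$-$\MU[x]$-algebra $B[x] := B \otimes \Ss[x]$, whose homotopy ring is $\pi_*B[x] \cong (\pi_* B)[x] \cong \Zz_{(p)}[v_1,\dots,v_n][x]$, where the last isomorphism uses that $\Ss[x]$ is, as a spectrum, a sum of copies of $\Ss$ indexed by $\Nn$ (so no Tor terms appear). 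Thus $B[x]$ is an $\Ee_3$-$\MU[x]$-algebra whose underlying $\MU[x]$-module realizes the polynomial ring over $\pi_*B$.

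Next I would cut down to the truncated version. The point is that $B[x]$ receives a map from $\MU[x]$ and is, in fact, an $\Ee_3$-$\MU[x]$-algebra; I would form the pushout (relative tensor product) $A := B[x] \otimes_{\MU[x]}^{\Ee_3} \MU[x]/x^e$ in $\Alg^{(3)}_{\MU[x]}$, or equivalently use the $\Ee_\infty$-$\MU[x]$-algebra structure on $\MU[x]/x^e$ to base change. Since $\MU[x]/x^e$ is a perfect $\MU[x]$-module (a finite free one, being a sum of $e$ copies of $\MU[x]$ up to shift — indeed $\Ss[x]/x^e$ is $\Ss$-finite free of rank $e$), the relative tensor product is computed on homotopy by the underived tensor product, and there are no higher Tor groups: $\pi_* A \cong \pi_*B[x] \otimes_{\MU_*[x]} \MU_*[x]/x^e \cong \Zz_{(p)}[v_1,\dots,v_n][x]/x^e$. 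This exhibits $A$ as an $\Ee_3$-$\MU[x]/x^e$-algebra form of $\BP\langle n\rangle[x]/x^e$ in the sense of Definition \ref{defe3form}, and at the same time, remembering only the $\MU[x]$-algebra structure (restriction of scalars along $\MU[x] \to \MU[x]/x^e$), it is an $\Ee_3$-$\MU[x]$-algebra form in the sense of the third definition, since the composite $\Zz_{(p)}[v_1,\dots,v_n][x] \to \pi_* A$ is surjective with kernel $(x^e)$. Applying the forgetful functor $\Alg^{(3)}_{\MU[x]} \to \Alg^{(3)}_{\Ss} = \Alg_{\Ee_3}(\Sp)$ then yields the $\Ee_3$-ring $A$ claimed, which by construction is the image of an $\Ee_3$-$\MU[x]$-algebra form of $\BP\langle n\rangle[x]/x^e$.

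The only genuine subtlety — and the step I expect to require the most care — is the bookkeeping of $\Ee_3$-structures through the base change: one must check that the relative tensor product $B[x] \otimes_{\MU[x]}^{\Ee_3} \MU[x]/x^e$ makes sense, i.e. that $\MU[x]/x^e$ is naturally an object of $\CAlg_{\MU[x]}$ (so that tensoring against it preserves $\Ee_3$-algebras), and that the $\Ee_3$-$\MU$-algebra form of $\BP\langle n\rangle$ supplied by Theorem \ref{e3form} can be promoted to an $\Ee_3$-$\MU[x]$-algebra by the evident base change $B \mapsto B \otimes_{\MU}^{\Ee_3} \MU[x]$ along the $\Ee_\infty$-map $\MU \to \MU[x]$. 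All of this is formal from \cite{ha}: $\Mod_{\MU[x]}$ is presentably symmetric monoidal, $\CAlg_{\MU[x]}$-objects act on $\Alg_{\Ee_k}(\Mod_{\MU[x]})$ for every $k$, and the forgetful functors along $\Ee_\infty$-ring maps are symmetric monoidal, so they carry $\Ee_3$-algebras to $\Ee_3$-algebras. The homotopy computations are then immediate because every spectrum in sight ($\Ss[x]$, $\Ss[x]/x^e$, $\MU[x]/x^e$ over $\MU[x]$) is flat — in fact free — so the relevant spectral sequences or Tor computations collapse, giving the stated polynomial and truncated-polynomial homotopy rings on the nose.
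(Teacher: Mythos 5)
Your proposal is correct and takes essentially the same route as the paper's proof: base change the $\Ee_3$-$\MU$-algebra form of $\BP\langle n\rangle$ from Theorem \ref{e3form} along the $\Ee_{\infty}$-ring maps of Construction \ref{trunpoly} (the paper does this in one step along $\MU\to \MU[x]/x^e$, you in two steps through $\MU[x]$, which agree after the evident identification $B\otimes_{\MU}\MU[x]/x^e\simeq (B\otimes_{\MU}\MU[x])\otimes_{\MU[x]}\MU[x]/x^e$), verify the homotopy ring using freeness, and then restrict scalars to $\MU[x]$ and finally to $\Ss$. The only small imprecision is that the restriction-of-scalars functor $\Mod_{\MU[x]/x^e}\to\Mod_{\MU[x]}$ is lax symmetric monoidal rather than symmetric monoidal (\cite[Proposition 7.1.2.7]{ha}), but lax symmetric monoidality is exactly what is needed to carry $\Ee_3$-algebras to $\Ee_3$-algebras, so your argument goes through unchanged.
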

\begin{proof}
By the definition of $\MU[x]$-algebra form of $\BP\langle n\rangle[x]/x^e$, it suffice to show there is an $\Ee_3$-$\MU[x]/x^e$-algebra form of $\BP\langle n\rangle[x]/x^e$.  This is because we have an $\Ee_{\infty}$-ring map $\MU[x]\to \MU[x]/x^e$ by Construction \ref{trunpoly}, and according to \cite[Proposition 7.1.2.7]{ha} we know the forgetful functor $\Mod_{\MU[x]/x^e}\to \Mod_{\MU[x]}$ is lax symmetric monoidal. 

According to Theorem \ref{e3form}, we get an object in $\Alg_{\MU}^{(3)}$. By Construction \ref{trunpoly} we learn that there is a map of commutative algebras $\MU\to \MU[x]/x^e$, so there is a symmetric monoidal functor $\Mod_{\MU}\to \Mod_{\MU[x]/x^e}$. We thus get an $\Ee_3$-$\MU[x]$-algebra $B[x]/x^e$, and it is clear that $B[x]/x^e$ satisfies the condition in Definition \ref{defe3form}.  Now, let $A$ be the image of $B[x]/x^e$ under the lax symmetric monoidal functor $\Mod_{\MU[x]} \to \Mod_{\Ss}.$
\end{proof}
\begin{remark}\label{remarke3alg1}
We let $A$ denote the $\Ee_3$-algebra in Proposition \ref{e3form2} and let $B$ denote the underlying $\Ee_3$-algebra of the $\Ee_3$-$\MU$-algebra $\BP\langle n\rangle$ constructed in Theorem \ref{e3form}.  We know that \begin{equation*}
 (B\otimes^{\Ee_3}\Ss[x]/x^e)\simeq A
\end{equation*} as spectra, this essentially is \cite[Proposition 4.6.2.17]{ha}.
To show that the equivalence is an equivalence of $\Ee_3$-algebras, it suffice to produce an $\Ee_3$-algebra map $B \otimes^{\Ee_3}\Ss[x]/x^e\to A$ which induce isomorphism as spectra, see \cite[Remark 7.1.1.8]{ha}. However, this  \textbf{would not} be an equivalence of $\Ee_3$-algebras in general, unless, $\BP\langle n\rangle$ admits an $\Ee_{\infty}$-$\MU$-algebra structure. But we can still say a bit more.
\begin{claim}\label{remarkclaim1}
    We have an $\Ee_3$-algebra map $B \otimes\Ss[x]/x^e \to A$, and we have an equivalence of $\Ee_{\infty}$-algebra $B\otimes \Ss[x]/x^e\to A$ when $\BP\langle n\rangle
    $ admits an $\Ee_{\infty}$-$\MU$-algebra structure.
\end{claim}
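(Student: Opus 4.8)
The plan is to produce the $\Ee_3$-algebra map by feeding a single evident $\Ee_3$-ring map out of $B$ into a base-change/restriction adjunction, to recognise the resulting map as an equivalence of underlying spectra by a homotopy-group count, and then to read off the $\Ee_3$- and $\Ee_\infty$-statements from \cite[Remark 7.1.1.8]{ha}; in the $\Ee_\infty$-case the identification will in fact come out more directly by pasting pushout squares of commutative algebras. \emph{Step 1 (structures on $A$).} By Proposition~\ref{e3form2} and Remark~\ref{remarke3alg1}, $A$ is the image of the $\Ee_3$-$\MU[x]/x^e$-algebra $B\otimes_{\MU}\MU[x]/x^e$ under the (lax symmetric monoidal, by \cite[Proposition 7.1.2.7]{ha}) forgetful functors $\Mod_{\MU[x]/x^e}\to\Mod_{\MU[x]}\to\Mod_{\Ss}$, i.e. under restriction along the unit $\Ss\to\MU[x]/x^e$. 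Since this unit factors as $\Ss\to\Ss[x]/x^e\to\MU[x]/x^e$ using the $\Ee_\infty$-ring maps of Construction~\ref{trunpoly}, the spectrum $A$ is naturally an $\Ee_3$-$\Ss[x]/x^e$-algebra refining its $\Ee_3$-ring structure. On the other hand, the unit $\MU\to B$ of the $\Ee_3$-$\MU$-algebra $B=\BP\langle n\rangle$ and the $\Ee_\infty$-ring map $\MU\to\MU[x]/x^e$ give an $\Ee_3$-$\MU$-algebra map $\iota\colon B\simeq B\otimes_{\MU}\MU\to B\otimes_{\MU}\MU[x]/x^e=A$, hence an $\Ee_3$-ring map $\iota\colon B\to A$.

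\emph{Step 2 (adjunction).} Base change along $\Ss\to\Ss[x]/x^e$ is a symmetric monoidal functor $\Mod_{\Ss}\to\Mod_{\Ss[x]/x^e}$ whose right adjoint is the (lax symmetric monoidal) restriction; exactly as in the proof of Proposition~\ref{e3form2}, this induces an adjunction on $\Ee_3$-algebra objects, with left adjoint $D\mapsto D\otimes\Ss[x]/x^e$ and right adjoint restriction. Applying the unit of this adjunction to $\iota$, with $A$ regarded as an $\Ee_3$-$\Ss[x]/x^e$-algebra via Step~1, yields an $\Ee_3$-$\Ss[x]/x^e$-algebra map $\varphi\colon B\otimes\Ss[x]/x^e\to A$ restricting to $\iota$ on $B=B\otimes\Ss\subseteq B\otimes\Ss[x]/x^e$; forgetting the module structure gives the desired $\Ee_3$-ring map. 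By $\Ss[x]/x^e$-linearity, $\varphi$ carries the class $b\otimes x^i$ to $\iota(b)\cdot\overline{x}^{\,i}$, where $\overline{x}\in\pi_0A$ is the image of $x$ under the structure map $\Ss[x]/x^e\to A$.

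\emph{Step 3 (underlying equivalence and conclusions).} As $\Ss[x]/x^e$ is a finite free $\Ss$-module, $\pi_*(B\otimes\Ss[x]/x^e)\cong(\pi_*B)[x]/x^e$ with no correction terms, and $\pi_*B\cong\Zz_{(p)}[v_1,\dots,v_n]$ by Theorem~\ref{e3form}; by the defining property of an $\MU[x]$-algebra form of $\BP\langle n\rangle[x]/x^e$ used in Proposition~\ref{e3form2} we have $\pi_*A\cong\Zz_{(p)}[v_1,\dots,v_n][x]/x^e$ with $\overline{x}$ the polynomial generator. Hence $\varphi$ is the identity on homotopy groups, so an equivalence of underlying spectra, and therefore by \cite[Remark 7.1.1.8]{ha} an equivalence of $\Ee_3$-algebras — which also completes the discussion begun in Remark~\ref{remarke3alg1}. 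If moreover $\BP\langle n\rangle$ carries an $\Ee_\infty$-$\MU$-algebra structure, then $B$ is $\Ee_\infty$ and, since $\MU\to\MU[x]/x^e$ is $\Ee_\infty$, so is $A=B\otimes_{\MU}\MU[x]/x^e$; rerunning Steps~1--2 with $\CAlg$ in place of $\Alg_{\Ee_3}$ upgrades $\varphi$ to an $\Ee_\infty$-ring map, necessarily an equivalence by the commutative case of \cite[Remark 7.1.1.8]{ha}. Equivalently, and more directly, $\MU[x]/x^e\simeq\MU\otimes\Ss[x]/x^e$ in $\CAlg_{\MU}$, so two applications of the pasting law for pushout squares in $\CAlg$ give $A\simeq B\otimes_{\MU}(\MU\otimes\Ss[x]/x^e)\simeq(B\otimes_{\MU}\MU)\otimes\Ss[x]/x^e\simeq B\otimes\Ss[x]/x^e$ canonically as $\Ee_\infty$-rings. (This pasting argument fails for $\Ee_3$-algebras precisely because there $\otimes$ is not the coproduct, which is why Step~2 goes through an adjunction instead.)

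The main obstacle is not a hard step but the bookkeeping: keeping the several base-change and restriction functors straight (along $\MU\to\MU[x]/x^e$, $\Ss\to\Ss[x]/x^e$, and $\Ss[x]/x^e\to\MU[x]/x^e$), and checking that the $\Ee_3$-$\Ss[x]/x^e$-algebra structure put on $A$ in Step~1 is the one for which $\overline{x}$ is the polynomial generator of $\pi_*A$ singled out in Definition~\ref{defe3form} — this is what makes $\varphi$ the identity on $\pi_*$ in Step~3. The conceptually essential point is that $\varphi$ is built as an algebra map (rather than merely exhibited on homotopy groups), so that \cite[Remark 7.1.1.8]{ha} applies and converts the underlying-spectrum equivalence of \cite[Proposition 4.6.2.17]{ha} into an equivalence of $\Ee_3$- (resp. $\Ee_\infty$-) algebras.
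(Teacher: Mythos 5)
Your construction is correct and is, at bottom, the same formal argument as the paper's: both produce the $\Ee_3$-map from the base-change/restriction adjunctions attached to the $\Ee_{\infty}$-ring maps among $\Ss$, $\Ss[x]/x^e$, $\MU$, $\MU[x]/x^e$, and both obtain the $\Ee_{\infty}$-statement by rewriting $A\simeq B\otimes_{\MU}\MU[x]/x^e$ as an iterated pushout in $\CAlg$ --- your ``more direct'' pasting argument is literally the paper's proof of that half. The only genuine difference is where the adjunction is run: the paper stays in the $\MU$-direction, forming the composite $B\otimes\Ss[x]/x^e\to B\otimes f_*f^*\Ss[x]/x^e\to f_*(B\otimes_{\MU}\MU[x]/x^e)$ from the unit of $f^*\dashv f_*$ for $f\colon\Ss\to\MU$, the lax symmetric monoidal structure of $f_*$, and the identifications of Claims \ref{remarkclaim2} and \ref{remarkclaim3}, whereas you restrict $A$ to an $\Ee_3$-$\Ss[x]/x^e$-algebra and take the adjoint of $\iota\colon B\to A$ along $\Ss\to\Ss[x]/x^e$; the two maps agree up to the standard compatibilities, and your packaging concentrates the bookkeeping in a single adjunction.

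One caveat: your Step 3 asserts strictly more than the claim, namely that $\varphi$ is an equivalence of $\Ee_3$-algebras, while the paper in Remark \ref{remarke3alg1} explicitly declines to assert this in the $\Ee_3$-setting and reserves the equivalence for the $\Ee_{\infty}$-case. Your route to the stronger statement (the adjunct is the canonical underlying equivalence $B\otimes\Ss[x]/x^e\simeq B\otimes_{\MU}\MU\otimes\Ss[x]/x^e$, so conservativity of the forgetful functor, cf.\ \cite[Remark 7.1.1.8]{ha}, upgrades it) looks sound to me, provided you also verify the identification you yourself flag: that the underlying $\Ee_3$-ring of the base change $B\otimes\Ss[x]/x^e\in\Alg_{\Ee_3}(\Mod_{\Ss[x]/x^e})$ is the tensor product of $\Ee_3$-rings appearing in Convention \ref{conven1}, and that the $\Ss[x]/x^e$-action on $A$ used in the counit is the one restricted from the $\MU[x]/x^e$-structure, so that $\varphi_*$ really is the identity on $\BP\langle n\rangle_*[x]/x^e$. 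So this is not a gap in the proof of the stated claim (for which the extra assertion is not needed), but be aware that it goes beyond, and is in tension with, what the paper itself is willing to claim for $\Ee_3$-algebras.
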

Note that we have following two adjunction pairs \begin{gather*}
 f^*:\Mod_{\Ss}\leftrightarrows \Mod_{\MU}:f_*,\\
 g^*:\Mod_{\MU}\leftrightarrows \Mod_{\MU[x]/x^e}:g_*.
\end{gather*}
induced from the $\Ee_{\infty}$-ring maps $f:\Ss\to \MU, g:\MU\to \MU[x]/x^e$.
So $B=f_*(\BP\langle n\rangle), A\simeq f_* g_*(g^*(\BP\langle n\rangle))$. The second equivalence follows from following facts\begin{itemize}
    \item We have $\Ee_{\infty}$-ring maps $\Ss\to \MU \to \MU[x]\to \MU[x]/x^e$.
    \item Computing the forgetful functor $\Mod_{\MU[x]}\to \Mod_{S}$ is equivalent to compute the composite
    $\Mod_{\MU[x]/x^e} \to \Mod_{\MU}\to \Mod_{\Ss}$.
\end{itemize}
\begin{claim}\label{remarkclaim2}
    We have an equivalence of $\Ee_{\infty}$-$\MU$-algebras $\MU\otimes\Ss[x]/x^e\simeq f^*(\Ss[x]/x^e)$.
\end{claim}
\begin{claimproof}
Note that for an commutative algebra $R$, $\CAlg_R$ may identifies with $\CAlg_{R/}$, see \cite[Variant 7.1.3.8]{ha}. 
Now note that  the left adjoint of the forgetful functor  $\CAlg_{\MU}\to \CAlg_{\Ss}$ is given by \begin{equation*}
\xymatrix{\CAlg_{\Ss}\simeq \CAlg_{\Ss/}\ar[r]^{\sqcup \MU} &\CAlg_{\MU/}\simeq \CAlg_{\MU}},
\end{equation*} see \cite[Proposition 5.2.2.8]{htt}. Hence,
$f^*(\Ss[x]/x^e)\simeq \Ss[x]/x^e\sqcup \MU \simeq \Ss[x]/x^e\otimes \MU$.
\end{claimproof}
\begin{claim}\label{remarkclaim3}
We have an equivalence of $\Ee_3$-rings $A \simeq f_*(\BP\langle n\rangle \otimes_{\MU}^{\Ee_3} \MU[x]/x^e)$.
\end{claim}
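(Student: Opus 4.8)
The plan is to combine the identification $A\simeq f_*g_*g^*(\BP\langle n\rangle)$ recorded above with the observation that, for the $\Ee_\infty$-ring map $g\colon\MU\to\MU[x]/x^e$, base changing an $\Ee_3$-algebra along $g$ and then restricting scalars back to $\MU$ has the same effect as tensoring with $\MU[x]/x^e$ in the symmetric monoidal category $\Alg^{(3)}_{\MU}$. Recall from the discussion just above that, because $\Ss\to\MU\to\MU[x]\to\MU[x]/x^e$ are $\Ee_\infty$-ring maps and $\MU\to\MU[x]/x^e$ factors through $\MU[x]$, restricting the $\Ee_3$-$\MU[x]/x^e$-algebra $g^*(\BP\langle n\rangle)$ — with $\BP\langle n\rangle$ the $\Ee_3$-$\MU$-algebra of Theorem \ref{e3form} — down to $\Mod_{\Ss}$ computes $f_*g_*g^*(\BP\langle n\rangle)$, so that $A\simeq f_*g_*g^*(\BP\langle n\rangle)$ as $\Ee_3$-rings, while restricting only as far as $\Mod_{\MU}$ exhibits $g_*g^*(\BP\langle n\rangle)$ as the underlying $\Ee_3$-$\MU$-algebra of $A$; all the functors here are lax symmetric monoidal and so carry $\Ee_3$-algebras to $\Ee_3$-algebras. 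It therefore suffices to identify $g_*g^*(\BP\langle n\rangle)$ with $\BP\langle n\rangle\otimes^{\Ee_3}_{\MU}\MU[x]/x^e$ as $\Ee_3$-$\MU$-algebras, and then apply $f_*$.

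To make the identification, note that $g^*\colon\Mod_{\MU}\to\Mod_{\MU[x]/x^e}$ is strong symmetric monoidal, that $g_*\colon\Mod_{\MU[x]/x^e}\to\Mod_{\MU}$ is lax symmetric monoidal and leaves underlying spectra unchanged, and hence — on unwinding the lax structure of the composite — that $g_*g^*$, as a lax symmetric monoidal endofunctor of $\Mod_{\MU}$, is equivalent to $(-)\otimes_{\MU}\MU[x]/x^e$, with the lax structure on the latter induced by the multiplication of the $\Ee_\infty$-ring $\MU[x]/x^e$. Applying $\Alg_{\Ee_3}(-)$, which takes a lax symmetric monoidal functor to the induced functor on $\Ee_3$-algebra objects, turns this into an equivalence of endofunctors of $\Alg^{(3)}_{\MU}$ between $T\mapsto g_*g^*(T)$ and $T\mapsto T\otimes^{\Ee_3}_{\MU}\MU[x]/x^e$, the tensor being that of the symmetric monoidal category $\Alg^{(3)}_{\MU}$. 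Evaluating at $T=\BP\langle n\rangle$ gives $g_*g^*(\BP\langle n\rangle)\simeq\BP\langle n\rangle\otimes^{\Ee_3}_{\MU}\MU[x]/x^e$ as $\Ee_3$-$\MU$-algebras, and applying the lax symmetric monoidal functor $f_*$ to this equivalence yields the asserted $A\simeq f_*\bigl(\BP\langle n\rangle\otimes^{\Ee_3}_{\MU}\MU[x]/x^e\bigr)$ — an equivalence of $\Ee_3$-rings produced from one of $\Ee_3$-$\MU$-algebras, as the claim states.

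I expect the only point needing real care to be the identification of the lax symmetric monoidal endofunctor $g_*g^*$ of $\Mod_{\MU}$ with $(-)\otimes_{\MU}\MU[x]/x^e$: the underlying functors plainly agree, but the lax-monoidal coherence data have to be matched. Unwound, this amounts to the fact that both resulting $\Ee_3$-$\MU$-algebra structures on $\BP\langle n\rangle\otimes_{\MU}\MU[x]/x^e\simeq\BP\langle n\rangle\otimes\Ss[x]/x^e$ (the last equivalence by Claim \ref{remarkclaim2}) are ``multiply in the $\BP\langle n\rangle$-factor and in the $\MU[x]/x^e$-factor'', and it is assembled from the functoriality of base change of algebras and of $\Alg_{\Ee_3}(-)$ in \cite{ha} (for the former see e.g.\ \cite[Proposition 7.1.2.7]{ha}), while the underlying-spectrum identification is exactly the one already used in Remark \ref{remarke3alg1} via \cite[Proposition 4.6.2.17]{ha}. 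No further computation is required.
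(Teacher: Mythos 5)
Your reduction coincides with the paper's: everything comes down to an equivalence of $\Ee_3$-$\MU$-algebras $g_*g^*(\BP\langle n\rangle)\simeq \BP\langle n\rangle\otimes^{\Ee_3}_{\MU}\MU[x]/x^e$, after which one applies the lax symmetric monoidal $f_*$. Where you differ is in how that equivalence is produced, and the difference matters. The paper never compares functors: it uses the adjunction $g^*\colon \Alg_{\Ee_3}(\Mod_{\MU})\leftrightarrows \Alg_{\Ee_3}(\Mod_{\MU[x]/x^e})\colon g_*$ supplied by \cite[Proposition 7.1.2.6]{ha} and \cite[Proposition 5.2.2.8]{htt} to write down a \emph{single} $\Ee_3$-$\MU$-algebra map $\BP\langle n\rangle\otimes^{\Ee_3}_{\MU}\MU[x]/x^e\to g_*g^*(\BP\langle n\rangle)$, adjoint to the map $g^*(\BP\langle n\rangle\otimes^{\Ee_3}_{\MU}\MU[x]/x^e)\to g^*(\BP\langle n\rangle)$ coming from $g^*$ being symmetric monoidal and the multiplication collapsing the $\MU[x]/x^e$-factor, and then observes that this map is an equivalence of underlying $\MU$-modules, hence of $\Ee_3$-$\MU$-algebras (the principle recorded at the start of Remark \ref{remarke3alg1} via \cite[Remark 7.1.1.8]{ha}). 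You instead propose to identify $g_*g^*$ with $(-)\otimes_{\MU}\MU[x]/x^e$ as \emph{lax symmetric monoidal endofunctors} of $\Mod_{\MU}$ and then apply $\Alg_{\Ee_3}(-)$.

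Your route is viable — the identification is true (it is the projection formula, equivalently the statement that the monad of the free--forgetful adjunction for the commutative $\MU$-algebra $\MU[x]/x^e$ is tensoring with that algebra, promoted to lax symmetric monoidal functors), and it buys naturality in the $\Ee_3$-$\MU$-algebra input rather than a statement about $\BP\langle n\rangle$ alone. But it is strictly stronger than what the claim needs, and it is precisely the step you flag and then dismiss with ``no further computation is required'': matching the lax-monoidal coherence data of two $\infty$-categorical functors is genuine work that \cite[Proposition 7.1.2.7]{ha} and \cite[Proposition 4.6.2.17]{ha} do not hand you, and it is the only nontrivial content of your argument. So either supply that identification honestly, or do what the paper does and sidestep it: at a single object no comparison of lax structures is required — one comparison map produced by adjunction, checked to be an equivalence on underlying modules, suffices.
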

\begin{claimproof}
Because $A\simeq f_* g_*(g^*(\BP\langle n\rangle))$.
It suffice to prove that $g_*g^*(\BP\langle n\rangle)\simeq \BP\langle n\rangle \otimes^{\Ee_3}_{\MU} \MU[x]/x^e$ is  an equivalence of $\Ee_3$-$\MU$-algebras, which is clearly an equivalence of $\MU$-modules. We only need to produce an $\Ee_3$-$\MU$-algebra map $\BP\langle n\rangle \otimes^{\Ee_3}_{\MU}\MU[x]/x^e \to  g_*g^*(\BP\langle n\rangle)$. According to \cite[Proposition 7.1.2.6]{ha}, \cite[Proposition 5.2.2.8]{htt}, we have a natural transformation of lax symmetrical monoidal functors $\id \to g_*g^*$,  hence we have an adjunction pair  \begin{equation*}
    g^*: \Alg_{\Ee_3}(\Mod_{\MU}) \leftrightarrows \Alg_{\Ee_3}(\Mod_{\MU[x]/x^e}): g_*.
\end{equation*}
Hence we have an equivalence of mapping spaces $\Map_{\Alg^{(3)}_{\MU[x]/x^e}}(g^* c, d)\simeq \Map_{\Alg^{(3)}_{\MU}}(c, g_*d)$. Now take $c= \BP\langle n\rangle \otimes^{\Ee_3}_{\MU}\MU[x]/x^e, d=g^*(\BP\langle n\rangle)$. Since $g^*$ is symmetric monoidal, thus $g^*(\BP\langle n\rangle \otimes_{\MU}^{\Ee_3} \MU[x]/x^e)\simeq g^*(\BP\langle n\rangle)\otimes^{\Ee_3}_{\MU[x]/x^e} \MU[x]/x^e \simeq g^*(\BP\langle n\rangle)$, therefore we have an equivalence of $\Ee_3$-$\MU$-algebras $\BP\langle n\rangle \otimes^{\Ee_3}_{\MU}\MU[x]/x^e \simeq g_*g^*(\BP\langle n\rangle)$.
\end{claimproof}
\begin{claimproof}[Proof of Claim \ref{remarkclaim1}]
Now, we need to produce an $\Ee_3$-algebra map 
\begin{equation*}
   f_*(\BP\langle n\rangle)\otimes^{\Ee_3}\Ss[x]/x^e \to f_*(\BP\langle n\rangle\otimes^{\Ee_3}_{\MU} \MU[x]/x^e). 
\end{equation*}
Since $f_*$ is lax symmetric monoidal, and according to \cite[Proposition 7.1.2.7]{ha}, \cite[Proposition 5.2.2.8 (2)]{htt}, Claim \ref{remarkclaim2}, \ref{remarkclaim3}, we have $\Ee_3$-algebra maps \begin{equation*}
f_*(\BP\langle n\rangle)\otimes^{\Ee_3} \Ss[x]/x^e\to f_*(\BP\langle n\rangle)\otimes^{\Ee_3} f_*f^* \Ss[x]/x^e \to f_*(\BP\langle n\rangle
 \otimes^{\Ee_3}_{\MU} \MU[x]/x^e)\simeq A.
\end{equation*}
If $\BP\langle n\rangle$ has an $\Ee_{\infty}$-$\MU$-algebra structure, we may write $\BP\langle n\rangle \otimes^{\Ee_{\infty}}_{\MU} \MU[x]/x^e$ as a coproduct in $\CAlg_{\MU}\simeq \CAlg_{\MU/}$, see \cite[Proposition 3.2.4.7, Variant 7.1.3.8]{ha}. Hence 
$f_*(\BP\langle n\rangle \otimes_{\MU}^{\Ee_{\infty}}\MU[x]/x^e)$ may identifies as the push-out $B \sqcup_{\MU}(\MU \sqcup \Ss[x]/x^e)$ in $\CAlg_{\Ss}\simeq \CAlg_{\Ss/}$ hence equivalent to $B\sqcup \Ss[x]/x^e \simeq B\otimes \Ss[x]/x^e$.
\end{claimproof}

\end{remark}
\begin{convention}\label{conven1}
 Hence, as an $\Ee_3$-algebra we may write $A$ as $\BP\langle n\rangle \otimes_{\MU}\MU[x]/x^e$,
 and we will denote $B\otimes \Ss[x]/x^e$ as $\BP\langle n\rangle[x]/x^e$. When $\BP\langle n\rangle$ admits an $\Ee_{\infty}$-$\MU$-algebra structure or $e=0$ we will not distinguish these two notations. We denote the $\Ee_m$-$\MU[x]/x^e$-algebra and the $\Ee_m$-$\MU[x]$-algebra in Proposition \ref{e3form2} as $\BP\langle n\rangle[x]/x^e$, where $0\leq m\leq 3$.
\end{convention}

\begin{remark}\label{differentforms}
One can also use the strategy in \cite[\S 2]{HW22} to construct an $\MU[x]$-algebra form of $\BP\langle n\rangle[x]/x^e$, i.e. by computing $\mathscr{U}^{(m)}_{\MU[x]}(\BP\langle n\rangle[x]/x^e)$. It seems that one could only get an $\Ee_1$-form, due to computational difficulties. We have following commutative diagram.
\begin{equation*}
    \xymatrix{
   \BP\langle n\rangle \in \ar[d] \Alg^{(1)}_{\MU} \ar[d] &\ar[l]_= \Alg^{(1)}_{\MU} \ar[d]  \ni \BP\langle n\rangle \ar[d] & \ar[l] \Alg^{(3)}_{\MU} \ar[d]  \ni \BP\langle n\rangle \ar[d] \\
  \BP\langle n\rangle[x]\in \Alg^{(1)}_{\MU[x]} \ar[d]  &\ar[d]\Alg^{(1)}_{\MU[x]/x^e}  \ni \BP\langle n\rangle[x]/x^e & \ar[l] \Alg^{(3)}_{\MU[x]/x^e}  \ni \BP\langle n\rangle [x]/x^e \ar[d] \\
    \BP\langle n\rangle[x]/x^e \in \Alg^{(1)}_{\MU[x]} \ar[dr]& \ar[l] \ar[d]  \Alg^{(1)}_{\MU[x]/x^e} \ni \BP\langle n\rangle[x]/x^e & \Alg^{(3)}_{\MU[x]}\ar[dl] \ni \BP\langle n\rangle[x]/x^e \\ 
    & \BP\langle n\rangle\otimes_{\MU}\MU[x]/x^e\in\Alg & 
}
\end{equation*}
\end{remark}
\begin{construction}\footnote{The author observed this construction from \cite[\S 5]{HLS24}.}\label{exterior}
   For any spectrum $A,$ We define $\Lambda_A(x)$ to be the spectrum $A\otimes (\Ss\otimes_{\Ss[x]}\Ss)$ where $\Ss$ view as an $\Ss[x]$-module via Construction \ref{trunpoly} when $e=1$. If moreover, $A \in \Alg_{\Ee_n},$ where $1\leq n\leq \infty$. By the same argument, this gives rise an $\Ee_n$-algebra $\Lambda_A(x)$ in $\Sp.$  For instance, $\MU\in \CAlg,$ then $\Lambda_{\MU}(x)$ is also a commutative algebra.  Using the Tor spectral sequence, one computes the homotopy group of $\Lambda_{\Ss}(x)$ as $\Lambda_{\pi_*\Ss}(x),$ where $|x|=1,$ see for instance \cite[Proposition 3.6]{Ang08}. We learn that $\pi_*(\Lambda_{\Ss}(x))$ is a free $\pi_*(\Ss)$-module. Then if $A\in \Alg_{\Ee_2}$, we have isomorphism $\pi_*\Lambda_A(x)\simeq \Lambda_{\pi_*A}(x)$.
\end{construction}
\begin{definition}
       Let $1\leq k\leq \infty$ and $n\geq 0.$ Let $B$ be a $p$-local $\Ee_k$-$\Lambda_{\MU}(x)$-algebra. We say that $B$ is an $\Ee_k$-$\Lambda_{\MU}(x)$-algebra form of $\Lambda_{\BP\langle n\rangle}(x)$ if the composite 
    \begin{equation*}
        \Zz_{(p)}[v_1,v_2,\dots, v_n]\otimes \Lambda(x)\subseteq \pi_*\Lambda_{\BP}(x)\subseteq \pi_*\Lambda_{\MU}(x)\to \pi_*B
    \end{equation*}
is an isomorphism. By convention $\Lambda_{\BP\langle -1\rangle}(x):=\Lambda_{\Ff_p}(x)$.
\end{definition}
\begin{proposition}\label{e3form3}
    There is an $\Ee_3$-algebra $A$, and $A$ is the image of an $\Ee_3$-$\Lambda_{\MU}(x)$-algebra form of $\Lambda_{\BP\langle n\rangle}(x)$ under the forgetful functor. 
\end{proposition}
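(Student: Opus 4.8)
The plan is to mimic the proof of Proposition \ref{e3form2}, replacing $\MU[x]/x^e$ by $\Lambda_{\MU}(x)$ throughout. First I would record the $\Ee_\infty$-ring map $g\colon \MU\to \Lambda_{\MU}(x)$: by Construction \ref{exterior}, $\Lambda_{\MU}(x)=\MU\otimes(\Ss\otimes_{\Ss[x]}\Ss)$ is an $\Ee_\infty$-$\MU$-algebra, since $\Ss\otimes_{\Ss[x]}\Ss$ is a pushout of $\Ee_\infty$-rings along the augmentation $\Ss[x]\to \Ss$ of Construction \ref{trunpoly} and hence carries an $\Ee_\infty$-ring structure, with unit $\Ss\to \Ss\otimes_{\Ss[x]}\Ss$, and because $\suspen_+$ and $(-)\otimes\MU$ are symmetric monoidal by \cite[Proposition 4.8.2.18, Corollary 4.8.2.19, Proposition 7.1.2.7]{ha}. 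This $g$ produces a symmetric monoidal base-change functor $g^*\colon \Mod_{\MU}\to \Mod_{\Lambda_{\MU}(x)}$ together with a lax symmetric monoidal forgetful functor $g_*$ in the other direction.

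Next I would take the $\Ee_3$-$\MU$-algebra form $\BP\langle n\rangle$ of Theorem \ref{e3form} and set $B:=g^*(\BP\langle n\rangle)=\BP\langle n\rangle\otimes^{\Ee_3}_{\MU}\Lambda_{\MU}(x)$, an $\Ee_3$-$\Lambda_{\MU}(x)$-algebra. To see that $B$ is an $\Ee_3$-$\Lambda_{\MU}(x)$-algebra form of $\Lambda_{\BP\langle n\rangle}(x)$, I would compute its homotopy: since $\Lambda_{\Ss}(x)\simeq \Ss\oplus\Sigma\Ss$ as a spectrum (Construction \ref{exterior}), $\Lambda_{\MU}(x)$ is a free $\MU$-module, so $B\simeq \BP\langle n\rangle\oplus\Sigma\BP\langle n\rangle$ as a $\BP\langle n\rangle$-module and $\pi_*B\cong \pi_*\BP\langle n\rangle\otimes_{\MU_*}\Lambda_{\MU_*}(x)\cong \Zz_{(p)}[v_1,\dots,v_n]\otimes\Lambda(x)$. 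Tracing through the definitions, the relevant composite $\Zz_{(p)}[v_1,\dots,v_n]\otimes\Lambda(x)\subseteq \pi_*\Lambda_{\BP}(x)\subseteq \pi_*\Lambda_{\MU}(x)\to \pi_*B$ is obtained from the isomorphism $\Zz_{(p)}[v_1,\dots,v_n]\subseteq \BP_*\subseteq \pi_*\MU_{(p)}\to \pi_*\BP\langle n\rangle$ defining $\BP\langle n\rangle$ by tensoring over $\Zz$ with the free module $\Lambda(x)$, hence is again an isomorphism.

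Finally, let $A$ be the image of $B$ under the lax symmetric monoidal forgetful functor $\Mod_{\Lambda_{\MU}(x)}\to \Mod_{\Ss}$; it is an $\Ee_3$-algebra by \cite[Variant 7.1.3.8]{ha}, and by construction it is the image of the $\Ee_3$-$\Lambda_{\MU}(x)$-algebra form $B$ of $\Lambda_{\BP\langle n\rangle}(x)$ under the forgetful functor, as claimed. Essentially everything here is formal manipulation of (lax) symmetric monoidal functors, exactly as in Proposition \ref{e3form2} and Remark \ref{remarke3alg1}; the only point genuinely specific to this case is the homotopy-group identification, and that reduces cleanly to the freeness of $\Lambda_{\MU}(x)$ over $\MU$ from Construction \ref{exterior}, so I do not expect a serious obstacle. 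If desired, one can also state the evident analogue of Remark \ref{remarke3alg1} (describing $A$ as $B\otimes\Lambda_{\Ss}(x)$ at the level of $\Ee_\infty$-algebras when $\BP\langle n\rangle$ admits an $\Ee_\infty$-$\MU$-algebra structure) by the same argument.
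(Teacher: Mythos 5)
Your proposal is correct and is precisely the argument the paper intends: the paper's proof of this proposition simply says it is "similar to the proof of Proposition \ref{e3form2}," and your write-up carries out that analogy faithfully, with the homotopy-group identification via freeness of $\Lambda_{\MU}(x)$ over $\MU$ being the only non-formal point. No issues.
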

\begin{proof}
    This is clear, the proof is similar to the proof of Proposition \ref{e3form2}.
\end{proof}
\begin{remark}\label{remarke3alg2}
Similar to Remark \ref{remarke3alg1},  we have the following  claim. \begin{claim}
We denote the underlying $\Ee_3$-algebra of $\BP\langle n\rangle$ as $B$, and $A$ is denoted to be the algebra in Proposition \ref{e3form3}. We have an $\Ee_3$-algebra map 
\begin{equation*}
    B\otimes \Lambda_{\Ss}(x) \to A.
\end{equation*}
There is an equivalence of commutative algebras $B\otimes \Lambda_{\Ss}(x) \simeq A$, when $\BP\langle n\rangle$ has an $\Ee_{\infty}$-$\MU$-algebra structure.
\end{claim}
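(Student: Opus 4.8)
The plan is to run the argument of Remark \ref{remarke3alg1} verbatim, with $\MU[x]/x^e$ replaced everywhere by $\Lambda_{\MU}(x)$. First I would record the two adjunction pairs
\begin{gather*}
 f^*:\Mod_{\Ss}\leftrightarrows \Mod_{\MU}:f_*,\\
 g^*:\Mod_{\MU}\leftrightarrows \Mod_{\Lambda_{\MU}(x)}:g_*,
\end{gather*}
induced by $\Ee_{\infty}$-ring maps $f:\Ss\to\MU$ and $g:\MU\to\Lambda_{\MU}(x)$. The map $g$ exists because $\Lambda_{\Ss}(x)=\Ss\otimes_{\Ss[x]}\Ss$ is the pushout in $\CAlg$ of the two augmentations $\Ss\leftarrow\Ss[x]\to\Ss$ from Construction \ref{trunpoly}, hence an $\Ee_{\infty}$-ring, and $\Lambda_{\MU}(x)\simeq \MU\otimes\Lambda_{\Ss}(x)\simeq \MU\otimes_{\MU[x]}\MU$ is then an $\Ee_{\infty}$-$\MU$-algebra by Construction \ref{exterior}. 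As in Remark \ref{remarke3alg1}, from the chain of $\Ee_{\infty}$-ring maps $\Ss\to\MU\to\Lambda_{\MU}(x)$, together with the fact that the forgetful functor $\Mod_{\Lambda_{\MU}(x)}\to\Mod_{\Ss}$ factors as $\Mod_{\Lambda_{\MU}(x)}\to\Mod_{\MU}\to\Mod_{\Ss}$, one identifies $B=f_*(\BP\langle n\rangle)$ and $A\simeq f_*g_*g^*(\BP\langle n\rangle)$, where $g^*(\BP\langle n\rangle)$ is the $\Ee_3$-$\Lambda_{\MU}(x)$-algebra form of $\Lambda_{\BP\langle n\rangle}(x)$ of Proposition \ref{e3form3}. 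One also records, as in Remark \ref{remarke3alg1}, that $B\otimes\Lambda_{\Ss}(x)\simeq A$ as spectra by \cite[Proposition 4.6.2.17]{ha}.

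Next I would prove the analogues of Claims \ref{remarkclaim2} and \ref{remarkclaim3}. The first, $f^*(\Lambda_{\Ss}(x))\simeq\MU\otimes\Lambda_{\Ss}(x)$ as $\Ee_{\infty}$-$\MU$-algebras, is the proof of Claim \ref{remarkclaim2} unchanged: under $\CAlg_{\MU}\simeq\CAlg_{\MU/}$ the left adjoint of the forgetful functor $\CAlg_{\MU}\to\CAlg_{\Ss}$ is $-\sqcup\MU$ by \cite[Proposition 5.2.2.8]{htt}, so $f^*(\Lambda_{\Ss}(x))\simeq\Lambda_{\Ss}(x)\sqcup\MU\simeq\Lambda_{\Ss}(x)\otimes\MU$. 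The second, $A\simeq f_*(\BP\langle n\rangle\otimes^{\Ee_3}_{\MU}\Lambda_{\MU}(x))$ as $\Ee_3$-$\MU$-algebras, reduces to $g_*g^*(\BP\langle n\rangle)\simeq\BP\langle n\rangle\otimes^{\Ee_3}_{\MU}\Lambda_{\MU}(x)$, which is clear on underlying $\MU$-modules; to upgrade it to an equivalence of $\Ee_3$-$\MU$-algebras one repeats the proof of Claim \ref{remarkclaim3} using the natural transformation $\id\to g_*g^*$ of lax symmetric monoidal functors (\cite[Proposition 7.1.2.6]{ha}, \cite[Proposition 5.2.2.8]{htt}), the resulting adjunction $g^*:\Alg^{(3)}_{\MU}\leftrightarrows\Alg^{(3)}_{\Lambda_{\MU}(x)}:g_*$, and the symmetric monoidality of $g^*$, which gives $g^*(\BP\langle n\rangle\otimes^{\Ee_3}_{\MU}\Lambda_{\MU}(x))\simeq g^*(\BP\langle n\rangle)\otimes^{\Ee_3}_{\Lambda_{\MU}(x)}\Lambda_{\MU}(x)\simeq g^*(\BP\langle n\rangle)$.

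Then I would assemble the map: since $f_*$ is lax symmetric monoidal, composing the unit $\Lambda_{\Ss}(x)\to f_*f^*\Lambda_{\Ss}(x)$ with the identifications above produces $\Ee_3$-algebra maps
\begin{equation*}
f_*(\BP\langle n\rangle)\otimes^{\Ee_3}\Lambda_{\Ss}(x)\to f_*(\BP\langle n\rangle)\otimes^{\Ee_3}f_*f^*\Lambda_{\Ss}(x)\to f_*(\BP\langle n\rangle\otimes^{\Ee_3}_{\MU}\Lambda_{\MU}(x))\simeq A,
\end{equation*}
invoking \cite[Remark 7.1.1.8]{ha} to know this composite is realized by an algebra map. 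When $\BP\langle n\rangle$ carries an $\Ee_{\infty}$-$\MU$-algebra structure, $\BP\langle n\rangle\otimes^{\Ee_{\infty}}_{\MU}\Lambda_{\MU}(x)$ becomes a coproduct in $\CAlg_{\MU}\simeq\CAlg_{\MU/}$ (\cite[Proposition 3.2.4.7, Variant 7.1.3.8]{ha}), so applying $f_*$ identifies it with the pushout $B\sqcup_{\MU}(\MU\sqcup\Lambda_{\Ss}(x))\simeq B\sqcup\Lambda_{\Ss}(x)\simeq B\otimes\Lambda_{\Ss}(x)$ in $\CAlg_{\Ss}\simeq\CAlg_{\Ss/}$; the map above is then a map of $\Ee_{\infty}$-algebras inducing the equivalence $B\otimes\Lambda_{\Ss}(x)\simeq A$ on underlying spectra, hence an equivalence. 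The main obstacle is exactly the one flagged after Claim \ref{remarkclaim1}: at the $\Ee_3$-level there is no way to rewrite the relative tensor product $\BP\langle n\rangle\otimes^{\Ee_3}_{\MU}\Lambda_{\MU}(x)$ as a pushout of commutative algebras, so without the $\Ee_{\infty}$-$\MU$-algebra structure on $\BP\langle n\rangle$ one obtains only an $\Ee_3$-algebra map, which need not be an equivalence.
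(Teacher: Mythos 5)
Your proposal is correct and takes essentially the same route as the paper: the paper justifies this claim only by saying it is "similar to Remark \ref{remarke3alg1}", and your argument is precisely that transposition, with the analogues of Claims \ref{remarkclaim2} and \ref{remarkclaim3} for $g:\MU\to\Lambda_{\MU}(x)$ and the same coproduct identification in $\CAlg$ for the $\Ee_{\infty}$ case.
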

\end{remark}
\begin{convention}\label{conven2}
Hence as an $\Ee_3$-algebra,  we may write $A$ as 
$\BP\langle n\rangle\otimes_{\MU} \Lambda_{\MU}(x)$, we will denote $B \otimes \Lambda_{\Ss}(x)$ to be $\Lambda_{\BP\langle n\rangle}(x)$. When $\BP\langle n\rangle$ admits an $\Ee_{\infty}$-$\MU$-structure we will not distinguish these two notations. We denote the $\Ee_m$-$\Lambda_{\MU}(x)$-algebra in Theorem \ref{e3form2} as $\Lambda_{\BP\langle n\rangle}(x)$, where $0\leq m\leq 3$.    
\end{convention}
Note that $v_n^e \in \pi_* \BP\langle n\rangle,$ we can form the cofiber $\BP\langle n\rangle/v_n^e$. By \cite[Theorem 1.4]{Bur22} and Theorem \ref{e3form}, we find that $\BP\langle n\rangle/v_n^e$ can be equipped with an $\Ee_2$-algebra structure, as long as $e>2$. In fact, the strategy in \cite[\S 2]{HW22} says that we can constructed an $\Ee_3$-$\MU[y]/y^e$-algebra form of $\BP\langle n\rangle/v_n^e$. We will have a similar diagram as in Remark \ref{differentforms}.
\begin{definition}
    Let $1\leq k\leq \infty$ and $n\geq 0.$ Let $B$ be a $p$-local $\Ee_k$-$\MU$-algebra. We say that $B$ is an $\Ee_k$-$\MU$-algebra form of $\BP\langle n \rangle/v_n^e$ if the composite 
    \begin{equation*}
        \Zz_{(p)}[v_1,v_2,\dots, v_n]\subseteq \pi_*\BP\subseteq \pi_*\MU\to \pi_*B
    \end{equation*}
    is a surjection and the kernel is generated by $v_n^e$, for $e\geq 1$. 
\end{definition}
In order to prove there is an $\Ee_3$-$\MU$-algebra form of $\BP\langle n\rangle/v_n^e$, we need to recall the results in \cite[\S 2]{HW22}, and briefly sketch the strategy. In \cite[\S 2]{HW22}, the authors constructed a tower of $\Ee_{\infty}$-$\MU$-algebras \begin{equation}\label{tower}
  \MU[y]\to \cdots  \to \MU[y]/y^k\to \MU[y]/y^{k-1}\to \cdots \to \MU[y]/y^2\to \MU
\end{equation}
where the $y$ lies in degree $2j\in 2\Nn_{>0}$. The commutative algebra $\MU[y]$ is very different from the Construction \ref{trunpoly} unless $j=0$. We construct the $\Ee_3$-algebra form of $\BP\langle n\rangle$ by induction on $n.$ When $n=-1,0$, the $\Ee_3$-form (actually $\Ee_{\infty}$) of $\BP\langle n\rangle$ can be constructed via Postnikov truncation and note that \cite[Theorem 7.1.3.15]{ha}. Suppose we have an $\Ee_3$-$\MU$-algebra form of $\BP\langle n\rangle$, we will inductively construct an $\Ee_3$-$\MU[y]/y^k$-algebra form $B_k$ using the square zero extension $\MU[y]/y^k\to \MU[y]/y^{k-1}$ and some information of the cotangent complex, where $|y|=2(p^{n+1}-1)$. More precisely, we have the following fact.
\begin{proposition}[{\cite[Proposition 2.6.2]{HW22}}]\label{square0}
Fix $n\geq 0$, and let $B$ to be an $\Ee_3$-$\MU$-algebra form of $\BP\langle n \rangle$, then there exists a sequence of maps 
\begin{equation*}
    \cdots \to B_k\to B_{k-1} \to \cdots B_2\to B_1=B
\end{equation*}
satisfying following properties.
\begin{enumerate}
    \item $B_k$ is given an $\Ee_3$-$\MU[y]/y^k$-algebra structure.
    \item Each map $B_k\to B_{k-1}$ is an $\Ee_3$-$\MU[y]/y^k$-algebra map.
    \item  The map $B_k\to B_{k-1}$ induce an equivalence of 
    $\Ee_3$-$\MU[y]/y^{k-1}$-algebras \begin{equation*}
        \MU[y]/y^{k-1} \otimes_{\MU[y]/y^k} B_k\simeq B_{k-1}.
    \end{equation*}
    \item  The map $B\to \cofib(B_2\to B_1)\simeq \Sigma^{|y|+1} B$ is detected by $\delta v_{n+1}$ in \begin{equation*}
        E_2=E_{\infty}\simeq \ext^{*,*}_{\MU_*}(B_*, B_*)\Longrightarrow \pi_*\map_{\MU}(B,B)
    \end{equation*} 
\end{enumerate}
\end{proposition}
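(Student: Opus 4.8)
This is \cite[Proposition 2.6.2]{HW22}; I recall the plan, as it is the engine of the inductive construction of $\Ee_3$-forms in Theorem \ref{e3form}. One builds the tower by induction on $k$, with $B_1 = B$ given. The key structural input is that for $k\ge 2$ the map $q_k\colon\MU[y]/y^k\to\MU[y]/y^{k-1}$ in the tower \eqref{tower} is a square-zero extension of $\Ee_\infty$-$\MU$-algebras: the ideal $(y^{k-1})$ squares to zero since $y^{2(k-1)}=0$, and as an $\MU[y]/y^{k-1}$-module (pulled back along $y\mapsto 0$) it is $\Sigma^{(k-1)|y|}\MU$. Hence $q_k$ is classified by a derivation $\eta_k\colon L_{(\MU[y]/y^{k-1})/\MU}\to\Sigma^{(k-1)|y|+1}\MU$, exhibiting $\MU[y]/y^k$ as the pullback of $\MU[y]/y^{k-1}$ along the zero section and along $\eta_k$.

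Given $B_{k-1}\in\Alg_{\Ee_3}(\Mod_{\MU[y]/y^{k-1}})$, producing $B_k$ together with the base-change equivalence of (3) is exactly the problem of lifting $B_{k-1}$ along $q_k$ through $\Ee_3$-algebras. By the deformation theory of \cite[\S 7.4]{ha} (in its $\Ee_3$-variant), such a lift exists if and only if an obstruction class vanishes in $\pi_0\map_{\MU}\!\bigl(L^{\Ee_3}_{B_{k-1}/\MU},\,\Sigma^{(k-1)|y|+1}B_{k-1}\bigr)$, this class being the image of $\eta_k$ under $\MU\to B_{k-1}$; and the set of lifts, when nonempty, is a torsor over $\pi_0\map_{\MU}\!\bigl(L^{\Ee_3}_{B_{k-1}/\MU},\,\Sigma^{(k-1)|y|}B_{k-1}\bigr)$, which gives the compatibilities of (1)--(3). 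So the first step is to get a grip on the $\Ee_3$-cotangent complex $L^{\Ee_3}_{B_{k-1}/\MU}$ along the tower.

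The main obstacle is precisely this vanishing, and it is where the $\Ee_3$-hypothesis is indispensable: the analogous obstruction to an $\Ee_\infty$-lift is not known to vanish (this is why $\BP\langle n\rangle$ is not known to be $\Ee_\infty$). Here I would follow \cite{HW22}: using that $B_*$ is a regular quotient of $\MU_{(p),*}$, one controls $L^{\Ee_3}_{B_{k-1}/\MU}$ and, by a parity/connectivity estimate for $\Ee_3$-topological Andr\'e--Quillen cohomology, shows that the relevant internal graded piece of $\pi_0$ above is zero; this forces both the obstruction and the torsor to vanish in the needed degree, so each $B_k$ exists and the maps $B_k\to B_{k-1}$ are determined.

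Finally, (4) is a naturality check. The stage $k=2$ of the tower over $B$ is the base change of $q_2\colon\MU[y]/y^2\to\MU$ along the unit $\MU\to B$, so the $k$-invariant $B\to\cofib(B_2\to B_1)\simeq\Sigma^{|y|+1}B$ is the image of $\eta_2$ under $\MU\to B$. The tower \eqref{tower} is constructed in \cite{HW22} precisely so that $\eta_2$ represents $\delta v_{n+1}$ in the descent spectral sequence $\ext^{*,*}_{\MU_*}((-)_*,(-)_*)\Rightarrow\pi_*\map_\MU(-,-)$; by functoriality of that spectral sequence the $k$-invariant of the $B$-tower is then detected by $\delta v_{n+1}$, as claimed. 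Passing to $\varprojlim_k B_k$ and reading off homotopy groups then produces an $\Ee_3$-$\MU$-algebra with homotopy $(\BP\langle n\rangle)_*[v_{n+1}]$, which completes the inductive step for Theorem \ref{e3form}.
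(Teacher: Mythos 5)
The paper itself contains no proof of this proposition: it is quoted wholesale from \cite[Proposition 2.6.2]{HW22} (as the bracketed citation indicates), so your sketch has to be measured against the argument it imports rather than against anything internal to this paper. Your general frame --- each $\MU[y]/y^k\to\MU[y]/y^{k-1}$ in the tower (\ref{tower}) is a square-zero extension, lifting $\Ee_3$-algebras along it is governed by $\Ee_3$-cotangent-complex obstruction theory, and the needed vanishing rests on evenness/regular-quotient computations --- is indeed the shape of the Hahn--Wilson construction. Two technical quibbles: the fact that the ideal $(y^{k-1})$ squares to zero does not by itself make $q_k$ a square-zero extension in the structured, derivation-classified sense (the fiber $\Sigma^{(k-1)|y|}\MU$ is not truncated, so Lurie's small-extension criterion does not apply; the derivation comes from the graded construction of the tower itself), and the obstruction should be formulated against the relative cotangent complex $L^{\Ee_3}_{B_{k-1}/(\MU[y]/y^{k-1})}$ rather than $L^{\Ee_3}_{B_{k-1}/\MU}$.

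The genuine gap is your treatment of (4). The bottom stage $q_2\colon\MU[y]/y^2\to\MU$ is the \emph{split} square-zero extension of $\Ee_\infty$-$\MU$-algebras: every square-zero extension of $\MU$ by an $\MU$-module in $\CAlg_{\MU}$ is split because $L_{\MU/\MU}\simeq 0$, so your $\eta_2$ is zero and there is no class ``$\delta v_{n+1}$'' on the base to push forward (indeed $\ext^1_{\MU_*}(\MU_*,\Sigma^{|y|}\MU_*)=0$). Consequently, if stage $k=2$ of the $B$-tower were the base change of $q_2$ along $\MU\to B$, as you assert, then $B_2\simeq B\otimes_{\MU}\MU[y]/y^2$ would be split, the boundary map $B\to\Sigma^{|y|+1}B$ would be null, property (4) would fail, and the tower would assemble into forms of $\BP\langle n\rangle[y]/y^k$ --- precisely the \emph{other} construction the paper records after Proposition \ref{e3form4} --- rather than forms of $\BP\langle n+1\rangle/v_{n+1}^k$; in the limit $v_{n+1}$ would not map to $y$ and one would not obtain a form of $\BP\langle n+1\rangle$. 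Property (4) is not a naturality statement: it is the requirement that $B_2$ be \emph{chosen} so that its underlying $\MU$-module extension realizes the nonzero class $\delta v_{n+1}\in\ext^{1,|y|+1}_{\MU_*}(B_*,B_*)$ (which detects an honest map $B\to\Sigma^{|y|+1}B$ because the displayed spectral sequence collapses), and the whole point of the obstruction-theoretic work you gesture at is to produce an $\Ee_3$-$\MU[y]/y^2$-algebra realizing this nontrivial extension and then to propagate it up the tower; your sketch never imposes this, so at best it reconstructs the split tower. (Your closing remark about $\varprojlim_k B_k$ is fine --- since $|y|>0$ and $B$ is connective the limit does have polynomial homotopy --- but only once (4) has actually been arranged.)
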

\begin{proposition}\label{e3form4}
    There is an $\Ee_3$-algebra $A$, and $A$ is the image of an $\Ee_3$-$\MU$-algebra form of $\BP\langle n\rangle/v_n^e$ under the forgetful functor, for any $n\in \Nn$ and $e\geq 1$. 
\end{proposition}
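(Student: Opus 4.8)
The plan is to realize $\BP\langle n\rangle/v_n^e$ as the $e$-th stage of the tower produced by Proposition \ref{square0} (i.e.\ \cite[Proposition 2.6.2]{HW22}) applied to an $\Ee_3$-$\MU$-algebra form of $\BP\langle n-1\rangle$, and then to transport it to $\Sp$ along the forgetful functor. Assume first $n\geq 1$. By Theorem \ref{e3form} there is an $\Ee_3$-$\MU$-algebra form $B$ of $\BP\langle n-1\rangle$. Apply Proposition \ref{square0} with the fixed integer taken to be $n-1$; the auxiliary class then has degree $|y|=2(p^{n}-1)=|v_n|$, with $\MU[y]$ the $\Ee_\infty$-$\MU$-algebra of the tower \eqref{tower}, and we obtain a sequence
\begin{equation*}
\cdots \to B_k \to B_{k-1}\to \cdots \to B_2\to B_1=B
\end{equation*}
of $\Ee_3$-$\MU[y]/y^k$-algebras with $\MU[y]/y^{k-1}\otimes_{\MU[y]/y^k}B_k\simeq B_{k-1}$ and with $B\to\cofib(B_2\to B_1)\simeq\Sigma^{|y|+1}B$ detected by $\delta v_n$.

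Set $A':=B_e$. I claim $A'$ is an $\Ee_3$-$\MU$-algebra form of $\BP\langle n\rangle/v_n^e$, i.e.\ the composite $\Zz_{(p)}[v_1,\dots,v_n]\subseteq\pi_*\MU\to\pi_*A'$ is surjective with kernel $(v_n^e)$. Since $\MU[y]/y^e$ is an $\Ee_\infty$-$\MU$-algebra, the forgetful functor $\Mod_{\MU[y]/y^e}\to\Mod_{\MU}$ is lax symmetric monoidal by \cite[Proposition 7.1.2.7]{ha}, so $A'$ is an $\Ee_3$-$\MU$-algebra; it remains to compute $\pi_*A'$ as a $\pi_*\MU$-algebra. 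Starting from $\pi_*B_1=\Zz_{(p)}[v_1,\dots,v_{n-1}]$, the equivalences $\MU[y]/y^{k-1}\otimes_{\MU[y]/y^k}B_k\simeq B_{k-1}$ together with the $\delta v_n$-detection of Proposition \ref{square0}(4) — which is exactly the input forcing the image of $v_n\in\pi_*\MU$ in $\pi_*B_k$ to agree, modulo decomposables, with the polynomial generator — show inductively that $B_k$ is an $\Ee_3$-$\MU$-algebra form of $\BP\langle n\rangle/v_n^k$, i.e.\ $\pi_*B_k\cong\Zz_{(p)}[v_1,\dots,v_n]/v_n^k$ with the structure map surjective of kernel $(v_n^k)$. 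This is the verification by which \cite[\S 2]{HW22} identifies $\lim_k B_k$ with $\BP\langle n\rangle$, performed at the finite stage $k=e$.

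Finally, let $f\colon\Ss\to\MU$ be the unit map and $f_*\colon\Mod_{\MU}\to\Mod_{\Ss}$ the forgetful functor, which is lax symmetric monoidal by \cite[Proposition 7.1.2.7]{ha}; then $A:=f_*(A')$ is an $\Ee_3$-algebra in $\Sp$ which is by construction the image of the $\Ee_3$-$\MU$-algebra form $A'$ of $\BP\langle n\rangle/v_n^e$. The remaining case $n=0$ is elementary: $\BP\langle 0\rangle/v_0^e\simeq H(\Zz/p^e)$ is an $\Ee_\infty$-ring and an $\Ee_\infty$-$\BP\langle 0\rangle$-algebra, hence via $\MU\to\BP\langle 0\rangle$ the image of an $\Ee_\infty$- (in particular $\Ee_3$-) $\MU$-algebra form of $\BP\langle 0\rangle/v_0^e$; alternatively one argues by Postnikov truncation exactly as in the $n=-1,0$ base cases of Theorem \ref{e3form}.

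I expect the main obstacle to be the homotopy computation of the second paragraph: one must extract from the $\delta v_n$-detection the precise statement that at each finite stage the $\MU$-algebra structure on $B_k$ sends $v_n$ to (a unit multiple of, modulo decomposables) the polynomial generator, so that $\pi_*B_k\cong\pi_*(\BP\langle n\rangle/v_n^k)$ holds as $\pi_*\MU$-algebras and not merely as graded abelian groups. This is a truncation of the identification $\lim_k B_k\simeq\BP\langle n\rangle$ made in \cite[\S 2]{HW22}: the subtlety lies purely in tracking the detection datum along the tower, and no new input beyond Proposition \ref{square0} is required.
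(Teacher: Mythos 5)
Your proposal follows essentially the same route as the paper: take an $\Ee_3$-$\MU$-algebra form of a truncated Brown--Peterson spectrum from Theorem \ref{e3form}, run the square-zero tower of Proposition \ref{square0} out to stage $e$, identify $\pi_*B_e$ using the filtration together with the $\delta v$-detection of property (4), and transport the result along the lax symmetric monoidal forgetful functors. The only differences are cosmetic: you shift the index so as to land exactly on $\BP\langle n\rangle/v_n^e$ by starting from $\BP\langle n-1\rangle$ (the paper starts from $\BP\langle n\rangle$ and produces $\BP\langle n+1\rangle/v_{n+1}^e$), and you dispose of the $n=0$ case separately via Postnikov truncation, a case the tower argument does not literally cover.
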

\begin{proof}
By Theorem \ref{e3form}, we take $B$ to be an $\Ee_3$-$\MU$-algebra form of $\BP\langle n \rangle$, and then by Proposition \ref{square0}, we get a tower\begin{equation*}
    B_e\to B_{e-1} \to \cdots \to B_2\to B
\end{equation*}
 satisfies the conditions in Proposition \ref{square0}. By the construction, or properties in Proposition \ref{square0} we know that $\cofib(B_k\to B_{k-1})\simeq \Sigma^{k|y|}B_{k-1}.$
 Therefore the associated graded has homotopy group as
 \begin{equation*}
     \pi_*\gr B_e \simeq B_*[y]/y^e. 
 \end{equation*}
 Note that the tower in (\ref{tower}) has the similar graded 
 \begin{equation*}
     \pi_* \gr \MU[y]/y^e\simeq \MU_*[y]/y^e.
 \end{equation*}
 Therefore by our assumption on $B$, we know that
 \begin{equation*}
     \Zz_{(p)}[v_1,\ldots,v_n][y]/y^e\subseteq \MU_*[y]/y^e \to \pi_*B_e
\end{equation*}
 is an isomorphism.
 According to Proposition \ref{square0}, we know that the image of $y \in \pi_*\MU[y]/y^2$ in $\pi_*B_2$ is the same as the image $v_{n+1}\in \MU_* \to \pi_* B_2 \simeq B_*[y]/y^2$. Thus $v_{n+1} \in \MU_*$ maps to $y$ in $\pi_* B_e\simeq B_*[y]/y^e$, so the map $\Zz_{(p)}[v_1,v_2,\ldots, v_{n+1}] \subseteq \MU_* \to \pi_*\MU[y]/y^e \to \pi_*B_e$ is a surjection and has kernel generated by $v_{n+1}^e$. Now we let $A$ be the image of $B_e$ under the forgetful functor $\Mod_{\MU}\to \Mod_{\Ss}$.
\end{proof}
\begin{convention}\label{conven3}
    Sometimes, we will denote $\BP\langle n\rangle/v_n^e$ constructed above as $\BP\langle n,e\rangle$ for simplicity.
\end{convention}
\begin{construction}
 Let $\MU[y]/y^e$ be the $\Ee_{\infty}$-$\MU$-algebra (can also be viewed as a graded commutative $\MU$-algebra) in the tower (\ref{tower}), where $|y|=2j \in 2\Nn_{>0}$, and we also have an $\Ee_{\infty}$-ring map $\MU\to \MU[y]/y^e$.
And let $B$ be the form constructed in Proposition \ref{e3form4}, then $B\otimes_{\MU}\MU[y]/y^e$ gives arise an $\Ee_3$-$\MU[y]/y^e$-algebra. We will denote it to be $B[y]/y^e$, then using the $\Ee_{\infty}$-map $\MU[y]\to \MU[y]/y^e$, one gets an $\Ee_3$-$\MU[y]$-algebra $B[y]/y^e$, where $|y|=2j$. This construction is different from Proposition \ref{e3form2}, unless $j=0$. Since $\MU[y]/y^e$ is free as $\MU$-module, thus one computes $\pi_*(B[y]/y^e)\simeq \BP\langle n\rangle_*[y]/y^e$, where $|y|=2j$.
\end{construction}
We therefore have proved following fact.
\begin{proposition}\label{e3form5}
    There is an $\Ee_3$-algebra $A$, and $A$ is the image of an $\Ee_3$-$\MU[y]$-algebra form $B$ of $\BP\langle n\rangle[y]/y^e$ under the forgetful functor. The following map
    \begin{equation*}
        \Zz_{(p)}[v_1,v_2,\ldots,v_n][y]\subseteq \pi_*\MU[y]\to B_*
    \end{equation*}
is a surjection and the kernel is generated by $y^e$.
\end{proposition}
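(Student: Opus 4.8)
The plan is to deduce Proposition \ref{e3form5} almost immediately from the constructions already in place, so the proof is essentially a bookkeeping argument assembling Proposition \ref{e3form4}, Theorem \ref{e3form}, and the tower \eqref{tower}. First I would take $B$ to be an $\Ee_3$-$\MU$-algebra form of $\BP\langle n\rangle$ as produced in Theorem \ref{e3form}. Next, recalling that $\MU[y]/y^e$ (with $|y|=2j\in 2\Nn_{>0}$) is a commutative $\MU$-algebra sitting in the tower \eqref{tower}, I would form the relative tensor product $B\otimes_{\MU}\MU[y]/y^e$; since $\Mod_{\MU}\to\Mod_{\MU[y]/y^e}$ is symmetric monoidal, this acquires a natural $\Ee_3$-$\MU[y]/y^e$-algebra structure, and then restriction of scalars along the $\Ee_\infty$-map $\MU[y]\to\MU[y]/y^e$ (which is lax symmetric monoidal by \cite[Proposition 7.1.2.7]{ha}) makes it an $\Ee_3$-$\MU[y]$-algebra. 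Call this $B[y]/y^e=:B'$.

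The homotopy computation is the only place where a spectral sequence intervenes, but it collapses for degree reasons: since $\MU[y]/y^e$ is a free $\MU$-module on the basis $\{1,y,\dots,y^{e-1}\}$, the Künneth/Tor spectral sequence computing $\pi_*(B\otimes_{\MU}\MU[y]/y^e)$ degenerates and yields $\pi_*B'\cong \BP\langle n\rangle_*[y]/y^e$ with $|y|=2j$. Then, tracing through the inclusion $\Zz_{(p)}[v_1,\dots,v_n][y]\subseteq \pi_*\MU[y]$ and the map $\pi_*\MU[y]\to\pi_*B'$ (which factors through $\pi_*\MU[y]/y^e\to\pi_*(B\otimes_{\MU}\MU[y]/y^e)$), one sees directly that this composite is surjective onto $\BP\langle n\rangle_*[y]/y^e$ with kernel exactly the ideal generated by $y^e$ (the $v_i$ for $i>n$ already die under $\pi_*\MU\to\BP\langle n\rangle_*$, while $y$ is carried to the polynomial/truncated generator). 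Finally, applying the forgetful functor $\Mod_{\MU[y]}\to\Mod_{\Ss}$ — which is lax symmetric monoidal, hence sends $\Ee_3$-algebras to $\Ee_3$-algebras — to $B'$ produces the desired $\Ee_3$-algebra $A$.

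I do not expect any genuine obstacle here; the statement is labelled ``We therefore have proved following fact,'' signalling that everything needed has been set up. If there is a subtle point, it is purely notational: one must be careful to distinguish this $\MU[y]/y^e$ (the square-zero-type tower with $|y|=2j>0$, as in \cite[\S2]{HW22}) from the truncated polynomial spectrum of Construction \ref{trunpoly} (where $|y|=0$), and correspondingly to distinguish $B[y]/y^e$ here from the algebra $\BP\langle n\rangle[x]/x^e$ of Convention \ref{conven1}. Beyond that, the proof is the one already indicated in the surrounding Construction, and I would simply write: take $B$ as in Proposition \ref{e3form4} (or Theorem \ref{e3form}), set $B':=B\otimes_{\MU}\MU[y]/y^e$ with its induced $\Ee_3$-$\MU[y]$-algebra structure, compute $\pi_*B'$ using freeness of $\MU[y]/y^e$ over $\MU$, identify the composite map and its kernel, and let $A$ be the image of $B'$ under the forgetful functor to $\Sp$.
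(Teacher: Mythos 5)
Your proposal matches the paper's own argument: the Construction preceding Proposition \ref{e3form5} defines $B[y]/y^e := B\otimes_{\MU}\MU[y]/y^e$ using the tower \eqref{tower}, restricts scalars along $\MU[y]\to\MU[y]/y^e$, computes $\pi_*$ via freeness of $\MU[y]/y^e$ over $\MU$, and forgets to $\Sp$ — exactly your steps. Your remark distinguishing this graded $\MU[y]/y^e$ (with $|y|=2j>0$) from Construction \ref{trunpoly} is also the point the paper itself emphasizes, so there is nothing to add.
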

\begin{construction} Let $\Lambda_{\MU}(y):=\MU\otimes_{\MU[y]}\MU$, since the $\MU\to \MU[y]$ is an $\Ee_{\infty}$ map, therefore $\Lambda_{\MU}(y)$ is an $\Ee_{\infty}$-ring. One computes the homotopy group $\pi_*\Lambda_{\MU}(y)\simeq \Lambda_{\MU_*}(y)$, where $|y|=2j+1$. With the same argument before, we can construct an $\Ee_3$-$\Lambda_{\MU}(y)$-algebra form of $\Lambda_{\BP\langle n\rangle}(y)$.\end{construction}
\begin{remark}\label{remarkkernel}
    By carefully checking the argument of constructing an $\Ee_3$-$\MU$-algebra form of $\BP\langle n\rangle$ and Proposition \ref{square0}, one can actually prove that, there is an $\Ee_3$-$\MU$-algebra form $B$ of $\BP\langle n\rangle$ such that the map \begin{equation*}
      \pi_*\MU_{(p)}\to \pi_* B
    \end{equation*}
is a surjection, and has kernel generated by $\{v_m, x_j| m\geq n+1, \forall k , j\ne p^k-1\}$. And the similar statement is also hold for $\Ee_3$-$\MU$-form of $\BP\langle n\rangle/v_n^e$, and $\Ee_3$-$\MU[x]$-form $\BP\langle n\rangle[x]/x^e$.
\end{remark}
So far, we have constructed several types of $\Ee_3$-algebras.
Basically, all of them can be viewed as truncated polynomial algebras over $\BP\langle n\rangle$ for some $n$, but the polynomial generators may not live in the degree $0$. The form in Proposition \ref{e3form2}, $\BP\langle n\rangle[x]/x^e$ is just a finite direct sum of $\BP\langle n\rangle$ (note that $\BP\langle n\rangle[x]/x^e\simeq \BP\langle n\rangle_{\MU}\MU[x]/x^e$ as spectra). The form in Proposition \ref{e3form3} is equivalent to $\BP\langle n\rangle \bigoplus \Sigma^1 \BP\langle n\rangle$ as $\BP\langle n\rangle$-module. The form in Proposition \ref{e3form4} is finite as $\BP\langle n\rangle$-module. Because by construction, the cofiber $\cofib(B_k\to B_{k-1}) \simeq \Sigma^{k(2p^{n+1}-2)} \BP\langle n\rangle$, thus $\BP\langle n+1\rangle/v_{n+1}^e$ is a finite $\BP\langle n\rangle$-module. Since $\MU[y]/y^e$ is finite as $\MU$-module, we therefore conclude that the forms in Proposition \ref{e3form5} are finite as $\BP\langle n\rangle$-modules. We thus have proved following fact.
\begin{proposition}\label{finalgebra}
    Each form in Proposition \ref{e3form2}, \ref{e3form3}, \ref{e3form4}, \ref{e3form5}, is a finite $\BP\langle n\rangle$-module for some $n$. Thus we can view them as finite algebras over $\BP\langle n\rangle$ (the ring of integers of a height $n$ local field).
\end{proposition}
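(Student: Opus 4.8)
The plan is to treat the four families one at a time and, in each case, reduce the assertion to an explicit identification of the underlying module spectrum over the relevant $\BP\langle n\rangle$, after which finiteness (i.e.\ perfectness, equivalently membership in the thick subcategory of $\Mod_{\BP\langle n\rangle}$ generated by $\BP\langle n\rangle$) is immediate. For the forms of Proposition~\ref{e3form2}: by Remark~\ref{remarke3alg1}, Claim~\ref{remarkclaim3}, both $\BP\langle n\rangle[x]/x^e = B\otimes\Ss[x]/x^e$ and $\BP\langle n\rangle\otimes_{\MU}\MU[x]/x^e$ have underlying spectrum that of $B\otimes\Ss[x]/x^e$; since $\Ss[x]/x^e=\suspen_+ M_e$ is a finite wedge of copies of $\Ss$ indexed by $\{1,x,\dots,x^{e-1}\}$, this underlying $\BP\langle n\rangle$-module is free of rank $e$, hence finite. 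For the forms of Proposition~\ref{e3form3}: Construction~\ref{exterior} computes $\pi_*\Lambda_{\Ss}(x)\simeq\Lambda_{\pi_*\Ss}(x)$ with $|x|=1$, a free $\pi_*\Ss$-module of rank $2$, so $\Lambda_{\BP\langle n\rangle}(x)=B\otimes\Lambda_{\Ss}(x)\simeq\BP\langle n\rangle\oplus\Sigma\BP\langle n\rangle$ as a $\BP\langle n\rangle$-module, again finite; the same holds for $\BP\langle n\rangle\otimes_{\MU}\Lambda_{\MU}(x)$.

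For the forms of Proposition~\ref{e3form5} I would argue by base change. The algebra $\MU[y]/y^e$ occurring in the tower~(\ref{tower}) is a free $\MU$-module of rank $e$ on $\{1,y,\dots,y^{e-1}\}$, so $B[y]/y^e=B\otimes_{\MU}\MU[y]/y^e$ is free of rank $e$ as a $B$-module; since $B$ is an $\Ee_3$-$\MU$-algebra form of $\BP\langle n\rangle$ it is free of rank $1$ over $\BP\langle n\rangle$, so $B[y]/y^e$ is a finite $\BP\langle n\rangle$-module of rank $e$. The analogous statement for the exterior variant $\Lambda_{\BP\langle n\rangle}(y)=B\otimes_{\MU}\Lambda_{\MU}(y)$ follows from $\Lambda_{\MU}(y)$ being free of rank $2$ over $\MU$.

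The one case requiring real input — and the step I expect to be the main obstacle — is Proposition~\ref{e3form4}, because $\BP\langle n\rangle/v_n^e$ is \emph{not} a finite $\BP\langle n\rangle$-module (its homotopy ring $\Zz_{(p)}[v_1,\dots,v_n]/v_n^e$ is not finitely generated over $\BP\langle n\rangle_*$); instead one must exhibit it as a finite $\BP\langle n-1\rangle$-module. For this I would run the construction of Proposition~\ref{e3form4} starting from an $\Ee_3$-$\MU$-algebra form $B$ of $\BP\langle n-1\rangle$, so that the resulting $B_e$ is a form of $\BP\langle n\rangle/v_n^e$, and invoke the tower $B_e\to B_{e-1}\to\cdots\to B_1=B$ of Proposition~\ref{square0}: properties~(3) and~(4) of that proposition force each cofiber $\cofib(B_k\to B_{k-1})$ to be a suspension of $B$, compatibly with $\pi_*\gr B_e\simeq B_*[y]/y^e$ where $|y|=2(p^n-1)$. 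Hence $B_e$ is an iterated extension, via $e-1$ cofiber sequences, of $B_1=B$ by suspensions of $B$, so $B_e$ lies in the thick subcategory of $\Mod_{\BP\langle n-1\rangle}$ generated by $B\simeq\BP\langle n-1\rangle$ and is therefore a finite $\BP\langle n-1\rangle$-module; this matches the homotopy-level fact that $\Zz_{(p)}[v_1,\dots,v_n]/v_n^e$ is free of rank $e$ over $\Zz_{(p)}[v_1,\dots,v_{n-1}]$. Assembling the four cases yields the proposition.
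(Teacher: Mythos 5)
Your proposal is correct and follows essentially the same route as the paper: identify the underlying $\BP\langle n\rangle$-module of each form explicitly (a rank-$e$ free module for the truncated polynomial forms, $\BP\langle n\rangle\oplus\Sigma\BP\langle n\rangle$ for the exterior forms, base change along the finite $\MU$-module $\MU[y]/y^e$ for the graded forms), and for $\BP\langle n\rangle/v_n^e$ use the tower of Proposition~\ref{square0}, whose cofibers are suspensions of the base form, to place it in the thick subcategory generated by $\BP\langle n-1\rangle$. Your explicit flagging of the index shift in the quotient case (finite over $\BP\langle n-1\rangle$, not $\BP\langle n\rangle$) is exactly the point the paper's ``for some $n$'' is meant to absorb.
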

In the following, we will use the conventions in Convention \ref{conven1}, \ref{conven2}, we will use the name of the form constructed in the Proposition \ref{e3form4}, \ref{e3form5}. For instance, we will refer to $\BP\langle n\rangle/v_n^e$ as both an algebra and an $\MU$-algebra in Proposition \ref{e3form4}.  We now present some computational results of these algebras, before that, we present a well known lemma.
\begin{lemma}\label{dividedpower}
    Let $R$ be an ordinary commutative ring. We have following equivalence of graded rings:
    \begin{equation*}
        \THH_*((R[x]/x^e)/R[x])\simeq \Gamma_{R[x]/x^e}(\sigma^2 x^e).
    \end{equation*}
\end{lemma}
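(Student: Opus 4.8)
The plan is to compute this relative $\THH$ purely algebraically. Write $P:=R[x]$ and $S:=R[x]/x^e=P/(x^e)$; the essential structural input is that $x^e$ is a non-zerodivisor in $P$. Since $P$ is an ordinary commutative ring, the relative topological Hochschild homology coincides with the derived Hochschild homology of $S$ over $P$, that is, $\THH(S/P)\simeq S\otimes^{\Ll}_{S\otimes^{\Ll}_P S}S$ computed in the derived category $D(P)$, so it suffices to identify the graded ring $\tor^{S\otimes^{\Ll}_P S}_*(S,S)$.

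First I would compute $S\otimes^{\Ll}_P S$. Because $x^e$ is a non-zerodivisor, the Koszul complex $(P\xrightarrow{\,x^e\,}P)$ resolves $S$ over $P$, and it carries the structure of a commutative dg-$P$-algebra, namely the free graded-commutative $P$-algebra on a class $\epsilon_1$ in homological degree $1$ with $d\epsilon_1=x^e$, so in particular $\epsilon_1^2=0$. Base-changing along $P\to S$ kills the differential, since $x^e=0$ in $S$, yielding an equivalence of graded-commutative $S$-algebras $S\otimes^{\Ll}_P S\simeq\Lambda_S(\epsilon_1)$, concentrated in homological degrees $0$ and $1$.

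Next I would compute $\THH(S/P)\simeq S\otimes^{\Ll}_{\Lambda_S(\epsilon_1)}S$ by Tate's construction, killing the odd cycle $\epsilon_1$: adjoin a divided-power variable $\sigma_2:=\sigma^2 x^e$ in homological degree $2$ with $d(\sigma_2^{[k]})=\epsilon_1\sigma_2^{[k-1]}$, producing the dg-$\Lambda_S(\epsilon_1)$-algebra $C:=\Lambda_S(\epsilon_1)\otimes_S\Gamma_S(\sigma_2)$, free over $\Lambda_S(\epsilon_1)$ on the divided powers $\sigma_2^{[k]}$. A short direct check gives $H_*(C)\cong S$: using $\epsilon_1^2=0$ the differential vanishes on every $\epsilon_1\sigma_2^{[k]}$ and restricts to an isomorphism $S\cdot\sigma_2^{[k]}\xrightarrow{\ \sim\ }S\cdot\epsilon_1\sigma_2^{[k-1]}$ for $k\geq1$, so all homology in positive degrees cancels. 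Hence $C$ is a dg-algebra resolution of $S$ over $\Lambda_S(\epsilon_1)$ and $S\otimes^{\Ll}_{\Lambda_S(\epsilon_1)}S\simeq S\otimes_{\Lambda_S(\epsilon_1)}C$; base-changing along $\Lambda_S(\epsilon_1)\to S$ (which sends $\epsilon_1\mapsto0$) annihilates the induced differential and leaves $\Gamma_S(\sigma_2)$ with zero differential, whose ring structure, inherited from the dg-algebra $C$, is the divided-power one. Therefore $\THH_*(S/P)\cong\Gamma_S(\sigma_2)=\Gamma_{R[x]/x^e}(\sigma^2 x^e)$ as graded rings.

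I expect the only genuinely delicate point to be the bookkeeping of the multiplicative structure across the two stages: that the first stage yields an \emph{exterior} algebra on $\epsilon_1$ (forced by the dg-algebra structure of the Koszul complex, not merely by graded-commutativity), and that the second yields the \emph{divided-power} algebra $\Gamma_S(\sigma_2)$ rather than the polynomial algebra $S[\sigma_2]$ — the latter is precisely where the characteristic of $R$ would a priori intervene, and the divided-power Tate resolution is what renders the statement uniform in $R$. As a cross-check (and an alternative route), the HKR filtration on $\THH(S/P)$ has $\gr^n\THH(S/P)\simeq(\Lambda^n_S L_{S/P})[n]$ with $\Lambda^n$ the derived exterior power; since $x^e$ is a non-zerodivisor, $L_{S/P}\simeq S[1]$, and the d\'ecalage isomorphism $\Lambda^n_S(S[1])\simeq(\Gamma^n_S S)[n]\simeq S[n]$ identifies $\gr^n$ with $S[2n]$. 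These pieces sit in distinct even degrees, so the filtration splits, and d\'ecalage converts the product on the associated graded into the divided-power product, recovering $\Gamma_{R[x]/x^e}(\sigma^2 x^e)$.
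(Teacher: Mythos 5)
Your proof is correct and follows essentially the same route as the paper's: identify the relative $\THH$ with derived Hochschild homology, use the Koszul resolution on the non-zerodivisor $x^e$ to get $R[x]/x^e\otimes^{\Ll}_{R[x]}R[x]/x^e\simeq\Lambda_{R[x]/x^e}(\sigma x^e)$, then resolve by the divided-power Tate algebra to obtain $\Gamma_{R[x]/x^e}(\sigma^2 x^e)$. Your explicit verification that the Tate complex is acyclic and the HKR/cotangent-complex cross-check are welcome extra detail, but the core argument is the paper's.
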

\begin{proof}
 We need to compute $\THH(R[x]/x^e/R[x])\simeq \HH((R[x]/x^e)/R[x])$. To this end, we first compute $\mathscr{U}^{(1)}_{R[x]}(R[x]/x^e)\simeq R[x]/x^e\otimes_{R[x]}^{\Ll} R[x]/x^e$. We can resolve $R[x]/x^e$ by $\Lambda_{R[x]}(\sigma x^e)$\footnote{The notation $\sigma$ can be defined using factorization homology which we will update this later.} as an $R[x]$-module, this is because $x^e$ is not a zero-divisor. Therefore $\mathscr{U}_{R[x]}^{(1)}(R[x]/x^e) \simeq \Lambda_{R[x]/x^e}(\sigma x^e)$, this can be also obtained by using \cite[Proposition 3.6]{Ang08}. Then, we compute $R[x]/x^e\otimes_{\Lambda_{R[x]/x^e}(\sigma x^e)}^{\Ll} R[x]/x^e$. As a $\Lambda_{R[x]/x^e}(\sigma x^e)$-algebra, $R[x]/x^e$ is equivalent to $\Gamma_{\Lambda_{R[x]/x^e}(\sigma x^e)}(\sigma^2 x^e)$ which can be also written as $\Lambda_{R[x]/x^e}(\sigma x^e)\langle\sigma^2 x^e\rangle$. We therefore have $\HH_*((R[x]/x^e)/R[x])\simeq (R[x]/x^e)\langle \sigma^2 x^e\rangle$. 
\end{proof}
\begin{proposition}
The commutative algebra $\THH((\Ff_p[x]/x^e)/\MU[x])$ has homotopy ring equivalent to
    \begin{equation*}
    P_{\Ff_p}\otimes_{\Ff_p}\Gamma_{\Ff_p[x]/x^e}(\sigma^2 x^e),
    \end{equation*}
where $P_{\Ff_p}$ is a polynomial algebra over $\Ff_p$, and one of the generators is detected by $\sigma^2 p$. In particular, $\THH((\Ff_p[x]/x^e)/\MU[x])$ is even.
\end{proposition}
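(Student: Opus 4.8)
The plan is to split the computation into two independent pieces via a Künneth formula for $\THH$, and then feed in the known value of $\THH(\Ff_p/\MU)$ together with a divided‑power computation in the style of Lemma \ref{dividedpower}. First I would observe that $\Ff_p[x]/x^e$, $\Ff_p[x]\otimes_{\MU[x]}\MU[x]/x^e$ and $\Ff_p\otimes_\MU\MU[x]/x^e$ all agree as $\Ee_\infty$-$\MU[x]$-algebras (each is $\Ff_p\otimes\Ss[x]/x^e$ with the evident algebra structure). Since $\THH(-/\MU[x])\colon\Alg_{\MU[x]}\to\Alg_{\MU[x]}$ is symmetric monoidal, this yields
$$\THH\big((\Ff_p[x]/x^e)/\MU[x]\big)\simeq\THH(\Ff_p[x]/\MU[x])\otimes_{\MU[x]}\THH\big((\MU[x]/x^e)/\MU[x]\big),$$
and base change along the $\Ee_\infty$-map $\MU\to\MU[x]$ (applied to $\Ff_p=\Ff_p\otimes\Ss$, so that $\Ff_p[x]=\Ff_p\otimes_\MU\MU[x]$) identifies the first factor with $\THH(\Ff_p/\MU)\otimes_\MU\MU[x]$. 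Thus it suffices to compute $\THH(\Ff_p/\MU)$ and $\THH((\MU[x]/x^e)/\MU[x])$ separately.

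For the second factor I would rerun the proof of Lemma \ref{dividedpower} with the even $\Ee_\infty$-ring $\MU[x]$ in place of the ordinary ring $R[x]$: the only input used there is that $x^e$ is a non-zero-divisor, which still holds in $\pi_*\MU[x]=\MU_*[x]$, and since $\MU$ is concentrated in even degrees the intermediate bar spectral sequences collapse with no multiplicative extensions. This gives $\THH_*((\MU[x]/x^e)/\MU[x])\cong\Gamma_{\MU_*[x]/x^e}(\sigma^2 x^e)$ with $|\sigma^2 x^e|=2$, a free evenly graded $\MU_*[x]/x^e$-module. For the first factor I would use that $\THH(\Ff_p/\MU)$ is even and that $\pi_*\THH(\Ff_p/\MU)$ is a polynomial $\Ff_p$-algebra $P_{\Ff_p}$, one of whose generators is detected by $\sigma^2 p$; I would deduce this from Bökstedt's $\THH_*(\Ff_p)\cong\Ff_p[\mu]$ by base change along $\Ss\to\MU$, using the identification $\THH(\Ff_p/\MU)\simeq\THH(\Ff_p)\otimes_{\THH(\MU)}\MU$ together with $\THH_*(\MU)\cong\Lambda_{\MU_*}(\sigma b_1,\sigma b_2,\dots)$ on odd-degree generators (so the relevant bar spectral sequence is concentrated in even degrees and degenerates).

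Finally I would assemble: since $P_{\Ff_p}$ is an $\Ff_p$-algebra, $\THH(\Ff_p[x]/\MU[x])$ has homotopy $P_{\Ff_p}\otimes_{\Ff_p}\Ff_p[x]$, a free $\Ff_p[x]$-module, and tensoring it over $\MU_*[x]$ against $\Gamma_{\MU_*[x]/x^e}(\sigma^2 x^e)=\bigoplus_{i\ge 0}(\MU_*[x]/x^e)\gamma_i(\sigma^2 x^e)$ — resolving $\MU_*[x]/x^e$ over $\MU_*[x]$ by $\MU_*[x]\xrightarrow{x^e}\MU_*[x]$ — shows all higher $\operatorname{Tor}$ vanish, because $x^e$ acts as a non-zero-divisor on $P_{\Ff_p}\otimes_{\Ff_p}\Ff_p[x]$. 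Hence the Künneth spectral sequence degenerates, there are no multiplicative extensions since everything is evenly graded, and one obtains the ring isomorphism $\THH_*((\Ff_p[x]/x^e)/\MU[x])\cong P_{\Ff_p}\otimes_{\Ff_p}\Gamma_{\Ff_p[x]/x^e}(\sigma^2 x^e)$, which is visibly even.

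The step I expect to be the real obstacle is pinning down $\pi_*\THH(\Ff_p/\MU)$ as an honest polynomial algebra rather than merely its divided-power associated graded: this requires controlling the hidden multiplicative extensions in the bar spectral sequence, the same phenomenon that upgrades the formal answer to Bökstedt's $\Ff_p[\mu]$. Transporting the statement from the absolute $\THH_*(\Ff_p)$ along $\Ss\to\MU$ sidesteps a direct attack on the $\MU$-relative spectral sequence; the rest is bookkeeping with base change, the Künneth formula, and the non-zero-divisor property of $x^e$.
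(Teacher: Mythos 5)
Your overall architecture is sound and is essentially a mild variant of the paper's argument: the paper splits $\THH((\Ff_p[x]/x^e)/\MU[x])\simeq \THH(\Ff_p/\MU)\otimes_{\Ff_p}\THH((\Ff_p[x]/x^e)/\Ff_p[x])$ using multiplicativity of relative $\THH$, then quotes Lemma \ref{dividedpower} for the divided-power factor and \cite[Proposition 2.4.2]{HW22} for $\THH_*(\Ff_p/\MU)=P_{\Ff_p}$; your relative K\"unneth over $\MU[x]$ plus base change along $\MU\to\MU[x]$, and the final $\tor$-vanishing bookkeeping (resolving $\MU_*[x]/x^e$ by $x^e$ acting on $\MU_*[x]$, which is a non-zero-divisor on $P_{\Ff_p}[x]$), is correct and buys nothing essentially different.

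The genuine gap is your treatment of $\pi_*\THH(\Ff_p/\MU)$. The bar spectral sequence for $\THH(\Ff_p)\otimes_{\THH(\MU)}\MU$ has $E_2=\tor^{\Lambda_{\MU_*}(\sigma b_1,\sigma b_2,\ldots)}(\Ff_p[\mu],\MU_*)\cong \Ff_p[\mu]\otimes_{\Ff_p}\Gamma_{\Ff_p}(\sigma^2 b_i: i\geq 1)$, a \emph{divided power} algebra. It does collapse for degree reasons, but collapse only identifies the associated graded; it says nothing about whether the lifts of the classes $\sigma^2 b_i$ are polynomial generators or satisfy truncation relations, and transporting B\"okstedt's $\Ff_p[\mu]$ along $\Ss\to\MU$ only controls the subring generated by the image of $\mu$ -- the new even generators (including the degree-$2$ one you want to identify as detected by $\sigma^2 p$, which is not even visibly present in your $E_2$) are untouched by that comparison. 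So the polynomial structure of $P_{\Ff_p}$ is exactly the multiplicative-extension problem you flag as ``the real obstacle,'' and your proposed sidestep does not close it. This is a real theorem: the paper imports it as \cite[Proposition 2.4.2]{HW22}, whose proof rests on the Hopkins--Mahowald presentation of $\Ff_p$ as the free $\Ee_2$-ring with $p=0$ (B\"okstedt-strength input at the level of $\Ee_2$-cells), not on a degeneration argument. A secondary, smaller imprecision: asserting $\THH_*((\MU[x]/x^e)/\MU[x])\cong\Gamma_{\MU_*[x]/x^e}(\sigma^2 x^e)$ ``with no multiplicative extensions since everything is even'' is likewise unjustified (evenness gives collapse, not extension control); here it happens to be harmless, since any correction terms lie in the ideal generated by positive-degree elements of $\MU_*$ and die after base change to $P_{\Ff_p}[x]$ -- or one can simply work relative to $\Ff_p[x]$ from the start, as the paper does, where Lemma \ref{dividedpower} applies verbatim.
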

\begin{proof}
        By the multiplicative property of THH for $\Ee_{\infty}$-rings, we have following equivalences:
\begin{align*}
 \THH((\Ff_p[x]/x^e)/\MU[x]) & \simeq\THH(\Ff_p/\MU)\otimes \THH((\Ss[x]/x^e)/\Ss[x]),\\
~ & \simeq \THH(\Ff_p/\MU)\otimes_{\Ff_p} (\Ff_p
\otimes\THH((\Ss[x]/x^e)/\Ss[x])),\\
~ &  \simeq\THH(\Ff_p/\MU)\otimes_{\Ff_p}\THH((\Ff_p[x]/x^e)/\Ff_p[x])).
\end{align*}
All the equivalence above are equivalence of commutative algebras.
 We view $\THH(\Ff_p/\MU)$ as $\Ee_{\infty}$-$\Ff_p$-algebra via $\Ee_{\infty}$-algebra maps $\Ff_p\to \THH(\Ff_p)\to\THH(\Ff_p/\MU)$. Then the second equivalence right above follows from following facts.\begin{itemize}
      \item For an $\Ee_{\infty}$-algebra $R$, we have equivalence of categories $\CAlg_R\simeq \CAlg_{R/}$, see \cite[Variant 7.1.3.8]{ha}.
     \item The symmetrical monoidal structure on $\CAlg_R$ is coCartesian, see \cite[Proposition 3.2.4.7]{ha}, hence computing tensor product is the same as computing push-out.
 \end{itemize}
 Now the desire statement follows from Lemma \ref{dividedpower} and \cite[Proposition 2.4.2]{HW22}.
\end{proof}
\begin{proposition}
The commutative algebra $\THH((\BP\langle 0\rangle[x]/x^e)/\MU[x])$ has homotopy ring as   
\begin{equation*}
    \Zz_{(p)}[w_{1,i}: i\geq 0]\otimes_{\Zz_{(p)}} \Zz_{(p)}[y_{j,i}: j\not\equiv-1\modulo p, i\geq 0] \otimes_{\Zz_{(p)}} \Gamma_{\Zz_{(p)}[x]/x^e}(\sigma^2 x^e).
    \end{equation*}
\end{proposition}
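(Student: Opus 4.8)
The plan is to run, almost verbatim, the argument used for $\THH((\Ff_p[x]/x^e)/\MU[x])$ in the previous proposition, with $\Ff_p=\BP\langle -1\rangle$ replaced by $\Zz_{(p)}=\BP\langle 0\rangle$. The point that makes this legitimate is that $\BP\langle 0\rangle$ carries an $\Ee_\infty$-$\MU$-algebra structure --- it is the Postnikov truncation $\tau_{\leq 0}\MU_{(p)}$, cf. \cite[Theorem 7.1.3.15]{ha} --- so $\BP\langle 0\rangle[x]/x^e\simeq \BP\langle 0\rangle\otimes_{\MU}\MU[x]/x^e$ as commutative rings and all the $\Ee_\infty$-manipulations below are available. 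First I would invoke the multiplicativity of relative $\THH$ for $\Ee_\infty$-rings: writing $\MU[x]=\MU\otimes\Ss[x]$ and $\BP\langle 0\rangle[x]/x^e=\BP\langle 0\rangle\otimes\Ss[x]/x^e$ as tensor products over $\Ss$, one gets equivalences of commutative algebras
\begin{align*}
\THH((\BP\langle 0\rangle[x]/x^e)/\MU[x]) &\simeq \THH(\BP\langle 0\rangle/\MU)\otimes\THH((\Ss[x]/x^e)/\Ss[x])\\
&\simeq \THH(\BP\langle 0\rangle/\MU)\otimes_{\Zz_{(p)}}\THH((\Zz_{(p)}[x]/x^e)/\Zz_{(p)}[x]),
\end{align*}
where the second step uses $\CAlg_{\Zz_{(p)}}\simeq\CAlg_{\Zz_{(p)}/}$ together with the fact that the symmetric monoidal structure on $\CAlg_{\Zz_{(p)}}$ is coCartesian to base change the $\Ss[x]$-relative factor up to $\Zz_{(p)}[x]$, exactly as in the $\Ff_p$ case.

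For the right-hand tensor factor I would apply Lemma \ref{dividedpower} to the ordinary commutative ring $R=\Zz_{(p)}$, giving $\THH_*((\Zz_{(p)}[x]/x^e)/\Zz_{(p)}[x])\simeq \Gamma_{\Zz_{(p)}[x]/x^e}(\sigma^2 x^e)$. Since all of these classes lie in even degrees, and since (by the next step) $\THH_*(\BP\langle 0\rangle/\MU)$ is also concentrated in even degrees, there is no room for differentials or additive extensions when assembling the tensor product of the three pieces over $\Zz_{(p)}$.

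The remaining --- and only substantive --- input is the identification of $\THH_*(\BP\langle 0\rangle/\MU)$ with $\Zz_{(p)}[w_{1,i}:i\geq 0]\otimes_{\Zz_{(p)}}\Zz_{(p)}[y_{j,i}:j\not\equiv -1\modulo p,\ i\geq 0]$, which is the $n=0$ instance of the computation underlying \cite[Proposition 2.4.2]{HW22} (that reference treated $n=-1$). I would realize $\Zz_{(p)}=\BP\langle 0\rangle_*$ as the quotient of $\MU_{(p),*}$ by the regular sequence consisting of a $p$-typical generator $v_1$ in degree $2(p-1)$, the higher $v_k$ ($k\geq 2$), and the redundant polynomial generators $t_j$ of $\MU_{(p),*}$ in degrees $2j$ with $j\neq p^k-1$; run the bar spectral sequences $\pi_*(\BP\langle 0\rangle\otimes_{\MU}\BP\langle 0\rangle)\cong\Lambda_{\Zz_{(p)}}(\sigma v_k,\sigma t_j)$ and then $\Gamma_{\Zz_{(p)}}(\sigma^2 v_k,\sigma^2 t_j)\Longrightarrow\THH_*(\BP\langle 0\rangle/\MU)$; and resolve the multiplicative extensions. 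The expected behaviour is that $(\sigma^2 v_1)^p$ agrees with $\sigma^2 v_2$ modulo a unit and decomposables, and more generally $\gamma_{p^i}(\sigma^2 v_1)$ is detected by $\sigma^2 v_{i+1}$, so that the $\sigma^2 v_k$ and the divided powers of $\sigma^2 v_1$ coalesce into a polynomial algebra $\Zz_{(p)}[w_{1,i}:i\geq 0]$; an analogous degree count (note $2(j+1)=2p(j'+1)$ forces $j\equiv -1\modulo p$) shows every $\sigma^2 t_j$ with $j\equiv -1\modulo p$ is absorbed into the divided-power tower over some $\sigma^2 t_{j'}$ with $j'\not\equiv -1\modulo p$, leaving the polynomial generators $\Zz_{(p)}[y_{j,i}:j\not\equiv -1\modulo p,\ i\geq 0]$. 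Combining the three steps yields the stated homotopy ring.

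The hard part is precisely this last step. Over $\Ff_p$ a free divided-power algebra is already (a quotient of) a polynomial ring, so the $\Ff_p$ case is comparatively soft; over $\Zz_{(p)}$ it is not, and one genuinely needs the multiplicative and $\Ee_\infty$-level extensions in the bar spectral sequence for $\MU\to\BP\langle 0\rangle$ --- equivalently, the non-formality of $\BP\langle 0\rangle\otimes_{\MU}\BP\langle 0\rangle$ --- to promote the classes $\gamma_{p^i}(\sigma^2 v_1)$ and $\gamma_{p^i}(\sigma^2 t_j)$ to the independent polynomial generators $w_{1,i}$ and $y_{j,i}$. This is the content that must be extracted from, and checked against, the method of \cite[Proposition 2.4.2]{HW22}.
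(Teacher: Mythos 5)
Your reduction is exactly the paper's: multiplicativity of relative $\THH$ for $\Ee_\infty$-rings to split off $\THH(\BP\langle 0\rangle/\MU)$ and $\THH((\Zz_{(p)}[x]/x^e)/\Zz_{(p)}[x])$, then Lemma \ref{dividedpower} for the second factor. The one place you diverge is the input $\THH_*(\BP\langle 0\rangle/\MU)\simeq \Zz_{(p)}[w_{1,i}]\otimes\Zz_{(p)}[y_{j,i}]$: you propose to re-derive it by running the bar/Tor spectral sequences for $\MU\to\BP\langle 0\rangle$ and resolving the multiplicative extensions yourself, flagging this as the hard open step, whereas the paper simply cites \cite[Proposition 2.5.3]{HW22}, where this $n=0$ computation is already carried out (your "the reference treated $n=-1$" refers to \cite[Proposition 2.4.2]{HW22}, used for the $\Ff_p$ case; the $\Zz_{(p)}$ case is the next subsection of that paper). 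So the content you identify as needing extraction is available off the shelf, and your sketch of it (divided powers of $\sigma^2 v_1$ being promoted to polynomial generators by extensions involving the higher $\sigma^2 v_k$, and similarly for the $\sigma^2 t_j$) is consistent with that result. One small imprecision: when assembling $\THH(\BP\langle 0\rangle/\MU)\otimes_{\Zz_{(p)}}\THH((\Zz_{(p)}[x]/x^e)/\Zz_{(p)}[x])$, evenness alone does not rule out $\tor^{\Zz_{(p)}}_1$-contributions in the K\"unneth spectral sequence; what you actually need, and what the paper points out explicitly, is that $\HH_*((\Zz_{(p)}[x]/x^e)/\Zz_{(p)}[x])\simeq\Gamma_{\Zz_{(p)}[x]/x^e}(\sigma^2 x^e)$ is free as a $\Zz_{(p)}$-module (and $\THH_*(\BP\langle 0\rangle/\MU)$ is polynomial, hence free), so the Tor terms vanish. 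With that substitution your argument is complete and coincides with the paper's.
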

In particular $\THH(\BP\langle 0\rangle[x]/x^e/\MU[x])$ is even.
\begin{proof}
The proposition follows from the same argument as above, together with \cite[Proposition 2.5.3]{HW22}, and note that $\HH_*(\Zz_{(p)}[x]/x^e/\Zz_{(p)}[x])$ is free as $\Zz_{(p)}$-module.
\end{proof}
\begin{proposition}\label{THHtruncated}
    There is a spectral sequence \begin{equation*}
        E^2\Longrightarrow \THH_*((\BP\langle n\rangle[x]/x^e)/\MU[x]),
    \end{equation*} whose $E^{\infty}$-page is given by\begin{equation*}
        E^{\infty}\simeq \Gamma_{\BP\langle n\rangle_*[x]/x^e}(\sigma x^e, \sigma^2 v_i: i\geq n+1)\otimes_{\BP\langle n\rangle_*[x]/x^e}\Gamma_{\BP\langle n\rangle_*[x]/x^e}(\sigma^2 x_j: \forall k\in\Nn, j\ne p^k-1).
    \end{equation*}
In particular, $\THH(\BP\langle n\rangle[x]/x^e/\MU[x])$ is even.
\end{proposition}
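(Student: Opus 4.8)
The plan is to factor $\THH((\BP\langle n\rangle[x]/x^e)/\MU[x])$ as a tensor product of $\THH(\BP\langle n\rangle/\MU)$ with a truncated–polynomial contribution, to identify the latter via the mechanism of Lemma \ref{dividedpower}, and then to import the spectral sequence computing $\THH_*(\BP\langle n\rangle/\MU)$ from \cite{HW22}. Concretely, by Convention \ref{conven1} we have $\BP\langle n\rangle[x]/x^e\simeq B\otimes \Ss[x]/x^e$ as an $\Ee_3$-$\MU[x]$-algebra, where $B$ is the $\Ee_3$-$\MU$-algebra form of $\BP\langle n\rangle$ of Theorem \ref{e3form} and $\MU[x]\simeq \MU\otimes\Ss[x]$. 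Since relative $\THH$ of $\Ee_1$-algebras is symmetric monoidal in the algebra/base pair — the cyclic bar construction $B^{\mathrm{cyc}}(-)$ carries $\otimes$ to the diagonal and geometric realization commutes with $\otimes$ — I would first establish
\begin{equation*}
\THH\big((\BP\langle n\rangle[x]/x^e)/\MU[x]\big)\;\simeq\;\THH(\BP\langle n\rangle/\MU)\otimes \THH\big((\Ss[x]/x^e)/\Ss[x]\big).
\end{equation*}
This step only uses the $\Ee_1$-$\MU$-algebra structure on $\BP\langle n\rangle$, so the fact that it is merely $\Ee_3$, not $\Ee_\infty$, is no obstruction; this is exactly the place where one replaces the ``cocartesian symmetric monoidal structure on $\CAlg$'' argument used in the $\Ff_p$ and $\BP\langle 0\rangle$ cases by the monoidality of $B^{\mathrm{cyc}}(-)$.

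Next I would pin down the second factor. Because $\Ss[x]\xrightarrow{\,\cdot x^e\,}\Ss[x]$ is a split monomorphism, $\Ss[x]/x^e$ admits a length-one free resolution over $\Ss[x]$, so the argument of Lemma \ref{dividedpower} runs over $\Ss$ verbatim: $\mathscr{U}^{(1)}_{\Ss[x]}(\Ss[x]/x^e)\simeq \Lambda_{\Ss[x]/x^e}(\sigma x^e)$ and then $\THH\big((\Ss[x]/x^e)/\Ss[x]\big)\simeq \bigoplus_{k\geq 0}\Sigma^{2k}\Ss[x]/x^e$, with homotopy $\Gamma_{\pi_*(\Ss)[x]/x^e}(\sigma^2 x^e)$, the generator lying in degree $2$. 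Equivalently, tensoring with $\Ff_p$ recovers $\THH((\Ff_p[x]/x^e)/\Ff_p[x])$, as used in the $\Ff_p$ case. Tensoring instead with $\BP\langle n\rangle$ makes this a \emph{free} $\BP\langle n\rangle_*[x]/x^e$-module concentrated in even degrees, namely $\Gamma_{\BP\langle n\rangle_*[x]/x^e}(\sigma^2 x^e)$.

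Finally I would combine these with the computation of $\THH_*(\BP\langle n\rangle/\MU)$ from \cite{HW22}: there is a multiplicative spectral sequence converging to it whose $E^\infty$-page is the divided power algebra
\begin{equation*}
\Gamma_{\BP\langle n\rangle_*}\big(\sigma^2 v_i:\ i\geq n+1\big)\otimes_{\BP\langle n\rangle_*}\Gamma_{\BP\langle n\rangle_*}\big(\sigma^2 x_j:\ \forall k\in\Nn,\ j\neq p^k-1\big),
\end{equation*}
specializing to \cite[Proposition 2.4.2]{HW22} and \cite[Proposition 2.5.3]{HW22} when $n=-1,0$ (there one moreover resolves the hidden multiplicative extensions turning divided powers into polynomials). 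Tensoring this spectral sequence over $\BP\langle n\rangle_*$ with the flat, even factor $\Gamma_{\BP\langle n\rangle_*[x]/x^e}(\sigma^2 x^e)$ from the previous step — which introduces no new differentials, since that factor consists of permanent cycles and base change along the free extension $\BP\langle n\rangle_*\to\BP\langle n\rangle_*[x]/x^e$ is exact — yields the asserted spectral sequence converging to $\THH_*\big((\BP\langle n\rangle[x]/x^e)/\MU[x]\big)$ with the claimed $E^\infty$-page. Since every generator appearing there ($\sigma^2 x^e$, the $\sigma^2 v_i$, the $\sigma^2 x_j$) lies in even total degree, the $E^\infty$-page, and hence $\THH_*\big((\BP\langle n\rangle[x]/x^e)/\MU[x]\big)$, is concentrated in even degrees.

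The main obstacle is not any single calculation but correctly lining up the imported statements: making the multiplicativity of relative $\THH$ precise at the $\Ee_1$-level so that the $\Ee_3$-structure on $\BP\langle n\rangle$ suffices, and extracting from \cite{HW22} the $\THH(\BP\langle n\rangle/\MU)$ spectral sequence in a form compatible with base change to $\BP\langle n\rangle_*[x]/x^e$. The genuine subtlety — and the reason the conclusion is phrased via $E^\infty$ rather than an explicit presentation of the homotopy ring, in contrast with the $\Ff_p$ and $\BP\langle 0\rangle$ cases — is that for $n\geq 1$ one does not control the multiplicative extensions relating $E^\infty$ to $\THH_*(\BP\langle n\rangle/\MU)$ itself; the evenness assertion, however, needs only the $E^\infty$-page (or, equivalently, the known evenness of $\THH(\BP\langle n\rangle/\MU)$) together with the evenness of the free truncated–polynomial factor.
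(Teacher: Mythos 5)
Your route is genuinely different from the paper's, and mostly sound. The paper computes everything in one stroke over $\MU[x]$: writing $\THH(A/\MU[x])\simeq A\otimes_{\mathscr{U}^{(1)}_{\MU[x]}(A)}A$ for $A=\BP\langle n\rangle[x]/x^e$, it uses Remark \ref{remarkkernel} (the kernel of $\pi_*\MU_{(p)}[x]\to A_*$ is the regular sequence $x^e,\,v_i\ (i\geq n+1),\,x_j\ (j\neq p^k-1)$) together with \cite[Proposition 3.6]{Ang08} to identify $\pi_*\mathscr{U}^{(1)}_{\MU[x]}(A)$ as an exterior algebra over $A_*$, and then runs a single Tor spectral sequence whose $E^2$-page is the stated divided power algebra; since the ground ring $\BP\langle n\rangle_*[x]/x^e$ is even, $E^2=E^\infty$ and evenness is immediate. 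Your plan instead splits $\THH(A/\MU[x])\simeq\THH(\BP\langle n\rangle/\MU)\otimes\THH((\Ss[x]/x^e)/\Ss[x])$ and imports the known evenness/divided-power description of $\THH(\BP\langle n\rangle/\MU)$ from \cite{HW22}. The splitting step is fine: the $\Ee_3$-$\MU[x]$-algebra of Proposition \ref{e3form2} is by construction the base change of $B$ along $\MU\to\MU[x]/x^e$ restricted to $\MU[x]$, hence the external tensor of the $\Ee_1$-$\MU$-algebra $B$ with the $\Ee_1$-$\Ss[x]$-algebra $\Ss[x]/x^e$, which is all the data relative $\THH$ needs (although citing Convention \ref{conven1} for an identification ``as $\Ee_3$-$\MU[x]$-algebras'' claims more than the paper itself commits to in Remark \ref{remarke3alg1}); what your route buys is that the only new computation is the truncated-polynomial factor.

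That factor is where your write-up has a real gap: the assertion that the proof of Lemma \ref{dividedpower} ``runs over $\Ss$ verbatim'' to give $\THH((\Ss[x]/x^e)/\Ss[x])\simeq\bigoplus_{k\geq 0}\Sigma^{2k}\Ss[x]/x^e$. That lemma's proof is a discrete-ring argument (an explicit divided power resolution over $\Lambda_{R[x]/x^e}(\sigma x^e)$); over the sphere all you get for free is the Tor spectral sequence with $E_2\simeq\Gamma_{\pi_*(\Ss)[x]/x^e}(\sigma^2 x^e)$, and since the ground ring $\pi_*(\Ss)[x]/x^e$ is \emph{not} even, degeneration is not automatic: a priori $d_r(\gamma_k(\sigma^2 x^e))$ could hit a lower divided power multiplied by an odd-degree stable stem. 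This is precisely the subtlety the paper's proof avoids by running the Tor spectral sequence over $\MU[x]$, where the ground ring $\BP\langle n\rangle_*[x]/x^e$ is even. The claim you want is true, but needs an extra input: either (i) use the weight grading by powers of $x$ on $\Ss[x]$, $\Ss[x]/x^e$ and the bar construction --- the class $x^i\gamma_k(\sigma^2x^e)$ has weight $ke+i$ with $0\leq i<e$, so the weight determines $k$ and hence the homological degree, forcing all differentials to vanish and allowing you to build the splitting $Q$-linearly from the even generators; or (ii) skip the sphere-level statement and base change first, computing $\THH((\BP\langle n\rangle[x]/x^e)/\BP\langle n\rangle[x])$ (or working over $\MU[x]$ as the paper does), where evenness of the ground ring gives degeneration at once. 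With either repair your assembly closes and yields the stated $E^\infty$-page and the evenness conclusion.
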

\begin{proof}
    We need to compute $\BP\langle n\rangle[x]/x^e\otimes_{\mathscr{U}^{(1)}_{\MU[x]}(\BP\langle n\rangle[x]/x^e)} \BP\langle n\rangle [x]/x^e.$ By \cite[Proposition 3.6]{Ang08} and Remark \ref{remarkkernel}, \begin{equation*}
        \pi_*(\mathscr{U}^{(1)}_{\MU[x]}(\BP\langle n\rangle[x]/x^e))\simeq \Lambda_{\BP\langle n\rangle_*[x]/x^e}(\sigma x^e, \sigma v_i: i\geq n+1)\otimes \Lambda(\sigma x_j: j\ne p^k-1).
    \end{equation*}
Then the Tor spectral sequence has the formula:
\begin{equation*}
    E^2\simeq E^{\infty}\simeq \Gamma_{\BP\langle n\rangle_*[x]/x^e}(\sigma^2 x^e, \sigma^2 v_i, \sigma x_j: i\geq n+1, j\ne p^k-1)\Longrightarrow \pi_*(\THH((\BP\langle n\rangle[x]/x^e)/\MU[x]). 
\end{equation*}

Since every thing is concentrated in even degree, thus $\THH(\BP\langle n\rangle[x]/x^e/\MU[x])$ is even.
\end{proof}
\begin{remark}
Let $A_*=\BP\langle n\rangle_*[x]/x^e$.
We can construct a ring map 
\begin{equation*}
A_*[w_{n+1,i},y_{j,i}: i\geq 0, j\not\equiv p-1 \modulo p] \otimes_{A_*} A_*[\gamma_{p^m}(\sigma^2 x^e): m\geq 0]\to \THH_*(\BP\langle n\rangle[x]/x^e/\MU[x])
\end{equation*}
by choosing lifts of classes $\gamma_{p^i}(\sigma^2 v_{n+1}), \gamma_{p^i}(\sigma^2 x_j), \gamma_{p^i}(\sigma^2 x^e)$ in the Tor spectral sequence.
Since the fiber of  $\BP\langle n\rangle[x]/x^e\to \BP\langle 0\rangle[x]/x^e$  lies in $\Mod_{\MU[x]}^{\geq |v_1|}$, we conclude that the fiber of $\THH(\BP\langle n\rangle[x]/x^e/\MU[x])\to \THH(\Zz_{(p)}[x]/x^e/\MU[x])$ lies in $\Mod_{\MU[x]}^{\geq |v_1|}$. Assume $p<|v_1|=2p-2$, then $\THH_p(\Zz_{(p)}[x]/x^e/\MU[x])\simeq \THH_p(\BP\langle n\rangle [x]/x^e/\MU[x])$, then according to the functoriality of Tor spectral sequence, we conclude that $p\gamma_1^p(\sigma^2 x^e)= \gamma_p(\sigma^2 x^e)$.  
\end{remark}

\begin{conjecture}
From the above remark, we conjecture that for $n\geq 0, p>2$ the ring spectrum $\THH(\BP\langle n\rangle[x]/x^e/\MU[x])$ has homotopy ring as
\begin{equation*}
 A_*[w_{n+1,i},y_{j,i}: i\geq 0, j\not\equiv p-1 \modulo p] \otimes_{A_*} A_*[1/p^m(\sigma^2 x^e)^{p^m}: m\geq 0],   
\end{equation*}
i.e. the ring map in the remark is an surjection and the kernel is generated by $\{p^m \gamma_1(\sigma^2 x^e)-\gamma_{p^m}(\sigma^2 x^e)\}$. This is true for $n=0$.     
\end{conjecture}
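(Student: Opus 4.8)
The plan is to promote the Tor spectral sequence of Proposition \ref{THHtruncated} --- whose $E^{\infty}$-page is the divided-power algebra $\Gamma_{A_*}(\sigma^2 x^e,\sigma^2 v_i,\sigma x_j)$ --- to a complete ring presentation by resolving its multiplicative extensions, bootstrapping from the cases $n=-1,0$ (the preceding propositions and \cite[Proposition 2.4.2, 2.5.3]{HW22}), where the answer is already known.

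I would first separate the two families of generators by a K\"unneth reduction. By Convention \ref{conven1} and Remark \ref{remarke3alg1}, as an $\Ee_1$-algebra over $\MU[x]\simeq\MU\otimes\Ss[x]$ one has $\BP\langle n\rangle[x]/x^e\simeq\BP\langle n\rangle\otimes\Ss[x]/x^e$, with $\BP\langle n\rangle$ taken over $\MU$ and $\Ss[x]/x^e$ over $\Ss[x]$; since $\Ss\to\MU$ and $\Ss[x]\to\Ss[x]/x^e$ are $\Ee_{\infty}$ --- so that, unlike the $\Ee_{\infty}$-multiplicativity used in the low-height cases, the relative K\"unneth formula applies although $\BP\langle n\rangle$ is only $\Ee_3$ --- we obtain an equivalence of $\Ee_2$-rings
\begin{equation*}
\THH(\BP\langle n\rangle[x]/x^e/\MU[x])\;\simeq\;\THH(\BP\langle n\rangle/\MU)\otimes\THH(\Ss[x]/x^e/\Ss[x]).
\end{equation*}
Under this splitting the $w_{n+1,i},y_{j,i}$ are the generators of the first factor and the $\gamma_{p^m}(\sigma^2 x^e)$ those of the second, and the two factors occupy complementary internal $x$-weights (the second being the part of positive $x$-weight). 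For the second factor, $\THH(\Ss[x]/x^e/\Ss[x])$ is independent of $n$, and base change to $\Ff_p$ resp. $\Qq$ gives, by Lemma \ref{dividedpower}, the divided-power algebra $\Gamma_{\Ff_p[x]/x^e}(\sigma^2 x^e)$ resp. a polynomial algebra; the only real content is then the extension relating $\gamma_{p^m}(\sigma^2 x^e)$ and $\gamma_1(\sigma^2 x^e)^{p^m}$ recorded in the preceding Remark. I would fix it by comparison with $n=0$: the $\Ee_2$-ring map to $\THH(\Zz_{(p)}[x]/x^e/\MU[x])$ induced by $\BP\langle n\rangle\to\BP\langle 0\rangle$ has fibre in $\Mod_{\MU[x]}^{\geq |v_1|}$, hence is an isomorphism on homotopy through the degrees that already exhibit $\gamma_1(\sigma^2 x^e)$ and its $p$-th power, and iterating the $p$-th power operation (available since both sides are $\Ee_2$-rings) propagates the single extension to all $\gamma_1(\sigma^2 x^e)^{p^m}$; the target's presentation is the $n=0$ proposition above.

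The remaining, and hardest, input is the first factor: that $\THH_*(\BP\langle n\rangle/\MU)$ is the \emph{polynomial} $\BP\langle n\rangle_*$-algebra on $w_{n+1,i}$ (detecting $\gamma_{p^i}(\sigma^2 v_{n+1})$) and $y_{j,i}$ (detecting $\gamma_{p^i}(\sigma^2 x_j)$), i.e. that in $\THH_*(\BP\langle n\rangle/\MU)$ each even divided-power tower of the $E^{\infty}$-page collapses, $\gamma_{p^{i+1}}$ being a unit multiple of $\gamma_{p^i}^{p}$ modulo higher filtration. This is \cite[Proposition 2.4.2, 2.5.3]{HW22} for $n=-1,0$. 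For general $n$ I would run a descent argument along $\BP\langle n\rangle\to\MU$ --- concretely through the $\desc_{\Ff_p}$ descent spectral sequence of Section \ref{sec5}, whose $E_2$-page is computed there from the evenly free map $\THH(\MU[x])\to\MU[x]$ (Proposition \ref{pro:descentE2}) --- together with comparison to the lower-height cases (using that the relevant fibres are highly connected, so the low-degree multiplicative data is inherited) and the $\Ee_2$-ring structure on $\THH(\BP\langle n\rangle/\MU)$, which supplies the $p$-th power operations needed to slide the collapse up the towers. I expect this to be the main obstacle, and the reason the statement is posed only as a conjecture: the multiplicative extensions in $\THH_*(\BP\langle n\rangle/\MU)$ for $n\geq 1$ are not settled in the literature, and making the descent/power-operation argument genuinely \emph{force} the towers to be polynomial --- rather than merely matching graded dimensions --- is where the real work lies. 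Granting this, reassembling the two tensor factors along the splitting, plus a check using the $x$-weight grading that there are no cross extensions between the two families, yields the asserted presentation, with kernel generated by the elements displayed in the statement.
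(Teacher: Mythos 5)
This statement is posed in the paper as a \emph{conjecture}: the paper offers no proof, only the preceding remark (which pins down the single relation between $\gamma_p(\sigma^2x^e)$ and $\gamma_1(\sigma^2x^e)^p$ by comparison with the $\BP\langle 0\rangle$ case) together with the earlier propositions settling $n=-1,0$. So there is no argument in the paper to compare yours against, and the relevant question is whether your proposal actually closes the gap. It does not: you yourself defer the central point --- that the divided-power towers in the $E^\infty$-page of Proposition \ref{THHtruncated} collapse to the stated polynomial/partially-divided form, equivalently the multiplicative extension problems for $\THH_*(\BP\langle n\rangle/\MU)$ and for the $\gamma_{p^m}(\sigma^2x^e)$ tower --- calling it "where the real work lies." That is precisely the content of the conjecture, so what you have is a strategy outline (the K\"unneth splitting $\THH(\BP\langle n\rangle[x]/x^e/\MU[x])\simeq\THH(\BP\langle n\rangle/\MU)\otimes\THH(\Ss[x]/x^e/\Ss[x])$ is reasonable and is implicitly how the paper organizes its own low-height computations), not a proof.

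Moreover, the one step you do spell out for the $\Ss[x]/x^e$ factor does not go through as written. The fiber of $\THH(\BP\langle n\rangle[x]/x^e/\MU[x])\to\THH(\Zz_{(p)}[x]/x^e/\MU[x])$ lies in $\Mod_{\MU[x]}^{\geq 2p-2}$, so the comparison map is an isomorphism on homotopy only in degrees below $2p-2$; but $\gamma_1(\sigma^2x^e)^p$ and $\gamma_{p}(\sigma^2x^e)$ live in degree $2p$, and $\gamma_{p^m}(\sigma^2x^e)$ in degree $2p^m$, so connectivity of the fiber alone does not let you "read off" even the first extension from the $n=0$ case, let alone all of them (one would need a refined, weight-by-weight connectivity estimate, which you do not supply). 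Likewise, "iterating the $p$-th power operation" is not an argument: the relation $\gamma_p=\gamma_1^p/p$ does not formally determine $\gamma_{p^{m}}$ for $m\geq 2$ (e.g. it cannot by itself exclude $\gamma_{p^2}$ differing from $\gamma_p^{\,p}/p$ by classes of lower filtration), and no power operation with the required properties is constructed on these $\Ee_2$/$\Ee_3$-rings. So both the admitted gap and the sketched steps leave the conjecture open; your proposal is a plausible plan of attack but not a proof, consistent with the paper's decision to state the result only conjecturally.
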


\begin{proposition}\label{THHexterior}
The homotopy of the ring spectrum
    $\THH(\Lambda_{\BP\langle n\rangle}(x)/\Lambda_{\MU}(x))$
is isomorphic to (as a graded ring)   \begin{equation*}
\BP\langle n\rangle_*[w_{n+1,i}, y_{j,i}]\otimes \Lambda(x).
    \end{equation*}
\end{proposition}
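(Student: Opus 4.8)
The plan is to reduce the computation to the (known) description of $\THH_*(\BP\langle n\rangle/\MU)$ by a base‑change argument; the point will be that the exterior generator is ``inert''. Recall from Convention \ref{conven2} (and Remark \ref{remarke3alg2}) that the form $\Lambda_{\BP\langle n\rangle}(x)$ is by construction the $\Ee_3$-$\Lambda_{\MU}(x)$-algebra $\BP\langle n\rangle\otimes_{\MU}\Lambda_{\MU}(x)$, obtained from the $\Ee_3$-$\MU$-algebra $\BP\langle n\rangle$ by base change along the $\Ee_{\infty}$-ring map $\MU\to\Lambda_{\MU}(x)$ of Construction \ref{exterior}. Since relative $\THH$ commutes with base change of the ground ring --- for an $\Ee_1$-$R_0$-algebra $A_0$ and an $\Ee_{\infty}$-ring map $R_0\to R$, both the enveloping algebra $A_0\otimes_{R_0}A_0^{\mathrm{op}}$ and the cyclic bar construction commute with $-\otimes_{R_0}R$, so $\THH_R(A_0\otimes_{R_0}R)\simeq\THH_{R_0}(A_0)\otimes_{R_0}R$ --- we obtain an equivalence of ring spectra (in fact of $\Ee_2$-rings)
\[
\THH(\Lambda_{\BP\langle n\rangle}(x)/\Lambda_{\MU}(x))\;\simeq\;\THH(\BP\langle n\rangle/\MU)\otimes_{\MU}\Lambda_{\MU}(x).
\]

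Next I would pass to homotopy groups. By Construction \ref{exterior} we have $\pi_*\Lambda_{\MU}(x)\cong\Lambda_{\MU_*}(x)$ with $|x|=1$, a free $\MU_*$-module of rank two on $\{1,x\}$; indeed $\Lambda_{\MU}(x)\simeq\MU\oplus\Sigma^1\MU$ as $\MU$-modules. Hence $\Lambda_{\MU}(x)$ is flat over $\MU$, and the Künneth map is an isomorphism of graded rings
\[
\pi_*\bigl(\THH(\BP\langle n\rangle/\MU)\otimes_{\MU}\Lambda_{\MU}(x)\bigr)\;\cong\;\THH_*(\BP\langle n\rangle/\MU)\otimes_{\MU_*}\Lambda_{\MU_*}(x)\;\cong\;\THH_*(\BP\langle n\rangle/\MU)\otimes_{\BP\langle n\rangle_*}\Lambda_{\BP\langle n\rangle_*}(x),
\]
the last step because $\THH_*(\BP\langle n\rangle/\MU)$ is a $\BP\langle n\rangle_*$-algebra and $\Lambda_{\MU_*}(x)=\MU_*\otimes_{\BP\langle n\rangle_*}\Lambda_{\BP\langle n\rangle_*}(x)$ (note $x^2=0$, so the extra variable contributes exactly an exterior factor). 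Substituting the computation of $\THH_*(\BP\langle n\rangle/\MU)$ --- which, as in the cases $n\le 0$ treated above (cf. \cite{HW22}), is the polynomial algebra $\BP\langle n\rangle_*[w_{n+1,i},y_{j,i}]$ over $\BP\langle n\rangle_*$ --- yields $\BP\langle n\rangle_*[w_{n+1,i},y_{j,i}]\otimes\Lambda(x)$, as claimed.

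I expect the only genuine content to sit in that input: the Tor spectral sequence of \cite[Proposition 3.6]{Ang08} together with Remark \ref{remarkkernel} produces, on the nose, merely the divided‑power associated graded $\Gamma_{\BP\langle n\rangle_*}(\sigma^2 v_m,\sigma^2 x_j)$, and upgrading this to a polynomial ring requires resolving the multiplicative extensions $\gamma_p(\sigma^2 v_m)$, $\gamma_p(\sigma^2 x_j)$ --- this is precisely the step done in \cite{HW22}, and its truncated analogue for the class $\sigma^2 x^e$ is exactly what is left open in the discussion following Proposition \ref{THHtruncated}. Everything else is formal: one only needs the base‑change equivalence above to be an equivalence of $\Ee_1$-rings over $\Lambda_{\MU}(x)$ so that the $\pi_*$-isomorphism is multiplicative, which is automatic since $\Lambda_{\BP\langle n\rangle}(x)$ was \emph{defined} as $\BP\langle n\rangle\otimes_{\MU}\Lambda_{\MU}(x)$. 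Alternatively, for a self‑contained treatment, one could run the Tor spectral sequence directly for $\Lambda_{\BP\langle n\rangle}(x)$ over $\Lambda_{\MU}(x)$, computing $\mathscr{U}^{(1)}_{\Lambda_{\MU}(x)}(\Lambda_{\BP\langle n\rangle}(x))$ as an exterior algebra via \cite[Proposition 3.6]{Ang08} and Remark \ref{remarkkernel} and checking that the exterior variable $x$ neither supports nor receives a differential or extension; but the base‑change route is cleaner.
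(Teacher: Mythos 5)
Your proposal is correct and follows essentially the same route as the paper: identify $\Lambda_{\BP\langle n\rangle}(x)$ (as a $\Lambda_{\MU}(x)$-algebra) with the base change $\BP\langle n\rangle\otimes_{\MU}\Lambda_{\MU}(x)$ along the $\Ee_{\infty}$-map $\MU\to\Lambda_{\MU}(x)$, use that relative $\THH$ commutes with this symmetric monoidal base change to get $\THH(\BP\langle n\rangle/\MU)\otimes_{\MU}\Lambda_{\MU}(x)$, and then read off homotopy from the freeness of $\Lambda_{\MU}(x)\simeq\MU\oplus\Sigma\MU$ together with the known polynomial description of $\THH_*(\BP\langle n\rangle/\MU)$ from \cite{HW22}. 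Your closing remark correctly locates the only nonformal input (the multiplicative-extension resolution in \cite{HW22}), which the paper likewise imports rather than reproves.
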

\begin{proof}
Let $f:\MU\to \Lambda_{\MU}(x)$ be the $\Ee_{\infty}$-ring map in Construction \ref{exterior}, then $f^*:\Mod_{\MU}\to \Mod_{\Lambda_{\MU}(x)}$ is symmetrical monoidal by \cite[Proposition 7.1.2.7]{ha}. Now from the definition of $\THH$ in \cite[Definition \uppercase\expandafter{\romannumeral3}.2.3]{NS18}, we have following equivalences:
\begin{align*}
\THH(\Lambda_{\BP\langle n\rangle}(x)/\Lambda_{\MU}(x))& \simeq \THH(f^*(\BP\langle n\rangle)/f^*(\MU))\\
& \simeq f^*(\THH(\BP\langle n\rangle/\MU) \\
& \simeq \THH(\BP\langle n\rangle/\MU)\otimes_{\MU}\Lambda_{\MU}(x).\\
\end{align*}
The second equivalence is an equivalence of $\Ee_2$-$\Lambda_{\MU}(x)$-algebras,
the third equivalence is an equivalence of $\Ee_2$-algebras,
hence according to Construction \ref{exterior} we have \begin{equation*}
\pi_*\THH(\Lambda_{\BP\langle n\rangle}(x)/\Lambda_{\MU}(x))\simeq \BP\langle n\rangle_*[w_{n+1,i}, y_{j,i}]\otimes \Lambda(x),\end{equation*} where $|x|=1$.
\end{proof}
\begin{proposition}\label{THHquotient}
The ring spectrum $\THH(\BP\langle n\rangle/v_n^e/\MU)$ is even.
\end{proposition}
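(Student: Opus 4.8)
The plan is to run the argument of Lemma~\ref{dividedpower} and Proposition~\ref{THHtruncated} essentially verbatim, the only difference being that the class we kill, $v_n^e$, already lives in $\MU_*$. Write $R=\BP\langle n\rangle/v_n^e$ for the $\Ee_3$-$\MU$-algebra produced in Proposition~\ref{e3form4}; by construction $\pi_*R\simeq \BP\langle n\rangle_*/v_n^e\simeq \Zz_{(p)}[v_1,\dots,v_n]/v_n^e$, which is concentrated in even degrees, and by Remark~\ref{remarkkernel} one may choose this $\MU$-algebra form so that the structure map $\pi_*\MU_{(p)}\to \pi_*R$ is surjective with kernel generated by the set $S=\{v_n^e\}\cup\{v_m: m\geq n+1\}\cup\{x_j: j\ne p^k-1\}$. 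The one point requiring an argument is the elementary observation that $S$ is a regular sequence of even-degree elements in $\MU_{(p)*}\cong \Zz_{(p)}[v_1,v_2,\dots;\, x_j: j\ne p^k-1]$: the $v_m$ with $m\geq n+1$ and the $x_j$ are part of a polynomial basis, so after killing them one is left with the domain $\Zz_{(p)}[v_1,\dots,v_n]$, in which $v_n^e$ is a non-zero-divisor, while $v_n^e$ is a non-zero-divisor in $\MU_{(p)*}$ to begin with. In particular $R$ realizes the Koszul complex on $S$ as an $\MU$-module.

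Granting this, I would compute $\mathscr{U}^{(1)}_{\MU}(R)\simeq R\otimes_{\MU}R$ exactly as in Proposition~\ref{THHtruncated}: applying \cite[Proposition~3.6]{Ang08} to the regular sequence $S$ gives
\[
\pi_*\bigl(\mathscr{U}^{(1)}_{\MU}(R)\bigr)\simeq \Lambda_{\pi_*R}\bigl(\sigma v_n^e,\ \sigma v_m,\ \sigma x_j:\ m\geq n+1,\ j\ne p^k-1\bigr),
\]
an exterior algebra all of whose generators sit in odd total degree (degree $|\cdot|+1$ with $|\cdot|$ even). By the definition of relative $\THH$ one has $\THH(R/\MU)\simeq R\otimes_{\mathscr{U}^{(1)}_{\MU}(R)}R$, so the Tor spectral sequence computing it is
\[
E^2\simeq \Gamma_{\pi_*R}\bigl(\sigma^2 v_n^e,\ \sigma^2 v_m,\ \sigma^2 x_j:\ m\geq n+1,\ j\ne p^k-1\bigr)\ \Longrightarrow\ \THH_*(R/\MU),
\]
namely the Tor of $\pi_*R$ against itself over the exterior algebra above; it is a divided power algebra whose generators all lie in even total degree.

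Since the $E^2$-page is concentrated in even total degrees, the spectral sequence degenerates, $E^2=E^\infty$, and no extension can mix parities; hence $\THH(R/\MU)=\THH(\BP\langle n\rangle/v_n^e/\MU)$ is even. The only place where anything beyond bookkeeping is needed is the verification that $v_n^e$ can be adjoined to the regular sequence cutting $\BP\langle n\rangle$ out of $\MU$ --- equivalently, that the $\Ee_3$-$\MU$-algebra form from Proposition~\ref{e3form4} has the expected Koszul presentation --- which is supplied by Remark~\ref{remarkkernel} together with the tower of square-zero extensions of Proposition~\ref{square0}; everything downstream is identical to Lemma~\ref{dividedpower} and Proposition~\ref{THHtruncated}.
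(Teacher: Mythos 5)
Your proposal is correct and follows essentially the same route as the paper: apply \cite[Proposition 3.6]{Ang08} to identify $\pi_*\mathscr{U}^{(1)}_{\MU}(\BP\langle n\rangle/v_n^e)$ as an exterior algebra on the odd classes $\sigma v_n^e,\ \sigma v_i\ (i\geq n+1),\ \sigma x_j\ (j\ne p^k-1)$, and then note that the Tor spectral sequence has a divided-power $E^2$-page concentrated in even total degree, forcing collapse and evenness. The only difference is that you spell out the regularity of the kernel sequence coming from Remark \ref{remarkkernel}, which the paper leaves implicit; this is a harmless (and welcome) addition, not a change of method.
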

\begin{proof}
Using \cite[Proposition 3.6]{Ang08}, we have
\begin{equation*}
\pi_*\mathscr{U}^{(1)}_{\MU}(\BP\langle n\rangle/v_n^e)\simeq \Lambda_{\BP\langle n\rangle_*/v_n^e}(\sigma v_n^e, \sigma v_i: i\geq n+1) \otimes \Lambda_{\BP\langle n\rangle_*/v_n^e}(\sigma x_j: j\ne p^k-1).
\end{equation*} 
Then the Tor spectral sequence has the formula:
\begin{equation*}
    E^2\simeq E^{\infty}\simeq \Gamma_{\BP\langle n\rangle_*/v_n^e}(\sigma^2 v_n^e, \sigma^2 v_i, \sigma x_j: i\geq n+1, j\ne p^k-1)\Longrightarrow \pi_*(\THH((\BP\langle n\rangle/v_n^e)/\MU). 
\end{equation*}
Since everything is concentrated in even degree, therefore $\THH(\BP\langle n\rangle/v_n^e/\MU)$ is even.
\end{proof}
\section{Negative topological cyclic homology}\label{sec3}
The primary goal of this section is to investigate chromatic localized algebraic K-theory and also the $v_{n+1}$-periodicity of $\TC^-$. In particular, we prove  Theorem 
\ref{thmB}. These were also studied in \cite{AR02}, \cite{HW22}. We denote $K(n)$ to be the $n$th Morava K-theory.
\begin{definition}
A spectrum $A$ is height $n$ if $K(h)_*A\ne 0$ and $K(h+k)_*A=0$ for $k>0$.
\end{definition}
\begin{proposition}\label{redshift1}
With the same notations in Section \ref{sec2}, the spectrum $\BP\langle n\rangle[x]/x^e\simeq \BP\langle n\rangle_{\MU} \MU[x]/x^e$ has height $n$. And 
\begin{gather*}
 L_{K(n+1)}\K(\BP\langle n\rangle[x]/x^e)\not\simeq 0,\\
 L_{K(n+1)}(\K(\BP\langle n\rangle \otimes_{\MU}\MU[x]/x^e))\not\simeq 0.  
\end{gather*}
\end{proposition}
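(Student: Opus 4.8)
The plan is to prove this in two stages: first identify the height of $\BP\langle n\rangle[x]/x^e$, then deduce non-vanishing of $L_{K(n+1)}\K$ from the redshift machinery. For the height computation, recall that as a spectrum $\BP\langle n\rangle[x]/x^e \simeq \bigoplus_{i=0}^{e-1}\Sigma^0\BP\langle n\rangle$ is a finite wedge of copies of $\BP\langle n\rangle$ (this is the content of Convention \ref{conven1} and Proposition \ref{finalgebra}: it is a finite $\BP\langle n\rangle$-module, and in fact a trivial one on underlying spectra). Since localization $L_{K(i)}$ commutes with finite colimits, $L_{K(i)}(\BP\langle n\rangle[x]/x^e) \simeq \bigoplus^{e} L_{K(i)}\BP\langle n\rangle$, so the height of $\BP\langle n\rangle[x]/x^e$ equals the height of $\BP\langle n\rangle$, which is $n$ (one knows $L_{K(i)}\BP\langle n\rangle \not\simeq 0$ for $0\le i\le n$ and $L_{K(n+1)}\BP\langle n\rangle \simeq 0$; the latter is the statement that $\BP\langle n\rangle$ is an fp spectrum of type $n$, cf. Theorem \ref{thmA}). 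The same argument applies verbatim to $\BP\langle n\rangle\otimes_{\MU}\MU[x]/x^e$, since Remark \ref{remarke3alg1} (Claim \ref{remarkclaim1}) gives the equivalence of underlying spectra $B\otimes\Ss[x]/x^e \simeq A$.

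For the non-vanishing of $L_{K(n+1)}\K$, the strategy outlined in Section \ref{sec1.1} applies: use an $\Ee_3$-algebra map and functoriality of $K(n+1)$-localized K-theory together with the redshift theorem for $\BP\langle n\rangle$ proved in \cite{HW22}. Concretely, I would use the $\Ee_3$-algebra map $\BP\langle n\rangle \to \BP\langle n\rangle[x]/x^e$ (the inclusion of the degree-zero summand, induced by the unit of the monoid $M_e$) together with the retraction $\BP\langle n\rangle[x]/x^e \to \BP\langle n\rangle$ sending $x\mapsto 0$ (induced by the monoid map $M_e \to M_1$ in Construction \ref{trunpoly}); these compose to the identity. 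Applying $L_{K(n+1)}\K(-)$, which is a functor, yields a retraction $L_{K(n+1)}\K(\BP\langle n\rangle) \to L_{K(n+1)}\K(\BP\langle n\rangle[x]/x^e) \to L_{K(n+1)}\K(\BP\langle n\rangle)$. Since by \cite{HW22} (redshift for truncated Brown--Peterson spectra, i.e. Conjecture \ref{conj:red1}/\ref{conj:red2} for $R=\BP\langle n\rangle$) we have $L_{K(n+1)}\K(\BP\langle n\rangle)\not\simeq 0$, a non-zero object being a retract of $L_{K(n+1)}\K(\BP\langle n\rangle[x]/x^e)$ forces the latter to be non-zero. The identical argument, using the split $\Ee_3$-maps between $\BP\langle n\rangle$ and $\BP\langle n\rangle\otimes_{\MU}\MU[x]/x^e$ (again the unit and augmentation, which are $\Ee_\infty$-$\MU$-algebra maps on $\MU[x]/x^e$ and hence induce $\Ee_3$-maps after base change), handles the second displayed non-vanishing.

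I expect the only genuinely delicate point to be bookkeeping with the algebra structures: one must check that the inclusion $\BP\langle n\rangle \to \BP\langle n\rangle[x]/x^e$ and the augmentation back are morphisms in $\Alg_{\Ee_1}(\Sp)$ (or $\Alg_{\Ee_3}$), so that $\K$ — which is defined on $\Ee_1$-rings — can be applied functorially and the composite really is the identity of $\BP\langle n\rangle$ as a ring spectrum, not merely as a spectrum. This follows because $\Ss \to \Ss[x]/x^e \to \Ss$ are $\Ee_\infty$-ring maps composing to the identity (Construction \ref{trunpoly}), and tensoring the $\Ee_3$-$\MU$-algebra $B = \BP\langle n\rangle$ with these over $\Ss$ (respectively, base-changing along $\MU \to \MU[x]/x^e \to \MU$ for the second family) preserves the retraction; one invokes the lax symmetric monoidality of the relevant forgetful functors, exactly as in the proof of Proposition \ref{e3form2} and Remark \ref{remarke3alg1}. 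Everything else is formal: retracts of zero objects in a stable category are zero, and $L_{K(n+1)}\K(-)$ preserves retractions.
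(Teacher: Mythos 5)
Your proposal is correct, but it takes a partly different route from the paper in both halves, so let me compare. For the height statement the paper does not use the underlying splitting $\BP\langle n\rangle[x]/x^e\simeq\bigoplus_{i=0}^{e-1}\BP\langle n\rangle$; it instead invokes the formula $L_{K(m)}R\simeq (v_m^{-1}R)^{\wedge}_{(p,v_1,\ldots,v_{m-1})}$ for complex-oriented $R$ together with the fact (Remark \ref{remarkkernel}) that $v_m$ maps to $0$ in homotopy for $m\geq n+1$. Your wedge argument is equally valid and more elementary, with one caveat: your parenthetical justification of $L_{K(n+1)}\BP\langle n\rangle\simeq 0$ via ``$\BP\langle n\rangle$ is an fp spectrum of type $n$'' is not literally that statement -- fp-type $n$ implies $K(n+1)$-acyclicity only after a short extra argument (or one can simply quote the same complex-orientation formula for $\BP\langle n\rangle$ itself). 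For the non-vanishing, the paper uses only the augmentations: lax symmetric monoidality of $\K$ and of $L_{K(n+1)}$ produce $\Ee_2$-ring maps $L_{K(n+1)}\K(\BP\langle n\rangle[x]/x^e)\to L_{K(n+1)}\K(\BP\langle n\rangle\otimes_{\MU}\MU[x]/x^e)\to L_{K(n+1)}\K(\BP\langle n\rangle)$, so vanishing of the source would force the unit of the target to vanish, contradicting \cite[Corollary 5.0.2]{HW22}. You instead exhibit $L_{K(n+1)}\K(\BP\langle n\rangle)$ as a retract, using the unit and augmentation composing to the identity; this needs only functoriality of $\K$ on $\Ee_1$-rings and of $L_{K(n+1)}$, at the cost of having to check that the section $\BP\langle n\rangle\to\BP\langle n\rangle[x]/x^e$ (resp.\ $\BP\langle n\rangle\to\BP\langle n\rangle\otimes_{\MU}\MU[x]/x^e$) is an algebra map with composite homotopic to the identity of ring spectra -- which you handle correctly by tensoring the $\Ee_{\infty}$-retraction $\Ss\to\Ss[x]/x^e\to\Ss$ (resp.\ $\MU\to\MU[x]/x^e\to\MU$) with $\BP\langle n\rangle$. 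The paper's route avoids needing any section but leans on the multiplicative structure of K-theory and of the localization; both reduce to the same input, $L_{K(n+1)}\K(\BP\langle n\rangle)\not\simeq 0$ from \cite{HW22}.
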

\begin{proof}
Since $\BP\langle n\rangle[x]/x^e$ is complex orientated, hence according to \cite[Chapter 6, Proposition 4.1]{tmf} the localization can be computed as $(v_m^{-1} \BP\langle n\rangle[x]/x^e)^{\wedge}_{(p,v_1,\ldots, v_{m-1})}$. Therefore for any $ m\geq n+1$, $L_{K(m)}\BP\langle n \rangle [x]/x^e\simeq 0$ by the fact $v_m$ maps to $0$ in homotopy, see Remark \ref{remarkkernel}. According to Claim \ref{remarkclaim1}, there is an $\Ee_3$-algebra map $\BP\langle n\rangle[x]/x^e\to \BP\langle n\rangle\otimes_{\MU} \MU[x]/x^e$. Now by  Claim \ref{remarkclaim2}, there is an $\Ee_3$-algebra map $\BP\langle n\rangle\otimes_{\MU} \MU[x]/x^e \to\BP\langle n\rangle$
induced by an $\Ee_3$-$\MU$-algebra map $\MU[x]/x^e\to \MU$, using Construction \ref{trunpoly}.
 Since $\K(-)$ is a lax symmetric monoidal functor \cite[Proposition 10.11]{BGT13}, and $\Mod_{\BP\langle n\rangle[x]/x^e}\to \Mod_{\BP\langle n\rangle\otimes_{\MU}\MU[x]/x^e}\to \Mod_{\BP\langle n\rangle}$ are $\Ee_2$-monoidal maps between $\Ee_2$-monoidal categories \cite[Proposition 4.5.8.20]{ha}. We therefore have following $\Ee_2$-ring maps \begin{equation*}
    \K(\BP\langle n\rangle[x]/x^e)\to \K(\BP\langle n\rangle\otimes_{\MU}\MU[x]/x^e)\to \K(\BP\langle n\rangle).
\end{equation*}
    Since $L_{K(n+1)}$ is a lax symmetrical monoidal functor, applying $L_{K(n+1)}$ yields following $\Ee_2$-ring maps
    \begin{equation*}
        L_{K(n+1)}(\K(\BP\langle n\rangle[x]/x^e))\to
        L_{K(n+1)}(\K(\BP\langle n\rangle\otimes_{\MU}\MU[x]/x^e))\to L_{K(n+1)}(\K(\BP\langle n\rangle)).
    \end{equation*}
 Suppose $L_{K(n+1)}(\K(\BP\langle n\rangle_{\MU}\otimes \MU[x]/x^e))\simeq 0$, then $L_{K(n+1)}(K(\BP\langle n\rangle))\simeq 0$. However, according to \cite[Corollary 5.0.2]{HW22}, we know that $L_{K(n+1)}\K(\BP\langle n\rangle)\not\simeq 0$, hence we arrive at a contradiction.  Thus $L_{K(n+1)}(\K(\BP\langle n\rangle\otimes_{\MU} \MU[x]/x^e))\not\simeq 0$, a same argument would arrive at the statement that \begin{equation*}
     L_{K(n+1)}(\K(\BP\langle n\rangle[x]/x^e))\not\simeq 0.
     \qedhere
 \end{equation*}
\end{proof}
We can also prove the following theorem with the same argument.
\begin{proposition}\label{redshift2}
    With the same notation as in Section \ref{sec2}, $\Lambda_{\BP\langle n\rangle}(x)$ is height $n$, $\BP\langle n\rangle/v_n^e$ is height $n-1$. And we have
\begin{equation*}
L_{K(n+1)}(\K(\Lambda_{\BP\langle n\rangle}))\not\simeq 0,\\
L_{K(n+1)}(\K(\BP\langle n\rangle\otimes_{\MU}\Lambda_{\MU}(x)))\not\simeq 0,\\
L_{K(n)}(\K(\BP\langle n\rangle/v_n^e))\not\simeq 0.
\end{equation*}
\end{proposition}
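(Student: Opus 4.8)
The plan is to transcribe the proof of Proposition \ref{redshift1}, replacing the truncated polynomial algebra by the exterior algebras (resp.\ by $\BP\langle n\rangle/v_n^e$) and keeping careful track of which copy of $\BP\langle m\rangle$ occurs as the target of the comparison map.

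First I would settle the height statements. Both $\Lambda_{\BP\langle n\rangle}(x)$ and $\BP\langle n\rangle\otimes_{\MU}\Lambda_{\MU}(x)$ are $\MU$-algebras, hence complex orientable, and as $\BP\langle n\rangle$-modules each splits as $\BP\langle n\rangle\oplus\Sigma\BP\langle n\rangle$ (the discussion preceding Proposition \ref{finalgebra}); since $\BP\langle n\rangle$ has height exactly $n$, so does each of these. For $\BP\langle n\rangle/v_n^e$, a complex orientable $\MU$-algebra with $\pi_*\simeq\Zz_{(p)}[v_1,\dots,v_n]/v_n^e$ by Remark \ref{remarkkernel}, I would apply \cite[Chapter 6, Proposition 4.1]{tmf} to compute $L_{K(m)}(\BP\langle n\rangle/v_n^e)\simeq\bigl(v_m^{-1}(\BP\langle n\rangle/v_n^e)\bigr)^{\wedge}_{(p,v_1,\dots,v_{m-1})}$: this vanishes for $m\geq n$ because $v_n$ is nilpotent and $v_m=0$ in homotopy for $m\geq n+1$, while for $m\leq n-1$ the ring $v_m^{-1}(\Zz_{(p)}[v_1,\dots,v_n]/v_n^e)$ is nonzero and stays nonzero after reduction modulo $(p,v_1,\dots,v_{m-1})$. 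Hence $\BP\langle n\rangle/v_n^e$ has height $n-1$.

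For the non-vanishing of $K(m+1)$-localized $K$-theory the scheme is the same in all three cases: construct an $\Ee_3$-algebra map $R\to\BP\langle m\rangle$ (with $m=n$ in the exterior cases and $m=n-1$ for $\BP\langle n\rangle/v_n^e$), apply the lax symmetric monoidal functors $\K$ (\cite[Proposition 10.11]{BGT13}) and $L_{K(m+1)}$ to obtain an $\Ee_2$-ring map $L_{K(m+1)}\K(R)\to L_{K(m+1)}\K(\BP\langle m\rangle)$, and use $L_{K(m+1)}\K(\BP\langle m\rangle)\not\simeq 0$ (\cite[Corollary 5.0.2]{HW22}) to force the source to be nonzero. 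For $\Lambda_{\BP\langle n\rangle}(x)$ and $\BP\langle n\rangle\otimes_{\MU}\Lambda_{\MU}(x)$ the map is the composite of the $\Ee_3$-algebra map $\Lambda_{\BP\langle n\rangle}(x)\to\BP\langle n\rangle\otimes_{\MU}\Lambda_{\MU}(x)$ of Remark \ref{remarke3alg2} with the $\Ee_3$-$\MU$-algebra map $\BP\langle n\rangle\otimes_{\MU}\Lambda_{\MU}(x)\to\BP\langle n\rangle$ obtained by base-changing along $\BP\langle n\rangle$ the $\Ee_\infty$-$\MU$-algebra fold map $\Lambda_{\MU}(x)=\MU\otimes_{\MU[x]}\MU\to\MU$ over the augmentation $\MU[x]\to\MU$ (Construction \ref{trunpoly}, $e=1$); this is parallel to the use of Claims \ref{remarkclaim1} and \ref{remarkclaim2} in the proof of Proposition \ref{redshift1}. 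For $\BP\langle n\rangle/v_n^e$, recall from Proposition \ref{e3form4} that it is the top $B_e$ of a tower $B_e\to\cdots\to B_1=B$ of $\Ee_3$-algebra maps whose base $B$ is a form of $\BP\langle n-1\rangle$, the class $y$ being identified with $v_n$; by clause (3) of Proposition \ref{square0} the composite $B_e\to B_1$ realizes on homotopy the quotient $\Zz_{(p)}[v_1,\dots,v_n]/v_n^e\twoheadrightarrow\Zz_{(p)}[v_1,\dots,v_{n-1}]$ that kills $v_n$, so $\BP\langle n\rangle/v_n^e\to\BP\langle n-1\rangle$ is the $\Ee_3$-algebra map we want, and applying $\K$ and $L_{K(n)}$ together with $L_{K(n)}\K(\BP\langle n-1\rangle)\not\simeq 0$ gives the last assertion.

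The main obstacle is entirely bookkeeping: keeping straight the index shift in $\BP\langle n\rangle/v_n^e$ so that the comparison map lands in a form of $\BP\langle n-1\rangle$ (whose $K$-theory satisfies height-$n$ redshift by \cite[Corollary 5.0.2]{HW22}), and checking that the tower maps of Proposition \ref{square0} genuinely compose to the stated surjection on homotopy groups, which is where clause (3), $\MU[y]/y^{k-1}\otimes_{\MU[y]/y^k}B_k\simeq B_{k-1}$, intervenes. Once these identifications are in place, the proof is a line-by-line repetition of that of Proposition \ref{redshift1}.
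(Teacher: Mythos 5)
Your proposal is correct and takes essentially the same route as the paper: produce the $\Ee_3$-algebra maps $\Lambda_{\BP\langle n\rangle}(x)\to\BP\langle n\rangle\otimes_{\MU}\Lambda_{\MU}(x)\to\BP\langle n\rangle$ and $\BP\langle n\rangle/v_n^e\to\BP\langle n-1\rangle$ (the latter via the tower of Proposition \ref{square0}), then apply the lax symmetric monoidal functors $\K$ and $L_{K(m+1)}$ together with \cite[Corollary 5.0.2]{HW22}. The only differences are cosmetic: the paper assembles the map $\BP\langle n\rangle/v_n^e\to\BP\langle n-1\rangle$ by induction on $e$ using clause (2) of Proposition \ref{square0} rather than composing the tower in one step, and you spell out the height computations that the paper's proof leaves implicit.
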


\begin{proof}
As seen in Proposition \ref{redshift1}, it suffice to produce $\Ee_3$-algebra maps \begin{equation*}
   \Lambda_{\BP\langle n\rangle}(x)\to \BP\langle n\rangle\otimes_{\MU} \Lambda_{\MU}(x)\to \BP\langle n\rangle, ~
\BP\langle n\rangle/v_n^e\to \BP\langle n-1\rangle.
\end{equation*}
By Remark \ref{remarke3alg2}, there is an $\Ee_3$-algebra map $\BP\langle n\rangle\otimes_{\MU} \Lambda_{\MU}(x)$ induced by an $\Ee_{\infty}$-ring map $\Lambda_{\Ss}(x)\to \Ss$, see Construction \ref{exterior}, \ref{trunpoly}. Then a similar argument would imply that $L_{K(n+1)}(\K(\BP\langle n\rangle\otimes_{\MU}\Lambda_{\MU}(x)))\not\simeq 0,     L_{K(n+1)}(\K(\Lambda_{\BP\langle n\rangle})(x))\not\simeq 0$.
For the second type ring spectra, we run induction on $e\in \Nn_{\geq 1}$. \begin{itemize}
    \item When $e=1$, by construction there is an equivalence $\BP\langle n\rangle/v_n\simeq \BP\langle n-1\rangle$, thus the desired statement is hold.
    \item Assume that $e=k$, we have an $\Ee_3$-algebra map $\BP\langle n\rangle/v_n^e\to \BP\langle n-1\rangle$.
    \item According to the property (2) in Proposition \ref{square0}, we have an $\Ee_3$-$\MU[y]/y^{k+1}$-algebra map $\BP\langle n\rangle/v_n^{k+1}\to \BP\langle n\rangle v_n^k$. Since the forgetful functor $\Mod_{\MU[y]/y^{k+1}}\to \Mod_{\Ss}$ is lax symmetric monoidal, therefore we get an $\Ee_3$-algebra map $\BP\langle n\rangle/v_n^{k+1}\to \BP\langle n\rangle/v_n^k$. 
    \item Composing the map right above with $\BP\langle n\rangle/v_n^k\to \BP\langle n-1\rangle$, yields an $\Ee_3$-algebra map $\BP\langle n\rangle/v_n^{k+1}\to \BP\langle n-1\rangle$.
\end{itemize}

Hence, for any $e\geq 1$, we conclude that $L_{K(n)}(\K(\BP\langle n\rangle/v_n^e))\not\simeq 0$ by the fact that $L_{K(n)}(\K(\BP\langle n-1\rangle))\not\simeq 0$.
\end{proof}
The results above actually has a refined version.
\begin{proposition}\label{NTC1}
The homotopy fixed point spectral sequence for $\THH(\BP\langle n\rangle[x]/x^e/\MU[x])^{h S^1}$ collapses at $E_2$-page.
 The map \begin{equation*}
    \pi_*\MU_{(p)}^{h S^1}\to \pi_*(\MU_{(p)}[x])^{h S^1} \to \pi_* \TC^-((\BP\langle n\rangle[x]/x^e)/\MU[x])
\end{equation*}
sends the complex orientation map to an element which is detected by $t$, and $v_{n+1}$ is send to $t(\sigma^2 v_{n+1})$. And $v_{n+1}$ is not nilpotent in $\TC^-_*((\BP\langle n\rangle[x]/x^e)/\MU[x])$.  
\end{proposition}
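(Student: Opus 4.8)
\emph{Plan.} The statement has three parts: the collapse of the homotopy fixed point spectral sequence, the behaviour of the complex orientation and of $v_{n+1}$ under the structure map, and the non-nilpotence of $v_{n+1}$. Only the identification of the leading term of the image of $v_{n+1}$ requires genuine input; everything else is formal. For the collapse, by Proposition \ref{THHtruncated} the spectrum $\THH(\BP\langle n\rangle[x]/x^e/\MU[x])$ is concentrated in even degrees, so the $E_2$-page $\pi_*\THH(\BP\langle n\rangle[x]/x^e/\MU[x])\otimes H^*(BS^1)$ of the $S^1$-homotopy fixed point spectral sequence lives in even total degree; since every $d_r$ changes total degree by $1$, all differentials vanish and the spectral sequence degenerates at $E_2$. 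I will record the consequence that $E_\infty=E_2=\pi_*\THH(\BP\langle n\rangle[x]/x^e/\MU[x])[t]$ is genuinely polynomial on the class $t$ of degree $-2$ coming from $H^2(BS^1)$, and is the associated graded of $\TC^-_*(\BP\langle n\rangle[x]/x^e/\MU[x])$ for the homotopy-fixed-point filtration.

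\emph{The structure map.} The composite $\MU_{(p)}\to \MU_{(p)}[x]\to \BP\langle n\rangle[x]/x^e\to \THH(\BP\langle n\rangle[x]/x^e/\MU[x])$ is $S^1$-equivariant for the trivial action on the first three terms (the last arrow is the unit $\BP\langle n\rangle[x]/x^e\to \THH(\BP\langle n\rangle[x]/x^e/\MU[x])$ of relative $\THH$, equivariant for the trivial action on its source), so applying $(-)^{hS^1}$ gives a map of spectra preserving the homotopy-fixed-point filtrations, hence a map of the associated spectral sequences, which factors through $(\MU_{(p)}[x])^{hS^1}$ as in the statement. On $E_2$ the polynomial generator $t\in H^2(BS^1)$ of the source maps to the class $t$ of the target by construction; since the source spectral sequence also collapses and the complex orientation of $\MU$ is by definition the generator of $\widetilde{\MU}^2(BS^1)\subseteq \pi_{-2}\MU^{hS^1}$ detected by $t$, its image in $\TC^-_*$ is detected by $t$. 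For $v_{n+1}$: it sits in filtration $0$ of $\pi_*\MU_{(p)}^{hS^1}$, and its image in $\pi_*\THH(\BP\langle n\rangle[x]/x^e/\MU[x])$ vanishes, since that ring receives a ring map from $\BP\langle n\rangle[x]/x^e$ in which $v_{n+1}\mapsto 0$ (Remark \ref{remarkkernel}); hence the image of $v_{n+1}$ in $\TC^-_*$ has filtration $\geq 2$. To pin its leading term to $t\cdot\sigma^2 v_{n+1}$ I would use the base-change equivalence of $S^1$-equivariant $\MU$-algebras $\THH(\BP\langle n\rangle[x]/x^e/\MU[x])\simeq \THH(\BP\langle n\rangle/\MU)\otimes_{\MU}\THH(\MU[x]/x^e/\MU[x])$, coming from $\BP\langle n\rangle[x]/x^e\simeq \BP\langle n\rangle\otimes_{\MU}\MU[x]/x^e$ and the multiplicativity of relative $\THH$, together with the lax symmetric monoidality of $(-)^{hS^1}$ and naturality, to reduce the claim to the $\MU$-linear statement that the image of $v_{n+1}$ in $\pi_*\TC^-(\BP\langle n\rangle/\MU)$ is detected by $t\,\sigma^2 v_{n+1}$. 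This last statement is the model computation underlying the redshift argument of \cite{HW22}; it is the familiar phenomenon that a regular element killed to build a relative $\THH$ reappears in $\TC^-$ as $t$ times its double suspension, exactly as $p=t\,\sigma^2 p$ in $\TC^-_*(\Ff_p/\Zz_p)$. This identification of the leading term is the main obstacle; the preceding steps are naturality and filtration bookkeeping.

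\emph{Non-nilpotence.} By Proposition \ref{THHtruncated} the associated graded of $\pi_*\THH(\BP\langle n\rangle[x]/x^e/\MU[x])$ for the Tor filtration is, over $A_*:=\BP\langle n\rangle_*[x]/x^e$, a divided-power algebra with $\sigma^2 v_{n+1}$ among its generators, hence a free $A_*$-module on the monomials in the divided powers. Since $A_*$ is free over $\Zz_{(p)}$ and $k!$ is a nonzerodivisor in $\Zz_{(p)}$, the identity $(\sigma^2 v_{n+1})^k=k!\,\gamma_k(\sigma^2 v_{n+1})$ shows $(\sigma^2 v_{n+1})^k\ne 0$ in the associated graded, and therefore — by multiplicativity of the Tor spectral sequence, passing to leading terms — $(\sigma^2 v_{n+1})^k\ne 0$ in $\pi_*\THH(\BP\langle n\rangle[x]/x^e/\MU[x])$ for all $k\geq 1$. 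Because the homotopy fixed point spectral sequence collapses and $E_\infty$ is polynomial in $t$, the class $t^k(\sigma^2 v_{n+1})^k$ is nonzero in $E_\infty$ for every $k$, so any lift of $t\,\sigma^2 v_{n+1}$ to $\TC^-_*$ has all powers nonzero. In particular the image of $v_{n+1}$, being detected by $t\,\sigma^2 v_{n+1}$, is not nilpotent in $\TC^-_*(\BP\langle n\rangle[x]/x^e/\MU[x])$.
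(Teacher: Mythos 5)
Your proposal is correct, and for the non-nilpotence it takes a genuinely different route from the paper. The collapse argument (evenness of $\THH(\BP\langle n\rangle[x]/x^e/\MU[x])$ from Proposition \ref{THHtruncated} plus parity of differentials) and the detection of the orientation by $t$ are the same; for the identification of the image of $v_{n+1}$ with a class detected by $t\,\sigma^2 v_{n+1}$ both you and the paper ultimately defer to the computation of \cite[Remark 5.0.5, A.4.1]{HW22}, you via a base-change map $\TC^-(\BP\langle n\rangle/\MU)\to\TC^-(\BP\langle n\rangle[x]/x^e/\MU[x])$ and naturality of the filtrations, the paper by running the same argument directly. The real divergence is the last step: the paper constructs $\Ee_1$/$\Ee_2$-structured maps $\TC^-((\BP\langle n\rangle[x]/x^e)/\MU[x])\to \TC^-((\BP\langle n\rangle[x]/x^e)/(\MU[x]/x^e))\to \TC^-(\BP\langle n\rangle/\MU)$ (two separate Claims are devoted to getting enough multiplicative structure) and then quotes \cite[Proposition 5.0.1]{HW22} for non-nilpotence in the target, pulling it back along a ring map; you instead argue internally, showing $(\sigma^2 v_{n+1})^k\neq 0$ in $\THH_*$ via $y^k=k!\,\gamma_k(y)$ in the divided-power $E^\infty$-page of the Tor spectral sequence and torsion-freeness of $\BP\langle n\rangle_*[x]/x^e$, and then using multiplicativity of the (strongly convergent, since degenerate) homotopy fixed point filtration to conclude that all powers of any class detected by $t\,\sigma^2 v_{n+1}$ are nonzero. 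Your route is more self-contained — it needs neither the comparison maps nor \cite[Proposition 5.0.1]{HW22} — at the price of making the non-nilpotence logically dependent on the detection statement (the paper's version does not need to know the leading term) and of the filtration bookkeeping: you should say explicitly that degeneration at $E_2$ upgrades conditional to strong convergence, so that "leading terms multiply to something nonzero" really does imply the powers are nonzero in $\pi_*\TC^-$; with that sentence added, the argument is complete.
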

\begin{proof}
We have the homotopy limit spectral sequence derived by the skeleton filtration on $B S^1\simeq \Cc\Pp^{\infty}$, and the formula is given by \begin{equation*}
       E^2\simeq  \THH_*((\BP\langle n\rangle[x]/x^e)/\MU[x])[t]\Longrightarrow \TC^-_*((\BP\langle n\rangle[x]/x^e)/\MU[x]).
    \end{equation*}
Since every thing is concentrated in even degree by Proposition \ref{THHtruncated}, thus the spectral sequence collapses at $E_2$-page. Using the same argument in \cite[A.4..1, Remark 5.0.5]{HW22}, one shows that $v_{n+1}$ is detected in $\lim_{\Cc\Pp^1} \THH(\BP\langle n\rangle[x]/x^e/\MU[x])$.
There are $\Ee_1$-$\MU$-algebra maps 
\begin{equation*}
 \TC^-((\BP\langle n\rangle[x]/x^e)/\MU[x]) \to \TC^-((\BP\langle n\rangle[x]/x^e)/\MU[x]/x^e) \to \TC^-(\BP\langle n\rangle/\MU).
\end{equation*}
\begin{claim}
 The first map is an $\Ee_2$-$\MU^{h S^1}$-algebra map.   
\end{claim}
\begin{claimproof}
Recall that we have adjunction pair $f^*:\Mod_{\MU[x]/x^e}\leftrightarrows \Mod_{\MU[x]}:f_*$, see \cite[Proposition 7.1.2.7]{ha}.
Therefore we have lax symmetric monoidal natural transformations \begin{equation*}
    \THH  f_* \Longrightarrow f_* f^* \THH f_*  \simeq f_* \THH f^* f_* \Longrightarrow f_*\THH.
\end{equation*}
Here, the equivalence above follows from the facts that $f_*$ is symmetric monoidal and commutes with colimits. 
Therefore we have a functor in $\Fun_{\text{lax}}(\Alg_{\Ee_1}(\Mod_{\MU[x]/x^e})^{\otimes}, \Mod_{\MU[x]}^{\otimes})$, applying to $\BP\langle n\rangle[x]/x^e$ yields an $\Ee_2$-$\MU[x]$-algebra map.    
\end{claimproof}
\begin{claim}
The second $\Ee_1$-$\MU^{h S^1}$-algebra map above is induced by an $\Ee_1$-$\MU$-algebra map and applying $-^{h S^1}$.
\end{claim}
\begin{claimproof} we could construct an $\Ee_1$-$\MU$-algebra map
\begin{equation*}
    \THH((\BP\langle n\rangle[x]/x^e)/\MU[x]/x^e)\simeq \THH(\BP\langle n\rangle/\MU)\otimes_{\MU} \MU[x]/x^e\to \THH(\BP\langle n\rangle/\MU).
\end{equation*}
Here, the equivalence above is an equivalence of $\Ee_2$-$\MU[x]/x^e$-algebras, see the proof of Proposition \ref{THHexterior}, the $\Ee_1$-$\MU$-algebra map above is induced by Construction \ref{trunpoly}. 
\end{claimproof}
Finally, by \cite[Proposition 5.0.1]{HW22}, we learn that $v_{n+1}$ is not nilpotent in $\TC^-_*(\BP\langle n\rangle/\MU)$, hence $v_{n+1}$ is not nilpotent in
$\TC^-_*((\BP\langle n\rangle[x]/x^e)/\MU[x])$.
\end{proof}
\begin{proposition}\label{NTC2}
The homotopy fixed point spectral sequence of $\THH(\BP\langle n\rangle/v_n^e/\MU)^{h S^1}$ collapses at $E_2$-page.
  The map \begin{equation*}
    \pi_*\MU_{(p)}^{h S^1} \to \pi_* \TC^-((\BP\langle n\rangle)/v_n^e/\MU)
\end{equation*}
sends the complex orientation map to an element which is detected by $t$. Moreover, $v_n$ is not nilpotent in $\TC^-_*((\BP\langle n\rangle/v_n^e)/\MU)$.    
\end{proposition}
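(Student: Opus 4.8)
The plan is to follow the proof of Proposition~\ref{NTC1} essentially verbatim; the present case is in fact simpler, since the base ring is $\MU$ throughout and there is no polynomial variable to keep track of. First I would run the homotopy limit spectral sequence coming from the skeletal filtration of $B S^1 \simeq \Cc\Pp^{\infty}$, which takes the form
\begin{equation*}
E^2 \simeq \THH_*(\BP\langle n\rangle/v_n^e/\MU)[t] \Longrightarrow \TC^-_*(\BP\langle n\rangle/v_n^e/\MU), \qquad |t| = -2 .
\end{equation*}
By Proposition~\ref{THHquotient} the ring spectrum $\THH(\BP\langle n\rangle/v_n^e/\MU)$ is even, so this $E_2$-page is concentrated in even total degree; as every differential changes total degree by one, the spectral sequence collapses at $E_2$.

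For the assertion about the complex orientation, the $\Ee_3$-$\MU$-algebra structure on $\BP\langle n\rangle/v_n^e$ from Proposition~\ref{e3form4} provides a unit map $\MU \to \BP\langle n\rangle/v_n^e$, hence an $S^1$-equivariant ring map $\MU \to \THH(\BP\langle n\rangle/v_n^e/\MU)$ with trivial action on the source. Taking $(-)^{h S^1}$ yields an $\Ee_1$-$\MU^{h S^1}$-algebra map $\MU^{h S^1} = \TC^-(\MU/\MU) \to \TC^-(\BP\langle n\rangle/v_n^e/\MU)$ together with a comparison of homotopy fixed point spectral sequences. Since on $\pi_* \MU^{h S^1} \simeq \MU_*[[t]]$ the generator $t$ is the complex orientation class of $\Cc\Pp^{\infty}$, and the comparison of spectral sequences carries this $t$ to the class $t$ of the target, the image of the complex orientation of $\MU_{(p)}^{h S^1}$ is detected by $t$ in $\TC^-_*(\BP\langle n\rangle/v_n^e/\MU)$; this is the argument of \cite[A.4.1, Remark 5.0.5]{HW22}, already reproduced in the proof of Proposition~\ref{NTC1}.

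For the non-nilpotence of $v_n$ (for $n\ge 1$) I would transport it along a multiplicative comparison down to $\BP\langle n-1\rangle$. In the proof of Proposition~\ref{redshift2} an $\Ee_3$-algebra map $\BP\langle n\rangle/v_n^e \to \BP\langle n-1\rangle$ is built by composing the maps $B_k \to B_{k-1}$ of the tower of Proposition~\ref{square0} with the equivalence $\BP\langle n\rangle/v_n \simeq \BP\langle n-1\rangle$; since every $B_k$ is an $\Ee_3$-$\MU$-algebra and every such map an $\Ee_3$-$\MU$-algebra map, this comparison refines to an $\Ee_3$-$\MU$-algebra map. Applying $\THH(-/\MU)$ and then $(-)^{h S^1}$ — exactly as in the two Claims in the proof of Proposition~\ref{NTC1} — yields an $\Ee_1$-$\MU^{h S^1}$-algebra map
\begin{equation*}
\TC^-(\BP\langle n\rangle/v_n^e/\MU) \longrightarrow \TC^-(\BP\langle n-1\rangle/\MU).
\end{equation*}
Being a map of $\MU^{h S^1}$-algebras, it carries the image of $v_n \in \pi_* \MU_{(p)}^{h S^1}$ to the image of $v_n$; and by \cite[Proposition 5.0.1]{HW22}, applied with $n-1$ in place of $n$, the latter is non-nilpotent in $\TC^-_*(\BP\langle n-1\rangle/\MU)$. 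Hence no power of $v_n$ can vanish in $\TC^-_*(\BP\langle n\rangle/v_n^e/\MU)$, which is the last claim.

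The main obstacle is structural rather than computational: one must verify that $\THH(-/\MU)$ turns an $\Ee_3$-$\MU$-algebra map into an $\Ee_2$-$\MU$-algebra map whose $(-)^{h S^1}$ is an $\Ee_1$-$\MU^{h S^1}$-algebra map, and that the elements called $v_n$ on the two sides of the comparison are genuinely identified through the complex-orientation map, so that the transport of non-nilpotence is legitimate. These are precisely the lax-symmetric-monoidal-functor manipulations carried out in the two Claims of the proof of Proposition~\ref{NTC1}, so I would simply refer to that proof. The homotopy-group input is minimal.
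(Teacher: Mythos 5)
Your proposal follows the paper's proof essentially verbatim: collapse of the homotopy fixed point spectral sequence from the evenness statement of Proposition \ref{THHquotient}, detection of the orientation by $t$ as in the proof of Proposition \ref{NTC1}, and transport of the non-nilpotence of $v_n$ along the $\TC^-$ map induced by the $\Ee_3$-$\MU$-algebra map $\BP\langle n\rangle/v_n^e \to \BP\langle n-1\rangle$ from the proof of Proposition \ref{redshift2}, using \cite[Proposition 5.0.1]{HW22} for $\BP\langle n-1\rangle$. This is exactly the argument given in the paper, so there is nothing to add.
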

\begin{proof}
According to the proof of Proposition \ref{NTC1}, and Proposition \ref{THHquotient}, the homotopy fixed point spectral collapses at $E_2$-page.  And we have $\Ee_2$-$\MU^{h S^1}$-algebra map 
\begin{equation*}
\TC^-((\BP\langle n\rangle/v_n^e)/\MU) \to \TC^-(\BP\langle n-1\rangle/\MU)
\end{equation*}
induced by an $\Ee_3$-$\MU$-algebra map $\BP\langle n\rangle/v_n^e\to \BP\langle n-1\rangle$ constructed in the proof of Proposition \ref{redshift2}.  Since $v_n$ is not nilpotent in $\TC^-(\BP\langle n-1\rangle/\MU)$, hence $v_n$ is not nilpotent in $\TC^-(\BP\langle n\rangle/v_n^e/\MU)$.
\end{proof}
\begin{proposition}\label{NTC3}
  The map \begin{equation*}
    \pi_*\MU_{(p)}^{h S^1}\to \pi_*\Lambda_{\MU}(x)^{h S^1} \to \pi_* \TC^-(\Lambda_{\BP\langle n\rangle}(x)/\Lambda_{\MU}(x))
\end{equation*}
sends the complex orientation map to an element which is detected by $t$, and $v_{n+1}$ is send to $t(\sigma^2 v_{n+1})$. Moreover, $v_{n+1}$ is not nilpotent in $\TC^-(\Lambda_{\BP\langle n\rangle}(x)/\Lambda_{\MU}(x))$. 
\end{proposition}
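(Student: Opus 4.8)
The plan is to mimic the argument of Proposition \ref{NTC1}, using the computation of $\THH(\Lambda_{\BP\langle n\rangle}(x)/\Lambda_{\MU}(x))$ from Proposition \ref{THHexterior} together with a base-change argument to reduce the non-nilpotence statement to the known non-nilpotence of $v_{n+1}$ in $\TC^-_*(\BP\langle n\rangle/\MU)$ from \cite[Proposition 5.0.1]{HW22}. First I would recall from the proof of Proposition \ref{THHexterior} that there is an equivalence of $\Ee_2$-algebras
\begin{equation*}
\THH(\Lambda_{\BP\langle n\rangle}(x)/\Lambda_{\MU}(x))\simeq \THH(\BP\langle n\rangle/\MU)\otimes_{\MU}\Lambda_{\MU}(x),
\end{equation*}
arising by applying the symmetric monoidal base-change functor $f^*\colon \Mod_{\MU}\to \Mod_{\Lambda_{\MU}(x)}$ to $\THH(\BP\langle n\rangle/\MU)$. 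Since $\Lambda_{\MU}(x)$ is free over $\MU$ on a class in odd degree $1$, and $\THH(\BP\langle n\rangle/\MU)$ has homotopy $\BP\langle n\rangle_*[w_{n+1,i},y_{j,i}]$ concentrated in even degrees (by \cite[Proposition 2.4.2]{HW22} or as recorded in Proposition \ref{THHexterior}), the homotopy of the tensor product is $\BP\langle n\rangle_*[w_{n+1,i},y_{j,i}]\otimes\Lambda(x)$; in particular it is concentrated in even and odd degrees but is an exterior algebra on one odd generator over an even ring. This is not enough for an immediate degeneration of the homotopy fixed point spectral sequence, so instead I would argue the $E_2$-collapse by observing that the $S^1$-action on $\THH(\Lambda_{\BP\langle n\rangle}(x)/\Lambda_{\MU}(x))$ is obtained by base change from the $S^1$-action on $\THH(\BP\langle n\rangle/\MU)$, whose homotopy fixed point spectral sequence collapses at $E_2$ (by Proposition \ref{NTC1}'s method, since $\THH(\BP\langle n\rangle/\MU)$ is even); base change along the flat map $\MU\to\Lambda_{\MU}(x)$ then gives a spectral sequence that is obtained from the collapsing one by tensoring with $\Lambda(x)$, hence also collapses.

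Next I would identify the image of the complex orientation class. The $\Ee_1$-$\MU$-algebra map $\MU\to\Lambda_{\MU}(x)$ from Construction \ref{exterior} induces
\begin{equation*}
\pi_*\MU_{(p)}^{h S^1}\to \pi_*\Lambda_{\MU}(x)^{h S^1}\to \pi_*\TC^-(\Lambda_{\BP\langle n\rangle}(x)/\Lambda_{\MU}(x)),
\end{equation*}
and by naturality of the homotopy fixed point spectral sequence together with the base-change identification, the complex orientation map (detected by $t$ in $\pi_*\MU^{h S^1}$, i.e. in $\pi_{-2}$) maps to a class detected by $t$ in the target, exactly as in Proposition \ref{NTC1}. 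For the same reason, tracking $v_{n+1}\in\MU_*\to\THH_*(\BP\langle n\rangle/\MU)$ through the equivalence above and applying the argument of \cite[A.4.1, Remark 5.0.5]{HW22}, one sees $v_{n+1}$ is detected by $t(\sigma^2 v_{n+1})$ in $\TC^-_*$.

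Finally, for the non-nilpotence of $v_{n+1}$, I would produce an $\Ee_1$-$\MU^{h S^1}$-algebra map
\begin{equation*}
\TC^-(\Lambda_{\BP\langle n\rangle}(x)/\Lambda_{\MU}(x))\to \TC^-(\BP\langle n\rangle/\MU)
\end{equation*}
obtained by applying $(-)^{h S^1}$ to the $\Ee_1$-$\MU$-algebra map $\THH(\Lambda_{\BP\langle n\rangle}(x)/\Lambda_{\MU}(x))\simeq\THH(\BP\langle n\rangle/\MU)\otimes_{\MU}\Lambda_{\MU}(x)\to\THH(\BP\langle n\rangle/\MU)$ induced by the $\Ee_{\infty}$-ring map $\Lambda_{\MU}(x)\to\MU$ sending $x\mapsto 0$ (Construction \ref{exterior}, which collapses $\Ss\otimes_{\Ss[x]}\Ss\to\Ss$). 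This map sends $v_{n+1}$ to $v_{n+1}$; since $v_{n+1}$ is not nilpotent in $\TC^-_*(\BP\langle n\rangle/\MU)$ by \cite[Proposition 5.0.1]{HW22}, it cannot be nilpotent in the source. The main obstacle I expect is the $E_2$-collapse claim: one must be careful that the homotopy fixed point spectral sequence genuinely base-changes along $\MU\to\Lambda_{\MU}(x)$ — this uses that $\Lambda_{\MU}(x)$ is a finite free $\MU$-module so that $(-)^{h S^1}$ commutes with the relevant colimit $-\otimes_{\MU}\Lambda_{\MU}(x)$, and that the odd generator $x$ carries trivial $S^1$-action, so no unexpected differentials involving $x$ appear. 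Once this is in place, everything else is a direct transcription of the proof of Proposition \ref{NTC1}.
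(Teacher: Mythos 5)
Your proposal is correct and follows essentially the same route as the paper: it uses the base-change equivalence $\THH(\Lambda_{\BP\langle n\rangle}(x)/\Lambda_{\MU}(x))\simeq \THH(\BP\langle n\rangle/\MU)\otimes_{\MU}\Lambda_{\MU}(x)$ to identify the $E_2$-page as $\THH_*(\BP\langle n\rangle/\MU)[t]\otimes\Lambda(x)$, and then the $\Ee_1$-$\MU^{hS^1}$-algebra map to $\TC^-(\BP\langle n\rangle/\MU)$ induced by $\Lambda_{\MU}(x)\to\MU$ to import the non-nilpotence of $v_{n+1}$ from \cite[Proposition 5.0.1]{HW22}. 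The paper's proof is terser but identical in substance; your extra care about $(-)^{hS^1}$ commuting with $-\otimes_{\MU}\Lambda_{\MU}(x)$ (valid since $\Lambda_{\MU}(x)\simeq\MU\oplus\Sigma\MU$ is finite free) is a reasonable detail the paper leaves implicit.
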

\begin{proof}
The homotopy fixed point spectral sequence has the formula:
\begin{equation*}
    E^2\simeq \THH_*(\BP\langle n\rangle/\MU)[t]\otimes \Lambda(x)  \Longrightarrow \TC^-_*(\Lambda_{\BP\langle n\rangle}(x)/\Lambda_{\MU}(x)).
\end{equation*}
There is a ring map\begin{equation*}
    \TC^-(\Lambda_{\BP\langle n\rangle}(x))\to \TC^-(\BP\langle n\rangle/\MU)
\end{equation*}
induced by $\Ee_1$-$\MU^{h S^1}$-algebra map \begin{equation*}
    \THH(\Lambda_{\BP\langle n\rangle}(x)/\Lambda_{\MU}(x))\simeq\THH(\BP\langle n\rangle/\MU)\otimes_{\MU} \Lambda_{\MU}(x)\to \THH(\BP\langle n\rangle/\MU).
\end{equation*}
Since $v_{n+1}$ is not nilpotent in $\TC^-(\BP\langle n\rangle/\MU)$, so we conclude that $v_{n+1}$ is not nilpotent in $\TC^-(\Lambda_{\BP\langle n\rangle}(x))$.
\end{proof}
\section{Segal conjecture}\label{sec4}
We investigate Lichtenbaum\textendash Quillen phenomena in this section, we can reduce to proving the Segal conjecture and weak canonical vanishing theorem for certain cyclotomic spectra due to Theorem \ref{boundedTR}. In particular, we prove Proposition \ref{pro:polyBPNSegal}, \ref{pro:segalfails}. First we review some basic concepts and related techniques. 
\begin{definition}[{\cite[Proposition 3.2]{MR99}}]
We say a spectrum $X$ is $\pi$-finite if $X$ is bounded ($\exists a,b \in \Zz, X\in \Sp_{[a,b]}$), and $\pi_k X$ is a finite group for any $k\in \Zz$. A spectrum $X$ is called \textbf{fp-spectrum} if $X$ is bounded below, $p$-complete, and there exists a finite complex $F$ such that $F\otimes X$ is finite in the previous sense.
\end{definition}
\begin{remark}
The notation "fp" should be viewed as abbreviation of "finite presented", due to \cite[Proposition 3.2]{MR99}. Because a spectrum $X$ is an fp-spectrum, if and only if $X$ is $p$-complete and bounded below, and $\pi_*(\Ff_p\otimes X)$ is \textbf{finite presented} as $\calA_*$-comodule.
\end{remark}
For a $p$-local finite complex $X$, we can define type of $X$ using thick subcategory theorem. Then we can define fp-type for an fp-spectrum. 
\begin{definition}
We define \textbf{fp-type} for an fp-spectrum $X$ to be the minimum of the set $\{\text{type}(F)-1: F\otimes X ~\text{is  finite, and $F$ is finite complex}\}$.
\end{definition}
\begin{proposition}\label{type}
    The ring spectra $(\BP\langle n\rangle[x]/x^e)^{\wedge}_p, (\Lambda_{\BP\langle n\rangle}(x))^{\wedge}_p$ in Section \ref{sec2} are fp-spectra of type $n$, the ring spectra $(\BP\langle n\rangle/v_n^e)^{\wedge}_p$ is an fp-spectrum of type $n-1$.
\end{proposition}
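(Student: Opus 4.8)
The plan is to identify each of these ring spectra as a finite module over $\BP\langle n\rangle$ (Proposition \ref{finalgebra}) and to leverage the known fact that $\BP\langle n\rangle^{\wedge}_p$ is an fp-spectrum of type $n$. Recall from \cite{HW22} that $\BP\langle n\rangle^{\wedge}_p$ is $p$-complete, bounded below, and there is a type $n+1$ finite complex $F$ with $F\otimes \BP\langle n\rangle^{\wedge}_p$ finite; moreover no type $\leq n$ complex makes it finite, which is what pins the fp-type at exactly $n$. Since $p$-completeness, bounded-belowness, and the finiteness of $F\otimes(-)$ are all preserved under taking finite colimits (in particular finite direct sums and cofibers), and each of our spectra is built from finitely many shifts of $\BP\langle n\rangle$ (or $\BP\langle n-1\rangle$ in the last case) as a module, the upper bound on fp-type is immediate.

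First I would treat $(\BP\langle n\rangle[x]/x^e)^{\wedge}_p$: as a spectrum it is $\bigoplus_{i=0}^{e-1}\Sigma^0\BP\langle n\rangle$ (Remark \ref{remarke3alg1}, noting $\BP\langle n\rangle[x]/x^e\simeq \BP\langle n\rangle\otimes_{\MU}\MU[x]/x^e$ as spectra), so after $p$-completion it is a finite sum of copies of $\BP\langle n\rangle^{\wedge}_p$; tensoring with the type $n+1$ complex $F$ gives a finite sum of copies of $F\otimes\BP\langle n\rangle^{\wedge}_p$, hence finite. For $(\Lambda_{\BP\langle n\rangle}(x))^{\wedge}_p\simeq(\BP\langle n\rangle\oplus\Sigma\BP\langle n\rangle)^{\wedge}_p$ the same argument applies. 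For $(\BP\langle n\rangle/v_n^e)^{\wedge}_p$, I would use the tower of Proposition \ref{square0}: the successive cofibers are $\Sigma^{k(2p^{n+1}-2)}\BP\langle n\rangle$, but crucially $\BP\langle n\rangle/v_n^e$ is a \emph{finite} $\BP\langle n-1\rangle$-module (as explained before Proposition \ref{finalgebra}, via iterated cofiber sequences whose cofibers are shifts of $\BP\langle n-1\rangle$), so tensoring with a type $n$ complex $F'$ — for which $F'\otimes\BP\langle n-1\rangle^{\wedge}_p$ is finite — yields something finite. This establishes that the fp-type is at most $n$ in the first two cases and at most $n-1$ in the last.

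For the lower bounds I would argue by chromatic height. By Proposition \ref{redshift1} and Proposition \ref{redshift2}, $\BP\langle n\rangle[x]/x^e$ and $\Lambda_{\BP\langle n\rangle}(x)$ have height exactly $n$, while $\BP\langle n\rangle/v_n^e$ has height exactly $n-1$. If $(\BP\langle n\rangle[x]/x^e)^{\wedge}_p$ were an fp-spectrum of type $<n$, there would be a finite complex $F$ of type $\leq n$ with $F\otimes(\BP\langle n\rangle[x]/x^e)^{\wedge}_p$ finite, hence bounded; but a bounded $p$-complete spectrum is dissonant, i.e. $K(i)$-acyclic for all $i\geq 1$, so $F\otimes\BP\langle n\rangle[x]/x^e$ would be $K(n)$-acyclic. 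Taking $F$ to have nonzero $K(n)$-homology (possible since $\mathrm{type}(F)\leq n$) and using that $\BP\langle n\rangle[x]/x^e$ has $L_{K(n)}\neq 0$ with $K(n)_*\BP\langle n\rangle[x]/x^e$ free over $K(n)_*$ on $\pi_*K(n)$, the Künneth formula forces $K(n)_*(F\otimes\BP\langle n\rangle[x]/x^e)\neq 0$, a contradiction. The same reasoning applied at height $n$ for $\Lambda_{\BP\langle n\rangle}(x)$, and at height $n-1$ for $\BP\langle n\rangle/v_n^e$, gives the matching lower bounds.

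I expect the main obstacle to be the careful bookkeeping in the $\BP\langle n\rangle/v_n^e$ case: one must verify that this spectrum really is a \emph{finite} $\BP\langle n-1\rangle$-module with the right cofiber pattern, and that the relevant type $n$ finite complex $F'$ detects it at height $n-1$, which requires tracking how $v_{n-1}$ (or rather the fact that $L_{K(n-1)}\BP\langle n\rangle/v_n^e\neq 0$) behaves through the tower. The height computations themselves are already packaged in Propositions \ref{redshift1} and \ref{redshift2}, so once those are invoked the lower-bound half is short; the subtlety is purely in matching the finite-complex-witness side of the fp-type definition with the chromatic-support side, i.e.\ confirming that the type $n$ (resp.\ $n+1$) complex that makes $\BP\langle n-1\rangle^{\wedge}_p$ (resp.\ $\BP\langle n\rangle^{\wedge}_p$) finite also works after the relevant finite-module base change, which follows formally from the thick subcategory theorem but should be stated explicitly.
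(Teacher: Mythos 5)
Your proposal is correct, and on the upper bound it is the same argument as the paper's: the paper's proof is essentially one line, citing that the spectra are $p$-complete and bounded below, that $\BP\langle n\rangle^{\wedge}_p$ is an fp-spectrum of type $n$, and Proposition \ref{finalgebra} (finiteness as $\BP\langle n\rangle$-, resp.\ $\BP\langle n-1\rangle$-modules), together with the observation $X^{\wedge}_p\otimes \Ss/p\simeq X\otimes \Ss/p$ to pass between the completed and uncompleted spectra — exactly your finite-sum/iterated-cofiber bookkeeping. Where you genuinely go beyond the paper is the lower bound: the paper never spells out why the fp-type is not smaller, whereas you supply a chromatic-support argument, invoking Propositions \ref{redshift1} and \ref{redshift2} for the exact heights and then a $K(n)$- (resp.\ $K(n-1)$-) Künneth contradiction; this is a valid route (and for $\BP\langle n\rangle/v_n^e$, which is an iterated extension rather than a direct sum of copies of $\BP\langle n-1\rangle$, it is arguably the cleanest one, since a retract argument is not immediately available there). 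Two small points to tidy: the justification should read that a \emph{bounded spectrum with finite homotopy groups} is $K(i)$-acyclic for $i\geq 1$ (boundedness plus $p$-completeness alone does not give dissonance — think of $H\Zz_p$ — but finiteness in the sense of the fp-definition does, since such a spectrum is built from finitely many Eilenberg--MacLane spectra of finite groups); and in the $\BP\langle n\rangle/v_n^e$ case the successive cofibers are shifts $\Sigma^{k(2p^{n}-2)}\BP\langle n-1\rangle$ (your displayed indexing mirrors a typo already present in the paper, and you correct it in words immediately after).
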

\begin{proof}
The spectra mentioned above are both $p$-complete and bounded below. Since $\BP\langle n\rangle^{\wedge}_p$ is fp-spectrum of type $n$, now the result follows from Proposition \ref{finalgebra}, and note that there is equivalence $X^{\wedge}_p\otimes \Ss/p \simeq X\otimes \Ss/p$.
\end{proof}
Furthermore, we have following theorem establishing the relationship between fp-spectra and Lichtenbaum\textendash Quillen property.
\begin{theorem}[{\cite[Proposition 8.2]{MR99},\cite[Theorem 3.1.3]{HW22}\cite[Proposition 1.1]{yang2025}}]\label{thm:LQproperty}
Let $X$ be an fp-spectrum of type at most $n$, then the fiber of the map
\begin{equation*}
    X\to L^f_n X
\end{equation*}
is bounded above, where $L^f_n$ denote the Bousfield localization functor with respect to the spectrum $T(0)\oplus T(1)\oplus \cdots \oplus T(n)$, and $T(k)$ is the telescope of a type $k$ finite complex. In fact, if $X$ is a $p$-local spectrum and there is a type-$(n+1)$ finite spectrum $F$ such that $X\otimes F$ is bounded. Then the fiber of the localization map 
\begin{equation*}
    X\to L^f_n X
\end{equation*}
is bounded above.
\end{theorem}
We aim to investigate the redshift phenomena in this context, the methods to prove redshift conjecture were well understood, see \cite{AN21,HW22}.
More concretely, we have the following theorem. First, we introduce some conventions.
\begin{convention}
As in \cite{HW22}, for a connective algebra $A$, we will abbreviated $\TR^j(\THH(A)^{\wedge}_p))$ by $\TR^j(A)$, $\TR(\THH(A)^{\wedge}_p)$ by $\TR(A)$, and $\TC(\THH(A)^{\wedge}_p)$ by $\TC(A)$.
Let $X$ be a bounded below $p$-typical cyclotomic spectrum. We use following notations to indicate some statements concerning about $X$.
Let $F$ be a fixed finite complex.
\begin{itemize}
    \item \framebox{$F$-Bounded TR for $X$} = The spectrum $F\otimes \TR(X)$ is bounded.
    \item \framebox{$F$-Bounded TC for $X$} = The spectrum $F\otimes \TC(X)$ is bounded.
    \item \framebox{$F$-Segal Conjecture for $X$} = The fiber of the Frobenius $F\otimes X\to F\otimes X^{t C_p}$
    is bounded above.
    \item  \framebox{$F$-Weak Canonical Vanishing for $X$} = There is an integer $d\geq 0$, such that for any $i\geq d$ the map 
    \begin{equation*}
        \pi_i(F\otimes \mbox{can}) : \pi_i(F\otimes X^{h C_{p^k}})\to \pi_i(F\otimes X^{t C_{p^k}})
    \end{equation*}
    is zero for all $0\leq k\leq \infty$.
\end{itemize}
When the concerning object $X$ is clear, we will simply write as \framebox{$F$-Bounded $\TR$} without mentioning $X$. 
\end{convention}
\begin{theorem}[{\cite[Theorem 3.3.2]{HW22}}]\label{boundedTR}
Let $X$ be a bounded below, $p$-typical cyclotomic spectrum and $F\otimes X$ is $p$-power torsion ($\exists N, p^N=0 \in \pi_0(\Map(F\otimes X, F\otimes X))$). Then we have,
\begin{equation*}
\text{\framebox{$F$-\textup{Segal Conjecture}} $\wedge$
\framebox{$F$-\textup{Weak Canonical Vanishing}} 
$\Longleftrightarrow$
\framebox{$F$-\textup{Bounded} $\TR$}.}
\end{equation*}
\end{theorem}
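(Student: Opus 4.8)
\smallskip
\noindent\emph{Proof proposal.} The plan is to reduce everything to the two standard descriptions of $\TR$ of a bounded below $p$-typical cyclotomic spectrum $X$ (with Frobenius $\varphi\colon X\to X^{tC_p}$): on the one hand $\TR(X)=\lim_n\TR^n(X)$ along the restriction maps $R$, with norm cofiber sequences $X_{hC_{p^{n-1}}}\to\TR^n(X)\xrightarrow{R}\TR^{n-1}(X)$; on the other hand, using the Tate orbit lemma identification $X^{tC_{p^{i+1}}}\simeq(X^{tC_p})^{hC_{p^i}}$, which holds because $X$ is bounded below, the limit description of $\TR$ (see \cite{NS18}) identifying $\TR(X)$ with the fiber of a difference map
\begin{equation*}
\TR(X)\;\longrightarrow\;\prod_{i\geq 0}X^{hC_{p^i}}\;\xrightarrow{\ \alpha-\beta\ }\;\prod_{i\geq 0}\bigl(X^{tC_p}\bigr)^{hC_{p^i}},
\end{equation*}
where $\beta$ is ``diagonal'' with $i$-th component the Frobenius $\varphi^{hC_{p^i}}$, while $\alpha$ ``shifts the index down by one'', its $i$-th component being the canonical map $\can\colon X^{hC_{p^{i+1}}}\to X^{tC_{p^{i+1}}}\simeq(X^{tC_p})^{hC_{p^i}}$. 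Concretely $\TR^{n+1}(X)$ is the fiber of the analogous finite difference map on $\prod_{i=0}^{n}$, and $\fib(R\colon\TR^{n+1}\to\TR^n)$ is the fiber of the canonical map on the top factor, which recovers the norm cofiber sequences. Since $F$ is a finite complex, $F\otimes(-)$ commutes with all the limits, Tate constructions and homotopy fixed points involved, so the same two descriptions hold for $F\otimes\TR(X)$ with $X$ replaced by $F\otimes X$; the hypothesis that $F\otimes X$ is $p$-power torsion is what guarantees $p$-completeness and the good behaviour of the Tate constructions, so that boundedness is the right finiteness condition to track. The strategy is then that the Segal conjecture governs the Frobenius maps making up $\beta$, while weak canonical vanishing governs the canonical maps making up $\alpha$.

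First I would treat the implication from $F$-Segal conjecture together with $F$-weak canonical vanishing to $F$-bounded $\TR$. By $F$-Segal, $\fib(F\otimes\varphi)$ is bounded above, say by $D_0$. The key elementary input is that the homotopy fixed points of a spectrum bounded above by $D_0$ are again bounded above by $D_0$, with a bound \emph{independent of the finite group} taken: there is no positive cohomological degree contribution in the homotopy fixed point spectral sequence that could push classes upward. Hence each $\fib(F\otimes\varphi^{hC_{p^i}})=\fib(F\otimes\varphi)^{hC_{p^i}}$ is bounded above by $D_0$, uniformly in $i$, i.e.\ $\beta\otimes F$ is an isomorphism on $\pi_*$ in all degrees $>D_0$, uniformly in $i$. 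Likewise $F$-weak canonical vanishing says the canonical maps $(F\otimes X)^{hC_{p^k}}\to(F\otimes X)^{tC_{p^k}}$ vanish on $\pi_*$ in degrees $\geq d$, uniformly in $k$, so $\alpha\otimes F$ vanishes on $\pi_*$ in degrees $\geq d$. Therefore $(\alpha-\beta)\otimes F$ is an isomorphism on $\pi_*$ in all degrees $>\max(D_0,d)$, so its fiber $F\otimes\TR(X)$ is bounded above by $\max(D_0,d)$. On the other hand $F\otimes\TR(X)$ is bounded below: by the norm cofiber sequences each $F\otimes\TR^n(X)$ is bounded below by the connectivity $c$ of $F\otimes X$ (homotopy orbits preserve that bound), and an inverse limit of a tower of uniformly $c$-connective spectra is $(c-1)$-connective. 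Hence $F\otimes\TR(X)$ is bounded.

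Conversely, suppose $F\otimes\TR(X)$ is bounded, say bounded above by $D$; then from the fiber sequence above $(\alpha-\beta)\otimes F$ is an isomorphism on $\pi_*$ in all degrees $>D+1$. The plan is to first extract $F$-weak canonical vanishing and then deduce $F$-Segal formally. For the latter step: once the canonical maps, after $\otimes F$, are known to vanish on $\pi_*$ in a fixed range, $\alpha\otimes F$ vanishes there, so $(\alpha-\beta)\otimes F$ agrees with $-\beta\otimes F=-\prod_i(\varphi^{hC_{p^i}}\otimes F)$ on $\pi_*$ in high degrees; since $\pi_*$ commutes with products, a product of maps of spectra is an isomorphism on $\pi_*$ in a range exactly when each factor is, so the $i=0$ factor shows $F\otimes\varphi$ is an isomorphism on $\pi_*$ in high degrees, which is $F$-Segal. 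The remaining, and main, obstacle is the extraction of $F$-weak canonical vanishing from $F$-bounded $\TR$: the canonical maps do not appear on the associated graded of $\alpha-\beta$ (that graded consists only of the Frobenius maps $\varphi^{hC_{p^i}}$) but rather as the ``off-diagonal'' entries, equivalently as the attaching maps in the norm cofiber sequences $F\otimes X_{hC_{p^{k-1}}}\to F\otimes\TR^k(X)\xrightarrow{R}F\otimes\TR^{k-1}(X)$. One must show that if some $\can\colon(F\otimes X)^{hC_{p^k}}\to(F\otimes X)^{tC_{p^k}}$ had non-zero image or cokernel on $\pi_*$ in arbitrarily high degrees, with no bound uniform in $k$, then, tracing through the tower, $\pi_*(F\otimes\TR(X))$ would be non-zero in arbitrarily high degrees, contradicting boundedness. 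Making this precise requires a careful d\'evissage along the $R$-tower (equivalently, along the index filtration of the product above), keeping track of how the canonical maps interact with the Frobenius maps across infinitely many stages; this is where the $p$-power torsion hypothesis and the precise form of the norm cofiber sequences are essential, and it is the step I expect to be the most delicate. Combining the two extractions gives the converse, and hence the equivalence.
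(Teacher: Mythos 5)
The paper itself offers no proof of this statement: it is imported verbatim from \cite[Theorem 3.3.2]{HW22}, so your proposal can only be measured against the argument of the cited source. Your proof of the implication from $F$-Segal together with $F$-weak canonical vanishing to $F$-bounded $\TR$ is correct and follows the standard skeleton: for bounded below $X$ the Tate orbit lemma gives $X^{tC_{p^{i+1}}}\simeq (X^{tC_p})^{hC_{p^i}}$, $\TR(X)$ is the limit of the iterated pullbacks $\TR^{m+1}(X)\simeq X^{hC_{p^m}}\times_{(X^{tC_p})^{hC_{p^{m-1}}}}\TR^{m}(X)$, equivalently the fiber of your difference map $\alpha-\beta$; your uniform coconnectivity bound for homotopy fixed points of a coconnective spectrum, the degreewise vanishing of the canonical components supplied by weak canonical vanishing, the uniform connectivity of the terms, and dualizability of $F$ then give boundedness of $F\otimes\TR(X)$. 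This is essentially the route of the cited source for that direction.

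The converse, however, is not proved: you defer the extraction of $F$-weak canonical vanishing from $F$-bounded $\TR$ to an unspecified d\'evissage and explicitly flag it as the delicate step, and that step is precisely where the content of the theorem lies; it is moreover the direction this paper actually uses (Corollary \ref{cor:TRboundedfails} argues that failure of the Segal conjecture forces $\TR$ to be unbounded, i.e., it needs bounded $\TR$ $\Rightarrow$ $F$-Segal). The deferral is not a formality: knowing that $(\alpha-\beta)\otimes F$ induces isomorphisms on homotopy in high degrees does not by itself separate the diagonal (Frobenius) entries from the off-diagonal (canonical) entries. Already for the finite stage $\TR^2(X)=\fib\bigl(X^{hC_p}\times X\to X^{tC_p}\bigr)$ the difference of the two maps can be an isomorphism on homotopy in a range without the canonical map vanishing or the Frobenius being an equivalence there, and on an infinite product a shift-minus-diagonal map can behave badly even when every diagonal entry is an equivalence (the constant sequences lie in the kernel of the shift-minus-identity map on $\prod_{i\geq 0}A$), so no purely "triangular" argument can work. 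Genuine extra input is required at this point -- this is exactly where the cited proof has to work and where the $p$-power torsion hypothesis enters; in your write-up that hypothesis is mentioned but never actually used, which is a symptom of the missing argument. Your observation that, once weak canonical vanishing is available, $F$-Segal follows from boundedness of $\TR$ by projecting to the $i=0$ factor of $\beta$ is fine, but as it stands the proposal establishes only one of the two implications.
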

\begin{remark}
Note that $\TC\simeq \fib(1-F: \TR\to  \TR)$, hence we have \framebox{$F$-Bounded $\TR$} $\Longrightarrow$ \framebox{$F$-Bounded $\TC$}. In fact, the $\TR$ here is the usual $\TR(-,p)$, and note that we have equivalences $\TR(A^{\wedge}_p)^{\wedge}_p\simeq \TR(A)^{\wedge}_p\simeq \TR(A,p)^{\wedge}_p$.
\end{remark}
Hence, according to the discussion above, to prove the boundedness of $\TC$, it suffice to prove Segal conjecture and weak canonical vanishing. However, for $\BP\langle -1\rangle [x]/x^e\simeq \Ff_p[x]/x^e$ the Segal conjecture fails, when $e>1$.
\begin{proposition}\label{pro:segal1}
Let $e>1$, the fiber of the Frobenius $$\THH(\Ff_p[x]/x^e)\otimes \Ss/p\to \THH(\Ff_p[x]/x^e)^{t C_p}\otimes \Ss/p$$ is not bounded above. Moreover, for any perfect field $k$ of characteristic $p$, the fiber of the Frobenius \begin{equation*}
 \THH(k[x]/x^e)\otimes \Ss/p\to \THH(k[x]/x^e)^{t C_p}\otimes \Ss/p  
\end{equation*} is not bounded above, i.e. 
\framebox{$\Ss/p$-\textup{Segal conjecture}}  fails for $\THH(k[x]/x^e)^{\wedge}_p$. Consequently, \framebox{$\Ss$-\textup{Segal conjecture}} also fails for both $\THH(k[x]/x^e)^{\wedge}_p$ and $\THH(k[x]/x^e)$. 
\end{proposition}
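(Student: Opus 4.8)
The plan is to use the cyclic (weight) decomposition of topological Hochschild homology of a pointed monoid algebra. Write $\Pi_e=\{0,1,x,\dots,x^{e-1}\}$ for the pointed monoid with $x^e=0$, so that $\Ff_p[x]/x^e\simeq\Ff_p\otimes\suspen\Pi_e$, and, since $\THH$ is symmetric monoidal with values in cyclotomic spectra,
\[
\THH(\Ff_p[x]/x^e)\;\simeq\;\THH(\Ff_p)\otimes\suspen\lvert N^{\mathrm{cy}}(\Pi_e)\rvert\;\simeq\;\bigoplus_{i\ge 0}\THH(\Ff_p)\otimes\suspen\lvert N^{\mathrm{cy}}(\Pi_e;i)\rvert,
\]
where the splitting is by the weight grading of the cyclic nerve (Proposition \ref{pro:decomposation}); the weight $0$ summand is $\THH(\Ff_p)$ and the others are $\THH(\Ff_p)$ smashed with finite pointed $S^1$-CW complexes. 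In the symmetric monoidal category of cyclotomic spectra the Frobenius of a tensor product is the lax symmetric monoidal structure map of $(-)^{tC_p}$ precomposed with the tensor of the two Frobenii; and the Frobenius of $\THH(\suspen\Pi_e)=\suspen\lvert N^{\mathrm{cy}}(\Pi_e)\rvert$ on the summand of weight $pi$ is the canonical map $\suspen\lvert N^{\mathrm{cy}}(\Pi_e;i)\rvert=\Phi^{C_p}\suspen\lvert N^{\mathrm{cy}}(\Pi_e;pi)\rvert\to(\suspen\lvert N^{\mathrm{cy}}(\Pi_e;pi)\rvert)^{tC_p}$ (using the edgewise-subdivision identification $\lvert N^{\mathrm{cy}}(\Pi_e;pi)\rvert^{C_p}\cong\lvert N^{\mathrm{cy}}(\Pi_e;i)\rvert$), which is a $p$-adic equivalence by the Segal conjecture for finite $C_p$-spectra of Lin and Gunawardena. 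It follows that after $\otimes\Ss/p$ the fiber of the Frobenius of $\THH(\Ff_p[x]/x^e)$ decomposes as $\bigoplus_{i\ge 0}\fib(\varphi_i)\otimes\Ss/p$, where $\varphi_i$ is the Segal map $\Phi^{C_p}\bigl(\THH(\Ff_p)\otimes\suspen\lvert N^{\mathrm{cy}}(\Pi_e;pi)\rvert\bigr)\to\bigl(\THH(\Ff_p)\otimes\suspen\lvert N^{\mathrm{cy}}(\Pi_e;pi)\rvert\bigr)^{tC_p}$.

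I would first clear away the auxiliary reductions. Smashing a bounded-above spectrum with $\Ss/p$ keeps it bounded above, $\fib(\varphi\otimes\Ss/p)\simeq\fib(\varphi)\otimes\Ss/p$, and $\otimes\Ss/p$ does not distinguish a spectrum from its $p$-completion; hence failure of the $\Ss/p$-Segal conjecture for $\THH(\Ff_p[x]/x^e)$ gives failure of the $\Ss$-Segal conjecture for both $\THH(k[x]/x^e)$ and its $p$-completion. For a general perfect field $k$ of characteristic $p$ one has $\THH_*(k)=k[\mu]$ with $|\mu|=2$, the Segal conjecture holds for $k$ (it reduces to the $\Ff_p$ case by flat base change $\THH(k)\simeq\THH(\Ff_p)\otimes_{\Ff_p}k$), and $\THH(k[x]/x^e)\simeq\THH(k)\otimes\suspen\lvert N^{\mathrm{cy}}(\Pi_e)\rvert$ carries the same weight decomposition and Frobenius analysis; so the argument below, written for $k=\Ff_p$, applies verbatim with $\THH(\Ff_p)$ replaced by $\THH(k)$. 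It thus suffices to show that $\bigoplus_{i\ge 0}\fib(\varphi_i)\otimes\Ss/p$ is not bounded above when $e>1$.

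The weight $0$ term is $W\otimes\Ss/p$, where $W=\fib\bigl(\THH(\Ff_p)\to\THH(\Ff_p)^{tC_p}\bigr)$; by B\"okstedt's computation and the Segal conjecture for $\Ff_p$ this is nonzero but bounded above (the ``$W\not\simeq 0$'' of Theorem \ref{thmE}). The task is therefore to show that for some $i\ge 1$ the Segal map $\varphi_i$ of the auxiliary cyclotomic $\Ff_p$-module $X_i:=\THH(\Ff_p)\otimes\suspen\lvert N^{\mathrm{cy}}(\Pi_e;pi)\rvert$ fails to be a $\pi_*$-isomorphism in arbitrarily large degrees after $\otimes\Ss/p$. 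Its source $\Phi^{C_p}X_i\simeq\THH(\Ff_p)\otimes\suspen\lvert N^{\mathrm{cy}}(\Pi_e;i)\rvert$ has homotopy $\Ff_p[\mu]\otimes\widetilde H_*\bigl(\lvert N^{\mathrm{cy}}(\Pi_e;i)\rvert;\Ff_p\bigr)$, so $\dim_{\Ff_p}\pi_k(\Phi^{C_p}X_i/p)$ is eventually constant in each parity; whereas the target $X_i^{tC_p}$ is computed by the isotropy-separation sequence $EC_{p+}\otimes X_i\to X_i\to\widetilde{EC_p}\otimes X_i$ and the mod $p$ Tate spectral sequence $\widehat H^{-s}(C_p;\pi_t X_i)\Longrightarrow\pi_{s+t}X_i^{tC_p}$, whose $E_2$-term one controls since the $C_p$-action on $\pi_* X_i=\Ff_p[\mu]\otimes\widetilde H_*(\lvert N^{\mathrm{cy}}(\Pi_e;pi)\rvert;\Ff_p)$ is trivial on the $\Ff_p[\mu]$-factor. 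The point to extract is that $\dim_{\Ff_p}\pi_k(X_i^{tC_p}/p)$ exceeds the source's dimension for all large $k$ of a suitable parity, which forces $\fib(\varphi_i)\otimes\Ss/p$ to be unbounded above.

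The main obstacle is exactly this last computation: one must pin down the $C_p$-equivariant homotopy type of $\lvert N^{\mathrm{cy}}(\Pi_e;pi)\rvert$ and run the resulting Tate spectral sequence precisely enough to see the mismatch. I would do this using Hesselholt--Madsen's $C_p$-equivariant cell decomposition of the spaces $\lvert N^{\mathrm{cy}}(\Pi_e;j)\rvert$ (free $C_p$-cells attached over the fixed subspace $\lvert N^{\mathrm{cy}}(\Pi_e;j/p)\rvert$), beginning with the smallest nontrivial case $e=2$: there $\lvert N^{\mathrm{cy}}(\Pi_2;1)\rvert\simeq S^1_+$ and $\lvert N^{\mathrm{cy}}(\Pi_2;2)\rvert\simeq\Rr\Pp^2$, with $C_2\subset S^1$ acting nontrivially and with fixed subspace $S^1_+$, and one checks by a direct mod $2$ computation that the Segal conjecture fails for $\THH(\Ff_2)\otimes\suspen\Rr\Pp^2$; the case of general $e>1$ then follows because every such $e$ has nontrivial weights of this form. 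This is also the step where the Hesselholt--Madsen computations feed in, and where, together with \cite[Theorem A]{HM97}, one reads off that $\K(\Ff_p[x]/x^e)/p$ is not bounded above.
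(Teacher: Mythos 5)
Your overall route (the Hesselholt--Madsen weight decomposition of $\THH(\Ff_p[x]/x^e)$ as $\bigoplus_m \THH(\Ff_p)\otimes B_m$, identification of the Frobenius componentwise via geometric fixed points and the $p$-adic equivalence $\Phi^{C_p}\suspen|N^{\mathrm{cy}}(\Pi_e;pi)|\to(\suspen|N^{\mathrm{cy}}(\Pi_e;pi)|)^{tC_p}$) is legitimate and is in fact the ``alternative, direct'' proof that the paper defers to Propositions \ref{pro:segalfails} and \ref{pro:segalfailsgradtruncated}; the paper's own proof of Proposition \ref{pro:segal1} is instead indirect: by \cite[Proposition 3.3.7]{HW22}, the $\Ss/p$-Segal conjecture for the $\Ee_\infty$-$\Ff_p$-algebra $k[x]/x^e$ would force $\TR(k[x]/x^e)\otimes\Ss/p$ to be bounded, contradicting the Hesselholt--Madsen computation of $\TC(k[x]/x^e)$ \cite[Theorem 4.2.10]{HM97}. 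So the approaches genuinely differ, but your execution has a fatal gap at the key step.

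The gap is your claim that some \emph{single} weight $i\ge 1$ has Segal map $\varphi_i$ failing to be a $\pi_*$-isomorphism in arbitrarily large degrees, which you try to see by a dimension count on $X_i^{tC_p}$. This cannot work. By your own identifications, modulo $p$ the map $\varphi_i$ factors as $\varphi_{\THH(\Ff_p)}\otimes \mathrm{id}$ on $\THH(\Ff_p)\otimes\suspen|N^{\mathrm{cy}}(\Pi_e;i)|$ followed by equivalences, so
\begin{equation*}
\fib(\varphi_i)\otimes\Ss/p\;\simeq\;W\otimes\suspen|N^{\mathrm{cy}}(\Pi_e;i)|\otimes\Ss/p,
\end{equation*}
where $W=\fib(\THH(\Ff_p)\to\THH(\Ff_p)^{tC_p})$ is concentrated in negative degrees; tensoring with a \emph{finite} complex keeps this bounded above, so every individual $\varphi_i$ satisfies the bounded-above Segal property. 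Correspondingly, your asserted inequality $\dim_{\Ff_p}\pi_k(X_i^{tC_p}/p)>\dim_{\Ff_p}\pi_k(\Phi^{C_p}X_i/p)$ for all large $k$ is false: since $\varphi_{\THH(\Ff_p)}$ is an isomorphism on homotopy in large (indeed non-negative) degrees and both sides are tensored with the same finite complex, the dimensions agree in all sufficiently large degrees; in particular your ``direct mod $2$ computation'' for $\THH(\Ff_2)\otimes\suspen\Rr\Pp^2$ cannot show what you want. The unboundedness of the total fiber is an aggregate phenomenon across infinitely many weights, and the correct endgame (the one the paper runs in Proposition \ref{pro:segalfails} and \ref{pro:segalfailsgradtruncated}) needs only ingredients you already have on the table: $W\not\simeq 0$ together with $W\otimes\suspen_+S^1\not\simeq 0$, plus the fact that the connectivity of $B_m$ grows like $2[\frac{m-1}{e}]$, produce nonzero classes $\pi_{b+[\frac{m-1}{e}]}(W\otimes B_m)\ne 0$ in degrees tending to $\infty$; since the total fiber is the product $\prod_m W\otimes B_m$ (wedge equals product by growing connectivity, and $\Ss/p$ is finite so smashing commutes with the product), it is not bounded above. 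Replacing your single-weight step by this argument repairs the proof; the reductions to general perfect $k$ and the passage from $\Ss/p$ to $\Ss$ in your second paragraph are fine.
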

\begin{proof}
By \cite[Proposition 3.3.7]{HW22}, since $k[x]/x^e$ is a connective $\Ee_{\infty}$-$\BP\langle -1\rangle$-algebra, then for any type $1$ complex $F$, we have 
\begin{equation*}
\text{\framebox{$F$-Segal conjecture for $\THH(k[x]/x^e)^{\wedge}_p$} $\Longrightarrow$  \framebox{$F$-Bounded $\TR$ for $\THH(k[x]/x^e)^{\wedge}_p$}.}  
\end{equation*}
From the computation in \cite[Theorem 4.2.10]{HM97}, one gets that $\TC(k[x]/x^e)\otimes \Ss/p$ is not bounded above \footnote{Note for $k=\Ff_p$, the result says $\mathbf{W}_{me}(\Ff_p)/V_e(\mathbf{W}_m(\Ff_p)) \simeq \TC_{2m-1}(k[x]/x^e, (x))$, hence $\mathbf{W}_{me}(\Ff_p)/V_e(\mathbf{W}_m(\Ff_p)) \otimes \Zz/p \hookrightarrow  \TC_{2m-1}(k[x]/x^e; \Zz/p)$. Now note that $\mathbf{W}_{me}(\Ff_p)/V_e(\mathbf{W}_m(\Ff_p)) \otimes \Zz/p \twoheadrightarrow (\Zz/p^{me}/p^e (\Zz/p^m)) \otimes \Zz/p \Zz$, and $(\Zz/p^{me}/p^e (\Zz/p^m)) \otimes \Zz/p\Zz \not\simeq 0$ for $m\gg 0$, thus $\TC_{2m-1}(k[x]/x^e;\Zz/p)\not\simeq 0$. In general, combining the same argument with \cite[Theorem 2.1]{HMwitt}, one gets the desired statement.}, 
therefore $\TR(k[x]/x^e)\otimes \Ss/p$ is not bounded above. Hence, $\Ss/p$-Segal conjecture fails for $\THH(k[x]/x^e)^{\wedge}_p$.
\end{proof}

\begin{remark}
The proof of the proposition above may appear counterintuitive, as our goal is to apply Segal conjecture to deduce the boundedness of $\TR$, or $\TC$.
There is an alternative proof of the above result, and the result can be further generalized. That would require a better understanding of $\THH(\Ss[x]/x^e)$ which we will present later.
\end{remark}
First, we recall a framework used in \cite{HW22} to prove the Segal conjecture, and it also appeared in \cite{MaytypeSS}. 
Let $\fil(\Sp):=\Fun(\Zz^{\text{op}},\Sp)$ be the category of filtered spectra, which can be equipped with a symmetrical monoidal structure via Day convolution \cite[Example 2.2.6.17]{ha}. Then for an algebra $X$ in $\fil(\Sp)$, one can define $\THH(x)$ using \cite[Definition \Rom{3}.2.3.]{NS18}, and $\THH(X)$ is a filtered spectrum. And note that for an algebra $A$, we can easily viewed $A$ as filtered algebra via Whitehead tower  which we could denote it by $\tau_{\geq \bullet} A$. Then $\THH(\tau_{\geq \bullet} A)$ gives us a filtration on $\THH(A)$, and the associated spectral sequence is given by \begin{equation*}
    \pi_{*}\THH(\gr_{\bullet} A) \Longrightarrow \pi_* (\THH(A)).
\end{equation*}
There is also a well known way to produce a filtered algebra from an algebra, namely Adams descent. Let $E$ be a connective commutative algebra.
For a spectrum $X$, one can form then $E$-Adams tower (or $E$-synthetic analogue) $\lim_{[n]\in \triangle_{\leq \bullet}}X\otimes E^{\otimes[n]}$, then the associated spectral sequence is the Adams spectral sequence. There is a modified version, called descent tower in \cite{HW22} or $E$ synthetic analogue of $X$. The filtered spectrum is given by $ \lim_{[n]\in \triangle}( \tau_{\geq \bullet}( X\otimes E^{\otimes [n]}))$. We denote this functor by $\desc_E: \Sp \to \fil(\Sp)$ which is a lax symmetric monoidal functor,  and according to  \cite[Convention C.1.1]{HW22} we get a spectral sequence 
\begin{equation*}
    E^{s,t}_2\simeq \pi_{t-s}\gr^t X.
\end{equation*}
Here, we have used the \textit{d\'ecalage} construction, see \cite[Remark 3.7]{GIK22}. Especially, for $E=\Ff_p$, and $X$ is bounded below, we have the following spectral sequence \begin{equation*}
    E^{s,t}_2 \simeq \ext^{*,*}_{\calA}(\Ff_p,\coh_*(X;\Ff_p)) \Longrightarrow \pi_* X^{\wedge}_p.
\end{equation*}
Now, in \cite{HW22}, to prove the \framebox{$F$-Segal conjecture for $\THH(\BP\langle n\rangle)$} it suffice to show that the Frobenius map \begin{equation*}
    \Ff_p\otimes_{\BP\langle n\rangle} \THH(\BP\langle n\rangle) \to \Ff_p\otimes_{\BP\langle n\rangle} \THH(\BP\langle n\rangle)^{t C_p}
\end{equation*}
is truncated. Applying the descent functor $\desc$, one eventually reduce to show for the $\Ee_2$-graded algebra $\Ff_p \otimes \Ss[v_1] \otimes \cdots \otimes \Ss[v_n]$,  the Frobenius
\begin{equation*}
    \THH_*(\Ff_p\otimes \Ss[v_1]\otimes \cdots \otimes \Ss[v_n])/(v_1,v_2,\ldots, v_n) \to \THH_*(\Ff_p\otimes \Ss[v_1]\otimes \cdots \otimes \Ss[v_n])^{t C_p}/(v_1,v_2,\ldots, v_n)
\end{equation*}
is isomorphism in large enough degrees. Where the $\Ee_2$-algebra is equivalent to the associated graded algebra of $\desc_{\Ff_p}(\BP\langle n\rangle)$ by computing Adams $E_2$-page and some formal arguments. 
\begin{remark}
  Note that according to \cite[Proposition 6.6]{Lars-zetafunction}, the Segal conjecture is true for $\THH(\Ff_p[x])$ (and this can be also seen in \cite{HW22}). Hence it would make sense to guess that \framebox{$F$-Segal conjecture for $\THH(\BP\langle n\rangle[x])$} holds for any type $n+1$-finite complex $F$. And for $\BP\langle n\rangle[x]/x^e$, as seen in Proposition \ref{pro:segal1}, it would also make sense to guess that Segal conjecture fails for $\THH(\BP\langle n\rangle[x]/x^e)$. These all require some understanding of the associated graded algebras derived by $\Ff_p$-descent functor ($\Ff_p$-synthetic analogue).
\end{remark}
According to the discussion before, we first prove the following result. Consider the $\Ee_2$-graded-$\Ff_p[x]$-algebra $R$, with the homotopy ring $\pi_* R\simeq \Ff_p[x][a_1,a_2,\ldots ,a_n]$, each $a_i$ has degree $|a_i|\in 2\Zz$ and positive weight.
\begin{proposition}\label{pro:gradede2}
    The $\Ee_2$-graded-$\Ff_p[x]$-algebra $R$ is equivalent to 
    \begin{equation*}
        \Ff_p[x]\otimes \Ss[a_1]\otimes \Ss[a_2] \otimes \cdots\otimes \Ss[a_n],
    \end{equation*} with the corresponding degrees and weights, where $\Ss[a_i]$ is the free graded $\Ee_2$-algebra generated by $a_i$.
\end{proposition}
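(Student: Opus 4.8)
The plan is to build a map from the free $\Ee_2$-graded-$\Ff_p[x]$-algebra on generators $a_1,\dots,a_n$ (in the prescribed degrees and weights) to $R$, and then show it is an equivalence by a filtration/associated-graded argument. Concretely, since $R$ is an $\Ee_2$-$\Ff_p[x]$-algebra, freeness of $\Ss[a_i]$ (the free graded $\Ee_2$-algebra on a class $a_i$ in degree $|a_i|$ and the chosen weight) means that choosing cycles in $\pi_*R$ representing the $a_i$ determines an $\Ee_2$-$\Ff_p[x]$-algebra map
\begin{equation*}
\varphi\colon \Ff_p[x]\otimes \Ss[a_1]\otimes\cdots\otimes\Ss[a_n]\longrightarrow R.
\end{equation*}
Here I would use that all the $a_i$ sit in even degree, so there is no obstruction coming from the free $\Ee_2$-algebra on an even class having more homotopy than a polynomial generator (in char $p$ one must be mildly careful: the free graded $\Ee_2$-algebra on an even class is \emph{not} just a polynomial algebra — it carries Dyer--Lashof-type operations — but the statement of the proposition is precisely that $R$ is this free object, not a polynomial ring, so there is nothing to kill). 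The weights being positive guarantees the tensor factors are connective in the weight grading and that the relevant (co)limits and filtrations below converge.

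Next I would check that $\varphi$ is an equivalence. The clean way is to reduce modulo $x$ and argue by the weight filtration. On homotopy we are told $\pi_*R\simeq \Ff_p[x][a_1,\dots,a_n]$; after base change along $\Ff_p[x]\to\Ff_p$ (setting $x=0$) both sides become $\Ff_p$-algebras, and on the target $\pi_*(R/x)\simeq \Ff_p[a_1,\dots,a_n]$, which by the computation of $\pi_*$ of free $\Ee_2$-algebras over $\Ff_p$ (e.g. via the known structure of $\THH$-style free constructions, or directly the bar/Koszul computation) matches $\pi_*(\Ss[a_1]\otimes\cdots\otimes\Ss[a_n])$ — this is exactly the input that makes the phrase ``free graded $\Ee_2$-algebra'' consistent with the given homotopy ring. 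So $\varphi\otimes_{\Ff_p[x]}\Ff_p$ is a $\pi_*$-isomorphism, hence an equivalence. Then I would promote this to an equivalence before reducing mod $x$: both source and target of $\varphi$ are $\Ff_p[x]$-modules which are complete (or bounded-below and finite type) with respect to the $x$-adic filtration — $x$ has positive weight, and in each fixed weight only finitely many powers of $x$ contribute — so an $\Ff_p[x]$-linear map inducing an equivalence mod $x$ induces an equivalence on the associated graded for the $x$-adic filtration and therefore is itself an equivalence. Equivalently one runs the weight-graded spectral sequence and observes $\varphi$ is an iso on $E_1$.

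The main obstacle I expect is the homotopy computation of the free $\Ee_2$-$\Ff_p$-algebra on an even-degree class, i.e. verifying that $\pi_*(\Ss[a_i])\simeq \Ff_p[a_i]$ (or identifying precisely what it is) in the relevant range of degrees and weights, since over $\Ff_p$ free $\Ee_2$-algebras a priori acquire extra operations; one must confirm the weight grading is set up so that no such extra classes appear in the weights occupied by $\Ff_p[x][a_1,\dots,a_n]$. Once that identification is in hand, the rest is the formal map-building-plus-associated-graded argument sketched above, and in particular the multiplicativity ($\Ee_2$-ness) of $\varphi$ is automatic from the universal property of the free $\Ee_2$-algebra, so no coherence bookkeeping is needed beyond choosing the cycle representatives.
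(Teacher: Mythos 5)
There is a genuine gap, and it sits exactly at the step you dispose of first: the claim that ``freeness of $\Ss[a_i]$ means that choosing cycles in $\pi_*R$ representing the $a_i$ determines an $\Ee_2$-$\Ff_p[x]$-algebra map $\varphi$.'' Despite the wording of the statement, $\Ss[a_i]$ here cannot be the literal free graded $\Ee_2$-algebra on a class $a_i$: over $\Ff_p$ that free object carries Dyer--Lashof operations and Browder brackets, so its homotopy is strictly larger than $\Ff_p[a_i]$, and these extra classes live in weights $2w_i$ and $pw_i$ --- the same weights as $a_i^2$ and $a_i^p$ --- so no weight bookkeeping excludes them. Since the hypothesis is that $\pi_*R$ \emph{is} polynomial, $R$ cannot be the literal free $\Ee_2$-algebra, and your parenthetical ``there is nothing to kill'' resolves the tension in the wrong direction. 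The object actually meant (following \cite[Appendix B]{HW22}) is the polynomial $\Ee_2$-ring with $\pi_*\Ss[a_i]\simeq \pi_*\Ss[a_i]$, which has no universal property of the kind you invoke; producing an $\Ee_2$-algebra map $\Ss[a_i]\to R$ extending a chosen homotopy class is the entire content of the proposition, not a formality.

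The paper's proof (imported from \cite[Proposition 4.2.1]{HW22}) supplies exactly the missing ingredient: by the Koszul-duality obstruction theory of loc.\ cit., the space of such $\Ee_2$-maps is controlled by the double bar construction, and one reduces to checking that $\Ff_p[x]\otimes_{\Ff_p[x]\otimes_A\Ff_p[x]}\Ff_p[x]$ (with $A=\Ff_p[x]\otimes\Ss[a_1]\otimes\cdots\otimes\Ss[a_n]$) has even homotopy, which follows from the Tor spectral sequence. Your second half --- reducing mod $x$ and running the weight filtration to see that a map inducing the evident isomorphism on $\pi_*$ is an equivalence --- is fine, but it is the easy part and only becomes available once the map has been constructed by the obstruction-theoretic argument above.
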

\begin{proof}
    The proof is the same as in \cite[Proposition 4.2.1]{HW22}, we denote $\Ff_p[x]\otimes \Ss[a_1]\otimes \Ss[a_2]\otimes \cdots \Ss[a_n]$ by $A$, we only need to show that 
    \begin{equation*}
        \Ff_p[x]\otimes_{\Ff_p[x]\otimes_{A} \Ff_p[x]} \Ff_p[x]
    \end{equation*}
has homotopy groups concentrated in even degrees. But this is just an application of \cite{Ang08} or Tor spectral sequence.
\end{proof}
In general, one can prove the following result.
\begin{proposition}\label{pro:gradedn}
Let $R$ be a commutative ring. For any graded $\Ee_2$-$R$-algebra $R(n)$ with the homotopy ring $\pi_*R(n) \simeq R[a_1,\ldots, a_n]$, we have an equivalence of graded $\Ee_2$-$R$-algebras
    \begin{equation*}
        R(n) \simeq R\otimes \Ss[a_1]\otimes \cdots\otimes \Ss[a_n].
    \end{equation*}
\end{proposition}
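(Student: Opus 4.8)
The plan is to repeat the proof of Proposition~\ref{pro:gradede2}, which in turn follows \cite[Proposition~4.2.1]{HW22}, with the base $\Ff_p[x]$ replaced by the arbitrary commutative ring $R$; nothing in that argument uses the ground ring in an essential way beyond its being an ordinary commutative ring over which the relevant $\tor$-computations are available. Write $A := R \otimes \Ss[a_1] \otimes \cdots \otimes \Ss[a_n]$ for the candidate right-hand side, a connective graded $\Ee_2$-$R$-algebra.

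First I would construct a comparison map of graded $\Ee_2$-$R$-algebras $\Phi\colon A \to R(n)$. For each $i$ choose a class $\tilde a_i \in \pi_{*,*} R(n)$ lifting the polynomial generator $a_i$; by the universal property of the free graded $\Ee_2$-algebra $\Ss[a_i]$ this determines an $\Ee_2$-algebra map $\Ss[a_i] \to R(n)$, and these, together with the $R$-algebra structure on $R(n)$, assemble --- exactly as in \cite[Proposition~4.2.1]{HW22} --- into $\Phi$; it is compatible with the evident augmentations $A \to R$ and $R(n) \to R$ that annihilate the positive-weight part.

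It remains to prove $\Phi$ is an equivalence. Since the generators $a_i$ have strictly positive weight, both $A$ and $R(n)$ are complete with respect to the filtration by powers of the augmentation ideal (in any fixed bidegree only finitely many filtration steps contribute), so it suffices to check that $\Phi$ becomes an equivalence after the relevant bar construction; concretely, as in the proof of Proposition~\ref{pro:gradede2}, it is enough to show that $R \otimes_{R \otimes_A R} R$ has homotopy concentrated in even degrees, whereupon the compatibility of $\Phi$ with these constructions forces $\Phi$ itself to be an equivalence by descending along the tower. This evenness is a routine $\tor$-spectral-sequence computation (equivalently, an application of \cite[Proposition~3.6]{Ang08}): from $\pi_{*,*} A = R[a_1,\ldots,a_n]$, polynomial on a regular sequence of even-degree classes, one gets $\pi_{*,*}(R \otimes_A R) = \Lambda_R(\sigma a_1,\ldots,\sigma a_n)$, and one further bar construction turns this into a divided-power algebra $\Gamma_R(\sigma^2 a_1,\ldots,\sigma^2 a_n)$ on even-degree classes, the spectral sequences degenerating for parity reasons; running the same computation over $R(n)$ gives the same answer, and under these identifications $\Phi$ carries $\sigma^2 \tilde a_i$ to $\sigma^2 a_i$.

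The main obstacle is not a single calculation but the two structural inputs imported from \cite{HW22}: first, that the maps out of the free factors $\Ss[a_i]$ genuinely organize into a single $\Ee_2$-$R$-algebra map $\Phi$ --- since for $\Ee_2$- as opposed to $\Ee_\infty$-algebras the tensor product is not the coproduct, this is precisely the point handled in \cite[Proposition~4.2.1]{HW22}; and second, the completeness/convergence statement allowing one to pass back from the equivalence on the iterated bar construction to $\Phi$, which uses the positivity of the weights essentially. Granting these, the remaining work is routine.
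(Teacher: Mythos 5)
Your proposal is correct and takes essentially the same route as the paper, which proves this proposition exactly as Proposition \ref{pro:gradede2}: invoke the argument of \cite[Proposition 4.2.1]{HW22} and reduce to checking that $R\otimes_{R\otimes_A R}R$ has even homotopy, which follows from the Tor spectral sequence (or \cite{Ang08}) giving $\Lambda_R(\sigma a_i)$ and then $\Gamma_R(\sigma^2 a_i)$ on even-degree classes. The extra points you flag (assembling the $\Ee_2$-map from the free factors and the weight-completeness/convergence step) are precisely the inputs the paper also imports wholesale from \cite{HW22}, so there is no substantive difference.
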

\begin{proposition}
Consider the $\Ee_2$-graded-$\Ff_p[x]$-algebra $R$, with the homotopy ring $\pi_* R\simeq \Ff_p[x][a_1,a_2,\ldots ,a_n]$, each $a_i$ has degree $|a_i|\in 2\Zz$ and positive weight.
    The map \begin{equation*}
        \pi_*(\THH(R))/(a_1,a_2,\ldots, a_n) \to \pi_*(\THH(R)^{t C_p})/(a_1, a_2, \ldots ,a_n)
    \end{equation*}
is an equivalence in degrees $*>n+1+\sum_{i=1}^n |a_i|$.
\end{proposition}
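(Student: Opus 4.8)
The plan is to follow the argument of \cite[\S 4.2]{HW22}, where the analogous statement is proved for the graded $\Ee_2$-algebra $\Ff_p \otimes \Ss[v_1]\otimes\cdots\otimes\Ss[v_n]$ arising as the associated graded of $\desc_{\Ff_p}(\BP\langle n\rangle)$; the only genuinely new feature here is the extra polynomial factor $\Ff_p[x]$. First I would invoke Proposition \ref{pro:gradede2} to identify $R \simeq \Ff_p[x]\otimes\Ss[a_1]\otimes\cdots\otimes\Ss[a_n]$ as graded $\Ee_2$-algebras. Since $\THH$ of a tensor product of $\Ee_2$-algebras is the tensor product of the $\THH$'s as cyclotomic spectra (see \cite[\S III]{NS18}),
\[
\THH(R)\;\simeq\;\THH(\Ff_p[x])\otimes\THH(\Ss[a_1])\otimes\cdots\otimes\THH(\Ss[a_n]),
\]
each factor having explicitly computable homotopy (even up to an exterior class, and of finite type, just as in the evenness statements of Section \ref{sec2}), and the cyclotomic Frobenius $\varphi_R$ is the composite of $\varphi_{\Ff_p[x]}\otimes\varphi_{\Ss[a_1]}\otimes\cdots\otimes\varphi_{\Ss[a_n]}$ with the lax symmetric monoidal comparison map $\bigotimes_i(-)^{tC_p}\to\bigl(\bigotimes_i -\bigr)^{tC_p}$. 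As each $a_i$ is a non-zero-divisor in $\THH_*(R)$, reducing modulo $(a_1,\ldots,a_n)$ only affects the corresponding tensor factors.

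The two inputs to feed in are then: (i) for the factor $\THH(\Ff_p[x])$, the Segal conjecture for $\Ff_p[x]$, i.e. \cite[Proposition 6.6]{Lars-zetafunction}, which says $\varphi_{\Ff_p[x]}$ is an isomorphism on homotopy in degrees $>1$ (the $dx$-class accounting for the shift by $1$ relative to the corresponding bound for $\Ff_p$); and (ii) for each factor $\THH(\Ss[a_i])$, the reduction carried out in \cite[\S 4.2]{HW22}, which shows that \emph{after} quotienting by $a_i$ the fibre of $\varphi_{\Ss[a_i]}$ is bounded above, with top nonzero homotopy in degree $|a_i|+1$ (in high degrees $\THH_*(\Ss[a_i])$ is free over $\Ff_p[a_i]$ and the Frobenius identifies it with the $a_i$-adic completion of the target). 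Note the quotient by $a_i$ is essential here: the unreduced Frobenius is not an equivalence in high degrees because the Tate spectral sequence for $\THH(\Ss[a_i])^{tC_p}$ carries an infinite "negative cone" of $a_i^{-1}$-classes, and the whole point of passing to $\THH(R)/(a_1,\ldots,a_n)$ is to annihilate all of these cones simultaneously.

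What remains is to assemble these factorwise statements into the bound $n+1+\sum_{i=1}^n|a_i| = 1 + \sum_{i=1}^n(|a_i|+1)$, and this is where the main obstacle lies: $(-)^{tC_p}$ does not commute with the tensor decomposition of $\THH(R)$, and a priori the fibres of the factor-Frobenii are unbounded above, so one cannot simply add up top degrees on the nose. The way I would make this rigorous is to stay inside the filtered (Adams/synthetic) framework of \cite[Appendix C]{HW22}: each of $\THH(\Ff_p[x])$ and $\THH(\Ss[a_i])$ is assembled weight-by-weight out of \emph{finite} spectra, so on the weight-graded pieces — which is all that matters for a homotopy-degree estimate — the Tate construction commutes with the tensor product and with the cofibre sequences cutting down by the $a_i$, and the d\'ecalage/weight-versus-degree bookkeeping of \cite[\S 4.2]{HW22} then produces the bound, with the $\Ff_p[x]$-factor contributing the summand $1$ and each generator $a_i$ the summand $|a_i|+1$. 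A more computational alternative, available because $|a_i|$ is even, is to write down $\THH_*(R)/(a_1,\ldots,a_n)$ via a Tor spectral sequence and $\THH_*(R)^{tC_p}/(a_1,\ldots,a_n)$ via a Tate spectral sequence — both of which degenerate in the relevant range — and compare the two term by term.
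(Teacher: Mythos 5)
Your overall route is the paper's: identify $R\simeq \Ff_p[x]\otimes\Ss[a_1]\otimes\cdots\otimes\Ss[a_n]$ via Proposition \ref{pro:gradede2}, write the Frobenius as the factorwise Frobenii followed by the lax comparison map, and feed in \cite[Proposition 6.6]{Lars-zetafunction} together with the analysis of \cite[\S 4.2]{HW22}. But your input (ii) is wrong as stated, and it is precisely the step that is supposed to produce the bound $n+1+\sum_i|a_i|$. The cyclotomic Frobenius $\THH(\Ss[a_i])\to\THH(\Ss[a_i])^{tC_p}$ is a $p$-adic equivalence (the Segal conjecture for these spherical factors, via their decomposition into induced pieces), so after smashing with $\Ff_p$ its fibre vanishes: there is no surviving ``negative cone of $a_i^{-1}$-classes'' in $\THH(\Ss[a_i])^{tC_p}$ mod $p$, no quotient by $a_i$ is needed for that factor in isolation, and the claim that mod $a_i$ the fibre of $\varphi_{\Ss[a_i]}$ has top homotopy in degree $|a_i|+1$ is not in \cite[\S 4.2]{HW22}. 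The only negative cone in the problem comes from the single factor $\THH(\Ff_p)^{tC_p}$, and the quotient by $(a_1,\ldots,a_n)$ is needed for a different reason than the one you give: since $|a_i|>0$, products of the negative-cone classes with arbitrarily large powers of the $a_i$ would otherwise populate the cokernel in arbitrarily high degrees. After killing the $a_i$, the cokernel is spanned by classes of the form $u^{-j}\,x^k\,(\sigma x)^{\epsilon_0}(\sigma a_1)^{\epsilon_1}\cdots(\sigma a_n)^{\epsilon_n}$ with $j>0$ and $\epsilon_\bullet\in\{0,1\}$, whose degree is at most $-2+|\sigma x|+\sum_i|\sigma a_i|$, which gives the stated range; your factorwise contributions ($1$ and $|a_i|+1$) happen to sum to the right number only because $|\sigma x|=1$ and $|\sigma a_i|=|a_i|+1$, not because the fibres of the factor Frobenii have those top degrees.

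On your ``main obstacle'': the concern about the comparison map $\bigotimes_i(-)^{tC_p}\to(\bigotimes_i-)^{tC_p}$ is legitimate, but it resolves more simply than your weight-by-weight d\'ecalage plan. The paper uses the structural form of \cite[Proposition 6.6]{Lars-zetafunction} (the Frobenius of $\THH(\Ff_p[x])$ factors through $\THH(\Ff_p)^{tC_p}\otimes\THH(\Ss[x])$, with the second map to $\THH(\Ff_p[x])^{tC_p}$ an equivalence) and the induced-piece argument of \cite[Proposition 4.2.2]{HW22} for the other factors; then, reducing mod $p$ via $X\otimes\Ff_p\simeq X^{\wedge}_p\otimes\Ff_p$, every spherical Tate factor is replaced by the corresponding $\THH$, and the whole statement becomes an explicit comparison of $\Ff_p$-vector spaces, namely $\Ff_p[u]\otimes\Ff_p[x,a_1,\ldots,a_n]\otimes\Lambda(\sigma x,\sigma a_i)$ mapping to its Tate analogue with $u$ inverted. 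So the approach is right, but you should replace item (ii) and the ``annihilate the cones'' mechanism by this mod $p$ identification, and actually carry out the (short) degree estimate on the cokernel; as proposed, the key count is delegated to bookkeeping that rests on an incorrect factorwise statement.
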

\begin{proof}
We use the argument in \cite[Corollary 4.2.3]{HW22}. By Proposition \ref{pro:gradede2}, we may assume that $R=\Ff_p[x]\otimes \Ss[a_1]\otimes \Ss[a_2]\otimes \cdots \Ss[a_n]$. Hence the Frobenius is equivalent to the composition of the map \begin{equation*}
    \THH(\Ff_p[x])\otimes \THH(\Ss[a_1]) \otimes \cdots \otimes \THH(\Ss[a_n]) \to \THH(\Ff_p[x])^{t C_p} \otimes \THH(\Ss[a_1])^{t C_p} \otimes \cdots \THH(\Ss[a_n])^{t C_p},
\end{equation*}
and the equivalence
\begin{equation*}
\THH(\Ff_p[x])^{t C_p}\otimes \THH(\Ss[a_1])^{t C_p} \otimes \cdots \otimes \THH(\Ss[a_n])^{t C_p} \to (\THH(\Ff_p[x])\otimes \THH(\Ss[a_1])\otimes \cdots\otimes \THH(\Ss[a_n]))^{t C_p}.
\end{equation*}
Note that the proof in \cite[Proposition 6.6]{Lars-zetafunction} shows that the Frobenius map $\varphi:\THH(\Ff_p[x]) \to \THH(\Ff_p[x])^{t C_p}$ is equivalent to the composition of map 
\begin{equation*}
    \THH(\Ff_p[x])\to \THH(\Ff_p)^{t C_p} \otimes \THH(\Ss[x]),
\end{equation*}
and the equivalence 
\begin{equation*}
    \THH(\Ff_p)^{t C_p} \otimes \THH(\Ss[x]) \to \THH(\Ff_p[x])^{t C_p}.
\end{equation*}
Thus we only need to show that the map 
\begin{equation*}
\THH(\Ff_p[x])\otimes \THH(\Ss[a_1, \ldots, a_n]) \to
    \THH(\Ff_p)^{t C_p}\otimes \THH(\Ss[x])\otimes \THH(\Ss[a_1, \ldots, a_n])^{t C_p}
\end{equation*}

has the desired property on homotopy groups. Note that we have following facts.
\begin{itemize}
    \item For any spectrum $X$, we have equivalence $X\otimes\Ff_p \simeq X^{\wedge}_p\otimes\Ff_p$.
    \item We have equivalence of $\Ee_{\infty}$-algebras 
    $ \THH(\Ff_p)\simeq \THH(\Ff_p)\otimes_{\Ff_p} \Ff_p$, and also $\THH(\Ff_p)^{t C_p} \simeq \THH(\Ff_p)^{t C_p}\otimes_{\Ff_p} \Ff_p$.
    \item $\Ff_p\otimes \THH(\Ss[x]) \simeq \THH(\Ff[x]/\Ff_p)$. According to \cite{Ang08}, we have the homotopy groups as $\pi_*(\Ff_p\otimes \THH(\Ss[x]))\simeq \Ff_p[x]\otimes_{\Ff_p}\Lambda_{\Ff_p}(\sigma x).$
\end{itemize}
Therefore we are reduce to understand the map
\begin{equation}\label{equ:pi*Fro}
\Ff_p[u]\otimes\Ff_p[x,a_1,\ldots,a_n]\otimes\Lambda_{\Ff_p}(\sigma x, \sigma a_1,\ldots ,\sigma a_n) \to \Ff_p[u^{\pm}]\otimes \Ff_p[x,a_1,\ldots,a_n]\otimes\Lambda_{\Ff_p}(\sigma x, \sigma a_1,\ldots ,\sigma a_n).
\end{equation}
 Modulo $a_1, a_2,\ldots, a_n$, the right hand side of (\ref{equ:pi*Fro}) may has elements  generated by 
 \begin{equation*}
   u^j (\sigma x)^{j_0} (\sigma a_1)^{j_1} \cdots (\sigma a_n)^{j_n}, 
 \end{equation*} which are not in the image of the left hand side, where $j>0, j_i \in \{0,1\}$.
If we require the total degree greater than $|\sigma x|+\sum_{i=1}^n |\sigma a_i|= n+1+\sum_{i=1}^n |a_i|$, such elements does not appear, hence the map modulo $a_1, a_2,\ldots, a_n$ is an equivalence for $*>|\sigma x|+\sum_{i=1}^n |\sigma a_i|= n+1+\sum_{i=1}^n |a_i|$. 
\end{proof}
This coincides with \cite[Proposition 6.6]{Lars-zetafunction}, more generally, one can prove the following theorem for $R=\Ff_p[x_1,\ldots,x_m]$ in Proposition \ref{pro:gradedn}.
\begin{proposition}
 The map induced by Frobenius
 \begin{equation*}
     \pi_*(\THH(R(n))/(a_1, a_2,\ldots, a_n) \to \pi_*(\THH(R(n))^{t C_p})/(a_1, a_2, \ldots ,a_n)
 \end{equation*}
is an equivalence in degree $*>n+m+\sum_{i=1}^n |a_i|$.
\end{proposition}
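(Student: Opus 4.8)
The plan is to reproduce the proof of the preceding proposition with the single polynomial variable $x$ replaced by $x_1,\dots,x_m$; the only structural novelty is the presence of $m$ exterior classes $\sigma x_1,\dots,\sigma x_m$, each of degree $1$ (because $|x_j|=0$), which together account for the summand $m$ in the bound. First I would apply Proposition \ref{pro:gradedn} to $R=\Ff_p[x_1,\dots,x_m]$ in order to replace $R(n)$ by the split graded $\Ee_2$-algebra $\Ff_p[x_1,\dots,x_m]\otimes\Ss[a_1]\otimes\cdots\otimes\Ss[a_n]$, where $\Ss[a_i]$ is the free graded $\Ee_2$-algebra on $a_i$. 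Multiplicativity of $\THH$ together with the lax symmetric monoidal structure on $(-)^{tC_p}$ then exhibits the Frobenius as the composite
\begin{equation*}
\THH(\Ff_p[x_1,\dots,x_m])\otimes\bigotimes_{i}\THH(\Ss[a_i])\ \xrightarrow{\ \varphi\otimes\bigotimes_{i}\varphi\ }\ \THH(\Ff_p[x_1,\dots,x_m])^{tC_p}\otimes\bigotimes_{i}\THH(\Ss[a_i])^{tC_p}\ \longrightarrow\ \Bigl(\THH(\Ff_p[x_1,\dots,x_m])\otimes\bigotimes_{i}\THH(\Ss[a_i])\Bigr)^{tC_p},
\end{equation*}
and one checks that the second (lax monoidal) map is an equivalence exactly as in \cite[Corollary 4.2.3]{HW22}.

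Next I would establish the multivariable form of \cite[Proposition 6.6]{Lars-zetafunction}. Writing $\Ff_p[x_1,\dots,x_m]\simeq\Ff_p\otimes\Ss[x_1]\otimes\cdots\otimes\Ss[x_m]$, the spectrum $\THH(\Ss[x_1,\dots,x_m])=\bigotimes_j\THH(\Ss[x_j])$ is $\Nn^m$-graded by weight; the multiweights prime to $p$ carry free $C_p$-actions, so their Tate construction vanishes, and the cyclotomic structure identifies the $p$-divisible multiweights. This yields an equivalence $\THH(\Ff_p)^{tC_p}\otimes\THH(\Ss[x_1,\dots,x_m])\xrightarrow{\ \sim\ }\THH(\Ff_p[x_1,\dots,x_m])^{tC_p}$ under which the Frobenius becomes $\varphi_{\THH(\Ff_p)}\otimes\id$. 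Smashing everything in sight with $\Ff_p$ and inserting $\pi_*\THH(\Ff_p)=\Ff_p[u]$, $\pi_*\THH(\Ff_p)^{tC_p}=\Ff_p[u^{\pm1}]$ with $|u|=2$, $\pi_*\bigl(\Ff_p\otimes\THH(\Ss[x_1,\dots,x_m])\bigr)\simeq\Ff_p[x_1,\dots,x_m]\otimes\Lambda_{\Ff_p}(\sigma x_1,\dots,\sigma x_m)$ with $|\sigma x_j|=1$, and $\pi_*\bigl(\Ff_p\otimes\THH(\Ss[a_i])\bigr)\simeq\Ff_p[a_i]\otimes\Lambda_{\Ff_p}(\sigma a_i)$ with $|\sigma a_i|=|a_i|+1$ (all via \cite{Ang08} or a Tor spectral sequence), the Frobenius on $\Ff_p$-homology becomes the localization map
\begin{equation*}
\Ff_p[u]\otimes\Ff_p[x_\bullet,a_\bullet]\otimes\Lambda_{\Ff_p}(\sigma x_\bullet,\sigma a_\bullet)\ \longrightarrow\ \Ff_p[u^{\pm1}]\otimes\Ff_p[x_\bullet,a_\bullet]\otimes\Lambda_{\Ff_p}(\sigma x_\bullet,\sigma a_\bullet)
\end{equation*}
inverting $u$. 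Each $a_i$ is a non-zero-divisor on both sides, so reducing modulo $(a_1,\dots,a_n)$ merely discards the polynomial generators $a_i$; the resulting map is injective, and its cokernel is spanned by the monomials $u^{-j}(\sigma x_1)^{\epsilon_1}\cdots(\sigma x_m)^{\epsilon_m}(\sigma a_1)^{\delta_1}\cdots(\sigma a_n)^{\delta_n}$ with $j\ge 1$ and all $\epsilon_\bullet,\delta_\bullet\in\{0,1\}$. Such a monomial has degree at most $-2+m+\sum_{i=1}^n(|a_i|+1)=m+n+\sum_{i=1}^n|a_i|-2$, so the map is an isomorphism whenever $*>n+m+\sum_{i=1}^n|a_i|$, as desired.

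The genuine content lies entirely in the two equivalences invoked rather than in any computation: I expect the main obstacle to be verifying, just as in the one-variable case, that the lax monoidal comparison $\THH(\Ff_p[x_1,\dots,x_m])^{tC_p}\otimes\bigotimes_i\THH(\Ss[a_i])^{tC_p}\to(\cdots)^{tC_p}$ is an equivalence (which uses boundedness below of all the tensor factors) and that the weight decomposition of $\THH(\Ss[x_1,\dots,x_m])$ splits $\THH(\Ff_p[x_1,\dots,x_m])^{tC_p}$ off its cyclotomic Frobenius. Granting these, the degree bookkeeping — keeping careful track that each $\sigma x_j$ contributes exactly $1$ — gives the stated range; as in the one-variable statement the bound is not optimal and could be lowered by $2$.
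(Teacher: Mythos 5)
Your argument is exactly the paper's: the proof given there simply says the one-variable argument goes through verbatim with \cite[Proposition 6.6]{Lars-zetafunction} still applying, and your proposal spells out precisely those steps (splitting $R(n)$ via Proposition \ref{pro:gradedn}, factoring the Frobenius through the lax monoidal comparison, the weight-decomposition identification of $\THH(\Ff_p[x_1,\dots,x_m])^{tC_p}$, and the cokernel degree count giving $*>n+m+\sum_i|a_i|$). The details you fill in, including the observation that the bound is not sharp by $2$, are consistent with the paper's one-variable proof.
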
 
\begin{proof}
    Same as before, and the statement in \cite[Proposition 6.6]{Lars-zetafunction} still works.
\end{proof}
With the above results, we could prove the Segal conjecture for $\BP\langle n\rangle[x]$ using the strategy mentioned before, more precisely, we have the following fact.
\begin{theorem}\label{pro:polyBPNSegal}
    The map of algebras induced by Frobenius \begin{equation*}
        \THH(\BP\langle n\rangle[x])\otimes_{\BP\langle n\rangle
        [x]}\Ff_p[x]\to \THH(\BP\langle n\rangle[x])^{t C_p}\otimes_{\BP\langle n\rangle[x]}\Ff_p[x]
    \end{equation*}
is an equivalence in large degrees, i.e. the map \begin{equation*}
    \THH(\BP\langle n\rangle[x])/(p,v_1, v_2, \ldots,v_n)\to \THH(\BP\langle n\rangle[x])^{t C_p}/(p, v_1, v_2,\ldots, v_n)
\end{equation*}
If one replaces $\BP\langle n\rangle$ with $\BP\langle n\rangle \otimes \Ss_{W(k)}$ the statement holds still. Here, $k$ is a perfect field of characteristic $p$, $\Ss_{W(k)}$ is the spherical Witt vectors.
\end{theorem}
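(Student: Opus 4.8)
The plan is to run the Frobenius–descent argument recalled above for $\THH(\BP\langle n\rangle)$, now over $\Ff_p[x]$ instead of over $\Ff_p$. First I would filter everything by applying the lax symmetric monoidal functor $\desc_{\Ff_p}$ to the $\Ee_3$-algebra $\BP\langle n\rangle[x]$, obtaining a filtered $\Ee_3$-algebra with colimit $(\BP\langle n\rangle[x])^{\wedge}_p$; forming $\THH$ and the $C_p$-Tate construction in filtered spectra as in \cite[Appendix C]{HW22}, and base-changing along the filtered map to $\desc_{\Ff_p}(\Ff_p[x])\simeq\Ff_p[x]$ (the target concentrated in weight $0$, since $\Ff_p[x]$ is already an $\Ff_p$-module), yields a filtered refinement of
\begin{equation*}
\THH(\BP\langle n\rangle[x])/(p,v_1,\dots,v_n)\longrightarrow\THH(\BP\langle n\rangle[x])^{tC_p}/(p,v_1,\dots,v_n).
\end{equation*}
This filtration is complete and exhaustive, so it suffices to show that the fiber $W$ of the induced map on associated gradeds is bounded above by a bound that does not depend on the weight; the descent spectral sequence then forces $W$ itself to be bounded above, which is exactly the assertion.

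Next I would identify the relevant associated graded. From $\coh_*(\BP\langle n\rangle[x];\Ff_p)\cong\coh_*(\BP\langle n\rangle;\Ff_p)\otimes_{\Ff_p}\Ff_p[x]$ (with $|x|=0$) and the standard change-of-rings computation $\coh^*(\BP\langle n\rangle;\Ff_p)\cong\calA/\!/E(Q_0,\dots,Q_n)$, the $\Ff_p$-Adams $E_2$-page is $\Ff_p[v_0,v_1,\dots,v_n]\otimes\Ff_p[x]$, so after the d\'ecalage of \cite[Convention C.1.1]{HW22} the associated graded of $\desc_{\Ff_p}(\BP\langle n\rangle[x])$ is an $\Ee_2$-graded-$\Ff_p[x]$-algebra $R$ with homotopy ring $\Ff_p[x][v_0,v_1,\dots,v_n]$ — the $v_i$ in even internal degrees ($|v_0|=0$, $|v_i|=2(p^i-1)$ for $i\geq 1$) and in positive weights — exactly of the shape treated in Proposition \ref{pro:gradede2}. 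Applying that proposition with these $n+1$ polynomial generators gives $R\simeq\Ff_p[x]\otimes\Ss[v_0]\otimes\cdots\otimes\Ss[v_n]$ as $\Ee_2$-graded-$\Ff_p[x]$-algebras, whose weight-$0$ quotient $R/(v_0,\dots,v_n)$ is $\Ff_p[x]$; hence the associated-graded Frobenius becomes
\begin{equation*}
\pi_*(\THH(R))/(v_0,\dots,v_n)\longrightarrow\pi_*(\THH(R)^{tC_p})/(v_0,\dots,v_n),
\end{equation*}
which, by the Frobenius computations established just above (for the base $\Ff_p[x]$, i.e. the case $m=1$, with $n+1$ generators), is an isomorphism in all degrees above a fixed constant of the form $n+2+\sum_{i=1}^{n}2(p^i-1)$. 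As this constant is independent of the weight, $W$ is concentrated below it, which finishes the plan.

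The step I expect to be the main obstacle is the compatibility of the $C_p$-Tate construction with the descent filtration: what the argument really needs is that the associated graded of $\THH(\desc_{\Ff_p}(\BP\langle n\rangle[x]))^{tC_p}$ agrees, at least in the range of degrees that matters, with the $C_p$-Tate construction of $\THH$ of the associated graded $\gr\desc_{\Ff_p}(\BP\langle n\rangle[x])$. Since $(-)^{tC_p}$ commutes with neither filtered colimits nor filtered limits, and since the fiber of the Frobenius is not bounded below, this requires importing the boundedness bookkeeping of \cite[\S 4, Appendix C]{HW22}, where the filtered spectra involved are bounded below with weight-wise bounded-below associated graded and connective underlying $\THH$; that is what makes the filtered Frobenius, its fiber, and the descent spectral sequence converge as claimed. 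A secondary point, also handled there, is the role of the weight-carrying but internally degree-$0$ class $v_0=p$: one checks that it contributes only through $\THH(\Ff_p)$ and its Tate construction, for which the Segal conjecture is classical (B\"okstedt, Nikolaus--Scholze); alternatively one may first base-change $\THH(\BP\langle n\rangle[x])$ over $\BP\langle n\rangle[x]$ to $\Ff_p[x]$ and only afterwards apply $\desc_{\Ff_p}$, exactly as in the reduction recalled above for $\BP\langle n\rangle$.
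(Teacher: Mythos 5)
Your proposal follows essentially the same route as the paper's proof: filter the Frobenius by applying $\desc_{\Ff_p}$ to $\BP\langle n\rangle[x]$ (with convergence and compatibility of the Tate construction supplied by \cite[Proposition C.5.4]{HW22}), identify the associated graded via the Adams $E_2$-page and change of rings as $\Ff_p[x]\otimes\Ss[v_0]\otimes\cdots\otimes\Ss[v_n]$ using Proposition \ref{pro:gradede2}, and conclude from the graded Frobenius computation that the fiber is bounded above by a weight-independent constant. The convergence caveat you flag is precisely the point the paper addresses by the cited appendix, so there is no gap.
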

\begin{proof}
The map \begin{equation*}
    \varphi: \THH(\BP\langle n\rangle[x])\to\THH(\BP\langle n\rangle[x])^{t C_p}
\end{equation*}
is a map of $\Ee_2$-algebra, and $\THH(\BP\langle n\rangle[x])$ is equipped with a $\BP\langle n\rangle[x]$-module structure, hence $\varphi$ can be promoted to a map of $\BP\langle n\rangle[x]$-modules, where the module structure of right hand side is induced by the $\Ee_2$-algebra map. 
Consider the filtered algebras $\desc_{\Ff_p}(\BP\langle n\rangle[x]), \desc_{\Ff_p}(\Ff_p)$. This will give a filtration on the Frobenius map \begin{equation*}
 \THH(\BP\langle n\rangle[x])\otimes_{\BP\langle n\rangle[x]}\Ff_p[x] \to \THH(\BP\langle n\rangle[x])^{t C_p}\otimes_{\BP\langle n\rangle[x]}\Ff_p.   
\end{equation*} According to \cite[Proposition C.5.4]{HW22}, $\THH(\desc_{\Ff_p}(\BP\langle n\rangle[x]))^{\wedge}_{v_0}$ converges to $\THH(\BP\langle n\rangle[x])^{\wedge}_p$, and $\THH(\desc_{\Ff_p}(\BP\langle n\rangle[x]))^{t C_p}$ converges to $\THH(\BP\langle n\rangle[x])^{t C_p}$, $\desc_{\Ff_p}(\BP\langle n\rangle[x])$ converges to $(\BP\langle n\rangle[x])^{\wedge}_p$.
Hence the map \begin{equation}\label{equ:filBPNfro}
    \THH(\desc_{\Ff_p}(\BP\langle n\rangle[x]))^{\wedge}_{v_0}\otimes_{\desc_{\Ff_p}\BP\langle n\rangle[x]} \Ff_p[x]\to \THH(\desc_{\Ff_p}\BP\langle n\rangle[x])^{t C_p}\otimes_{\desc_{\Ff_p}(\BP\langle n\rangle[x])} \Ff_p[x]
\end{equation}
gives arise to a filtration on \begin{equation}\label{equ:BPNfro}
    \THH(\BP\langle n\rangle[x])\otimes_{\BP\langle n\rangle[x]} \Ff_p[x] \to \THH(\BP\langle n\rangle[x])^{t C_p}\otimes_{\BP\langle n\rangle[x]} \Ff_p[x].
\end{equation}
Note that the associated graded has homotopy groups as \begin{equation*}
\pi_*(\gr_{\bullet}(\BP\langle n\rangle[x]))\simeq \ext^{*,*}_{\calA_*}(\Ff_p, \coh_*(\BP\langle n\rangle[x],\Ff_p)).
\end{equation*}
The $\Ff_p$-homolgy of $\BP\langle n\rangle$ is coextended from the quotient Hopf algebra $\Lambda({\overline{\tau_0},\overline{\tau_1},\ldots ,\overline{\tau_n}})$, hence $\coh_*(\BP\langle n\rangle[x],\Ff_p)\simeq \coh_*(\BP\langle n\rangle)\otimes_{\Ff_p} \coh_*(\Ss[x],\Ff_p)\simeq (\calA\square_{E_1} \Ff_p)\otimes (E_1\square_{E_1}\Ff_p[x])\simeq (\calA\square_{E_1} \Ff_p\otimes E_1)\square_{E_1}\Ff_p[x]\simeq \calA\square_{\Lambda(\overline{\tau_0},\ldots,\overline{\tau_n})}\Ff_p[x]$, where $E_1=\Lambda(\overline{\tau_0},\ldots,\overline{\tau_n})$.
Therefore, by change of ring isomorphism, we have \begin{equation*}
    \pi_*\gr_{\bullet} (\BP\langle n\rangle[x])\simeq \ext_{\Lambda(\overline{\tau_0},\ldots, \overline{\tau_n})}(\Ff_p,\Ff_p[x])\simeq \Ff_p[x,v_0,v_1,\ldots, v_n].
\end{equation*} 
Thus by Proposition \ref{pro:gradede2}, we have an equivalence of graded $\Ee_2$-algebras
$\gr_{\bullet}(\BP\langle n\rangle[x])\simeq \Ff_p[x]\otimes \Ss[v_0]\otimes \cdots\Ss[v_n]$.
Now according to Proposition \ref{pro:polyBPNSegal}, we have that 
the fiber of (\ref{equ:filBPNfro}) is bounded above, and hence the fiber of (\ref{equ:BPNfro}) is bounded above.
\end{proof}
\begin{corollary}\label{cor:polyBPNSegal}
Let $F$ be any finite complex of type $n+1$, then  
\begin{equation*}
\text{
\framebox{$F$-\textup{Segal conjecture for }$\THH(\BP\langle n\rangle[x])$}
} 
\end{equation*}
is true.
\end{corollary}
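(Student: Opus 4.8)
The plan is to deduce the corollary from Theorem~\ref{pro:polyBPNSegal} by a thick subcategory argument, in the same way that the Segal conjecture for all type $n+1$ complexes is deduced from its $\Ff_p$-linear form in \cite{HW22}. Write $X=\THH(\BP\langle n\rangle[x])$ and let $W=\fib(\varphi)$, where $\varphi\colon X\to X^{tC_p}$ is the cyclotomic Frobenius. As recorded in the proof of Theorem~\ref{pro:polyBPNSegal}, $\varphi$ can be promoted to a map of $\BP\langle n\rangle[x]$-modules, so $W$ is a $\BP\langle n\rangle[x]$-module, and in particular a $\BP\langle n\rangle$-module. Since smashing with a finite complex $F$ is exact, $\fib(F\otimes X\to F\otimes X^{tC_p})\simeq F\otimes W$, so \framebox{$F$-Segal conjecture for $\THH(\BP\langle n\rangle[x])$} is precisely the statement that $F\otimes W$ is bounded above. (Type is a $p$-local notion, so we may and do assume $F$ is $p$-local.)

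First I would observe that $\mathcal{T}:=\{F\ \text{finite},\ p\text{-local}\ :\ F\otimes W\ \text{bounded above}\}$ is a thick subcategory of finite $p$-local spectra: it is closed under shifts and retracts for trivial reasons, and closed under cofibers because bounded-aboveness is preserved under extensions. By the thick subcategory theorem, the thick subcategory generated by any single type $n+1$ finite $p$-local spectrum is the category of all finite $p$-local spectra of type $\geq n+1$; hence it is enough to exhibit one type $n+1$ complex in $\mathcal{T}$, after which every type $n+1$ (and every type $n+2$) complex automatically lies in $\mathcal{T}$.

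To exhibit such a complex, I would take a generalized Moore spectrum $F_0=\Ss/(p,v_1^{i_1},\dots,v_n^{i_n})$, which exists for suitable exponents $i_j$ by the periodicity theorem and is a finite $p$-local complex of type $n+1$. Since the $v_j^{i_j}$-self maps defining $F_0$ are detected by the corresponding powers of $v_j$ in $\pi_*\BP\langle n\rangle$, one has $\BP\langle n\rangle\otimes F_0\simeq \BP\langle n\rangle/(p,v_1^{i_1},\dots,v_n^{i_n})$ as $\BP\langle n\rangle$-modules (the iterated module-theoretic cofiber). By the projection formula,
\begin{equation*}
F_0\otimes W\ \simeq\ W\otimes_{\BP\langle n\rangle}\bigl(\BP\langle n\rangle\otimes F_0\bigr)\ \simeq\ W/(p,v_1^{i_1},\dots,v_n^{i_n}),
\end{equation*}
and the right-hand side is built from $W/(p,v_1,\dots,v_n)$ by finitely many cofiber sequences, hence is bounded above once $W/(p,v_1,\dots,v_n)$ is. But $W/(p,v_1,\dots,v_n)\simeq W\otimes_{\BP\langle n\rangle[x]}\Ff_p[x]$ is exactly the fiber of the Frobenius appearing in Theorem~\ref{pro:polyBPNSegal}, which is bounded above. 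Thus $F_0\in\mathcal{T}$, and the corollary follows.

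The argument is essentially formal given Theorem~\ref{pro:polyBPNSegal}; the only point requiring a little care — and the closest thing to an obstacle — is the translation between the $\Ff_p[x]$-linear (equivalently $\BP\langle n\rangle$-module-theoretic) form of the Segal conjecture and the form with coefficients in an arbitrary finite complex, which is precisely where one invokes the periodicity theorem (to realize $\BP\langle n\rangle/(p,v_1^{i_1},\dots,v_n^{i_n})$ as $\BP\langle n\rangle\otimes F_0$) and the thick subcategory theorem. No genuine difficulty is expected.
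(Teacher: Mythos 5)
Your proposal is correct in outline and follows essentially the same route as the paper: the finite $p$-local complexes $F$ for which \framebox{$F$-Segal conjecture for $\THH(\BP\langle n\rangle[x])$} holds form a thick subcategory, a generalized Moore spectrum generates the thick subcategory of type $n+1$ complexes, and for that Moore spectrum the statement reduces to the mod $(p,v_1,\ldots,v_n)$ assertion of Theorem \ref{pro:polyBPNSegal}; this is exactly the paper's (terser) argument, with the translation spelled out via the projection formula.

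Two steps need minor repair. First, the periodicity theorem only provides a generalized Moore spectrum of the form $\Ss/(p^{i_0},v_1^{i_1},\ldots,v_n^{i_n})$ for \emph{some} exponents; you cannot insist on $i_0=1$ in general. This is harmless: $W/p^{i_0}$ is built from $W/p$ by cofiber sequences, so your argument goes through verbatim with $p^{i_0}$ in place of $p$ (as in the paper). Second, the identification $\BP\langle n\rangle\otimes F_0\simeq \BP\langle n\rangle/(p^{i_0},v_1^{i_1},\ldots,v_n^{i_n})$ as $\BP\langle n\rangle$-modules is not automatic from ``the self maps are detected by powers of $v_j$'': a $v_j$-self map is only required to induce multiplication by a power of $v_j$ on $K(j)$-homology, and after smashing with $\BP\langle n\rangle$ it need not literally agree with module multiplication by $v_j^{i_j}$. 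What your argument actually needs is weaker and true: $\BP\langle n\rangle[x]\otimes F_0$ lies in the thick subcategory of $\BP\langle n\rangle[x]$-modules generated by $\Ff_p[x]$. For instance, $\BP\langle n\rangle\otimes F_0$ is bounded with degreewise finite $p$-group homotopy (this is the fp-type $n$ property of $\BP\langle n\rangle^{\wedge}_p$ against a type $n+1$ complex), so its finite Postnikov tower exhibits it as built from shifts of $\Ff_p\simeq \BP\langle n\rangle/(p,v_1,\ldots,v_n)$ by finitely many cofiber sequences, and base change along $\BP\langle n\rangle\to \BP\langle n\rangle[x]$ preserves this. Combined with your projection-formula step $F_0\otimes W\simeq W\otimes_{\BP\langle n\rangle[x]}(\BP\langle n\rangle[x]\otimes F_0)$ and the bounded-aboveness of $W\otimes_{\BP\langle n\rangle[x]}\Ff_p[x]$ from Theorem \ref{pro:polyBPNSegal}, this gives $F_0\otimes W$ bounded above, and the thick subcategory argument finishes the proof as you say.
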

\begin{proof}
    Since the category of finite complex $F$ of type $n+1$ such that
\begin{equation*}
\text{\framebox{$F$-Segal conjecture for $\THH(\BP\langle n\rangle[x])$}}       
\end{equation*}  holds is thick, and the generalized Moore complex $\Ss/(p^{i_0}, v_1^{i_1}, \ldots, v_n^{i_n})$ generates the thick subcategory of type $n+1$ complex. It is enough to show that for any generalized Moore complex has that property. This follows from Proposition \ref{pro:polyBPNSegal}.
\end{proof}
Since $\BP\langle 1\rangle[x]$ is an $\Ee_{\infty}$-$\BP\langle 1\rangle$-algebra, see \cite{BR05}, hence according to \cite[Proposition 3.3.7]{HW22}, we have the following corollary.
\begin{corollary}\label{cor:LQforpoly}
Let $F$ be any $p$-local finite type $3$ complex. Then the following spectra are bounded.
\begin{align*}
&F\otimes \TC(\BP\langle 1\rangle[x]),\\
&F\otimes \K(\BP\langle 1\rangle[x])^{\wedge}_p.
\end{align*}
Consequently, the following maps have bounded above fibers
\begin{align*}
&\TC(\BP\langle 1\rangle[x])\to L_2^f (\TC(\BP\langle 1\rangle[x])),\\
&\K(\BP\langle 1\rangle[x])^{\wedge}_p \to L_2^f(\K(\BP\langle 1\rangle[x])^{\wedge}_p).
\end{align*}
\end{corollary}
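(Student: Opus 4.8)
The plan is to derive the two boundedness statements from the Segal conjecture of Corollary~\ref{cor:polyBPNSegal}, the $\Ee_\infty$-$\BP\langle 1\rangle$-algebra structure on $\BP\langle 1\rangle[x]$ (\cite{BR05}) and \cite[Proposition 3.3.7]{HW22}, and then to read off the localization statements from Theorem~\ref{thm:LQproperty}.

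First I would fix a $p$-local type $3$ complex $F$ and upgrade the Segal conjecture: the class of finite complexes $G$ for which the $G$-Segal conjecture holds for $\THH(\BP\langle 1\rangle[x])^{\wedge}_p$ is thick, by Corollary~\ref{cor:polyBPNSegal} it contains all type $2$ complexes, and a type $3$ complex such as $\Ss/(p^{i_0},v_1^{i_1},v_2^{i_2})$ lies in the thick subcategory generated by a type $2$ complex, so the $F$-Segal conjecture holds. Since $\BP\langle 1\rangle[x]$ is a connective $\Ee_\infty$-$\BP\langle 1\rangle$-algebra, \cite[Proposition 3.3.7]{HW22} turns this into $F$-bounded $\TR$, i.e.\ $F\otimes\TR(\BP\langle 1\rangle[x])$ is bounded; then $\TC\simeq\fib(1-F\colon\TR\to\TR)$ exhibits $F\otimes\TC(\BP\langle 1\rangle[x])$ as a fiber of a map of bounded spectra, hence bounded. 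That gives the first bullet.

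For the $\K$-theory bullet I would invoke the cyclotomic trace and the Dundas--Goodwillie--McCarthy theorem, which yields a pullback square relating $\K$ and $\TC$ of a connective ring spectrum to those of its $\pi_0$; since $\pi_0\BP\langle 1\rangle[x]\simeq\Zz_{(p)}[x]$, the fiber of $\K(\BP\langle 1\rangle[x])^{\wedge}_p\to\TC(\BP\langle 1\rangle[x])^{\wedge}_p$ agrees with that of $\K(\Zz_{(p)}[x])^{\wedge}_p\to\TC(\Zz_{(p)}[x])^{\wedge}_p$. On the discrete ring $\Zz_{(p)}[x]$, which is regular, algebraic $\K$-theory is $\Aa^1$-invariant (Quillen), so $\K(\Zz_{(p)}[x])\simeq\K(\Zz_{(p)})$ and $F\otimes\K(\Zz_{(p)})^{\wedge}_p$ is bounded by the Quillen--Lichtenbaum property; and $F\otimes\TC(\Zz_{(p)}[x])$ is bounded by the $n=0$ instance of the previous paragraph ($\Zz_{(p)}[x]=\BP\langle 0\rangle[x]$ is a connective $\Ee_\infty$-$\BP\langle 0\rangle$-algebra and Corollary~\ref{cor:polyBPNSegal} supplies the Segal conjecture). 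Hence $F$ smashed with the Dundas--Goodwillie--McCarthy fiber is bounded, and combining with the first bullet, $F\otimes\K(\BP\langle 1\rangle[x])^{\wedge}_p$ is bounded. This chain of reductions is exactly what \cite[Proposition 3.3.7]{HW22} packages once the Segal conjecture is in hand.

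Finally, having $F\otimes(-)$ bounded for a type $3$ complex $F$ is precisely the input needed to run the argument of Theorem~\ref{thm:LQproperty}: the fibers of $X\to L_2^f X$ are then bounded above for $X=\TC(\BP\langle 1\rangle[x])^{\wedge}_p$ and $X=\K(\BP\langle 1\rangle[x])^{\wedge}_p$, and smashing with $\Ss/p$ (using that $L_2^f$ is a smashing localization and $X^{\wedge}_p\otimes\Ss/p\simeq X\otimes\Ss/p$) yields the two displayed maps with bounded above fibers. The delicate step is the $\K$-theory transfer: it depends on the Dundas--Goodwillie--McCarthy reduction to $\pi_0$ and on importing two classical results---$\Aa^1$-invariance of $\K$-theory of the regular ring $\Zz_{(p)}[x]$ and the Quillen--Lichtenbaum property of the relevant Dedekind domain---while keeping the several $p$-completions organized so that the ring one ends up controlling is $p$-complete enough; by contrast the $\TR$/$\TC$ half is essentially formal given Corollary~\ref{cor:polyBPNSegal} and \cite[Proposition 3.3.7]{HW22}.
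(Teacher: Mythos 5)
Your proposal follows the paper's proof essentially verbatim: Corollary \ref{cor:polyBPNSegal} (upgraded by thickness to type $3$) together with \cite[Proposition 3.3.7]{HW22} gives bounded $\TR$, hence bounded $\TC$, and the $\K$-theory statement is obtained from the same Dundas--Goodwillie--McCarthy pullback square over $\pi_0=\Zz_{(p)}[x]$, using $\Aa^1$-invariance of $\K(\Zz_{(p)}[x])\simeq \K(\Zz_{(p)})$, the fp-type-$1$ property of $\K(\Zz_{(p)})$, and the $n=0$ instance for $\TC(\Zz_{(p)}[x])$. The only deviation is the final localization step: you propose to ``run the argument of Theorem \ref{thm:LQproperty}'', whose stated hypothesis (an fp spectrum, i.e.\ finiteness of the homotopy groups of $F\otimes X$, not just boundedness) is not verified for $\TC(\BP\langle 1\rangle[x])$ or $\K(\BP\langle 1\rangle[x])^{\wedge}_p$; this is why the paper instead cites \cite[Lemma 7.22]{telescope}, which needs only the boundedness you have established, so your step is correct in spirit but should invoke that lemma rather than Theorem \ref{thm:LQproperty} itself.
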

\begin{proof}
The statement for $\TC$ follows from Corollary \ref{cor:polyBPNSegal}, \cite[Proposition 3.3.7]{HW22}, Theorem \ref{thm:LQproperty}. Note that we have a pull back square \begin{equation*}
\xymatrix{
\K(\BP\langle 1\rangle[x])^{\wedge}_p \ar[r]\ar[d]& \TC(\BP\langle 1\rangle[x])\ar[d]\\
\K(\Zz_{(p)}[x])^{\wedge}_p \ar[r]& \TC(\Zz_{(p)}[x])
}
\end{equation*}
Therefore, after applying $F\otimes-$, the above square is still a pull back square.
Since $\Zz_{(p)}[x]$ is an $\Ee_{\infty}$-$\BP\langle 0\rangle$-algebra, hence for any $F'$ a $p$-local type $2$ finite complex, $F'\otimes \TC(\Zz_{(p)}[x])$ is bounded. Then $F\otimes \TC(\Zz_{(p)}[x])$ is also bounded. 
Note that K-theory is $\Aa^1$-invariant for regular rings, then we have 
$\K(\Zz_{(p)}[x])\simeq \K(\Zz_{(p)})$. Because $\K(\Zz_{(p)})$ is an fp spectrum of type $1$, then $F\otimes \K(\Zz_{(p)}[x])$ is bounded. Therefore $F\otimes \K(\BP\langle 1\rangle[x])^{\wedge}_p$ is bounded.

The last statement follows from Theorem \ref{thm:LQproperty}.
\end{proof}
We will use the decomposition of $\THH(\Ss[x]/x^e)$ to study the Frobenius map of $\THH(\BP\langle n\rangle[x]/x^e)$. We recall the following decomposition in \cite[Theorem B]{HM97}.
\begin{proposition}\label{pro:decomposation}
Let $V_d=\Cc(\xi_m)\oplus \cdots\oplus \Cc(\xi_m^d)$, where $\xi_m$ is the $m$th primitive root of unity. We use $S^{V_d}$ to denote the one-point compactification of $V_d$.
Then $\THH(\Ss[x]/x^e)\simeq \bigoplus_{m\geq 0} B_m$, as $S^1$-spectra, where 
\begin{equation*}
B_m\simeq S^{V_{[\frac{m-1}{e}]}} \otimes \suspen_+ S^1/C_m,
\end{equation*}
if $e\nmid m$, and
\begin{equation*}
    B_m\simeq S^{V_{[\frac{m-1}{e}]}}\otimes \cofib(\suspen_+ S^1/C_{m/e} \to \suspen_+ S^1/C_m)
\end{equation*}
if $e\mid m$.
\end{proposition}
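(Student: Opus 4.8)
The plan is to realize $\THH(\Ss[x]/x^e)$ as the $S^1$-equivariant suspension spectrum of a cyclic space and to extract the decomposition from a weight grading, following \cite{HM97}. Let $\Pi$ be the pointed monoid with underlying pointed set $M_e\sqcup\{0\}$, with $x^ix^j=x^{i+j}$ if $i+j<e$ and $x^ix^j=0$ otherwise, so that $\Ss[x]/x^e=\suspen_+M_e$ is its spherical monoid algebra $\Ss[\Pi]$. Since $\Ss[-]$ is symmetric monoidal and preserves colimits, $\THH(\Ss[x]/x^e)$ is the realization of the cyclic bar construction $N^{\mathrm{cy}}_\bullet(\Pi)$, $[k]\mapsto\Pi^{\wedge(k+1)}$, as an $S^1$-spectrum. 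The first step is the weight grading: a non-basepoint $k$-simplex is a tuple $(x^{i_0},\dots,x^{i_k})$, and every structure map of $N^{\mathrm{cy}}_\bullet(\Pi)$ — faces multiply two adjacent entries, degeneracies insert $1$, the cyclic operator permutes cyclically — either preserves $m=i_0+\dots+i_k$ or sends the simplex to the basepoint. Hence $N^{\mathrm{cy}}_\bullet(\Pi)$ splits as a wedge $\bigvee_{m\geq0}N^{\mathrm{cy}}_\bullet(\Pi;m)$ of pointed cyclic spaces, and therefore $\THH(\Ss[x]/x^e)\simeq\bigoplus_{m\geq0}B_m$ with $B_m$ the suspension spectrum of $|N^{\mathrm{cy}}_\bullet(\Pi;m)|$; the summand $B_0$ is $\Ss$.

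So the task is to identify, for $m\geq1$, the pointed $S^1$-space $|N^{\mathrm{cy}}_\bullet(\Pi;m)|$, and here I would compare with the polynomial case. The monoid surjection $q\colon\Nn\to\Pi$, $i\mapsto x^i$ (with $i\mapsto 0$ for $i\geq e$), induces a surjection of cyclic sets $N^{\mathrm{cy}}_\bullet(\Nn;m)\twoheadrightarrow N^{\mathrm{cy}}_\bullet(\Pi;m)$ that collapses to the basepoint exactly the sub-cyclic-set $Z_m^{(e)}\subseteq N^{\mathrm{cy}}_\bullet(\Nn;m)$ of words having at least one entry $\geq e$; this subset is closed under the structure maps because faces only enlarge entries, so $N^{\mathrm{cy}}_\bullet(\Pi;m)=N^{\mathrm{cy}}_\bullet(\Nn;m)/Z_m^{(e)}$. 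On the other hand $|N^{\mathrm{cy}}_\bullet(\Nn;m)|\simeq S^1/C_m$, $S^1$-equivariantly — the classical computation underlying $\THH(\Ss[x])\simeq\bigvee_{m\geq0}\suspen_+(S^1/C_m)$. This gives a cofiber sequence of $S^1$-spectra
\begin{equation*}
\suspen_+|Z_m^{(e)}|\longrightarrow\suspen_+(S^1/C_m)\longrightarrow B_m ,
\end{equation*}
reducing the problem to an equivariant computation of $|Z_m^{(e)}|$. For $1\leq m<e$ one has $Z_m^{(e)}=\emptyset$, so $B_m\simeq\suspen_+(S^1/C_m)$, matching the statement since $[\tfrac{m-1}{e}]=0$.

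For $m\geq e$ the plan is to filter $Z_m^{(e)}$ by the number of ``overflowing'' entries (equivalently, by the cyclic-polytope poset of ways a weight-$m$ cyclic word breaks into blocks of length $<e$) and peel off one representation sphere per unit of $d=[\tfrac{m-1}{e}]$: subtracting $e$ from a cyclically distinguished overflowing entry relates weight-$m$ words with an entry $\geq e$ to words of weight $m-e$, and through a Thom-space argument each such step contributes a smash factor $S^{\Cc(\xi_m^j)}$ for the appropriate character $j\in\{1,\dots,d\}$. When $e\nmid m$, iterating identifies $\suspen_+|Z_m^{(e)}|$ with $S(V_d)_+\wedge\suspen_+(S^1/C_m)$, where $S(V_d)$ is the unit sphere of $V_d$; plugging this into the cofiber sequence above, together with the standard cofiber sequence $S(V_d)_+\to S^0\to S^{V_d}$, yields $B_m\simeq S^{V_d}\wedge\suspen_+(S^1/C_m)$. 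When $e\mid m$ the final stratum is different: it contains the $S^1$-orbit of the word $(e,e,\dots,e)$ with $m/e$ entries, whose isotropy is $C_{m/e}$ rather than trivial, and carrying this through produces the extra factor $\cofib\bigl(\suspen_+(S^1/C_{m/e})\to\suspen_+(S^1/C_m)\bigr)$ along the $e$-fold covering — the second case of the statement.

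The main obstacle is the content of the last paragraph: making the equivariant filtration of $|Z_m^{(e)}|$ precise, and verifying that the characters that occur are exactly $\xi_m,\dots,\xi_m^d$ with the right multiplicities, together with the boundary analysis when $e\mid m$. This is the combinatorial heart of \cite{HM97}, where the cyclic polytopes do their work; I would invoke that analysis rather than reprove it, since everything before it — the identification of $\THH(\Ss[x]/x^e)$ with $|N^{\mathrm{cy}}_\bullet(\Pi)|$, the weight splitting, and the cofiber sequence — is formal.
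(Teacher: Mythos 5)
Your proposal is consistent with the paper's treatment: the paper gives no argument of its own here, simply recalling the statement as \cite[Theorem B]{HM97}, and your outline (cyclic bar construction of the pointed monoid, weight splitting, comparison with the free monoid $\Nn$, and the resulting cofiber sequence) is an accurate reconstruction of the formal part of Hesselholt--Madsen's proof while deferring the same cyclic-polytope/representation-sphere analysis to \cite{HM97} that the paper itself relies on. So the proposal is correct and takes essentially the same (citation-based) route as the paper.
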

Now we could prove the following proposition, according to the above discussion.
\begin{proposition}\label{pro:segalfails}
Let $F$ be any $p$-local type $n+2$ finite complex. Let $W$ be the fiber of Frobenius map of $F\otimes\THH(\BP\langle n\rangle)$. The following statements are true.
\begin{enumerate}
    \item If $W\not\simeq 0$, then the fiber of the Frobenius \begin{equation*}
    F\otimes \THH(\BP\langle n\rangle[x]/x^e) \to F\otimes \THH(\BP\langle n\rangle[x]/x^e)^{t C_p}
\end{equation*}
is not bounded above, where $e>1$.  In other words, $F$-Segal conjecture for $\THH(\BP\langle n\rangle[x]/x^e)$ is true if and only if $W\simeq 0$.
\item If $F$ is a sum of generalized Moore complexes $\Ss/(p^{i_0}, v_1^{i_1}, \ldots v_{n+1}^{i_{n+1}})$, then $W\not\simeq 0$.
\item In fact, for $F$ a $p$-local type $n+2$ finite complex, $W\not\simeq 0$.
\end{enumerate}
\end{proposition}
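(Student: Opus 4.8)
\emph{Setup.} The plan is to combine the multiplicativity of $\THH$ with the decomposition of Proposition~\ref{pro:decomposation} and to analyze the cyclotomic Frobenius weight by weight. Since $\BP\langle n\rangle[x]/x^e=\BP\langle n\rangle\otimes\Ss[x]/x^e$ and $\THH$ is symmetric monoidal, there is an equivalence of cyclotomic spectra $\THH(\BP\langle n\rangle[x]/x^e)\simeq T\otimes\THH(\Ss[x]/x^e)$ with $T:=\THH(\BP\langle n\rangle)$, hence by Proposition~\ref{pro:decomposation} an equivalence of $S^1$-spectra
\[
\THH(\BP\langle n\rangle[x]/x^e)\;\simeq\;\bigoplus_{m\geq 0}T\otimes B_m ,
\]
graded by the cyclic-bar weight $m$. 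The $S^1$-action, hence the $C_p$-action, preserves $m$, while the cyclotomic Frobenius $\varphi_p$ raises weight by a factor of $p$. Each $T\otimes B_m$ is bounded below with connectivity $\approx 2[\tfrac{m-1}{e}]\to\infty$, so the decomposition is compatible with $(-)_{hC_p}$, $(-)^{hC_p}$ and $(-)^{tC_p}$. For $p\nmid m$ the $C_p$-action on $S^1/C_m$, and on the representation sphere $S^{V_{[(m-1)/e]}}$, is free, so $T\otimes B_m$ is induced and $(T\otimes B_m)^{tC_p}\simeq 0$; thus the weight-$\ell$ part of $\THH(\BP\langle n\rangle[x]/x^e)^{tC_p}$ vanishes unless $p\mid\ell$, and after smashing with $F$ the Frobenius splits along source weights as a sum of maps $\varphi_m\colon F\otimes T\otimes B_m\to(F\otimes T\otimes B_{pm})^{tC_p}$ ($m\geq 0$), so that its fiber is $\bigoplus_{m\geq0}\fib(\varphi_m)$.

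\emph{Key lemma.} The crux is that for every $m$ with $\gcd(m,pe)=1$ the weight-$m$ component $\psi_m\colon B_m\to(B_{pm})^{tC_p}$ of the cyclotomic Frobenius of $\THH(\Ss[x]/x^e)$ is a $p$-adic equivalence; equivalently, the Segal conjecture holds for $\THH(\Ss[x]/x^e)$. I would prove this as in \cite[Proposition~6.6]{Lars-zetafunction}, using the edgewise-subdivision description of the cyclotomic structure on $\THH(\Ss[x]/x^e)=\suspen_+B^{\mathrm{cyc}}(M_e)$, under which the genuine $C_p$-fixed points of the weight-$pm$ summand are identified with the weight-$m$ summand; together with the identity $(S^V\otimes Z)^{tC_p}\simeq S^{V^{C_p}}\otimes Z^{tC_p}$ (valid because $\cofib(S^{V^{C_p}}\otimes Z\to S^{V}\otimes Z)$ is $C_p$-free) and the elementary count $\dim_{\Rr}V_{[(pm-1)/e]}^{C_p}=2[\tfrac{m-1}{e}]$ for $e\nmid m$, one sees that both source and target of $\psi_m$ become $\Sigma^{2[(m-1)/e]}(\suspen_+S^1)^{\wedge}_p$ after $p$-completion and that $\psi_m$ is the $p$-completion map. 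Granting this, $\varphi_m$ factors through the lax symmetric monoidal structure of $(-)^{tC_p}$ as the assembly of $F\otimes\varphi_{\THH(\BP\langle n\rangle)}$ with $\psi_m$; since $\psi_m$ and the relevant assembly map are $p$-adic equivalences, $\fib(\varphi_m)\simeq W\otimes B_m$ up to $p$-completion, where $W=\fib(F\otimes\varphi_{\THH(\BP\langle n\rangle)})$ (consistently, $m=0$ gives $B_0=\Ss$ and $\fib(\varphi_0)=W$).

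\emph{Conclusions.} If $W\not\simeq 0$ then the fiber of $F\otimes(\THH(\BP\langle n\rangle[x]/x^e)\to\THH(\BP\langle n\rangle[x]/x^e)^{tC_p})$ contains $\bigoplus_{\gcd(m,pe)=1}W\otimes B_m$; each summand is nonzero and lies near degree $2[\tfrac{m-1}{e}]\to\infty$, and $e>1$ gives infinitely many such $m$, so this fiber is not bounded above. The same analysis over all $m$ shows conversely that $W\simeq 0$ forces every $\fib(\varphi_m)$ to vanish and the $F$-Segal conjecture for $\THH(\BP\langle n\rangle[x]/x^e)$ to hold; this yields the stated equivalence. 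Finally, $W\not\simeq 0$ for every $p$-local type $n+2$ complex $F$: by the Segal conjecture for $\THH(\BP\langle n\rangle)$ \cite{HW22}, $W$ is bounded above, yet with $M:=F\otimes\THH(\BP\langle n\rangle)$ --- bounded below and nonzero --- either $M$ is $C_p$-free, whence $M^{tC_p}\simeq 0$ and $W\simeq M\not\simeq 0$, or $M$ is not $C_p$-free, in which case $M^{hC_p}$, and hence $M^{tC_p}=\cofib(M_{hC_p}\to M^{hC_p})$ with $M_{hC_p}$ bounded below, is not bounded below (by the $2$-periodicity of Tate cohomology), so the Frobenius $M\to M^{tC_p}$ cannot be an equivalence. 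For $F$ a sum of generalized Moore complexes $\Ss/(p^{i_0},v_1^{i_1},\dots,v_{n+1}^{i_{n+1}})$ this is visible directly from the mod $(p,v_1,\dots,v_{n+1})$ computation of the Frobenius on $\THH(\BP\langle n\rangle)$ in \cite{HW22}.

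\emph{Main obstacle.} The hard step is the key lemma: establishing that the cyclotomic Frobenius of $\THH(\Ss[x]/x^e)$ is a $p$-adic equivalence. This requires pinning down the cyclotomic structure maps of $\THH(\suspen_+M_e)$ on the summands of Proposition~\ref{pro:decomposation}, tracking the representation-sphere factors $S^{V_d}$ through $(-)^{tC_p}$, and separately treating $e\mid m$, where $B_m$ is a cofiber of two cyclic spectra rather than a single $\suspen_+S^1/C_m$.
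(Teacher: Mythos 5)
Your weight-by-weight analysis of the Frobenius is essentially the paper's argument: the decomposition $\THH(\BP\langle n\rangle[x]/x^e)\simeq\bigoplus_m \THH(\BP\langle n\rangle)\otimes B_m$, the vanishing of $(-)^{tC_p}$ on weights prime to $p$, and the identification of each component with $F\otimes\varphi_{\THH(\BP\langle n\rangle)}\otimes B_m$ followed by an assembly equivalence. The paper obtains this identification from the genuine cyclotomic structure and $B_m^{C_p}\simeq B_{m/p}$ (Lemma \ref{unstablefro}), rather than from your ``key lemma'' that the sphere-level Frobenius $B_m\to(B_{pm})^{tC_p}$ is a $p$-adic equivalence; that statement is in fact recorded in the paper only as a consequence (footnote to Proposition \ref{pro:segalfortruncated}), so taking it as an input would need the independent proof you sketch, including the case $e\mid pm$ that you defer. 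Up to that point the proposal is viable and close to the paper.

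The genuine gap is in your proof that $W\not\simeq 0$ for every $p$-local type $n+2$ complex $F$, which is exactly the part where the paper does its real work. Your dichotomy asserts that if $M=F\otimes\THH(\BP\langle n\rangle)$ is not $C_p$-free and $M^{tC_p}\not\simeq 0$, then $M^{tC_p}$ (or $M^{hC_p}$) fails to be bounded below ``by the $2$-periodicity of Tate cohomology.'' This principle is false for bounded-below spectra with $C_p$-action: the Tate spectral sequence is only conditionally convergent and its periodic $E_2$-page can be destroyed by infinitely many differentials, so the abutment can be bounded below and nonzero. The Segal conjecture itself provides counterexamples, e.g.\ $\Ss^{tC_p}\simeq\Ss^{\wedge}_p$ and $(\Ss/p)^{tC_p}\simeq\Ss/p$ with trivial action, both nonzero and connective. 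Indeed, whether $F\otimes\THH(\BP\langle n\rangle)^{tC_p}$ has homotopy in arbitrarily negative degrees is precisely the delicate point here, since the Hahn--Wilson Segal conjecture says the Frobenius is an equivalence in \emph{high} degrees; one cannot decide it by a softness argument. The paper settles it by filtering $\THH(\BP\langle n\rangle)^{tC_p}$ via $\desc_{\Ff_p}$, identifying the associated graded with $\THH(\Ff_p[v_0,\ldots,v_n])^{tC_p}$, reducing modulo $(p^{i_0},v_1^{i_1},\ldots,v_{n+1}^{i_{n+1}})$, and then proving (via a horizontal vanishing line and the Leibniz rule) that $u^{p^N}$ is a permanent cycle for $N\gg 0$, so that $\pi_{-2p^N}$ of the Tate construction is nonzero while the source is connective; a thick subcategory argument then passes from generalized Moore complexes to arbitrary type $n+2$ complexes. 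Your appeal to the mod $(p,v_1,\ldots,v_{n+1})$ computation in \cite{HW22} being ``visible directly'' skips this convergence/permanent-cycle step, so as written the final (and essential) claim $W\not\simeq 0$ is unproved.
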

\begin{lemma}
The Frobenius map of $\THH(\BP\langle n\rangle[x]/x^e)$ is the product of maps
\begin{equation*}
     \THH(\BP\langle n\rangle)\otimes B_{m/p} \to  \THH(\BP\langle n\rangle)^{t C_p} \otimes B_{m/p}\to (\THH(\BP\langle n\rangle)\otimes B_m)^{t C_p}.
\end{equation*}
\footnote{As one can see, one can replace $\BP\langle n\rangle$ by any $\Ee_1$-ring $R$ to get a more general statement for $\THH(R[x]/x^e)$.}
\label{unstablefro}
The second map is called unstable Frobenius which is an equivalence \begin{equation*}
     \THH(\BP\langle n\rangle)^{t C_p}\otimes B_{m/p} \to  \THH(\BP\langle n\rangle)\otimes B_m)^{t C_p}.
    \end{equation*}
\end{lemma}
\begin{proof}
    Note that the Frobenius map can be induced by the genuine cyclotomic structure, and the two Frobenius maps are equivalent. We have the following commutative diagram
    \begin{equation*}
    \xymatrix{
     \THH(\BP\langle n\rangle) \otimes B_{m/p} \ar[r]^{\simeq\quad}\ar[d]^{\simeq}& \Phi^{C_p}(\THH(\BP\langle n\rangle))\otimes B_{m/p}\ar[d]\\
     \Phi^{C_p} (\THH(\BP\langle n\rangle))\otimes B_m^{C_p} \ar[r]\ar[d]^{\simeq} & \THH(\BP\langle n\rangle)^{t C_p} \otimes B_m^{C_p}\ar[d]^{\simeq}\\
     \Phi^{C_p} (\THH(\BP\langle n\rangle)\otimes B_m) \ar[r] & (\THH(\BP\langle n\rangle\otimes B_m))^{t C_p}
     }
    \end{equation*}
The left vertical maps are equivalences, which are induced by the genuine cyclotomic structure on $\THH(\BP\langle n\rangle)$. And the top horizontal map is also an equivalence induced by the genuine cyclotomic structure.
The right vertical map is an equivalence by \cite[Lemma 9.1]{HMwitt}. Finally, we have $B_m^{C_p} \simeq  B_{m/p}$ where both sides are $S^1$-spaces, by \cite[(4.2.2)]{HM97}. Therefore the Frobenius map is equivalent to \begin{equation*}
    \THH(\BP\langle n\rangle)\otimes B_{m/p} \to  \THH(\BP\langle n\rangle)^{t C_p} \otimes B_{m/p}\to (\THH(\BP\langle n\rangle)\otimes B_m)^{t C_p},
\end{equation*}
where the second map is an equivalence.
\end{proof}
\begin{proof}[Proof of Proposition \ref{pro:segalfails}]
We have the following equivalences, 
\begin{equation*}
 F\otimes \THH(\BP\langle n\rangle[x]/x^e) \simeq F\otimes \THH(\BP\langle n\rangle) \otimes\THH(\Ss[x]/x^e)\simeq \bigoplus_{m\geq 0} F\otimes \THH(\BP\langle n\rangle)\otimes B_m, 
\end{equation*} where $B_m$ was described in Proposition \ref{pro:decomposation}, and the connectivity of $B_m$ tends to $\infty$, when $m\to \infty$. Hence we have the following equivalences (note that the summand has connectivity tends to $\infty$, then the direct sum is equivalent to the product).
\begin{equation*}
    F\otimes \THH(\BP\langle n\rangle[x]/x^e)\simeq \bigoplus_{m\geq 0} F\otimes \THH(\BP\langle n\rangle)\otimes B_m \simeq \prod_{m\geq 0} F\otimes \THH(\BP\langle n\rangle)\otimes B_m. 
\end{equation*}
Thus the Frobenius map factors as following,
\begin{equation*}
    \xymatrix{
    F\otimes \THH(\BP\langle n\rangle[x]/x^e) \ar[r]^{\varphi}\ar[d]_{\simeq} & F\otimes \THH(\BP\langle n\rangle[x]/x^e)^{t C_p}\ar[d]_{\simeq}\\
    \prod_{m\geq 0} F\otimes \THH(\BP\langle n\rangle)\otimes B_m \ar[r]^{\varphi'} \ar[d]_{\id\otimes\varphi\otimes\id} &  F\otimes\prod_{m\geq 0} (\THH(\BP\langle n\rangle)\otimes B_m)^{t C_p} \ar[d]_{\simeq }\\
    \prod_{m\geq 0} F\otimes \THH(\BP\langle n\rangle)^{t C_p} \otimes B_m \ar[r]^{\iota}  &  \prod_{m\geq 0} F\otimes (\THH(\BP\langle n\rangle) \otimes B_m)^{t C_p}.\\
    }
\end{equation*}
The right lower vertical map is an equivalence. Because $F$ is finite and then dualizable in $\Sp$ (see for instance \cite[Theorem 5.2.1]{orangebook}), and hence $F\otimes -$ commutes with products.
From the above diagram, it suffice to understand the map $\varphi'$. Suppose $\varphi'$ induce isomorphism on homotopy group on large degrees, then we have the component map \begin{equation*}
\xymatrix{F\otimes \THH(\BP\langle n\rangle) \otimes B_m \ar[rr]^{\id\otimes \varphi\otimes \id} &&  F\otimes \THH(\BP\langle n\rangle)^{t C_p} \otimes B_m \ar[r]^{\simeq}& F\otimes (\THH(\BP\langle n\rangle \otimes B_{mp})^{t C_p}}
\end{equation*}
induce isomorphism in large degrees, and the equivalence is due to Lemma \ref{unstablefro}.
\begin{claim}
We claim that $\exists b, \pi_b W\otimes \suspen_+ S^1\ne 0$ for some $b\in \Zz$. 
\end{claim}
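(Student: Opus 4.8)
The plan is to deduce the claim purely formally from the non-vanishing of $W:=\fib\bigl(F\otimes\THH(\BP\langle n\rangle)\xrightarrow{F\otimes\varphi}F\otimes\THH(\BP\langle n\rangle)^{tC_p}\bigr)$, the standing hypothesis. The key point is that the augmentation $\suspen_+S^1\to\Ss$ is split (choose a point of $S^1$), so $\suspen_+S^1\simeq\Ss\oplus\Sigma\Ss$ and hence $W\otimes\suspen_+S^1\simeq W\oplus\Sigma W$. Since a spectrum with vanishing homotopy groups is contractible, $W\not\simeq 0$ forces $\pi_aW\ne 0$ for some $a\in\Zz$, and then $\pi_a(W\otimes\suspen_+S^1)\cong\pi_aW\oplus\pi_{a-1}W\ne 0$. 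So the claim holds with $b=a$; there is no real obstacle to the claim as stated, the only input being $W\not\simeq 0$, which for $F$ a sum of generalized Moore complexes, and then for an arbitrary $p$-local type $n+2$ complex, I would verify in the remainder of the proof.

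It is worth recording how the claim is meant to finish Proposition \ref{pro:segalfails}, since that is where the real content sits. Using $\THH(\BP\langle n\rangle[x]/x^e)\simeq\bigoplus_{m\geq 0}\THH(\BP\langle n\rangle)\otimes B_m$ with the $B_m$ of Proposition \ref{pro:decomposation}, I would first note that $(F\otimes\THH(\BP\langle n\rangle)\otimes B_m)^{tC_p}\simeq 0$ whenever $p\nmid m$: then $C_p$ acts freely on $S^1/C_m$, so $B_m$ is a $C_p$-induced spectrum and its Tate construction, smashed with anything, vanishes. For $m=jp$, Lemma \ref{unstablefro} together with the identification $B_m^{C_p}\simeq B_{m/p}=B_j$ shows that the component of the Frobenius landing in the weight-$m$ summand of the target is, up to equivalence, $(F\otimes\varphi)\otimes\id_{B_j}$, whose fiber is $W\otimes B_j$. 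Because all these summands have connectivity tending to $\infty$, so that the direct sum agrees with the product, the fiber of the Frobenius for $\THH(\BP\langle n\rangle[x]/x^e)$ is, up to a bounded contribution from $m=0$, the product $\prod_{j\geq 1}W\otimes B_j$.

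Finally, for the indices $j$ with $e\nmid j$ — of which there are infinitely many, since $e>1$ — the underlying spectrum of $B_j$ is $S^{V_{[(j-1)/e]}}\otimes\suspen_+(S^1/C_j)\simeq\Sigma^{2[(j-1)/e]}\suspen_+S^1$, so the claim gives $\pi_{\,b+2[(j-1)/e]}(W\otimes B_j)\ne 0$. As $2[(j-1)/e]\to\infty$ with $j$ and the homotopy groups of a product are the products of the homotopy groups, the fiber of the Frobenius has nonzero homotopy in arbitrarily large degrees and is therefore not bounded above, which is the assertion of the Proposition. I expect the mildly delicate parts to be the bookkeeping — that the $m=0$ and $e\mid m$ terms do not interfere (immediate from Proposition \ref{pro:decomposation}), the $p\nmid m$ vanishing above, and the identification of the Frobenius component via Lemma \ref{unstablefro} — but none of these is a genuine difficulty; the one substantive hypothesis, $W\not\simeq 0$, is handled separately.
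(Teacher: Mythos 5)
Your proof of the claim is correct and takes essentially the same approach as the paper: both rest on the stable splitting $\suspen_+ S^1 \simeq \Ss \oplus \Sigma\Ss$, so that $W\otimes\suspen_+ S^1 \simeq W\oplus \Sigma W$ has nonzero homotopy in some degree as soon as $W\not\simeq 0$, the paper merely phrasing this via a $\pi_*(W)$ summand in the Tor spectral sequence instead of the direct module splitting. Your surrounding sketch of how the claim is used (vanishing of $B_m^{tC_p}$ for $p\nmid m$, Lemma \ref{unstablefro}, and the unboundedness coming from the summands with $e\nmid j$) likewise matches the paper's argument for Proposition \ref{pro:segalfails}.
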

\begin{claimproof}
Otherwise, $W\otimes \suspen_+ S^1 \simeq 0$.  Actually, one computes $\pi_*(\suspen S^1_+) \simeq \pi_*(\suspen S^1) \oplus \pi_*(\Ss)$, hence the Tor spectral sequence for $W\otimes \suspen_+ S^1$ has a summand $\pi_*(W) \not\simeq 0$, a contradiction.
\end{claimproof}
Now, we know that $\pi_{b+[\frac{m-1}{e}]} W\otimes B_m\ne 0$, for $e\nmid m$. When $m \to \infty$, this would give a  sequence of nonzero homotopy classes, a contradiction.

Next, we show that $W\not\simeq 0$. To this end, we show that $F\otimes \THH(\BP\langle n\rangle)^{t C_p}$ is not connective, and in fact $\pi_{-N} (F\otimes \THH(\BP\langle n\rangle))\not\simeq 0$, for suitable $N$ and $N\gg 0$. First let $F$ be a  generalized Moore complex. By using $\desc_{\Ff_p}$,  we learn that $\THH(\BP\langle n\rangle)^{t C_p}$ has associated graded equivalent to $\THH(\Ff_p[v_0, v_1, \ldots, v_n])^{t C_p}$. And it has homotopy groups as  \begin{equation*}
    \Ff_p[u^{\pm }, v_0, \ldots, v_n]\otimes \Lambda_{\Ff_p} (\sigma v_0, \ldots, \sigma v_n).
\end{equation*}
See \cite[Proposition 4.2.2]{HW22}.
We know that $v_0, \ldots, v_n$ is detected by the same elements above. Then $\THH(\desc_{\Ff_p
}(\BP\langle n\rangle))^{t C_p}\otimes_{\desc_{\Ff_p}\BP\langle n\rangle} \desc_{\Ff_p}\BP\langle n\rangle/(v_0^{i_0}, \ldots, v_n^{i_n})$ will have graded as \begin{equation}\label{equ:modulograded}
    \Ff_p[u^{\pm},v_0,\ldots, v_n]/(v_0^{i_0}, \ldots, v_n^{i_n})\otimes\Lambda (\sigma v_0, \ldots, \sigma v_n),
\end{equation}
Note that $u$ has degree $-2$, weight $0$, $|v_{n+1}^{i_{n+1}}|=(2p^{n+1}-2)^{i_{n+1}}$, and it lies in weight $(2p^{n+1}-1)^{i_{n+1}}$. The element $v_{n+1}^{i_{n+1}}$ induce a graded map $\Sigma^{|v_{n+1}^{i_{n+1}}|}\gr(w(v_{n+1}^{i_{n+1}})) \to \gr$.
If modulo $v_{n+1}$, $\THH(\BP\langle n\rangle)^{t C_p}/(p^{i_0},\ldots, v_n^{i_n})$ is connective, then 
the graded map induce isomorphism on $E_{\infty}$-page at negative degrees. In particular,  $
E_{\infty}^{-w(v_{n+1}^{i_{n+1}}), -2N-|v_{n+1}^{n+1}|-w(v_{n+1}^{i_{n+1}})} \simeq E_{\infty}^{0, -2N}$, however, when $N\to \infty$, the left hand side is zero (because there is no negative weight elements), while the right hand side is generated by $u^{-N}$. This follows from the following claim.
\begin{claim}
For $N\gg 0$, the element $u^{p^N}$ is a permanent cycle.
\end{claim}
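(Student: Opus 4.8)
The plan is to establish the claim inside the descent spectral sequence
\begin{equation*}
E_2 \;=\; \Ff_p[u^{\pm},v_0,\ldots,v_n]/(v_0^{i_0},\ldots,v_n^{i_n})\otimes\Lambda_{\Ff_p}(\sigma v_0,\ldots,\sigma v_n)\;\Longrightarrow\; \pi_*\bigl(\THH(\BP\langle n\rangle)^{t C_p}/(p^{i_0},\ldots,v_n^{i_n})\bigr)
\end{equation*}
coming from (\ref{equ:modulograded}), by combining two observations. First, this spectral sequence has finite length: in the reduction the classes $v_0,\ldots,v_n$ are nilpotent and $\sigma v_0,\ldots,\sigma v_n$ are exterior, so the monomials in them span a finite-dimensional $\Ff_p$-vector space; since $u$ has weight $0$ and each of $v_0,\ldots,v_n,\sigma v_0,\ldots,\sigma v_n$ has strictly positive weight, there is a uniform bound $W<\infty$ on the weight of every element of $E_2$. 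As $d_r$ strictly raises weight, by an amount growing without bound as $r\to\infty$ (in the d\'ecalage indexing of \cite[Convention C.1.1]{HW22}), all $d_r$ of index beyond some finite $r_0$ vanish, so the spectral sequence degenerates at a finite page. Second, this spectral sequence is (a module over) the multiplicative $\Ff_p$-algebra descent spectral sequence computing $\pi_*\THH(\BP\langle n\rangle)^{t C_p}$, in which $u$ and all of its powers already live.

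The key point is then that in the multiplicative spectral sequence for $\THH(\BP\langle n\rangle)^{t C_p}$ the class $u^{p^N}$ survives to later and later pages as $N$ grows. Indeed $u$ has even degree, so for each $k$ the Leibniz rule gives
\begin{equation*}
d_r\bigl((u^{p^{k-1}})^p\bigr)\;=\;p\,(u^{p^{k-1}})^{p-1}\,d_r(u^{p^{k-1}})\;=\;0
\end{equation*}
in characteristic $p$, valid on every page on which $u^{p^{k-1}}$ is still present; hence $u^{p^k}$ survives at least one page further than $u^{p^{k-1}}$, so that $u^{p^N}$ survives arbitrarily far as $N\to\infty$. Transporting this to the spectral sequence of (\ref{equ:modulograded}): for $N$ sufficiently large every differential on $u^{p^N}$ of index $\le r_0$ vanishes — it is the corresponding differential in the multiplicative spectral sequence, which is $0$ because $u^{p^N}$ has already survived that far, acting on the unit class, whose own differentials vanish — while differentials of index $>r_0$ vanish for free. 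Since $u^{p^N}$ can never be a boundary — it has weight $0$, and a differential into it would originate in negative weight, of which there is none — it follows that $u^{p^N}$ is a permanent cycle, surviving to a nonzero class of $E_\infty$, for all $N$ large. (Because $u$ is invertible on $E_2$, the inverse $u^{-p^N}$ is then a permanent cycle as well; this is the form of the statement used to contradict the assumed connectivity in arbitrarily distant degrees.)

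The delicate part is not the spectral-sequence manipulation above but checking its two hypotheses. The shape of the $E_2$-page rests on \cite[Proposition 4.2.2]{HW22} together with the identification $\gr\,\desc_{\Ff_p}(\BP\langle n\rangle)\simeq\Ff_p\otimes\Ss[v_0]\otimes\cdots\otimes\Ss[v_n]$ already in use, so that the associated graded of the reduction has exactly the displayed homotopy. The module (or algebra) structure linking the reduced spectral sequence to the multiplicative one — which is what licenses the comparison of differentials in the second paragraph — is where the choice of reduction enters: one either chooses the exponents $i_j$ so that the relevant generalized Moore complex is an $\Ee_1$-ring, in which case the reduced spectral sequence is itself multiplicative, or tracks the module structure over $\THH(\BP\langle n\rangle)^{t C_p}$ directly. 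With these in hand, the proof is the short combination of finite length with the characteristic-$p$ behaviour of $p$-th powers of $u$.
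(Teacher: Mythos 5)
Your proof is correct and takes essentially the same route as the paper: the paper's own argument is exactly ``horizontal vanishing line $+$ Leibniz rule,'' where your uniform weight bound (from nilpotence of the $v_i$ and exteriority of the $\sigma v_i$, with $u$ in weight $0$) is the vanishing line forcing degeneration at a finite page, and the characteristic-$p$ Leibniz computation $d_r\bigl((u^{p^{k-1}})^p\bigr)=0$ shows $u^{p^N}$ survives arbitrarily far, hence is a permanent cycle for $N\gg 0$. Your additional care about the multiplicative/module structure of the reduced spectral sequence and the non-boundary observation in weight $0$ are refinements the paper leaves implicit, not a different method.
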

\begin{claimproof}
Note that there is a horizontal vanishing line for the spectral sequence. So using Leibniz rule, one shows that for $N\gg 0$, the element $u^{p^N}$ is a permanent cycle. 
\end{claimproof}
A contradiction, meaning that $\THH(\BP\langle n\rangle)^{t C_p}/(p^{i_0},v_0^{i_1},\ldots,v_{n+1}^{i_{n+1}})$ is not connective. Furthermore, $\pi_{-2p^N} (F\otimes\THH(\BP\langle n\rangle)^{t C_p})\not\simeq 0$, for $N\gg 0$.
Then one sees that the fiber of Frobenius map cannot be an equivalence, hence $W\not\simeq 0$. In general, since $u^{p^N}$ lives in $E^{0, -2p^N}_{\infty}$, then by a thick sub-category argument, one proves that $W\not\simeq 0$
\end{proof}
As a corollary, we have following.
\begin{corollary}\label{cor:TRboundedfails}
Let $F$ be any $p$-local type $n+2$ finite complex. 
Then $F\otimes \TR(\BP\langle n\rangle[x]/x^e)$ is not bounded.
\end{corollary}
\begin{proof}
    This follows from Proposition \ref{pro:segalfails} and Theorem \ref{boundedTR}.
\end{proof}

\begin{theorem}\label{thm:generaltruncatedpoly}
 Let $S$ be a fp ring spectrum of type $n$, $F$ be a $p$-local finite $n+2$ finite complex. Let $V$ be the fiber of the Frobenius of $F\otimes \THH(S)$. Then $F\otimes \TR(S[x]/x^e)$ is bounded if and only if $V \simeq 0$. Here $e>0$.
\end{theorem}
\begin{proof}
    Same argument as in Theorem \ref{thm:generaltruncatedpoly}.
\end{proof}

Using the argument before we can prove the following.
\begin{proposition}\label{pro:segalforalge}
    The fiber of the Frobenius map  \begin{equation*}
        \Ff_p\otimes_{\BP\langle n\rangle} \THH(\BP\langle n\rangle[x]/x^e) \to \Ff_p\otimes_{\BP\langle n\rangle} \THH(\BP\langle n\rangle[x]/x^e)^{t C_p}
    \end{equation*}
is not bounded above.
\end{proposition}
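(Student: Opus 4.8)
The plan is to run the proof of Proposition \ref{pro:segalfails} again, replacing $F\otimes(-)$ everywhere by the base change $\Ff_p\otimes_{\BP\langle n\rangle}(-)$. This substitution is legitimate because $\Ff_p$ is a \emph{perfect} $\BP\langle n\rangle$-module --- it is the Koszul complex on the regular sequence $p,v_1,\ldots,v_n$ in $\BP\langle n\rangle_*$ --- so the functor $\Ff_p\otimes_{\BP\langle n\rangle}(-)$ preserves all limits and colimits; in particular it commutes with the infinite products and with the functor $(-)^{tC_p}$ appearing in that proof, and commuting $F\otimes(-)$ past such products (using dualizability of $F$) was the only place where finiteness of $F$ entered.

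Concretely, I would first combine $\THH(\BP\langle n\rangle[x]/x^e)\simeq\THH(\BP\langle n\rangle)\otimes\THH(\Ss[x]/x^e)$ with Proposition \ref{pro:decomposation} to obtain $\Ff_p\otimes_{\BP\langle n\rangle}\THH(\BP\langle n\rangle[x]/x^e)\simeq\bigoplus_{m\geq0}T\otimes B_m$, where $T:=\Ff_p\otimes_{\BP\langle n\rangle}\THH(\BP\langle n\rangle)$ and $\mathrm{conn}(B_m)\to\infty$. Applying $\Ff_p\otimes_{\BP\langle n\rangle}(-)$ to the factorization diagram of the relative Frobenius from the proof of Proposition \ref{pro:segalfails} --- using $(T\otimes B_m)^{tC_p}\simeq0$ for $p\nmid m$ (free $C_p$-action) together with the base change of Lemma \ref{unstablefro} for $p\mid m$, so that the map labelled $\iota$ there is still an equivalence --- identifies the fiber of $\Ff_p\otimes_{\BP\langle n\rangle}\varphi$ with $\bigoplus_m W'\otimes B_m$, where $W':=\fib(T\to T^{tC_p})$ and $T^{tC_p}\simeq\Ff_p\otimes_{\BP\langle n\rangle}(\THH(\BP\langle n\rangle)^{tC_p})$.

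It then remains to see that $W'$ is nonzero with homotopy in arbitrarily negative degrees. For this I would use the $\desc_{\Ff_p}$-argument from the proof of Proposition \ref{pro:segalfails}: by \cite[Proposition 4.2.2]{HW22}, $T^{tC_p}$ carries a complete filtration whose associated graded is the derived base change along $\Ff_p[v_0,\ldots,v_n]\to\Ff_p$ of $\Ff_p[u^{\pm},v_0,\ldots,v_n]\otimes\Lambda(\sigma v_0,\ldots,\sigma v_n)$, which contains $u^{-p^N}$ (of weight $0$) in degree $-2p^N$; by the horizontal vanishing line and the Leibniz rule these classes are permanent cycles for $N\gg0$, so $\pi_{-2p^N}T^{tC_p}\neq0$, and since $T$ is connective the long exact sequence forces $\pi_{-2p^N-1}W'\neq0$. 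Finally, as in the last paragraph of the proof of Proposition \ref{pro:segalfails}, for $e\nmid m$ one has $W'\otimes B_m\simeq\Sigma^{2[(m-1)/e]}(W'\oplus\Sigma W')$ (with an analogous cofiber when $e\mid m$), so fixing such an $N$ and letting $m\to\infty$ produces nonzero homotopy of $W'\otimes B_m$ in degrees $2[(m-1)/e]-2p^N-1\to\infty$; since $\pi_d(\bigoplus_m W'\otimes B_m)=\bigoplus_m\pi_d(W'\otimes B_m)$, the fiber is not bounded above.

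The main obstacle is bookkeeping: one must check that the genuine-cyclotomic/Tate-orbit inputs behind Lemma \ref{unstablefro}, and the step ``$(-)^{tC_p}$ commutes with the relevant infinite products'', genuinely survive base change along $\Ff_p$ over $\BP\langle n\rangle$ --- which is precisely what perfectness of $\Ff_p$ as a $\BP\langle n\rangle$-module provides --- together with the permanent-cycle claim for $u^{-p^N}$, which is the same point already handled in Proposition \ref{pro:segalfails}.
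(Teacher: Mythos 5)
Your argument is essentially correct, but it is organized differently from the paper's own proof. The paper proves this proposition by filtering first: it applies $\desc_{\Ff_p}$ to $\THH(\BP\langle n\rangle[x]/x^e)$ and its Tate construction, identifies the associated graded base with $\Ff_p[x]/x^e\otimes\Ss[v_0]\otimes\cdots\otimes\Ss[v_n]$ via Proposition \ref{pro:gradedn}, and then invokes the graded statement (Proposition \ref{pro:segalfailsgradtruncated}) together with explicit classes of the form $u^{p^N}\alpha^{p^{N+m}}$ in $\Ff_p[u^{\pm}]\otimes\HH_*(\Ff_p[x]/x^e/\Ff_p)\otimes\Lambda(\sigma v_i)$ that are not in the image of the graded Frobenius. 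You instead decompose first, integrally, reusing the $\bigoplus_m B_m$-splitting and Lemma \ref{unstablefro} exactly as in Proposition \ref{pro:segalfails}, and base change along the perfect $\BP\langle n\rangle$-module $\Ff_p$; this reduces everything to the single fiber $W'=\fib\bigl(\Ff_p\otimes_{\BP\langle n\rangle}\THH(\BP\langle n\rangle)\to\Ff_p\otimes_{\BP\langle n\rangle}\THH(\BP\langle n\rangle)^{tC_p}\bigr)$, whose negative homotopy you detect by the same $\desc_{\Ff_p}$/Tate computation already used at the end of Proposition \ref{pro:segalfails}, and then the suspensions $B_m\simeq\Sigma^{2[(m-1)/e]}(\Ss\oplus\Sigma\Ss)$ push these classes to arbitrarily high degree. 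What your route buys is that it bypasses the graded truncated-polynomial analysis and the ``approximate the fiber by the filtered fiber'' step; what the paper's route buys is that it only ever filters, so it never has to commute the base change past infinite products or through the cyclotomic decomposition. Both proofs rest on the same unproved-in-detail convergence/permanent-cycle point for $u^{p^N}$, so you are at parity with the paper there.

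Two small repairs. First, your blanket claim that $\Ff_p\otimes_{\BP\langle n\rangle}(-)$ ``commutes with $(-)^{tC_p}$'' is neither needed nor justified as stated: the $C_p$-action on $\THH(\BP\langle n\rangle)$ is not $\BP\langle n\rangle$-linear, so $T=\Ff_p\otimes_{\BP\langle n\rangle}\THH(\BP\langle n\rangle)$ does not even carry an evident $C_p$-action, and the $\BP\langle n\rangle$-module structure on the Tate side is the one coming through the Frobenius. The clean way to say it is: perform the componentwise identifications of Proposition \ref{pro:segalfails} and Lemma \ref{unstablefro} integrally (they are $\BP\langle n\rangle$-linear for these module structures), and then use only that $\Ff_p\otimes_{\BP\langle n\rangle}(-)$ is exact and, by perfectness, commutes with the infinite products; read ``$T^{tC_p}$'' throughout as notation for $\Ff_p\otimes_{\BP\langle n\rangle}\bigl(\THH(\BP\langle n\rangle)^{tC_p}\bigr)$, which is in fact how you use it in the degree count. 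Second, a sign slip: with $|u|=-2$ the relevant permanent cycles are $u^{p^N}$ in degree $-2p^N$, not $u^{-p^N}$; this does not affect the argument.
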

As usual, we prove the graded version of this.
\begin{proposition}\label{pro:segalfailsgradtruncated}
    Let $A$ be the $\Ee_2$-graded algebra $\Ff_p[x]/x^e\otimes \Ss[a_0]\otimes\Ss[a_1]\otimes \Ss[a_n]$. Where $|a_i|\in 2\Zz$, and in some (positive) weights. Modulo $a_0,\ldots, a_n$ the fiber of the Frobenius map
    \begin{equation*}
        \THH(A)/(a_0,\ldots,a_n)\to\THH(A)^{t C_p}/(a_0, \ldots,a_n)
    \end{equation*}
is not bounded above. 
\end{proposition}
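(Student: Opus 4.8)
Write $R_0 := \Ff_p\otimes\Ss[a_0]\otimes\cdots\otimes\Ss[a_n]$ and $\underline a := (a_0,\dots,a_n)$, so that $A\simeq R_0\otimes\Ss[x]/x^e$ as $\Ee_2$-graded algebras and the $a_i$ are central classes of even degree and positive weight. The plan is to run the argument of Proposition~\ref{pro:segalfails} one level down, with $\THH(\BP\langle n\rangle)$ replaced by $\THH(R_0)$. First I would use the symmetric monoidality of $\THH$ together with the decomposition $\THH(\Ss[x]/x^e)\simeq\bigoplus_{m\geq 0}B_m$ of Proposition~\ref{pro:decomposation} to get $\THH(A)\simeq\bigoplus_{m\geq 0}\THH(R_0)\otimes B_m$ as $S^1$-spectra, the $a_i$ acting through the $\THH(R_0)$-factor. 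Since the Frobenius is a map of $A$-modules and reduction modulo $\underline a$ is $-\otimes_A(A/\underline a)$ with $A/\underline a$ a perfect $A$-module, we obtain
\begin{equation*}
\THH(A)/\underline a\ \simeq\ \bigoplus_{m\geq 0}\bigl(\THH(R_0)/\underline a\bigr)\otimes B_m .
\end{equation*}
The bottom dimension of $\bigl(\THH(R_0)/\underline a\bigr)\otimes B_m$ tends to $\infty$, so this coproduct agrees with the corresponding product, and both $(-)^{tC_p}$ and $(-)/\underline a$ commute with that product; hence the reduced Frobenius for $A$ is, compatibly with these decompositions, assembled from its individual components.

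Next I would identify those components. Exactly as in Lemma~\ref{unstablefro}, using the genuine $p$-cyclotomic structure on $\THH(R_0)$ (available since $R_0$ is an $\Ee_1$-ring), the equivalence $B_m^{C_p}\simeq B_{m/p}$ of $S^1$-spaces from \cite[(4.2.2)]{HM97}, and \cite[Lemma~9.1]{HMwitt}, the restriction of the reduced Frobenius for $A$ to the $m$-th summand is the composite
\begin{equation*}
\bigl(\THH(R_0)/\underline a\bigr)\otimes B_m\ \xrightarrow{\ \varphi_{R_0}\otimes\id\ }\ \bigl(\THH(R_0)^{tC_p}/\underline a\bigr)\otimes B_m\ \xrightarrow{\ \simeq\ }\ \bigl(\THH(R_0)\otimes B_{mp}\bigr)^{tC_p}/\underline a ,
\end{equation*}
landing in the $(mp)$-th summand, where $\varphi_{R_0}\colon\THH(R_0)/\underline a\to\THH(R_0)^{tC_p}/\underline a$ is the reduced Frobenius of $\THH(R_0)$ and the second arrow is an equivalence. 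Since the summands $\bigl(\THH(R_0)\otimes B_k\bigr)^{tC_p}/\underline a$ with $p\nmid k$ are not in the image, taking fibers (which commute with products) shows that the fiber of the reduced Frobenius for $A$ contains $\prod_{m\geq 0}W_0\otimes B_m$ as a retract, where $W_0:=\fib(\varphi_{R_0})$.

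It then remains to see $W_0\not\simeq 0$ and to propagate it. By Proposition~\ref{pro:gradedn} we may take $R_0$ literally as above; then $\pi_*\THH(R_0)\simeq\Ff_p[\mu,a_0,\dots,a_n]\otimes\Lambda_{\Ff_p}(\sigma a_0,\dots,\sigma a_n)$ with $|\mu|=2$ (B\"okstedt periodicity together with the K\"unneth and Hochschild computations used in Section~\ref{sec2}), so $\pi_*\bigl(\THH(R_0)/\underline a\bigr)\simeq\Ff_p[\mu]\otimes\Lambda_{\Ff_p}(\sigma a_0,\dots,\sigma a_n)$, while $\pi_*\bigl(\THH(R_0)^{tC_p}/\underline a\bigr)\simeq\Ff_p[u^{\pm1}]\otimes\Lambda_{\Ff_p}(\sigma a_0,\dots,\sigma a_n)$ with $|u|=-2$ by \cite[Proposition~4.2.2]{HW22} (whose proof is insensitive to the even degrees and positive weights of the generators). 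As $u$ is invertible in the target but the source has no invertible element of nonzero degree, $\varphi_{R_0}$ is not an equivalence, so $W_0\not\simeq 0$; fix $j_0$ with $\pi_{j_0}W_0\neq 0$. Now for $e\nmid m$ we have $B_m\simeq S^{V_{[(m-1)/e]}}\otimes\suspen_+(S^1/C_m)$ and $\suspen_+(S^1/C_m)\simeq\Ss\oplus\Sigma\Ss$, so $\Sigma^{\dim V_{[(m-1)/e]}}W_0$ is a wedge summand of $W_0\otimes B_m$ and hence $\pi_{j_0+\dim V_{[(m-1)/e]}}\bigl(W_0\otimes B_m\bigr)\neq 0$. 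Since $e>1$ there are infinitely many $m$ with $e\nmid m$, along which $\dim V_{[(m-1)/e]}\to\infty$; thus $\prod_{m\geq 0}W_0\otimes B_m$, and therefore the fiber of the reduced Frobenius for $A$, has nonzero homotopy in arbitrarily large degrees, i.e.\ is not bounded above, as claimed.

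The step I expect to be the main obstacle is not conceptual but one of bookkeeping: one must verify that the genuine cyclotomic structure, the $B_m$-decomposition, and the reduction modulo $\underline a$ are mutually compatible, so that the reduced Frobenius really does factor componentwise as displayed (the graded analogue of Lemma~\ref{unstablefro}), and that the $\desc_{\Ff_p}$-style Tate computation of \cite[Proposition~4.2.2]{HW22} carries over with the generators placed in arbitrary even degrees and positive weights. Both adaptations are routine, but it is precisely the genericity of the degrees and weights, rather than the specific ones occurring in the $\BP\langle n\rangle$-descent, that has to be checked rather than merely cited.
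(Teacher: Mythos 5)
Your proposal is correct and follows essentially the same route as the paper: both arguments rest on the decomposition $\THH(\Ss[x]/x^e)\simeq\bigoplus_{m\geq 0}B_m$ of Proposition \ref{pro:decomposation}, the (graded analogue of the) unstable Frobenius identification of Lemma \ref{unstablefro}, and the non-connectivity of the mod-$(a_0,\ldots,a_n)$ Tate construction coming from $\Ff_p[u^{\pm 1}]$, with unboundedness supplied by the connectivity of $B_m$ tending to infinity. The only difference is bookkeeping: the paper groups the tensor factors as $\Ff_p[x]/x^e$ against the free part, first producing the fiber $W$ of the Frobenius of $\THH(\Ff_p[x]/x^e)$ and then tensoring with $\Lambda_{\Ff_p}(\sigma a_0,\ldots,\sigma a_n)$, whereas you group them as $R_0\otimes\Ss[x]/x^e$ and tensor the fiber $W_0$ for $\THH(R_0)$ with the $B_m$'s --- the same ingredients in a different order.
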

\begin{proof}
Same as before. The Frobenius map for $\THH(A)$ is equivalent to \begin{equation*}
    \THH(\Ff_p[x]/x^e)\otimes \THH(\Ss[a_0])\otimes\cdots \THH(\Ss[a_n]) \to \THH(\Ff_p[x]/x^e)^{t C_p}\otimes \THH(\Ss[a_0])^{\wedge}_p\otimes\cdots \THH(\Ss[a_n])^{\wedge}_p.
\end{equation*}
And hence equivalent to \begin{equation*}
    \THH(\Ff_p[x]/x^e)\otimes\THH(\Ff_p[a_0,\ldots,a_n]/\Ff_p)\to \THH(\Ff_p[x]/x^e)\otimes \THH(\Ff_p[a_0,\ldots, a_n]/\Ff_p).
\end{equation*}
Let $W$ be the fiber of $\varphi_p: \THH(\Ff_p[x]/x^e)\to \THH(\Ff_p[x]/x^e)^{t C_p}$, then we would like to understand $W\otimes \THH(\Ff_p[a_0,\ldots,a_n]/\Ff_p)$.
Note that $\THH(\Ss[x]/x^e)\simeq \bigoplus_{m\geq 0} B_m$, where $B_m$ is stated in Proposition \ref{pro:decomposation}. And hence the when $e\nmid m,$ the unstable Frobenius map is equivalent to \begin{equation*}
    \THH(\Ff_p)\otimes B_{m/p} \to \THH(\Ff_p)^{t C_p}\otimes B_{m/p}\to (\THH(\Ff_p)\otimes B_m)^{t C_p}
\end{equation*}
where the second map is an equivalence. Let $W_m$ be the fiber of the first map, then $W\simeq \prod_{m\geq 0} W_m$.
Note that when $m\to \infty$, $e\nmid m$, we have that $\pi_{[m-1/e]-2} W_m\not\simeq 0$, \footnote{In fact, by Proposition \ref{pro:segal1}, we know that there is a sequnece, $\{a_m\}$, with $\lim_{m\to \infty} a_m=\infty$, such that $\pi_{a_m}(W)\not\simeq 0$.} by the fact that the fiber of Frobenius map of $\THH(\Ff_p)$ is $\tau_{<0} \THH(\Ff_p)^{t C_p}$. And $\THH_*(\Ff_p[a_0,\ldots,a_n]/\Ff_p)/(a_0,\ldots,a_n)\simeq \Lambda_{\Ff_p}(\sigma a_0, \ldots, \sigma a_n)$. Hence $\pi_{[\frac{m-1}{e}]+|a_0|-1} (W\otimes_{\Ff_p} \HH(\Ff_p[a_0,\ldots,a_n]/\Ff_p)/(a_0,\ldots,a_n)) \not\simeq 0$. When $m$ tends to $\infty$, we get the desired statement.
\end{proof}
\begin{proof}[Proof of Proposition \ref{pro:segalforalge}]
Let the fiber of the map in the statement be $W_1$.
Since $\THH(\BP\langle n\rangle[x]/x^e)$ can be viewed as an $\Ee_1$-$\BP\langle n\rangle$-algebra, and we have $\Ee_2$-algebra map \begin{equation*}
    \THH(\BP\langle n\rangle[x]/x^e) \to \THH(\BP\langle n\rangle[x]/x^e)^{t C_p}.
\end{equation*}
Then the target of the map can be promoted to a $\BP\langle n\rangle$-module, therefore the map is well defined. We then apply $\desc_{\Ff_p}$ to the algebras above. Thus the map is approximate by \begin{equation*}
    \Ff_p\otimes_{\desc_{\Ff_p}(\BP\langle n\rangle)} \THH(\desc_{\Ff_p}(\BP\langle n\rangle[x]/x^e))\to \Ff_p\otimes_{\desc_{\Ff_p}(\BP\langle n\rangle)} \THH(\desc_{\Ff_p}(\BP\langle n\rangle[x]/x^e))^{t C_p}.
\end{equation*}
Hence the fiber is approximate by the fiber of this map of filtered spectra. The associated graded $\pi_* \gr \desc_{\Ff_p}(\BP\langle n\rangle) \simeq \Ff_p[x]/x^e[v_0,\ldots, v_n]$, then by Proposition \ref{pro:gradedn}, \begin{equation*}
\gr \desc_{\Ff_p} (\BP\langle n\rangle[x]/x^e)\simeq \Ff_p[x]/x^e \otimes \Ss[v_0]\otimes\cdots \otimes\Ss[v_n]=A. 
\end{equation*}
Then $W_1$ has graded given by the fiber of the map \begin{equation*}
    \THH(A)/(v_0,\ldots,v_n) \to \THH(A)^{t C_p}/(v_0,\ldots, v_n).
\end{equation*}
Taking homotopy groups, the map is \begin{equation*}
    \Ff_p[u]\otimes \HH_*(\Ff_p[x]/x^e/\Ff_p)\otimes \Lambda(\sigma v_1 ,\ldots ,\sigma v_n) \to \Ff_p[u^{\pm}] \otimes\HH_*(\Ff_p[x]/x^e/\Ff_p) \otimes \Lambda(\sigma v_1, \ldots \sigma v_n).
\end{equation*}\footnote{This follows from the fact that the Frobenius map $\THH(\Ss[x]/x^e)\to \THH(\Ss[x]/x^e)^{t C_p}$ is a $p$-adic equivalence.}
Note that $\HH_{2j}(\Ff_p[x]/x^e/\Ff_p)\simeq \Ff_p[x]/x^e$, let $\alpha \in \HH_2(\Ff_p[x]/x^e/\Ff_p)$. A similar argument shows that $\alpha^{p^N}$ is a permanent cycle, for $N\gg 0$. However $u^{p^N}\alpha^{p^{N+m}}$ is not in the image of the Frobenius map. When $m\to \infty$, this gives a sequence of homotopy classes in the fiber of the Frobenius map, hence the fiber of the Frobenius map is not bounded above.
\end{proof}

Actually, the statement applied in a general setting.
\begin{proposition}\label{pro:segalfailsforgroupring}
    Let $Y$ be a pointed simply connected space. By \cite[Corollary \Rom{4}.3.3]{NS18}, we know that $\THH(\Ss[\Omega Y]) \simeq \suspen_+ \mathcal{L} Y$. Suppose we have a decomposition \begin{equation*}
        \suspen_+\mathcal{L} Y\simeq \bigoplus_{m\geq 0} B_m
    \end{equation*}
as $S^1$-spectra. Let $p>2$, and  $F$ be any type $n+2$  $p$-local generalized Moore complex, i.e. $F=\Ss/(p, v_1^{k_1}, \ldots, v_{n+1}^{k_{n+1}})$. And assume further
\begin{enumerate}
    \item  When $m\to \infty$, the connectivity of $B_m$ tends to $\infty$. 
    \item The Frobenius of is given by product of $B_m \to B_{mp}^{t C_p}$, and $(B_m)^{t C_p}\simeq 0$ when $p\nmid m$.
    \item For each $B_m$ as a $C_p$-spectrum, lies in the thick subcategory that generated by $\Ss$ and $\suspen_+C_p$.
\end{enumerate}
Then  the Frobenius map of $F\otimes \THH(\BP\langle n\rangle[\Omega Y])$ is equivalent to the product of maps \begin{equation*}
    F\otimes \THH(\BP\langle n\rangle)\otimes B_m \to F\otimes \THH(\BP\langle n\rangle)^{t C_p} \otimes B_{mp}^{t C_p} \to F\otimes (\THH(\BP\langle n\rangle) \otimes B_{mp})^{t C_p} 
\end{equation*}
where the the second map is an equivalence, and the first map is a $p$-adic equivalence.
If further more, there are sequences $\{a_k\}, \{f(a_k)\}\subseteq \Zz$, such that $\pi_{f(a_k)} \Ss/p\otimes W\otimes B_{a_k}\not\simeq 0, \lim_{k\to\infty} f(a_k) =\infty$, where $W$ is the fiber of the Frobenius map of $F\otimes\THH(\BP\langle n\rangle)$, then
\begin{equation*}
\text{
\framebox{$F$-\textup{Segal conjecture for }$\THH(\BP\langle n\rangle[\Omega Y])$}
} 
\end{equation*}
fails.
As a by-product, $F\otimes \TR(\BP\langle n\rangle[\Omega Y])$ is not bounded.
\end{proposition}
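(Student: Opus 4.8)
The plan is to run the proof of Proposition \ref{pro:segalfails} essentially verbatim, replacing the cyclic decomposition of $\THH(\Ss[x]/x^e)$ coming from \cite{HM97} by the hypothesised decomposition $\suspen_+\mathcal{L}Y\simeq\bigoplus_{m\geq 0}B_m$, and using $(1)$--$(3)$ to supply exactly the structural inputs that the truncated-polynomial argument read off from \cite{HM97}. First I would use multiplicativity of $\THH$ together with \cite[Corollary \Rom{4}.3.3]{NS18} to identify $\THH(\BP\langle n\rangle[\Omega Y])\simeq\THH(\BP\langle n\rangle)\otimes\suspen_+\mathcal{L}Y$ as cyclotomic spectra, so that $F\otimes\THH(\BP\langle n\rangle[\Omega Y])\simeq\bigoplus_m F\otimes\THH(\BP\langle n\rangle)\otimes B_m$. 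Hypothesis $(1)$ makes the connectivities of the summands tend to $\infty$, so this direct sum is a product; since $F$ is finite (hence dualisable) $F\otimes-$ commutes with products, and by the same connectivity bookkeeping $(-)^{t C_p}$ does too on such a product, giving $F\otimes\THH(\BP\langle n\rangle[\Omega Y])^{t C_p}\simeq\prod_m F\otimes(\THH(\BP\langle n\rangle)\otimes B_m)^{t C_p}$.

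Next I would analyse the Frobenius one summand at a time. The cyclotomic Frobenius is multiplicative, so on the $m$-th summand it is $\varphi_{\BP\langle n\rangle}\otimes\varphi_m$, where $\varphi_{\BP\langle n\rangle}$ is the Frobenius of $\THH(\BP\langle n\rangle)$ and, by hypothesis $(2)$, $\varphi_m$ is the map $B_m\to B_{mp}^{t C_p}$ (the summands with $p\nmid m$ contributing $0$ on the target). This exhibits the $m$-th component as $\varphi_{\BP\langle n\rangle}\otimes\id_{B_m}$, followed by $\id\otimes\varphi_m$, followed by the lax symmetric monoidal comparison map $F\otimes\THH(\BP\langle n\rangle)^{t C_p}\otimes B_{mp}^{t C_p}\to F\otimes(\THH(\BP\langle n\rangle)\otimes B_{mp})^{t C_p}$ for $(-)^{t C_p}$. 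Hypothesis $(3)$ — that each $B_m$, as a $C_p$-spectrum, lies in the thick subcategory generated by $\Ss$ (trivial action) and $\suspen_+ C_p$ (free) — together with the Segal conjecture for the sphere, $\Ss^{t C_p}\simeq\Ss^{\wedge}_p$, and the $p$-torsionness of $F$, forces this comparison map to be an equivalence: one reduces by a thick-subcategory argument to the two generators, where both sides are computed by hand (this is the analogue of \cite[Lemma 9.1]{HMwitt} used in Lemma \ref{unstablefro}). By the same inputs $\varphi_m\colon B_m\simeq\Phi^{C_p}(B_{mp})\to B_{mp}^{t C_p}$ is a $p$-adic equivalence, so $\id\otimes\varphi_m$ is an equivalence after $F\otimes-$. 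Consequently the fibre of the $m$-th component is $W\otimes B_m$, where $W=\fib\bigl(F\otimes\THH(\BP\langle n\rangle)\to F\otimes\THH(\BP\langle n\rangle)^{t C_p}\bigr)$, and the fibre of the whole Frobenius is $\prod_m W\otimes B_m$.

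Finally, the additional hypothesis supplies $a_k$ with $\pi_{f(a_k)}(\Ss/p\otimes W\otimes B_{a_k})\neq 0$ and $f(a_k)\to\infty$; since $W\otimes B_{a_k}$ is a retract of $\prod_m W\otimes B_m$ and passing from mod-$p$ homotopy to homotopy loses at most one degree, the fibre $\prod_m W\otimes B_m$ has nonzero homotopy in a sequence of degrees tending to $\infty$, so it is not bounded above; that is, \framebox{$F$-\textup{Segal conjecture for }$\THH(\BP\langle n\rangle[\Omega Y])$} fails. Since $F\otimes\THH(\BP\langle n\rangle[\Omega Y])$ is bounded below and $p$-power torsion, Theorem \ref{boundedTR} then yields that $F\otimes\TR(\BP\langle n\rangle[\Omega Y])$ is not bounded. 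I expect the main obstacle to be the middle step: making precise, from hypothesis $(3)$ alone, that $(-\otimes B_{mp})^{t C_p}$ splits as $(-)^{t C_p}\otimes B_{mp}^{t C_p}$ after $F\otimes-$, and keeping careful track of $p$-completions throughout since $\THH(\BP\langle n\rangle)^{t C_p}$ is not bounded below; the remaining steps are the connectivity bookkeeping already carried out in Proposition \ref{pro:segalfails}.
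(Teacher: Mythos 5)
Your proposal follows essentially the same route as the paper's proof: use hypothesis (1) to convert the sum $\bigoplus_m F\otimes\THH(\BP\langle n\rangle)\otimes B_m$ into a product (with $F\otimes-$ commuting with products by dualizability of $F$), use (2) to factor the Frobenius componentwise, use (3) plus a thick-subcategory reduction to $\Ss$ and $\suspen_+C_p$ to see that the lax monoidal comparison $F\otimes\THH(\BP\langle n\rangle)^{tC_p}\otimes B_{mp}^{tC_p}\to F\otimes(\THH(\BP\langle n\rangle)\otimes B_{mp})^{tC_p}$ is an equivalence, identify the componentwise fiber with $W\otimes B_m$ up to smashing with $\Ss/p$, and then contradict boundedness using the sequence $\{a_k\}$, finishing with Theorem \ref{boundedTR} for the statement about $\TR$. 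This is exactly the paper's argument, so only one sub-step deserves comment.

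The one place where your justification diverges is the claim that the Frobenius component $\varphi_m$ is a $p$-adic equivalence because $\varphi_m\colon B_m\simeq\Phi^{C_p}(B_{mp})\to B_{mp}^{tC_p}$ and $B_{mp}$ lies in the thick subcategory of (3). Neither the identification $B_m\simeq\Phi^{C_p}(B_{mp})$ nor the assertion that $\varphi_m$ is the canonical map out of geometric fixed points is among hypotheses (1)--(3): hypothesis (2) only provides \emph{some} maps $B_m\to B_{mp}^{tC_p}$ as components of the Frobenius, and a thick-subcategory argument applies to natural transformations, not to an unspecified map, so as written this step has a small gap. The paper closes it differently and more robustly: by \cite[Theorem \Rom{4}.3.7]{NS18} the total Frobenius $\THH(\Ss[\Omega Y])\to\THH(\Ss[\Omega Y])^{tC_p}$ is a $p$-adic equivalence for any $Y$; smashing with the compact spectrum $\Ss/p$ (which commutes with the product decomposition on both sides) shows each component $\Ss/p\otimes\varphi_m$ is an equivalence, and since $p>2$ and $F$ is a generalized Moore complex built from $\Ss/p$, the first map in the factorization is mod $p$ identified with $\id\otimes\varphi\otimes\id$ on $B_m$. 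Substituting that argument for your geometric-fixed-points step, the rest of your write-up (including the $p$-completion and connectivity bookkeeping you flag) goes through as in Proposition \ref{pro:segalfails}.
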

\begin{proof}
    The argument is same as before. First we have the following equivalences by (1).\begin{equation*}
        F\otimes \THH(\BP\langle n\rangle[\Omega Y]) \simeq \bigoplus_{m\geq 0} F\otimes \THH(\BP\langle n\rangle)\otimes B_m \simeq \prod_{m\geq 0} F\otimes \THH(\BP\langle n\rangle) \otimes B_m.
    \end{equation*} 
And then by condition (1), (2), we have the following commutative diagram.
Hence the Frobenius map factors as follows,
\begin{equation*}
    \xymatrix{
    F\otimes \THH(\BP\langle n\rangle[\Omega Y]) \ar[r]^{\varphi}\ar[d]_{\simeq} & F\otimes \THH(\BP\langle n\rangle[\Omega Y])^{t C_p}\ar[d]_{\simeq}\\
    \prod_{m\geq 0} F\otimes \THH(\BP\langle n\rangle)\otimes B_{m/p}\ar[r]^{\varphi'} \ar[d]_{\id\otimes\varphi\otimes\varphi_m} &  F\otimes\prod_{m\geq 0} (\THH(\BP\langle n\rangle)\otimes B_m)^{t C_p} \ar[d]_{\simeq }\\
    \prod_{m\geq 0} F\otimes \THH(\BP\langle n\rangle)^{t C_p} \otimes B_m^{t C_p} \ar[r]^{\iota}  &  \prod_{m\geq 0} F\otimes (\THH(\BP\langle n\rangle) \otimes B_m)^{t C_p}.
    }
\end{equation*}
The right lower vertical map is an equivalence. Because $F$ is finite then dualizable is $\Sp$, thus commutes with products.
Therefore, the total Frobenius map is a product of maps
\begin{equation}\label{map:frobenius}
    F\otimes \THH(\BP\langle n\rangle)\otimes B_{m}\to F\otimes \THH(\BP\langle n\rangle)^{t C_p}\otimes B_{mp}^{t C_p}\to  F\otimes (\THH(\BP\langle n\rangle) \otimes B_{mp})^{t C_p},
\end{equation}
where the second map is an equivalence by condition (3) and combining with the argument in \cite[Proof of Proposition 4.2.2]{HW22}.
Now we are reduce to understand the first map in (\ref{map:frobenius}), we make the following claim.
\begin{claim}\label{claim:frobeniusmap}
The first map in (\ref{map:frobenius}) is $p$-adically equivalent to \begin{equation*}
    F\otimes \THH(\BP\langle n\rangle) \otimes B_m \to F\otimes \THH(\BP\langle n\rangle)^{t C_p} \otimes B_m
\end{equation*}
\end{claim}
\begin{claimproof}
Note that according to \cite[Theorem \Rom{4}.3.7]{NS18}, we know that the Frobenius map \begin{equation*}
    \THH(\Ss[\Omega Y])\to \THH(\Ss[\Omega Y])^{t C_p}
\end{equation*}
is a $p$-adic equivalence, and then by smashing $\Ss/p$ the map is also an equivalence. Since $\Ss/p$ is compact, then $\Ss/p\otimes -$ commutes with limits. Hence \begin{equation*}
    \Ss/p\otimes \varphi_m: \Ss/p\otimes B_{m/p} \to \Ss/p\otimes B_m^{t C_p} 
\end{equation*}
is a equivalence. And note the after smashing $\Ss/p$ with the first map in (\ref{map:frobenius}) the resulting map is equivalent to the original map, because $p>2$. Then we arrive at the desired statement.
\end{claimproof}
Suppose $F$-Segal conjecture for $\THH(\BP\langle n\rangle[\Omega Y])$ is true. By the previous discussion, the map in Claim \ref{claim:frobeniusmap} has bounded above fiber. However, this would cause a contradiction by condition (4). The last statement now follows from Theorem \ref{boundedTR}.
\end{proof}
\begin{corollary}\label{cor:segalfailsforgroupring}
Let $F$ be any $p$-local type $n+2$ finite complex. Let $W$ be the fiber of $\varphi: F\otimes\THH(\BP\langle n\rangle)\to F\otimes \THH(\BP\langle n\rangle)^{t C_p}$.
Then the following statements are true.
\begin{enumerate}
\item If $\Ss/p \otimes W\not\simeq 0$, then 
$F\otimes \TR(\BP\langle n\rangle[\Omega S^3])$ is not bounded, and also the $F$-Segal conjecture fails for $\THH(\BP\langle n\rangle[\Omega S^3])$. 
\item  We have $\Ff_p\otimes W\neq 0$. In particular, $\Ss/p\otimes W\ne 0$.
\end{enumerate}
\end{corollary}
\begin{proof}
By \cite[\S 5, Lemma 4.9, Lemma 5.5]{kthofthh}, $\THH(\Ss[\Omega S^3])\simeq \bigoplus_{m\geq 0} B_m$, and then condition (1), (2), (3) in Proposition \ref{pro:segalfailsforgroupring} hold.
\footnote{Actually, note that as $\Ee_1$-algebras $\Ss[\Omega S^3]$ is equivalent to $\oplus_{k\geq 0} (S^2)^{\otimes k}$. By \cite[Theorem 3.8]{AM21}, we know that $\THH(\Ss[\Omega S^3]) \simeq \bigoplus_{m\geq 0} \textup{Ind}^{S^1}_{C_m} (S^2)^{\otimes m}$ in $\Fun(\textup{B} S^1,\Sp)$. So the decompositions seems equivalent.}
First, we claim following fact.\begin{claim}\label{claim:equiv}
    As spectra, $B_m\simeq \textup{Ind}^{S^1}_{C_m} S^{2m}$.
\end{claim}
\begin{claimproof}
Note that we can construct a map \begin{equation*}
    B_m \to \bigoplus_{m\geq 0} B_m \simeq \THH(\Ss[\Omega S^3]) \simeq \bigoplus_{m\geq 0} \textup{Ind}^{S^1}_{C_m} (S^2)^{\otimes m} \simeq \prod_{m\geq 0} \textup{Ind}^{S^1}_{C_m} (S^2)^{\otimes m} \to \textup{Ind}^{S^1}_{C_m} S^{2m}.
\end{equation*}
Now, applying $\Zz\otimes -$, we have maps \begin{equation*}
    \Zz\otimes B_m \to \bigoplus_{m\geq 0}\Zz\otimes B_m \simeq \bigoplus_{m\geq 0} \Zz\otimes \textup{Ind}^{S^1}_{C_m} (S^2)^{\otimes m} \simeq \prod_{m\geq 0}\Zz\otimes  \textup{Ind}^{S^1}_{C_m} (S^2)^{\otimes m} \to \Zz\otimes\textup{Ind}^{S^1}_{C_m} S^{2m}.
\end{equation*}
This map induce equivalence on homotopy groups, by \cite[Lemma 5.5]{kthofthh}, and the fact that $\textup{Ind}^{S^1}_{C_m} S^{2m} \simeq S^{2m}\oplus \Sigma S^{2m}$.
 Now both sides are connective spectra, then we conclude $B_m\simeq \textup{Ind}^{S^1}_{C_m} S^{2m} \simeq S^{2m}\oplus S^{2m+1}$ (this can be seen by filtering $\Ss$ as $\tau_{\geq *}\Ss$, and consider the graded).
\end{claimproof}
 Now, let $W$ be the fiber of the Frobenius map of $F\otimes \THH(\BP\langle n\rangle)$, suppose the $F$-Segal conjecture for $\THH(\BP\langle n\rangle[\Omega S^3])$ is true. Then there is $k_0\in \Zz$, such that $W'(m)\in \Sp_{\leq k_0}$ for all $m\in \Nn,$  where $W'(m)$ is the fiber of following map \begin{equation*}
     F\otimes \THH(\BP\langle n\rangle)\otimes B_m \to F\otimes \THH(\BP\langle n\rangle)^{t C_p}\otimes (B_{mp})^{t C_p}.
 \end{equation*}
This would imply that $W'(m)\otimes \Ss/p \in \Sp_{\leq k_0+1}$, for all $m\in \Nn$.
And $W'(m)\otimes \Ss/p \simeq W\otimes B_m\otimes \Ss/p$, therefore $\Ss/p\otimes W\otimes B_m\in \Sp_{\leq k_0+1}$ for all $m\in \Nn$. However, by Claim \ref{claim:equiv}, we have $\Ss/p\otimes W\otimes B_m\simeq \Sigma^{2m}(\Ss/p\otimes W)\oplus \Sigma^{2m+1} (\Ss/p\otimes W)$.
 By assumption, there is $a\in \Zz$, $\pi_a(\Ss/p\otimes W)\not\simeq 0$, hence $\pi_{a+2m}(\Ss/p\otimes W\otimes B_m)\not\simeq 0$, when $a+2m>k_0$, this contradicts to $W\otimes\Ff_p\otimes B_m\in \Sp_{\leq k_0+1}$. Hence, $F$-Segal conjecture for $\THH(\BP\langle n\rangle[\Omega S^3])$ fails.

 Now, we need to show that $\Ss/p\otimes W$ is not $0$. Suppose $\Ss/p\otimes W\simeq 0$, then $\Ff_p\otimes W\simeq \Zz\otimes \Ss/p\otimes W\simeq 0$. But this will contradicts to following fact, therefore $\Ss/p\otimes W\not\simeq 0$.
 \begin{claim}
     $\Ff_p\otimes W\not\simeq 0$
 \end{claim}
 \begin{claimproof}[Sketch of Proof]
 One could use the argument in the proof of Proposition \ref{pro:segalfails} to prove $W\otimes \Ff_p\not\simeq 0$. One filtered $\THH(\BP\langle n\rangle)^{t C_p}$ using $\desc_{\Ff_p}$, the associated graded is $\THH(\Ff_p[v_0,\ldots,v_n])^{t C_p}$. Then one need to prove modulo $v_0^{i_0}, \ldots, v_{n+1}^{i_{n+1}}$, the $E_{\infty}$-page has negative homotopy groups. Note that $\Ff_p\otimes \THH(\Ff_p[v_0,\ldots,v_n]^{t C_p})\simeq \Ff_p\otimes \Ff_p\otimes_{\Ff_p}\THH(\Ff_p[v_0,\ldots ,v_n])^{t C_p}$. This is literally the same proof with different symbols.    
 \end{claimproof}
 
\end{proof}
However, when we replace $\BP\langle n\rangle$ with $\BP$ the Segal conjecture holds, and even for $\MU$. First recall that in \cite[Theorem 1.1]{segalforMU}, the authors showed that the following Frobenius maps are $p$-adic equivalences.
\begin{align*}
\THH(\MU)\to \THH(\MU)^{t C_p},\\
\THH(\BP)\to \THH(\BP)^{t C_p}.
\end{align*}

Now using the argument before, we can prove the following.
\begin{proposition}\label{pro:segalforgroupring}
    The Frobenius maps \begin{align*}
     \THH(\MU[\Omega S^3]) \to \THH(\MU[\Omega S^3])^{t C_p}\\
        \THH(\BP[\Omega S^3]) \to \THH(\BP[\Omega S^3])^{t C_p}
    \end{align*}
    are $p$-adic equivalences. In general, for $Y$ a space satisfying conditions (1), (2), (3) in Propoistion \ref{pro:segalfailsforgroupring}, then the Frobenius maps \begin{gather*}
       \THH(\MU[\Omega Y]) \to \THH(\MU[\Omega Y])^{t C_p}\\
        \THH(\BP[\Omega Y]) \to \THH(\BP[\Omega Y])^{t C_p} 
    \end{gather*}
    are $p$-adic equivalences.
\end{proposition}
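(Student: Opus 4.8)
The plan is to run the decomposition argument from the proof of Proposition~\ref{pro:segalfailsforgroupring} in reverse. There the product decomposition of the Frobenius was used to exhibit an unbounded fiber; here the same decomposition will show that, after replacing $\BP\langle n\rangle$ by $\MU$ or $\BP$, every factor of the Frobenius is a $p$-adic equivalence. First I would reduce to the general statement: for $Y=S^3$ the conditions (1), (2), (3) of Proposition~\ref{pro:segalfailsforgroupring} hold by \cite[\S 5, Lemma 4.9, Lemma 5.5]{kthofthh}, exactly as recorded in the proof of Corollary~\ref{cor:segalfailsforgroupring}, so it is enough to treat $\THH(\MU[\Omega Y])$ and $\THH(\BP[\Omega Y])$ for a pointed simply connected space $Y$ satisfying (1), (2), (3) together with the decomposition $\THH(\Ss[\Omega Y])\simeq\bigoplus_{m\ge 0}B_m$.

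Fix $R\in\{\MU,\BP\}$. By the multiplicative property of $\THH$ (using $R[\Omega Y]=R\otimes\Ss[\Omega Y]$) and the $S^1$-equivariant decomposition above, there is an equivalence of cyclotomic spectra
\begin{equation*}
\THH(R[\Omega Y])\simeq\THH(R)\otimes\THH(\Ss[\Omega Y])\simeq\bigoplus_{m\ge 0}\THH(R)\otimes B_m,
\end{equation*}
and by condition (1) the connectivities of the summands tend to $\infty$, so this direct sum is also a product. Then, exactly as in the proof of Proposition~\ref{pro:segalfailsforgroupring}, the Frobenius of $\THH(R[\Omega Y])$ is identified with the product over $m\ge 0$ of the composites
\begin{equation*}
\THH(R)\otimes B_m\longrightarrow\THH(R)^{t C_p}\otimes B_{mp}^{t C_p}\xrightarrow{\,\sim\,}\bigl(\THH(R)\otimes B_{mp}\bigr)^{t C_p},
\end{equation*}
where the second map is an equivalence by condition (3) together with the argument of \cite[Proof of Proposition 4.2.2]{HW22}; this step uses only that $\THH(R)$ is bounded below, which holds since $R$ is connective, so it is identical to the $\BP\langle n\rangle$ case.

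It then remains to check that the first map above is a $p$-adic equivalence. As in Claim~\ref{claim:frobeniusmap}, the total Frobenius $\bigoplus_m B_m=\THH(\Ss[\Omega Y])\to\THH(\Ss[\Omega Y])^{t C_p}=\prod_m B_{mp}^{t C_p}$ is a $p$-adic equivalence by \cite[Theorem \Rom{4}.3.7]{NS18}, hence so is each component $B_m\to B_{mp}^{t C_p}$; therefore, after $-\otimes\Ss/p$, the first map becomes $\varphi_{\THH(R)}\otimes\id_{B_m}$. Now I would invoke the key input \cite[Theorem 1.1]{segalforMU}: the Frobenius $\varphi_{\THH(R)}\colon\THH(R)\to\THH(R)^{t C_p}$ is a $p$-adic equivalence for $R=\MU$ and $R=\BP$. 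Since tensoring a $p$-adic equivalence with any spectrum is again a $p$-adic equivalence, the first map is a $p$-adic equivalence, and so is each factor of the Frobenius of $\THH(R[\Omega Y])$. Finally, $\Ss/p$ is a finite spectrum, so $-\otimes\Ss/p$ commutes with the product over $m$, and hence $\THH(R[\Omega Y])\to\THH(R[\Omega Y])^{t C_p}$ is a $p$-adic equivalence, as claimed.

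The hard part is essentially not internal to this paper: the only genuinely nonformal ingredient is the Segal conjecture for $\THH(\MU)$ and $\THH(\BP)$ from \cite[Theorem 1.1]{segalforMU}. The point to be careful about is that the Tate construction does not commute with reduction mod $p$, so one cannot simply tensor the abstract lax symmetric monoidal structure map $\THH(R)^{t C_p}\otimes(-)^{t C_p}\to(\THH(R)\otimes(-))^{t C_p}$ and conclude directly; it is precisely the decomposition into the $C_p$-finite pieces $B_m$ (condition (3)), combined with the already-established fact that the $\THH(\Ss[\Omega Y])$-Frobenius is a $p$-adic equivalence, that lets one rewrite each factor as $\varphi_{\THH(R)}\otimes\id$ and so reduce to \cite[Theorem 1.1]{segalforMU}. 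This is the same mechanism used in Proposition~\ref{pro:segalfailsforgroupring}, only now yielding an equivalence rather than a nonzero fiber because $\varphi_{\THH(R)}$ is itself a $p$-adic equivalence.
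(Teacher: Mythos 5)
Your proposal is correct and follows essentially the same route as the paper: decompose the Frobenius as a product of composites $\THH(R)\otimes B_m\to\THH(R)^{tC_p}\otimes B_{mp}^{tC_p}\to(\THH(R)\otimes B_{mp})^{tC_p}$ via conditions (1)--(3), observe the second map is an equivalence, and reduce the first map (after smashing with $\Ss/p$) to the Segal conjecture for $\THH(\MU)$ and $\THH(\BP)$ from \cite[Theorem 1.1]{segalforMU}. The paper's own proof is a terser version of exactly this argument, so no further comparison is needed.
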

\begin{proof}
We only prove the $\BP$ case, for $\MU$ case, one only need to replace the symbol.
    As seen,  $S^3$ satisfies the conditions (1), (2), (3) in Proposition \ref{pro:segalfailsforgroupring}. Thus the Frobenius map is the product of the following map \begin{equation*}
       \Ss/p\otimes \THH(\BP)\otimes B_m \to\Ss/p \otimes\THH(\BP)^{t C_p}\otimes B_m \to \Ss/p\otimes (\THH(\BP)\otimes B_{mp})^{t C_p},
    \end{equation*}
    where the second map is an equivalence.
    The fiber of $\Ss/p\otimes \THH(\BP)\to \Ss/p\otimes \THH(\BP)^{t C_p}$ is $0$. Hence the first map induce isomorphism on homotopy groups. Thus the total Frobenius map induces isomorphism on homotopy groups. 
\end{proof}

Now, in a same fashion, we can prove the following.
\begin{proposition}\label{pro:segalfortruncated}
Let $\THH(\Ss[x]/x^e)\simeq \bigoplus_{m\geq 0} B_m$ in Proposition \ref{pro:decomposation}.
    The Frobenius maps \begin{align*}
        \THH(\BP[x]/x^e) \to \THH(\BP[x]/x^e)^{t C_p},\\
        \THH(\MU[x]/x^e)\to \THH(\MU[x]/x^e)^{t C_p}.
    \end{align*}
are $p$-adic equivalences.
\end{proposition}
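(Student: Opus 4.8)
The plan is to run the same $B_m$-decomposition argument that proved Proposition \ref{pro:segalfails} and Proposition \ref{pro:segalforgroupring}, but now fed by the Segal conjecture for $\THH(\BP)$ and $\THH(\MU)$ from \cite[Theorem 1.1]{segalforMU} instead of the (failing) statement for $\THH(\BP\langle n\rangle)$; I spell out the $\BP$ case, the $\MU$ case being identical after renaming. First I would use multiplicativity of $\THH$ to write $\THH(\BP[x]/x^e)\simeq\THH(\BP)\otimes\THH(\Ss[x]/x^e)\simeq\bigoplus_{m\geq 0}\THH(\BP)\otimes B_m$, and since the connectivity of $B_m$ tends to $\infty$ this direct sum equals the product. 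Exactly as in the proof of Proposition \ref{pro:segalfails}, the genuine cyclotomic structure then identifies the Frobenius $\varphi\colon\THH(\BP[x]/x^e)\to\THH(\BP[x]/x^e)^{tC_p}$ with the product over $m$ of the composites
\begin{equation*}
\THH(\BP)\otimes B_m \xrightarrow{\ \varphi_{\BP}\otimes\id\ }\THH(\BP)^{tC_p}\otimes B_m \xrightarrow{\ \sim\ }\bigl(\THH(\BP)\otimes B_{mp}\bigr)^{tC_p},
\end{equation*}
where $\varphi_{\BP}$ is the Frobenius of $\THH(\BP)$ and the second arrow is the equivalence of Lemma \ref{unstablefro}; here I would note that the proof of Lemma \ref{unstablefro} uses only the genuine cyclotomic structure on $\THH$ of a connective ring together with $B_m^{C_p}\simeq B_{m/p}$ from \cite[(4.2.2)]{HM97} and the Tate--orbit statement \cite[Lemma 9.1]{HMwitt}, so it applies verbatim with $\BP$ (or $\MU$) in place of $\BP\langle n\rangle$, and likewise $(\THH(\BP)\otimes B_m)^{tC_p}\simeq 0$ for $p\nmid m$.

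Next I would smash the whole of $\varphi$ with the finite complex $\Ss/p$. Since $\Ss/p$ is dualizable, $(-)\otimes\Ss/p$ commutes with the infinite product and with $(-)^{tC_p}$, so $\Ss/p\otimes\varphi$ becomes the product over $m$ of the maps $(\Ss/p\otimes\varphi_{\BP})\otimes\id_{B_m}$ post-composed with equivalences. By \cite[Theorem 1.1]{segalforMU} the Frobenius $\varphi_{\BP}\colon\THH(\BP)\to\THH(\BP)^{tC_p}$ is a $p$-adic equivalence, so $\Ss/p\otimes\varphi_{\BP}$ is an equivalence, hence so is each component map, hence so is $\Ss/p\otimes\varphi$. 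Finally, a map $f$ with $\cofib(f)\otimes\Ss/p\simeq 0$ has $\cofib(f)^{\wedge}_p\simeq 0$ (one builds $\cofib(f)/p^n$ from copies of $\cofib(f)/p$ by extensions and passes to the limit), so $\varphi$ itself is a $p$-adic equivalence, which is the assertion. The $\MU$ case follows the same way from the $\MU$ half of \cite[Theorem 1.1]{segalforMU}.

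I do not expect a serious obstacle here: once the coefficient ring $\BP\langle n\rangle$ of Proposition \ref{pro:segalfails} is replaced by $\BP$ or $\MU$, the entire difficulty migrates into the input $\varphi_{\BP}$ being a $p$-adic equivalence, which is cited. The points needing care are purely bookkeeping: that the cyclotomic structure on $\THH(\Ss[x]/x^e)$ respects the $B_m$-grading in the precise sense used (the Frobenius sends the $m$-th summand into the $(mp)$-th Tate summand, and the Tate construction of $\THH(\BP)\otimes B_m$ vanishes for $p\nmid m$) --- this is already extracted in Lemma \ref{unstablefro} and its proof and depends on nothing about the coefficient ring beyond connectivity --- and the interchange of $(-)\otimes\Ss/p$, of $(-)^{tC_p}$, and of $\cofib$ with the infinite product, all of which follow from finiteness of $\Ss/p$ and dualizability of $S^1$. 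Given these, the statement is formal.
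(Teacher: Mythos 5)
Your proposal is correct and follows essentially the same route as the paper: decompose $\THH(\BP[x]/x^e)$ (resp.\ $\MU$) along the $B_m$-summands, identify the Frobenius with the product of the composites $\THH(\BP)\otimes B_m\to\THH(\BP)^{tC_p}\otimes B_m\to(\THH(\BP)\otimes B_{mp})^{tC_p}$ via Lemma \ref{unstablefro}, and reduce to the fact that $\varphi_{\BP}$ (resp.\ $\varphi_{\MU}$) is a $p$-adic equivalence by \cite[Theorem 1.1]{segalforMU} after smashing with $\Ss/p$. Your explicit remark that Lemma \ref{unstablefro} only uses the genuine cyclotomic structure and connectivity, hence applies with $\BP$ or $\MU$ in place of $\BP\langle n\rangle$, is a point the paper leaves implicit but is exactly the justification needed.
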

\begin{proof}
We only prove the cases for $\BP$. We can prove the Frobenius map is a product of the following map  \begin{equation*}
    \THH(\BP) \otimes B_m \to \THH(\BP)^{t C_p}\otimes B_m \to (\THH(\BP)\otimes B_{mp})^{t C_p}
\end{equation*}
using a similar argument in Lemma \ref{unstablefro}.
Here, the second map is an equivalence. After tensoring with $\Ss/p$ the fist map has fiber equivalent to $0$ by \cite[Theorem 1.1]{segalforMU}, then the Frobenius map is a $p$-adic equivalence.\footnote{Note that, using the above argument, if one replaces $\THH(\BP)$ with $\THH(\Ss)\simeq \Ss$, then one proves that $\THH(\Ss[x]/x^e)\to \THH(\Ss[x]/x^e)^{t C_p}$ is an $p$-adic equivalence.} 
\end{proof}

Now, we try to investigate the Frobenius map of 
$\THH(\BP\langle n\rangle/v_n^e)$. As seen, we first try to compute the Adams $E_2$-page of $\BP\langle n\rangle/v_n^e$. Note that \begin{equation*}
    \BP\langle n\rangle/v_n^e\otimes \Ff_p\simeq \BP\langle n\rangle/v_n^e\otimes \BP\otimes_{\BP}  \Ff_p.
\end{equation*}
Since $\BP\langle n\rangle/v_n^e$ is complex orientable, then the Tor spectral sequence has the formula \begin{equation*}
    \tor_{*,*}^{\BP_*}(R_*[t_i],\Ff_p).
\end{equation*}
Where $R_*=\BP\langle n,e\rangle_*$ and we have used the notation in Convention \ref{conven3}. Now write $S_* =\BP_*/(p,v_1, \ldots v_{n-1})$, we could identify the derived tensor products \begin{equation*}
    (- \otimes_{\BP_*}^{\Ll} S_*)\otimes_{S_*}^{\Ll} \Ff_p \simeq - \otimes_{\BP_*}^{\Ll} \Ff_p.
\end{equation*}
Hence we get the Cartan\textendash Elienberg (Grothendieck) spectral sequence 
\begin{equation*}
    \tor_{*,*}^{S_*}(\tor_{*,*}^{\BP_*}(R_*[t_i], S_*),\Ff_p) \Longrightarrow \tor_{*,*}^{\BP_*}(R[t_i],\Ff_p).
\end{equation*}
Note that the image of $p,v_1,\ldots, v_{n_1}$ through the map $\BP_*\to \BP_*(\BP) \to R_*(\BP) \simeq R_*[t_i]$ forms a regular sequence, hence the higher tor groups of $\tor^{\BP_*}(R_*[t], S_*)$ vanish. And we have $R_*[t]\otimes_{\BP_*} S_* \simeq \Ff_p[v_n]/v_n^e [t_i],$ thus the spectral sequence has the formula \begin{equation*}
    \Ff_p[v_n]/v_n^e[t_i] \otimes \Lambda(x_n, x_{n+1}, \ldots).
\end{equation*}
This should be a sub-algebra of $(\Ff_p)_*(\Ff_p\otimes \Ss[v_n]/v_n^e)$, where $\Ss[v_n]$ is a free graded $\Ee_2$-algebra, and the quotient of $v_n^e$ is a truncation with respect to the weight, see \cite[Appendix B.0.6]{HW22}. 
Although, we could not compute the Adams $E_2$-page of $\BP\langle n,e\rangle$ for $e>1$ \footnote{It seems that the Adams $E_2$-page should be $\Ff_p[v_n]/v_n^e[v_0, \ldots, v_{n-1}]$.}. But we could still investigate the Frobenius map of $\THH(\Ff_p\otimes\Ss[v_0]\otimes\cdots\otimes \Ss[v_{n-1}]\otimes \Ss[v_n]/v_n^e])$. In particular, one could ask the following question.
\begin{question}
Is there a $k_0\in \Zz$, such that $\forall k>k_0$ the map \begin{equation*}
   \THH_k(\Ff_p\otimes \Ss[v_0]\otimes\cdots\otimes \Ss[v_{n-1}]\otimes \Ss[v_n]/v_n^e) \to \THH_k(\Ff_p\otimes \Ss[v_0]\otimes\cdots\otimes \Ss[v_{n-1}]\otimes \Ss[v_n]/v_n^e)^{t C_p}
\end{equation*}
is an isomorphism.    
\end{question}
\begin{remark}
To investigate the above question, one may need to understand the structure of $\THH(\Ss[v_n]/v_n^e)$. A natrual expectation is that a similar decomposition exists.  Alternatively, it is necessary to analyze the behavior of the Frobenius \begin{equation*}
    \THH(\Ff_p\otimes \Ss[v_n]/v_n^e)\to \THH(\Ff_p\otimes \Ss[v_n]/v_n^e)^{t C_p},
\end{equation*}
as this constitutes the only remain unknown part.
\end{remark}
Next, we analyze the behavior of the Frobenius map of $\THH(\Lambda_{\BP\langle n\rangle}(x))$.
According to construction in Section \ref{sec2}, $\Lambda_{\BP\langle n\rangle} =\BP\langle n\rangle \otimes \Lambda_{\Ss}(x)$. It is known that $\pi_*(\Lambda_{\Ss}(x))\simeq \Lambda_{\pi_*(\Ss)}(\sigma x)$, and it is easy to construct a map from $\Ss\oplus \Sigma\Ss$ to $\Lambda_{\Ss_*}(\sigma x)$, namely taking the homotopy classes $1, \sigma x$. This map will induce isomorphisms between the homotopy groups of $\Ss\oplus \Sigma\Ss$ and homotopy groups of $\Lambda_{\Ss}(x)$. Therefore we have an equivalence of spectra ($\Ss$-modules) \begin{equation*}
    \Lambda_{\Ss}(x) \simeq \Ss\oplus \Sigma\Ss.
\end{equation*}
And the same thing holds for $\Lambda_{\BP\langle n\rangle} (x)$. Hence by \cite[Remark 5.7]{HLS24}, we have the following decomposition \begin{equation*}
    \THH(\Lambda_{\BP\langle n\rangle}(x))\simeq \THH(\BP\langle n\rangle)\oplus \bigoplus_{k\geq 1} \THH(\BP\langle n\rangle)\otimes\Hom((S^1/C_k)_+, (\Sigma^2 \BP\langle n\rangle)^{\otimes k}).
\end{equation*}
Because $\THH(\BP\langle n\rangle)$ is connective, then the summand $\THH(\BP\langle n\rangle)\otimes \Hom((S^1/C_k)_+, (\Sigma^2\BP\langle n\rangle)^{\otimes k})$ is $2k-1$ connected, so when $k\to \infty$, the connectivity of the summand tends to infinity. Thus we have the following equivalences.
\begin{equation*}
    \THH(\Lambda_{\BP\langle n\rangle}(x))\simeq \THH(\BP\langle n\rangle) \oplus \prod \Hom((S^1/C_k)_+, (\Sigma^2\BP\langle n\rangle)^{\otimes k}),
\end{equation*}
\begin{equation*}
    \THH(\Lambda_{\BP\langle n\rangle}(x))^{t C_p}\simeq \THH(\BP\langle n\rangle)^{t C_p} \oplus \prod_{k\geq 1} (\THH(\BP\langle n\rangle)\otimes \Hom((S^1/C_k)_+, (\Sigma^2\BP\langle n\rangle)^{\otimes k}))^{t C_p}.
\end{equation*}
This is because the direct sum is equivalent to the product, and then taking homotopy orbit and homotopy fixed point would commute with colimits and limits.
So, as seen before, the Frobenius map is the product of the following maps.
\begin{enumerate}
    \item \begin{equation*}
        \THH(\BP\langle n\rangle)\to \THH(\BP\langle n\rangle)^{t C_p}.
        \end{equation*}
        \item \begin{equation*}
        \THH(\BP\langle n\rangle)\otimes \Hom((S^1/C_k)_+, (\Sigma^2\BP\langle n\rangle)^{\otimes k})\to
           (\THH(\BP\langle n\rangle)\otimes \Hom((S^1/C_k)_+, (\Sigma^2\BP\langle n\rangle)^{\otimes kp}))^{t C_p}.
        \end{equation*}
\end{enumerate}
As seen before, one could expect that for $k$ sufficient large the fiber of type (2) map should have highly enough connectivity. So, one may ask the following question.
\begin{problem}
According to the above discussion, let $F$ be a type $n+2$
$p$-local finite complex.
Assume that the fiber of the Frobenius map 
\begin{equation*}
\xymatrix{
F\otimes \THH(\BP\langle n\rangle)\otimes \Hom((S^1/C_k)_+, (\Sigma^2 \BP\langle n\rangle)^{\otimes k})) \ar[d] \\
~ F\otimes(\THH(\BP\langle n\rangle)\otimes \Hom((S^1/C_k)_+, (\Sigma^2\BP\langle n\rangle)^{\otimes kp}))^{t C_p}
}
\end{equation*}
has connectivity $f(k)$. Do we have $\lim_{k\to \infty} f(k)=\infty$? In particular, do we have $\lim_{n\to \infty} f(p^n) =\infty$?  
\end{problem}
Or, one could ask the following question.
\begin{problem}
Let  $F$ be any $p$-local finite complex of type $n+2$. Is \begin{equation*}
    \text{
    \framebox{$F$-Segal conjecture for $\THH(\Lambda_{\BP\langle n\rangle}(x))$}
    }
\end{equation*}
true?    
\end{problem}

\section{Descent spectral sequence}\label{sec5}
In this section, we study the descent spectral sequences (and even filtration) which computes $\THH$ of algebras which constructed in Section \ref{sec2}. And also, we would like to investigate the canonical vanishing problem.

Note that $\THH(\MU[x])\to \MU[x]$ is evenly free. We need to show that for any map of $\Ee_1$-rings $\THH(\MU[x])\to A$, where $A$ is even,  the push-out $\MU[x]\otimes_{\THH(\MU[x])}A$ is even and free over $A$. Recall that 
$\MU[x]\otimes\MU[x]$ has homotopy as $(\MU_*\MU)[x,y]\simeq \MU_*[x,x'][x_i|i\in \Nn]$. Therefore $\THH(\MU[x])_*\simeq \Lambda_{\MU_*[x]}(\sigma x_i, \sigma x)$, which is odd. Hence the classes $\sigma x_i, \sigma x$ maps to $0$ in $\pi_* A$, therefore the Tor spectral sequence computes $\MU[x]\otimes_{\THH(\MU[x])}A$ is $A_*\otimes_{\MU_*[x]}\Gamma_{\MU_*[x]}(\sigma^2 x_i, \sigma^2 x; i\in \Nn)$ which is concentrated on even degrees, hence the push-out $\MU[x]\otimes_{\THH(\MU[x])}A$ is even. 
Now, combining with Proposition \ref{THHtruncated}, we have following fact.
\begin{proposition}
Let $\motfil$ be the motivic filtration, then we have the following equivalence of filtered spectra.
\begin{equation*}
    \motfil (\THH(\BP\langle n\rangle[x]/x^e)) \simeq \lim_{\triangle} \tau_{\geq 2*} (\THH(\BP\langle n\rangle[x]/x^e)\otimes \MU[x]^{\otimes_{\THH(\BP\langle n\rangle[x]/x^e/\MU[x])} \bullet+1})
\end{equation*}
\end{proposition}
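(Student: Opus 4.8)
The plan is to recognise $\motfil\THH(\BP\langle n\rangle[x]/x^e)$ as the even filtration $\evenfil$ of Hahn--Raksit--Wilson and to compute it by descent along the evenly free cover $\THH(\MU[x])\to\MU[x]$ recorded just above. Write $R=\BP\langle n\rangle[x]/x^e$ and let $\varepsilon\colon\THH(\MU[x])\to\MU[x]$ be the augmentation; it is an $\Ee_\infty$-ring map, split surjective on homotopy (the unit $\MU[x]\to\THH(\MU[x])$ is a section of $\varepsilon$), hence a cover for the even topology, and by the preceding discussion it is evenly free. By functoriality of $\THH$ applied to the $\Ee_3$-$\MU[x]$-algebra structure on $R$ from Proposition \ref{e3form2}, $\THH(R)$ is an $\Ee_2$-algebra over $\THH(\MU[x])$, so I would base-change $\varepsilon$ along $\THH(\MU[x])\to\THH(R)$ and feed the resulting augmented cosimplicial diagram into the even-descent criterion of Hahn--Raksit--Wilson.

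The first step is to check that this base-changed \v{C}ech nerve, $[\bullet]\mapsto\THH(R)\otimes_{\THH(\MU[x])}\MU[x]^{\otimes_{\THH(\MU[x])}\bullet+1}$, agrees with the cosimplicial object in the statement and has even terms. For the $0$-th term, the base change formula for relative $\THH$ --- namely $\THH(R/A)\simeq\THH(R)\otimes_{\THH(A)}A$ along $\varepsilon$, for $A$ an $\Ee_\infty$-ring --- identifies it with $\THH(R/\MU[x])$, which is even by Proposition \ref{THHtruncated}; the higher terms are obtained by further base change along $\MU[x]\to\MU[x]^{\otimes_{\THH(\MU[x])}k+1}$. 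Applying the evenly free property of $\varepsilon$ with $A=\MU[x]$ shows that $\MU[x]\otimes_{\THH(\MU[x])}\MU[x]$ is even and free over $\MU[x]$; iterating, each $\MU[x]^{\otimes_{\THH(\MU[x])}k+1}$ is an even, flat $\MU[x]$-algebra, and tensoring such an algebra over $\MU[x]$ with the even $\MU[x]$-algebra $\THH(R/\MU[x])$ stays in even degrees. (Alternatively, evenness of the higher terms can be read off from the multiplicativity of $\THH$ and the computations of Section \ref{sec2}.)

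Granting these inputs the conclusion is formal: since $\varepsilon$ is an evenly free cover and the base-changed \v{C}ech nerve is termwise even, even descent yields $\evenfil\THH(R)\simeq\lim_{\Delta}\tau_{\geq 2*}\bigl(\THH(R)\otimes_{\THH(\MU[x])}\MU[x]^{\otimes_{\THH(\MU[x])}\bullet+1}\bigr)$, using that $\evenfil$ of an even ring is its double-speed Postnikov filtration $\tau_{\geq 2*}$ and that descent along $\varepsilon$ and its base changes is effective. Rewriting the right-hand side as the cosimplicial object displayed in the statement (via the identification of the $0$-th term with $\THH(R/\MU[x])$), together with the comparison $\motfil=\evenfil$ on $\THH$, gives the claimed equivalence of filtered spectra.

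The step I expect to be the main obstacle is foundational rather than computational: the even/motivic filtration and its descent properties are set up for $\Ee_\infty$-rings, whereas $R$ is only $\Ee_3$ and $\THH(R)$ only $\Ee_2$. One has to argue either that the formalism extends with enough functoriality to $\Ee_1$- or $\Ee_2$-algebras over an $\Ee_\infty$-base, or --- more in keeping with the rest of this paper --- that the whole descent takes place relative to the $\Ee_\infty$-base $\MU[x]$ along the genuinely $\Ee_\infty$ map $\varepsilon$, with $\THH(R)$ entering only as a coefficient object, so that even descent remains effective. A closely related point is that ``evenly free'', rather than ``even and faithfully flat'', is the right hypothesis here, because $\THH(\MU[x])$ is itself very far from even; the computation $\THH(\MU[x])_*\simeq\Lambda_{\MU_*[x]}(\sigma x_i,\sigma x:i\in\Nn)$ and the evenly-free consequence drawn from it above carry the essential weight. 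Once those are granted, the base change formula for relative $\THH$, the evenness of the \v{C}ech terms, and the identification of the cosimplicial object are all routine.
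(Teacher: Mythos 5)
Your proposal is correct and follows essentially the same route as the paper: the paper's entire justification is the paragraph preceding the statement, showing $\THH(\MU[x])\to\MU[x]$ is evenly free via the Tor spectral sequence, combined with the evenness of $\THH(\BP\langle n\rangle[x]/x^e/\MU[x])$ from Proposition \ref{THHtruncated}, and then invoking even/motivic-filtration descent exactly as you do. Your write-up is in fact more careful than the paper's, notably in identifying the \v{C}ech-nerve terms and in flagging the $\Ee_3$/$\Ee_2$ versus $\Ee_\infty$ foundational point, which the paper passes over in silence.
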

\begin{remark}
    In general, if $A$ is an $\Ee_3$-algebra, and $B$ is an $\Ee_2$-$A$-algebra, then we could view $A,B$ as filtered algebras via Whitehead tower. And form the cosimplicial filtered object \begin{equation*}
        [n] \mapsto B^{\otimes_A [n]},
    \end{equation*}
then the geometrical realization of this cosimplicial object captures some information about $A$. Then one can use the Adams tower trick, and apply Hopf algebroid method. In \cite{LW22}, the authors use the map $\THH(\mathcal{O}_K/\Ss_{W(k)}) \to \THH(\mathcal{O}_K/\Ss_{W(k)}[z])$ to get a descent tower. However, to compute $\TC(\mathcal{O}_{K})^{\wedge}_p$, one need to show the mentioned two relative $\THH$ are actually cyclotomic spectra, i.e. to show the bases are cyclotomic bases. In the setting of motivic (even) filtration, one do not need cyclotomic bases, since the motivic filtration for $\THH$ is automatically cyclotomic. 
\end{remark}
As in \cite{HW22}, we make use of the following notations.
\begin{convention}
 If $A$ is an $\Ee_2$-algebra, and $M$ is a right $A$-module, hence $M$ can be viewed as a $A$-bi module, and we have following equivalences.
 \begin{equation*}
     \THH(A;M)=M\otimes_{A\otimes A^{\textup{op}}} A\simeq M\otimes_{A} \THH(A).
 \end{equation*}
 Hence we will have the $\THH(\BP\langle n\rangle[x]/x^e; \Ff_p[x]/x^e) \simeq \Ff_p[x]/x^e \otimes_{\BP\langle n\rangle[x]/x^e} \THH(\BP\langle n\rangle[x]/x^e).$
\end{convention}
In the following, we assume that $\BP\langle n\rangle$ equipped with an $\Ee_{\infty}$-algebra structure, for instance, $n\in \{-1,0,1\}$, or $p=3, n=2$, see \cite{BR05}, \cite{HW22}.
Using the calculation in \cite[\S 6]{HW22}, we have the following result. 
\begin{proposition}\label{pro:descentE2}
    The $E_2$-page of descent spectral sequence for \begin{equation*}
        \THH(\BP\langle n\rangle[x]/x^e; \Ff_p[x]/x^e) \to \THH(\BP\langle n\rangle [x]/x^e/\MU[x]; \Ff_p[x]/x^e)
    \end{equation*}
is  \begin{equation*}
    E_2=\Ff_p[\sigma^2 v_{n+1}] \otimes \Lambda(\sigma t_1, \sigma t_2,\ldots \sigma t_{n+1})\otimes\Lambda_C (\sigma (x_1- x_2))
\end{equation*}
where $\sigma^2 v_{n+1}, \sigma t_i$ are the same elements appeared in \cite[Proposition 6.1.6]{HW22}, and $C=\HH_*(\Ff_p[x]/x^e/\Ff_p[x])$. 
When $p|e$, then spectral sequence collapses at $E_2$-page.
\end{proposition}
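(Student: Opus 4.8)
The plan is to reduce the whole computation to the $\MU$-descent calculation of \cite[\S 6]{HW22} by exploiting symmetric monoidality. Using Convention \ref{conven1} and Construction \ref{trunpoly}, write $\BP\langle n\rangle[x]/x^e \simeq \BP\langle n\rangle \otimes \Ss[x]/x^e$, $\MU[x] \simeq \MU \otimes \Ss[x]$, and $\Ff_p[x]/x^e \simeq \Ff_p \otimes \Ss[x]/x^e$ (the last as an $\BP\langle n\rangle[x]/x^e$-module). Since $\THH$ and relative $\THH$ are symmetric monoidal, the motivic filtration is lax symmetric monoidal and --- by the evenly-free statement preceding the proposition together with the evenness in Proposition \ref{THHtruncated} --- strong monoidal on the inputs at hand, the filtered object computing the descent spectral sequence for $\THH(\BP\langle n\rangle[x]/x^e; \Ff_p[x]/x^e) \to \THH(\BP\langle n\rangle[x]/x^e/\MU[x]; \Ff_p[x]/x^e)$ should be the Day-convolution tensor product of the filtered object computing $\THH(\BP\langle n\rangle; \Ff_p) \to \THH(\BP\langle n\rangle/\MU; \Ff_p)$ with the one computing $\THH(\Ss[x]/x^e) \to \THH(\Ss[x]/x^e/\Ss[x])$, base-changed along $\Ss[x]/x^e \to \Ff_p[x]/x^e$. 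Equivalently, the descent proceeds along the Hopf algebroid $\pi_*(\MU[x] \otimes \MU[x]) \simeq (\MU_*\MU)[x_1, x_2]$, which factors as the $\MU$-Hopf algebroid tensored with the $\Ff_p[x]$-linear coalgebra $\Ff_p[x_1, x_2]$, so the resulting $\mathrm{Cotor}$ --- equivalently the $E_2$-page --- factors accordingly.

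For the $\MU$-factor, the relative term $\THH(\BP\langle n\rangle/\MU; \Ff_p)$ has polynomial homotopy (by the computations of \cite[\S 2]{HW22}), so its descent spectral sequence along $\MU_*\MU$ is exactly the one treated in \cite[\S 6]{HW22}; by \cite[Proposition 6.1.6]{HW22} the relevant $\mathrm{Cotor}$ is $\Ff_p[\sigma^2 v_{n+1}] \otimes \Lambda(\sigma t_1, \ldots, \sigma t_{n+1})$ with the stated names for the generators, and that spectral sequence degenerates at $E_2$ because $\sigma^2 v_{n+1}$ and the $\sigma t_i$ are permanent cycles and generate it multiplicatively. For the $\Ss[x]/x^e$-factor, Lemma \ref{dividedpower} identifies the relative term after base change with $C = \HH_*(\Ff_p[x]/x^e/\Ff_p[x]) \simeq \Gamma_{\Ff_p[x]/x^e}(\sigma^2 x^e)$, while the $\Ff_p[x]$-linear cobar on $\Ss[x] = \suspen_+ \Nn$ contributes a single exterior generator of degree $1$, the class $\sigma(x_1 - x_2)$ represented in the cobar complex by the difference of the two inclusions $x \mapsto x_1$, $x \mapsto x_2$; this gives $\Lambda_C(\sigma(x_1 - x_2))$ on the $E_2$-page. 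Tensoring the two factors produces the claimed $E_2$-page.

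For the last assertion, when $p \mid e$ the full spectral sequence collapses at $E_2$: since the $\MU$-factor already degenerates, it suffices to show the $\Ss[x]/x^e$-factor degenerates, i.e.\ that the algebra generators of $\Lambda_C(\sigma(x_1 - x_2))$ are permanent cycles. The divided powers $\gamma_{p^k}(\sigma^2 x^e)$ lie in cohomological filtration $0$ --- they come from the relative $\THH$ --- and are therefore permanent cycles; the crux is that $\sigma(x_1 - x_2)$ survives precisely when $p \mid e$. I would prove this by analyzing the descent spectral sequence for $\THH(\Ss[x]/x^e)$ directly and comparing the total dimension of its $E_2$-page in each bidegree with $\THH_*(\Ff_p[x]/x^e/\Ff_p)$, which is computed by Hesselholt--Madsen and is also accessible through the decomposition $\THH(\Ss[x]/x^e) \simeq \bigoplus_m B_m$ of Proposition \ref{pro:decomposation}: when $p \mid e$ the summands $B_m$ with $e \mid m$ --- the cofibers $\suspen_+ S^1/C_{m/e} \to \suspen_+ S^1/C_m$ --- contribute exactly the classes forced by $\Lambda_C(\sigma(x_1 - x_2))$ and leave no room for a differential, whereas for $p \nmid e$ a differential relating $\sigma(x_1 - x_2)$ to a power of $\sigma^2 x^e$ occurs. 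Pinning down this dimension count --- that is, showing exactly how the hypothesis $p \mid e$ forces $\sigma(x_1 - x_2)$ to be a permanent cycle --- is the main obstacle; the remaining steps are formal once the tensor splitting is in place and \cite[\S 6]{HW22} is invoked.
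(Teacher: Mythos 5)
Your overall route is the same as the paper's: split the descent (cobar) object for the map in question into the $\MU$-factor, which is \cite[\S 6]{HW22} verbatim and contributes $\Ff_p[\sigma^2 v_{n+1}]\otimes\Lambda(\sigma t_1,\ldots,\sigma t_{n+1})$, and the $\Ss[x]$-factor, where the homotopy of the one-cosimplices acquires the divided power factor $\Gamma_{\Ff_p[x]/x^e}(\sigma^2 x^e,\sigma^2(x_1-x_2))$ by Lemma \ref{dividedpower}, whose Cotor relative to $C=\Gamma_{\Ff_p[x]/x^e}(\sigma^2 x^e)$ is $\Lambda_C(\sigma(x_1-x_2))$. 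The paper phrases this as flatness of the one-cosimplices over the zero-cosimplices plus a K\"unneth factorization of the resulting $\ext$ groups, the divided power coalgebra on $\sigma^2(x_1-x_2)$ being cofree and hence contributing exactly one exterior class; your ``cobar on $\Ss[x]$'' gloss is the same computation, so the identification of the $E_2$-page is fine.

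The step you defer --- degeneration when $p\mid e$ --- is where your write-up has a genuine gap, but it closes in one line and needs neither the $B_m$-decomposition nor any analysis of which class could support a differential. The spectral sequence converges to $\THH_*(\BP\langle n\rangle;\Ff_p)\otimes_{\Ff_p}\THH_*(\Ff_p[x]/x^e/\Ff_p)$, and the second factor is classical: from the standard $2$-periodic bimodule resolution of $\Ff_p[x]/x^e$, the only surviving differential after base change is multiplication by $ex^{e-1}$, which vanishes exactly when $p\mid e$; in that case $\HH_i(\Ff_p[x]/x^e/\Ff_p)$ is a copy of $\Ff_p[x]/x^e$, of dimension $e$ over $\Ff_p$, in every degree $i\geq 0$. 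The $E_2$-page $\Lambda_C(\sigma(x_1-x_2))$ likewise has dimension exactly $e$ in every total degree (a copy of $\Ff_p[x]/x^e$ on $\gamma_j(\sigma^2 x^e)$ in degree $2j$ and on $\gamma_j(\sigma^2 x^e)\,\sigma(x_1-x_2)$ in degree $2j+1$). Since the $\MU$-factor already degenerates, the $E_2$-page has the same size as the abutment degreewise, so there is no room for differentials; this dimension count is exactly the paper's argument. Your remark that for $p\nmid e$ a differential must occur is consistent with the count (the abutment drops to dimension $e-1$ in positive degrees) but is not needed for the proposition.
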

\begin{proof}
We write $A=\THH(\BP\langle n\rangle[x]/x^e;\Ff_p[x]/x^e), B=\THH(\BP\langle n\rangle[x]/x^e/\MU[x];\Ff_p[x]/x^e), A'=\THH(\BP\langle n\rangle;\Ff_p), B'=\THH(\BP\langle n\rangle/\MU; \Ff_p)$. Then we viewed $A, B$ as filtered algebras, via the Whitehead filtration, we could form the cosimplicial filtered spectrum \begin{equation*}
    [n] \mapsto B^{\otimes_A [n]}.
\end{equation*}
Note that we have equivalences\footnote{It seems that the equivalence above could be only happen in $\Sp$, however, if $n\in\{-1,0,1\},$ then $\BP\langle n\rangle$ are $\Ee_{\infty}$-algebra, thus the equivalence would happen in $\CAlg$.} \begin{equation*}
    B\simeq \Ff_p[x]/x^e \otimes_{\BP\langle n\rangle\otimes \Ss[x]/x^e} (\THH(\BP\langle n\rangle/\MU) \otimes\THH(\Ss[x]/x^e)/\Ss[x]) \simeq B'\otimes \THH(\Ss[x]/x^e/\Ss[x]),
\end{equation*}
similarly, we have equivalence \begin{equation*}
    A\simeq A'\otimes \THH(\Ss[x]/x^e)\simeq A'\otimes_{\Ff_p} \Ff_p\otimes \THH(\Ss[x]/x^e).
\end{equation*}
So, $B\otimes_A B \simeq B'\otimes_{A'} B' \otimes_{\Ff_p} \Ff_p \otimes \THH(\Ss[x]/x^e/\Ss[x]) \otimes_{\THH(\Ss[x]/x^e)} \THH(\Ss[x]/x^e/\Ss[x])\simeq B'\otimes_{A'} B'\otimes_{\Ff_p} \Ff_p \otimes\THH(\Ss[x]/x^e/\Ss[x_1, x_2])$. Hence using Lemma \ref{dividedpower}, we have \begin{equation*}
  \pi_*(B\otimes_A B) \simeq \pi_*(B'\otimes_{A'} B')\otimes_{\Ff_p} \Gamma_{\Ff_p[x]/x^e}(\sigma^2 x^e, \sigma^2 (x_1-x_2)),   
\end{equation*}and \begin{equation*}
 \pi_* B\simeq \pi_* B'\otimes_{\Ff_p} \HH_*(\Ff_p[x]/x^e/\Ff_p[x])),   
\end{equation*} thus $\pi_*(B\otimes_A B)$ is flat over $\pi_* B$. 
Now we could compute the Ext group \begin{equation*}
    \ext^*_{\sum} (\pi_* B, \pi_* B),
\end{equation*}
where we denote $\sum:=\pi_*(B\otimes_A B) \simeq \pi_* (B'\otimes_{A'} B')\otimes_{\Ff_p} \Gamma_{\Ff_p[x]/x^e} (\sigma^2 x^e, \sigma^2 (x_1-x_2)).$  Therefore the Ext group is computed by \begin{equation*}
    \ext_{\sum}(\pi_* B,\pi_* B) \simeq \ext_{\pi_*(B'\otimes_{A'} B')}(\pi_* B', \pi_*B')\otimes\ext_D(C,C),
\end{equation*}
where $C=\Gamma_{\Ff_p[x]/x^e}(\sigma^2 x^e), D=\Gamma_{\Ff_p[x]/x^e}(\sigma^2 x^e, \sigma^2 (x_1-x_2))$. \footnote{Note that $D$ is relative injective, and $D$ is free as $C$-module.}
Finally, the Ext group is equivalence to \begin{equation*}
    \Ff_p[\sigma^2 v_{n+1}]\otimes_{\Ff_p} \Lambda_{\Ff_p}(\sigma t_1, \sigma t_2,\ldots, \sigma t_{n+1})\otimes_{\Ff_p}\Lambda_C(\sigma (x_1-x_2)).
\end{equation*}
According to the result in \cite{HM97}, we have \begin{equation*}
    \HH_*(\Ff_p[x]/x^e/\Ff_p)=\begin{cases}
        \Ff_p[x]/x^e, ~ *=0;\\
        x\Ff_p[x]/x^e ~ *>0 \text{~and even};\\
        \Ff_p[x]/x^{e-1} ~ *\text{~odd, and} ~ (e,p)=1;\\
        \Ff_p[x]/x^{e-1}\oplus \Ff_p\{x^{e-1}\}, ~ *\text{~odd, and~} p|e.
    \end{cases}
\end{equation*}
Therefore when $p|e$, then $E_2$-page of the descent spectral sequence has the size same as $\pi_* A$, then in this case the spectral sequence collapses.
\end{proof}
\begin{remark}
    From the above proposition, and combining with the proof of canonical vanishing theorem in \cite[\S 6]{HW22}, one could not directly prove the canonical vanishing theorem for $\THH(\BP\langle n\rangle[x]/x^e)$, because the $E_2$-page above is not a finitely generated $\Ff_p[\sigma^2 v_{n+1}]$-module.
\end{remark}
We can prove the following by a thick sub-category argument.
\begin{proposition}
Suppose $p|e$, and let $F$ be a type $n+1$ finite complex, and $F$ has a homotopy ring structure, then the elements in the kernel of the map \begin{equation*}
    F_*(\THH(\BP\langle n\rangle[x]/x^e)) \to F_*(\THH(\BP\langle n\rangle[x]/x^e/\MU[x]))
\end{equation*}
are nilpotent.
\end{proposition}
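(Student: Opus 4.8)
The plan is to reduce the statement to the Segal-type input coming from the descent spectral sequence of Proposition \ref{pro:descentE2}, and then propagate nilpotence from the generalized Moore complexes $\Ss/(p^{i_0},v_1^{i_1},\dots,v_n^{i_n})$ to an arbitrary type $n+1$ complex $F$ with homotopy ring structure. First I would observe that the map in question is a map of $F_*$-modules (indeed of rings, once $F$ is an $\Ee_1$-ring such that $F\otimes\THH(\BP\langle n\rangle[x]/x^e)$ is an $\Ee_1$-algebra under $F$), so it suffices to understand the kernel after tensoring $\THH(\BP\langle n\rangle[x]/x^e)$ with the descent filtration $\desc_{\Ff_p}$ and passing to associated graded; the target $\THH(\BP\langle n\rangle[x]/x^e/\MU[x])$ is even by Proposition \ref{THHtruncated}, so the relevant spectral sequence is exactly the one computed in Proposition \ref{pro:descentE2}.

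Next I would exploit the hypothesis $p\mid e$. In this case Proposition \ref{pro:descentE2} says the descent spectral sequence for $\THH(\BP\langle n\rangle[x]/x^e;\Ff_p[x]/x^e)\to\THH(\BP\langle n\rangle[x]/x^e/\MU[x];\Ff_p[x]/x^e)$ collapses at $E_2$, and the $E_2$-page is $\Ff_p[\sigma^2 v_{n+1}]\otimes\Lambda(\sigma t_1,\dots,\sigma t_{n+1})\otimes\Lambda_C(\sigma(x_1-x_2))$. The kernel of the map to the associated graded of the even target is then controlled by the positive-filtration part, i.e. the ideal generated by the exterior classes $\sigma t_i$ and $\sigma(x_1-x_2)$, together with the $\sigma^2 v_{n+1}$ direction. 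The key point is that after smashing with a generalized Moore complex $\Ss/(p^{i_0},v_1^{i_1},\dots,v_n^{i_n})$, the class $\sigma^2 v_{n+1}$ is killed — this is essentially the content that makes the canonical vanishing / Lichtenbaum--Quillen arguments of \cite[\S6]{HW22} work — so that on $E_2$ only the exterior algebra $\Lambda(\sigma t_1,\dots,\sigma t_{n+1})\otimes\Lambda_C(\sigma(x_1-x_2))$-part survives in positive filtration, and every element of this part squares to zero (in odd characteristic, exterior generators are nilpotent of order $2$, and the divided power truncation $C$ contributes only finitely many nilpotents when restricted appropriately). Hence any class in the kernel is nilpotent on the $E_\infty=E_2$-page; since the filtration is complete and exhaustive (by \cite[Proposition C.5.4]{HW22}) and a class whose associated graded is nilpotent is itself nilpotent (after possibly increasing the power to account for filtration jumps, using the multiplicative and complete structure), we conclude that $F$-homotopy classes in the kernel are nilpotent when $F$ is a generalized Moore complex.

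Finally, I would upgrade from generalized Moore complexes to an arbitrary type $n+1$ finite complex $F$ admitting a homotopy ring structure by a thick subcategory argument. The subtlety is that "nilpotence of the kernel" is not obviously a property closed under cofiber sequences of ring spectra, so I would instead argue as follows: the Moore spectra $\Ss/(p^{i_0},v_1^{i_1},\dots,v_n^{i_n})$ generate the thick subcategory of type $n+1$ finite spectra, so for suitable $i_\bullet$ there is a finite complex $F'$ with $F\otimes F'$ built from finitely many copies of a single generalized Moore complex; one then uses that $F$ is a retract (up to nilpotence, via the ring structure and the existence of a $v_{n+1}$-self map or a transfer) of something built from Moore complexes. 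Concretely, I would use that $F$ is dualizable with a homotopy ring structure so that the unit $\Ss\to F\otimes DF$ and the existence of enough self-maps force any homotopy class of $F\otimes\THH(\BP\langle n\rangle[x]/x^e)$ mapping to zero to have a power detected in $(F\otimes F')$-homotopy for a Moore complex $F'$, where we already know nilpotence. The main obstacle I expect is precisely this last step: controlling how nilpotence order can grow, or fail to be inherited, along the thick-subcategory filtration, and making sure that the homotopy-ring structure on $F$ interacts correctly with the module structure on $\THH(\BP\langle n\rangle[x]/x^e)$ so that "kernel consists of nilpotents" is genuinely transported. Everything else — the collapse, the shape of $E_2$, the completeness of the descent filtration — is already in place from Proposition \ref{pro:descentE2} and \cite{HW22}.
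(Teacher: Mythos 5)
Your overall strategy (use the collapse in Proposition \ref{pro:descentE2} for $p\mid e$, deduce nilpotence from the filtration, then pass from generalized Moore complexes to a general type $n+1$ ring complex $F$) is the one the paper gestures at -- it records no proof beyond the phrase ``by a thick subcategory argument'' -- but your pivotal step is false. You claim that smashing with $\Ss/(p^{i_0},v_1^{i_1},\dots,v_n^{i_n})$ kills $\sigma^2 v_{n+1}$, so that only exterior classes survive in positive filtration. A type $n+1$ Moore complex kills $p,v_1,\dots,v_n$, not $v_{n+1}$: the $E_2$-page of Proposition \ref{pro:descentE2} is already computed with $\Ff_p[x]/x^e$-coefficients, i.e.\ already modulo $(p,v_1,\dots,v_n)$, and it visibly contains the polynomial algebra $\Ff_p[\sigma^2 v_{n+1}]$; moreover the redshift results (e.g.\ Proposition \ref{NTC1}) depend precisely on $\sigma^2 v_{n+1}$ remaining non-nilpotent after this reduction. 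Fortunately the claim is also unnecessary: $\sigma^2 v_{n+1}$ sits in cobar filtration $0$ -- it is detected by its nonzero image in $\THH(\BP\langle n\rangle[x]/x^e/\MU[x])$ -- so it is not in the kernel of the edge map at all. The correct mechanism is that kernel elements have positive filtration, that the $E_2=E_\infty$-page (here the collapse, hence $p\mid e$, is used) vanishes in filtrations above $n+2$ because its positive-filtration part is generated over the filtration-zero part by the exterior classes $\sigma t_1,\dots,\sigma t_{n+1},\sigma(x_1-x_2)$, and that the filtration is multiplicative; hence any kernel element $x$ satisfies $x^{n+3}=0$. Your ``everything in positive filtration squares to zero'' is too strong even on the associated graded (a sum of exterior monomials need not square to zero), and graded nilpotence only transfers to genuine nilpotence via this bounded-filtration argument, not via completeness; note also that the relevant filtration is the $\MU[x]$-descent filtration of Proposition \ref{pro:descentE2}, not $\desc_{\Ff_p}$, which you conflate.

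The second gap is the one you flag yourself: the passage from Moore complexes to an arbitrary type $n+1$ finite complex $F$ with homotopy ring structure. ``The kernel consists of nilpotents'' is not visibly stable under cofiber sequences or retracts, and the sketch via ``retract up to nilpotence, a $v_{n+1}$-self map or a transfer'' is not an argument. The workable route -- presumably what the paper means by its thick subcategory argument -- is to run the same descent spectral sequence directly on $F\otimes\THH(\BP\langle n\rangle[x]/x^e)\simeq (F\otimes\BP\langle n\rangle[x]/x^e)\otimes_{\BP\langle n\rangle[x]/x^e}\THH(\BP\langle n\rangle[x]/x^e)$: since $F$ has type $n+1$, the module $F\otimes\BP\langle n\rangle[x]/x^e$ is built in finitely many steps from shifts of $\Ff_p[x]/x^e$, so the $E_2$-page for $F$ is still concentrated in filtrations $0\le s\le n+2$; the homotopy ring structure on $F$ makes $F_*\THH(\BP\langle n\rangle[x]/x^e)$ a ring compatibly with this finite multiplicative filtration, and the same uniform bound $x^{n+3}=0$ applies, with no need to transport nilpotence across the thick subcategory. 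So your instinct about where the real work lies is right, but as written the two key steps (killing $\sigma^2 v_{n+1}$, and the reduction to general $F$) are respectively false and missing.
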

Let $M$ be the finite $p$-local $\Ee_1$-ring constructed in \cite[Proposition 6.2.1]{HW22}, note that we have $M\otimes \THH(\BP\langle n\rangle[x]/x^e) \simeq\prod_{m\geq 0} M\otimes \THH(\BP\langle n\rangle)\otimes B_m$, and $B_0\simeq \Ss$. We try to show following. 
\begin{proposition}
    There is an element $z\in \pi_*(M\otimes \THH(\BP\langle n\rangle[x]/x^e))$ such that the following holds.
    \begin{enumerate}
        \item $z$ is central.
        \item $z$ maps to a power of $\sigma^2 v_{n+1}$ inside $\pi_*(M\otimes \THH(\BP\langle n\rangle[x]/x^e/\MU[x]))$.
        \item There is a $m\geq 0$, such that $t^m z$ in the $E_2$-page of homotopy fixed point spectral sequence detects the image of an central $v_{n+1}$-element from $\pi_* M$ inside $\pi_*M\otimes\THH(\BP\langle n\rangle)^{h S^1}$, and hence detects the image of $v_{n+1}$-element in $\pi_*(M\otimes \TC^-(\BP\langle n\rangle[x]/x^e))$. 
        \item $\pi_* M\otimes \THH(\BP\langle n\rangle[x]/x^e)$ is a $\Zz_{(p)}[z]$-module.
    \end{enumerate}
\end{proposition}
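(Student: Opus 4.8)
The plan is to deduce the proposition from its $e=1$ case, which is precisely the situation treated in \cite[\S 6]{HW22} (note $\BP\langle n\rangle[x]/x=\BP\langle n\rangle$ by Construction \ref{trunpoly}), by transporting structure along the split inclusion of the constant summand. Recall (Convention \ref{conven1}) that $\BP\langle n\rangle[x]/x^e=B\otimes\Ss[x]/x^e$ as $\Ee_3$-algebras, so, using that $\THH$ carries tensor products of $\Ee_1$-algebras to tensor products $S^1$-equivariantly (as already used above), there is a cyclotomic identification $M\otimes\THH(\BP\langle n\rangle[x]/x^e)\simeq R\otimes\THH(\Ss[x]/x^e)$ with $R:=M\otimes\THH(\BP\langle n\rangle)$, refining the decomposition $M\otimes\THH(\BP\langle n\rangle[x]/x^e)\simeq\prod_{m\ge0}M\otimes\THH(\BP\langle n\rangle)\otimes B_m$ with $B_0\simeq\Ss$. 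The $\Ee_3$-ring maps $\BP\langle n\rangle\to\BP\langle n\rangle[x]/x^e\to\BP\langle n\rangle$ coming from $\Ss\to\Ss[x]/x^e\to\Ss$ (Construction \ref{trunpoly}) are split, so applying $M\otimes\THH(-)$ and $M\otimes\THH(-)^{hS^1}$ (the $p$-completion implicit in $\TC^-$ commuting with $M\otimes-$ since $M$ is finite) yields split ring maps $\iota\colon R\to M\otimes\THH(\BP\langle n\rangle[x]/x^e)$ — the inclusion of the $B_0$-summand — and $\rho$ with $\rho\iota=\mathrm{id}$, together with the analogous split maps on $\TC^-$ and compatible maps of homotopy fixed point spectral sequences. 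Let $z_0\in\pi_*R$ be the central element of \cite[Proposition 6.2.1]{HW22} and the surrounding discussion, which satisfies exactly properties $(1)$–$(4)$ of the proposition for $e=1$. I would then set $z:=\iota(z_0)=z_0\otimes1$.

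Granting this, properties $(1)$, $(2)$ and $(4)$ are formal. For $(1)$: with $S:=\THH(\Ss[x]/x^e)$, left and right multiplication by $z_0\otimes1$ on the $\Ee_1$-ring $R\otimes S$ are $(z_0\cdot{-})\otimes\mathrm{id}_S$ and $({-}\cdot z_0)\otimes\mathrm{id}_S$, which agree since $z_0$ is central in $R$ (equivalently, $z_0$ lifts to $\pi_*$ of the $\Ee_2$-center $Z(R)$, which maps to $Z(R\otimes S)$). For $(2)$: the relative-$\THH$ map $\THH(\BP\langle n\rangle[x]/x^e)\to\THH(\BP\langle n\rangle[x]/x^e/\MU[x])$ is, after $M\otimes-$ and under the above tensor decompositions, the tensor product of $\THH(\BP\langle n\rangle)\to\THH(\BP\langle n\rangle/\MU)$ (carrying $z_0\mapsto(\sigma^2v_{n+1})^k$ by \cite[\S 6]{HW22}) with $\THH(\Ss[x]/x^e)\to\THH((\Ss[x]/x^e)/\Ss[x])$; hence $z\mapsto(\sigma^2v_{n+1})^k\otimes1=(\sigma^2v_{n+1})^k$. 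For $(4)$: $\rho(z)=z_0$, so any $\Zz_{(p)}$-polynomial relation among powers of $z$ would push forward to one among powers of $z_0$, of which there are none since $\pi_*R$ is a $\Zz_{(p)}[z_0]$-module; thus $z$ is a non-nilpotent central element and $\pi_*(M\otimes\THH(\BP\langle n\rangle[x]/x^e))$ is a $\Zz_{(p)}[z]$-module.

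For $(3)$ I would use the map of homotopy fixed point spectral sequences induced by $\iota$, carrying $t^m z_0\mapsto t^m z$, and its retraction induced by $\rho$. Writing $v'$ for the image in $\pi_*(M\otimes\TC^-(\BP\langle n\rangle))$ of the central $v_{n+1}$-element $v\in\pi_*M$, the class $t^m z_0$ detects $v'$ by \cite[\S 6]{HW22}; then $t^m z$ is a permanent cycle (image of one), it is nonzero in $E_\infty$ because $\rho$ sends it to $t^m z_0\ne0$, and — since $\iota$ and $\rho$ are both filtration non-decreasing with $\rho\iota=\mathrm{id}$ — the filtration of $\iota(v')$ equals that of $v'$, so the associated-graded class of $\iota(v')$ is $\iota(t^m z_0)=t^m z$. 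Hence $t^m z$ detects $\iota(v')$, the image in $\pi_*(M\otimes\TC^-(\BP\langle n\rangle[x]/x^e))$ of the $v_{n+1}$-element of $\pi_*M$, which is $(3)$. I expect this last step to be the main obstacle: one must verify $t^m z$ survives to $E_\infty$ and is the \emph{leading} term of $\iota(v')$ rather than of a class of strictly larger filtration — precisely what the retraction $\rho$ controls. A useful independent check is that the relative HFPSS degenerates at $E_2$ (Proposition \ref{NTC1}, via the evenness of $\THH(\BP\langle n\rangle[x]/x^e/\MU[x])$ in Proposition \ref{THHtruncated}), which already forbids differentials touching $t^m\sigma^2v_{n+1}$ and, by compatibility, constrains those on $t^m z$.
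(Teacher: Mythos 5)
Your construction is correct and it rests on the same inputs as the paper's proof --- the decomposition $M\otimes \THH(\BP\langle n\rangle[x]/x^e)\simeq \prod_{m\geq 0} M\otimes\THH(\BP\langle n\rangle)\otimes B_m$ with $B_0\simeq \Ss$, and the element of \cite[Lemma 6.3.5]{HW22} --- but it runs by a different mechanism. The paper works on the target of the relative comparison map: it identifies $M\otimes\THH(\BP\langle n\rangle[x]/x^e/\MU[x])$ with $\THH(\BP\langle n\rangle/\MU;\Ff_p)\otimes_{\Ff_p}\HH(\Ff_p[x]/x^e/\Ff_p[x])$, base-changed along $M\otimes \BP\langle n\rangle[x]/x^e$ (a sum of shifts of $\Ff_p[x]/x^e$), and then invokes \cite[Lemma 6.3.5]{HW22} to produce the lift $z$ of a power of $\sigma^2 v_{n+1}$ in that setting. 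You instead transport the already-constructed class $z_0$ along the split unit inclusion $\iota\colon M\otimes\THH(\BP\langle n\rangle)\to M\otimes\THH(\BP\langle n\rangle[x]/x^e)$ (the $B_0$-summand) and check (1)--(4) by naturality, with the retraction $\rho$ handling the genuinely delicate point in (3): that $t^m z$ survives to $E_\infty$ and that the filtration of $\iota(v')$ does not jump. That is the main thing your route buys --- the detection statement, which the paper leaves entirely to the citation, is made explicit and checkable. Two small caveats: your formal argument for (1) is valid for the spectrum-level notion of centrality used in \cite{HW22} (left and right multiplication homotopic as maps $\Sigma^{|z_0|}R\to R$), which does tensor up along $-\otimes\THH(\Ss[x]/x^e)$; it would not be formal if centrality were only meant in the graded homotopy ring, since $\pi_*(R\otimes\THH(\Ss[x]/x^e))$ contains classes not of the form $r\otimes s$. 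And your justification of (4) overshoots: a $\Zz_{(p)}[z]$-module structure on $\pi_*(M\otimes\THH(\BP\langle n\rangle[x]/x^e))$ already follows from associativity and centrality of $z$, so no absence of polynomial relations (nor non-nilpotence of $z$) is needed, and the pushforward-of-relations argument as written conflates the module structure with faithfulness of the $\Zz_{(p)}[z]$-action; this is an unnecessary detour rather than a gap.
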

\begin{proof}
Note that we have \begin{equation*}
        M\otimes\THH(\BP\langle n\rangle[x]/x^e/\MU[x])\simeq M\otimes\BP\langle n\rangle[x]/x^e\otimes_{\BP\langle n\rangle[x]/x^e} \THH(\BP\langle n\rangle[x]/x^e/\MU[x]),
    \end{equation*}
and $M\otimes \BP\langle n\rangle[x]/x^e$ is a sum of shift of $\Ff_p[x]/x^e$,  and \begin{equation*}
 \THH(\BP\langle n\rangle[x]/\MU[x];\Ff_p[x]/x^e)\simeq \THH(\BP\langle n\rangle/\MU;\Ff_p)\otimes_{\Ff_p} \HH(\Ff_p[x]/x^e/\Ff_p[x]).   
\end{equation*}
So, one could find a such lift $z$ of $\sigma^2 v_{n+1}$ in $\pi_*(M\otimes \THH(\BP\langle n\rangle[x]/x^e/\MU[x]))$ satisfies the condition (1), (2), (3), (4), by \cite[Lemma 6.3.5]{HW22}.
\end{proof}
\section{Computation for height one case}\label{sec6}
In height $-1$ case, the computation of $\TC(\BP\langle n\rangle[x]/x^e)$ was presented in \cite{HM97}. We will use the strategy and the computations in \cite{AR02} to carry out some computations for $\TC(\ell[x]/x^e)$, where we use $\ell$ to denote $\BP\langle 1\rangle$.

By Proposition \ref{pro:decomposation}, we have the following equivalence \begin{equation*}
    \THH(\ell[x]/x^e)\simeq \bigoplus_{m\geq 0}\THH(\ell)\otimes B_m\simeq \prod_{m\geq 0} \THH(\ell)\otimes B_m,
\end{equation*}
and since the summand has connectivity tends to $\infty$, we therefore have following equivalences
\begin{gather}\label{equ:decom1}
F\otimes\TC^-(\ell[x]/x^e)\simeq \prod_{m\geq 0} F\otimes (\THH(\ell) \otimes B_m)^{h S^1},\\
\label{equ:decom2}
F\otimes \TP(\ell[x]/x^e) \simeq \prod_{m\geq 0} F\otimes (\THH(\ell) \otimes B_m)^{t S^1}.  
\end{gather}
Where $F$ is a $p$-local type $2$ finite complex, for instance, $F=V(1)=\Ss/(p, v_1)$, (we may assume $p>3$ in the remaining context). We know that 
\begin{equation*}
  F\otimes \TC(\ell[x]/x^e)\simeq F\otimes \TC(\ell[x]/x^e,\Zz_p)\simeq \fib(\can_p-\varphi_p: F\otimes \TC^-(\ell[x]/x^e,\Zz_p) \to F\otimes \TP(\ell[x]/x^e,\Zz_p)).
\end{equation*}
Recall a theorem that allow us to compute (\ref{equ:decom1}) and (\ref{equ:decom2}).
\begin{proposition}[{\cite[Proposition 3]{cusp}}]\label{pro:Tatefix}
Let $G$ be a compact Lie group, $H\subset G$ be a closed subgroup, let $\lambda=T_H(G/H)$ be the tangent space at $H=eH$ with adjoint left $H$-action. Let $S^{\lambda}$ denote the compactification of $\lambda$, then for any $G$-spectrum $X$, we have the following equivalences which are natural.
\begin{gather*}
    (X\otimes (G/H)_+)^{h G}\simeq (X\otimes S^{\lambda})^{h H},\\
    (X\otimes (G/H)_+)^{t G}\simeq (X\otimes S^{\lambda})^{t H}.
\end{gather*}
\end{proposition}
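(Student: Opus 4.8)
The statement is a form of the Wirthm\"uller (equivalently, Adams) isomorphism, and the plan is to deduce both equivalences from three standard inputs: the projection formula, the twisted comparison between induction and coinduction for the compact homogeneous space $G/H$, and the compatibility of homotopy (co)limits with Kan extension. Throughout I regard a $G$-spectrum as an object of $\Fun(BG,\Sp)$ (the Borel structure suffices, since $(-)^{hG}$ and $(-)^{tG}$ are insensitive to more), and write $\mathrm{Res}^G_H$, $\mathrm{Ind}^G_H$ (left Kan extension along $BH\to BG$) and $\mathrm{CoInd}^G_H$ (right Kan extension along $BH\to BG$) for the three functors between $\Fun(BG,\Sp)$ and $\Fun(BH,\Sp)$, so that $(-)^{hG}=\lim_{BG}$, $(-)_{hG}=\colim_{BG}$, and similarly for $H$.

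First I would note that $\suspen_+(G/H)\simeq\mathrm{Ind}^G_H(\Ss)$: the homotopy fibre of $BH\to BG$ is $G/H$, so the left Kan extension of the unit along $BH\to BG$ has underlying spectrum $\suspen_+(G/H)$ with its residual $G$-action. Since $\mathrm{Ind}^G_H$ is $\Fun(BG,\Sp)$-linear, the projection formula gives a natural equivalence $X\otimes\suspen_+(G/H)\simeq\mathrm{Ind}^G_H(\mathrm{Res}^G_H X)$. The geometric heart is the Wirthm\"uller isomorphism in the form $\mathrm{Ind}^G_H(-)\simeq\mathrm{CoInd}^G_H(S^{\lambda}\otimes-)$, where $\lambda=T_H(G/H)$ carries its adjoint $H$-action; this is Atiyah duality for the closed manifold $G/H$, whose relative dualizing object over $BG$ is the Thom spectrum of $\lambda$ (one may also invoke Lewis--May--Steinberger directly). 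Granting this, together with $\lim_{BG}\circ\,\mathrm{CoInd}^G_H=\lim_{BH}$, the homotopy-fixed-point claim reads
\[
\bigl(X\otimes\suspen_+(G/H)\bigr)^{hG}\;\simeq\;\bigl(\mathrm{CoInd}^G_H(S^{\lambda}\otimes\mathrm{Res}^G_H X)\bigr)^{hG}\;\simeq\;\bigl(S^{\lambda}\otimes\mathrm{Res}^G_H X\bigr)^{hH}\;=\;\bigl(X\otimes S^{\lambda}\bigr)^{hH},
\]
and naturality is automatic, each step being a composite of natural transformations.

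For the Tate statement I would present $(-)^{tG}$ as the cofibre of the adjoint-twisted norm map $\mathrm{Nm}_G\colon\bigl(S^{\mathfrak g}\otimes-\bigr)_{hG}\to(-)^{hG}$, where $\mathfrak g=\mathrm{Lie}(G)$ has its adjoint action and $S^{\mathfrak g}$ is its one-point compactification; for $G=S^1$ this is the familiar sequence $\Sigma X_{hS^1}\to X^{hS^1}\to X^{tS^1}$. The projection formula again gives $S^{\mathfrak g}\otimes\mathrm{Ind}^G_H(Z)\simeq\mathrm{Ind}^G_H(\mathrm{Res}^G_H S^{\mathfrak g}\otimes Z)$, and $\colim_{BG}\circ\,\mathrm{Ind}^G_H=\colim_{BH}$ then identifies $\bigl(S^{\mathfrak g}\otimes\mathrm{Ind}^G_H Z\bigr)_{hG}$ with $\bigl(\mathrm{Res}^G_H S^{\mathfrak g}\otimes Z\bigr)_{hH}$. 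Because $H$ is compact, the short exact sequence of $H$-representations $0\to\mathfrak h\to\mathrm{Res}^G_H\mathfrak g\to\lambda\to0$ splits, so $\mathrm{Res}^G_H S^{\mathfrak g}\simeq S^{\mathfrak h}\otimes S^{\lambda}$; under the identifications above, and using that the norm maps are compatible with induction, $\mathrm{Nm}_G$ for $\mathrm{Ind}^G_H(\mathrm{Res}^G_H X)$ becomes precisely $\mathrm{Nm}_H$ for the $H$-spectrum $S^{\lambda}\otimes\mathrm{Res}^G_H X$. Taking cofibres yields $\bigl(X\otimes\suspen_+(G/H)\bigr)^{tG}\simeq\bigl(S^{\lambda}\otimes\mathrm{Res}^G_H X\bigr)^{tH}=\bigl(X\otimes S^{\lambda}\bigr)^{tH}$.

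The step I expect to demand the most care is fixing the Wirthm\"uller isomorphism with exactly the right twist $S^{\lambda}$ — checking that the relative dualizing object of $BH\to BG$ is the Thom spectrum of the adjoint tangent representation and not its inverse or a shift — and, for the Tate half, the bookkeeping of adjoint-representation shifts, in particular the $H$-equivariant splitting $\mathrm{Res}^G_H\mathfrak g\simeq\mathfrak h\oplus\lambda$ and the assertion that $\mathrm{Nm}_G$ restricts to $\mathrm{Nm}_H$. In the only case needed below, namely $G=S^1$ and $H=C_m$ with $m\geq1$, one has $\mathfrak h=0$ and $\lambda$ the trivial $1$-dimensional representation, so all these twists collapse to a single suspension and the verification becomes routine.
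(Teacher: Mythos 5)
The paper does not actually prove this statement: it is quoted verbatim from the cited reference (Hesselholt's Proposition 3), where it is established in genuine equivariant stable homotopy theory via the Wirthm\"uller/Adams isomorphisms in the Lewis--May--Steinberger framework. Your proposal reconstructs essentially the same content, but repackaged Borel-equivariantly: you realize $\Sigma^\infty_+(G/H)$ as the left Kan extension of $\Ss$ along $BH\to BG$, use the projection formula to rewrite $X\otimes (G/H)_+$ as $\mathrm{Ind}^G_H\mathrm{Res}^G_H X$, invoke the parametrized Wirthm\"uller isomorphism $\mathrm{Ind}^G_H(-)\simeq \mathrm{CoInd}^G_H(S^\lambda\otimes -)$, and for the Tate half compare the adjoint-twisted norm cofiber sequences using the $H$-equivariant splitting $\mathrm{Res}^G_H\mathfrak g\simeq \mathfrak h\oplus\lambda$. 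This is correct, the direction of the twist is right (as your $G=S^1$, $H=e$ sanity check confirms), and it has the advantage of only using the Borel data that $(-)^{hG}$ and $(-)^{tG}$ actually see, which is all the paper needs. The one load-bearing step you assert rather than prove is the identification of $\mathrm{Nm}_G$ on the induced spectrum with $\mathrm{Nm}_H$ on $S^\lambda\otimes\mathrm{Res}^G_H X$ under your equivalences (compatibility of twisted norms with induction); this is exactly where the content of the cited result sits, and a self-contained write-up would need to supply it, e.g.\ via transitivity of transfers or by falling back on the genuine-equivariant Adams isomorphism. You correctly flag this, and correctly observe that in the only case the paper uses ($G=S^1$, $H=C_m$, so $\mathfrak h=0$ and $\lambda$ trivial of dimension one) all the representation-theoretic bookkeeping collapses to a single suspension, matching the shifts $\Sigma^{[\frac{m-1}{e}]+1}$ appearing in Section 6.
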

In our case, $X$ is $\THH(\ell)\otimes S^{V_d}$, $H=C_m$, and $G=S^1$, hence applying Proposition \ref{pro:Tatefix}, Proposition \ref{pro:decomposation}, we have the following equivalences.
\begin{gather*}
F\otimes\TC^-(\ell[x]/x^e, \Zz_p)\simeq \prod_{e\nmid m} (\Sigma^{[\frac{m-1}{e}]+1}(F\otimes\THH(\ell)^{h C_p^{v_p(m)}})) \times \prod_{k=m/e\geq 1}\cofib (\Sigma^{[\frac{m-1}{e}]+1} V_{v_p(e),v_p(m)});\\
F\otimes\TP(\ell[x]/x^e, \Zz_p)\simeq \prod_{e\nmid m} (\Sigma^{[\frac{m-1}{e}]+1}(F\otimes\THH(\ell)^{t C_p^{v_p(m)}})) \times \prod_{k=m/e\geq 1}\cofib (\Sigma^{[\frac{m-1}{e}]+1} V_{v_p(e),v_p(m)}).
\end{gather*}
By Corollary \ref{cor:TRboundedfails}, $V(2)\otimes \TR(\ell[x]/x^e)$ is not bounded, so one would not expect that $V(2)\otimes\TC(\ell[x]/x^e)$ is bounded. However, this is still not clear.
According to the above decomposition, we will mainly focus on the easy part. First recall the calculations in \cite{AR02}, we have following.
\begin{proposition}[{\cite[Proposition 6.2, 6.5]{AR02}}]\label{pro:gradedofSS}
   The associated graded of the Tate spectral sequence computing $V(1)_*(\THH(\ell)^{t C_{p^{n+1}}})$  is given by 
   \begin{align*}
      \hat{E}^{\infty}(C_{p^{n+1}})=\Lambda_{\Ff_p}(\lambda_1,\lambda_2)\otimes P_{r(2n)+1}(t\mu)\otimes P(t^{p^{2n+2}}, t^{-p^{2n+2}})\oplus \\ 
      \bigoplus_{k=3}^{2n+2}\Lambda(u_{n+1}, \lambda_{[k]}')\otimes P_{r(k-2)}(t\mu)\otimes \Ff_p\{\lambda_{[k]}t^i\mid v_p(i)=k-1\}. 
   \end{align*}
   The associated graded of the homotopy fixed pointed spectral sequence computing $V(1)_*(\THH(\ell)^{h C_{p^n}})$ maps by a $(2p-2)$-coconnected map to 
   \begin{align*}
   \mu^{-1}E^{\infty} (C_{p^n})=\Lambda(\lambda_1,\lambda_2)\otimes P_{r(2n)+1}(t\mu)\otimes P(\mu^{p^{2n}}, \mu^{-p^{2n}})\oplus \\
   \bigoplus_{k=1}^{2n}\Lambda(u_n,\lambda'_{[k]})\otimes P_{r(k)}(t\mu)\otimes\Ff_p\{\lambda_{[k]} \mu^j| v_p(j)=k-1\}.     
   \end{align*}
The Frobenius map sends $\lambda_1\mu^{p^{2n-2}}, \lambda_2\mu^{p^{2n-1}}, \mu^{p^{2n}}$ respectively to $\lambda_1 t^{-p^{2n}}, \lambda_2 t^{-p^{2n+1}}, t^{-p^{2n+2}}$.
\end{proposition}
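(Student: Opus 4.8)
Since this proposition records the Ausoni--Rognes computation, its proof is in the end a citation of \cite[Proposition 6.2, 6.5]{AR02}; let me nonetheless sketch the route one follows to obtain it. The input is the McClure--Staffeldt/Ausoni--Rognes description of $V(1)_*\THH(\ell)$ as $\Lambda_{\Ff_p}(\lambda_1,\lambda_2)\otimes P(\mu)$, with $|\lambda_1|=2p-1$, $|\lambda_2|=2p^2-1$ and $|\mu|=2p^2$. From it one builds the $S^1$-homotopy fixed point and $S^1$-Tate spectral sequences, whose $E_2$-pages are $P(t)\otimes\Lambda(\lambda_1,\lambda_2)\otimes P(\mu)$ and $P(t^{\pm 1})\otimes\Lambda(\lambda_1,\lambda_2)\otimes P(\mu)$ with $|t|=-2$. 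The engine of the computation is the differential pattern: there is a first family of nonzero differentials of length $2p$, of which $d^{2p}(t^{-1})\doteq t^{p-1}\lambda_1$ in the Tate case and $d^{2p}(\mu)\doteq t^{p}\lambda_1\mu$ in the homotopy fixed point case are representatives, and then, on the classes surviving to $E_{2p+1}$, a second family of length $2p^2$ with representative $d^{2p^2}(t^{-p})\doteq t^{p^2-p}\lambda_2$; after this, multiplicativity together with a horizontal vanishing line forces degeneration, and the surviving classes reassemble into the stated answer, the generators $t^{\pm p^{2n+2}}$ and $\mu^{p^{2n}}$ occurring as the smallest $t$-powers and $\mu$-powers left untouched.

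To pass from $S^1$ to the finite cyclic groups $C_{p^{n+1}}$ and $C_{p^n}$ I would use the restriction maps $\THH(\ell)^{hS^1}\to\THH(\ell)^{hC_{p^m}}$, the tower $\cdots\to\THH(\ell)^{hC_{p^{m+1}}}\to\THH(\ell)^{hC_{p^m}}\to\cdots$, and the cofiber sequences relating $\THH(\ell)_{hC_{p^m}}$, $\THH(\ell)^{hC_{p^m}}$ and $\THH(\ell)^{tC_{p^m}}$. The $C_{p^m}$-spectral sequences are obtained from the $S^1$-ones by base change along $H^{*}(BS^1;\Ff_p)\to H^{*}(BC_{p^m};\Ff_p)$, i.e.\ by adjoining an exterior class $u_m$ with $d(u_m)=t$, so the $S^1$-differentials determine the $C_{p^m}$-differentials; but the $t$-power differentials can only run as long as the divisions by $t$ they require are permitted by $u_m$, and it is this cut-off that produces the truncated polynomial factors $P_{r(k)}(t\mu)$, the extra exterior classes $\lambda_{[k]}'$, $u_n$, $u_{n+1}$, and the appearance of $2n$, $2n+1$ and $2n+2$ according to the $p$-adic valuation of $|C_{p^m}|$. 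Reading off $\hat{E}^{\infty}(C_{p^{n+1}})$ and $\mu^{-1}E^{\infty}(C_{p^n})$ (the latter after inverting $\mu$, which is where the $(2p-2)$-coconnected comparison enters) is then just a matter of collecting the surviving monomials.

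The Frobenius is finally pinned down by naturality of the cyclotomic structure: after the standard comparison of the Tate construction with a shifted homotopy fixed point filtration, the relevant composite $\THH(\ell)^{hC_{p^n}}\to\THH(\ell)^{tC_{p^{n+1}}}$ sends the B\"okstedt-type periodicity class $\mu$ to $t^{-p^2}$ up to a unit, so on associated gradeds one reads $\lambda_1\mu^{p^{2n-2}}\mapsto\lambda_1 t^{-p^{2n}}$, $\lambda_2\mu^{p^{2n-1}}\mapsto\lambda_2 t^{-p^{2n+1}}$ and $\mu^{p^{2n}}\mapsto t^{-p^{2n+2}}$, once one checks these monomials lie in the filtrations claimed. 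The main obstacle is everything in the second paragraph: establishing the precise differential pattern in the $S^1$-spectral sequences (the two differential lengths, their exact targets, and the vanishing of all later differentials) and then verifying that the truncations restrict correctly to each finite $C_{p^m}$. This is a delicate piece of bookkeeping, and it is exactly the work carried out in \cite[\S 6]{AR02}; I would follow their argument rather than attempt to shortcut it.
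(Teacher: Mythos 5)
Your proposal takes exactly the paper's approach: the paper gives no argument of its own for this statement but simply quotes the Ausoni--Rognes computation, so reducing the proof to the citation of \cite[Propositions 6.2 and 6.5]{AR02} is precisely what is done there, and your proposal is correct as such. One caveat about your expository sketch (which does not affect correctness, since you defer to \cite{AR02} in the end): the finite-level spectral sequences are not obtained by base change from the $S^1$-computation with only the two differential families you list --- Ausoni and Rognes argue by induction on the subgroup $C_{p^m}$, using the $(2p-2)$-coconnected cyclotomic comparison $\THH(\ell)^{h C_{p^{m-1}}}\to \THH(\ell)^{t C_{p^m}}$ (Tsalidis' theorem) to produce new, longer differentials at each stage (these are what generate the extra summands indexed by $k=3,\dots,2n+2$ and the truncations $P_{r(k)}(t\mu)$), and the $S^1$-level statements are deduced afterwards by passing to the limit.
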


\begin{remark}
  We have an equivalence \begin{equation*}
    F\otimes \TC(\ell[x]/x^e)\simeq \fib(\can_p-\varphi: F\otimes \TC^-(\ell[x]/x^e) \to F\otimes \TP(\ell[x]/x^e)).
\end{equation*}  A summand of this is \begin{equation*}
    \fib (\can_p-\varphi_p: \prod_{e\nmid m} (\Sigma^{[\frac{m-1}{e}]+1}(F\otimes\THH(\ell)^{h C_p^{v_p(m)}})) \to \prod_{e\nmid m} (\Sigma^{[\frac{m-1}{e}]+1}(F\otimes\THH(\ell)^{t C_p^{v_p(m)}})))
\end{equation*}
We know that $v_2$ is detected by $t\mu$ in $V(1)_*\TC^-(\ell)$. Let $A=\Lambda_{\Ff_p}(\lambda_1,\lambda_2)\otimes P(t^{p^{2n+2}})$, $B=\bigoplus_{k=3}^{2n+2}\Lambda(u_{n+1}, \lambda_{[k]}')\otimes \Ff_p\{\lambda_{[k]}t^i\mid v_p(i)=k-1\}$, then
$\hat{E}^{\infty}(C_{p^{n+1}})/(t\mu)=A\oplus B$. We know that both canonical map and Frobenius map send $\lambda_1, \lambda_2$ to the same name elements in the target. Hence these classes will contribute to the homotopy groups on $V(1)_*\TC(\ell[x]/x^e)/(v_2)$, i.e. we  have following fact. 
\end{remark}
\begin{proposition}\label{pro:TClxe}
Let $e>1$.
 $\pi_{[\frac{m-1}{e}]+2p-1} (V(1)\otimes \TC(\ell[x]/x^e)/(v_2))\not\simeq 0$, and one of the generator is $\lambda_1$ as $\Ff_p$-vector space, and $\pi_{[\frac{m-1}{e}]+2p^2-1} (V(1)\otimes \TC(\ell[x]/x^e)/v_2)\not\simeq 0$, one of the generator is $\lambda_2$, and $\pi_{[\frac{m-1}{e}]+2p^2-2} (V(1)\otimes \TC(\ell[x]/x^e)/v_2)\not\simeq 0$, one of the generator is $\partial \lambda_2$. Hence, when $m\to \infty$, we get that $\TC(\ell[x]/x^e)/(p, v_1, v_2)$ is not bounded.
Therefore the spectrum  $\TC(\ell[x]/x^e)/(p,v_1, v_2)$ is not bounded, hence $\K((\ell[x]/x^e)^{\wedge}_p)^{\wedge}_p/(p,v_1, v_2)$ is not bounded.
\end{proposition}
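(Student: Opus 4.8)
The plan is to extract from $V(1)\otimes\TC(\ell[x]/x^e)$ the contribution of the summands $B_m$ with $e\nmid m$ in the cyclic decomposition and to show that these alone force nonzero homotopy in arbitrarily high degrees, even after a further quotient by $v_2$ (here $p>3$, so the $v_2$-self map on $V(1)$ is available). First I would assemble the decomposition: combining Proposition~\ref{pro:decomposation} with Proposition~\ref{pro:Tatefix} and Lemma~\ref{unstablefro} applied to $\ell=\BP\langle 1\rangle$, the equivalences \eqref{equ:decom1} and \eqref{equ:decom2} refine to the splittings of $V(1)\otimes\TC^-(\ell[x]/x^e,\Zz_p)$ and $V(1)\otimes\TP(\ell[x]/x^e,\Zz_p)$ displayed just before the statement, whose $e\nmid m$ factors are suspensions of $V(1)\otimes\THH(\ell)^{hC_{p^{v_p(m)}}}$ and $V(1)\otimes\THH(\ell)^{tC_{p^{v_p(m)}}}$. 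The canonical map is diagonal in the index $m$, while the cyclotomic Frobenius carries the $m$-th factor to the $pm$-th factor; since (away from the exceptional case $p\mid e$, which I would treat separately) both operations preserve the partition of indices into $\{e\mid m\}$ and $\{e\nmid m\}$, the map $\can_p-\varphi_p$ respects the associated direct-sum decomposition, so the fiber of $\can_p-\varphi_p$ restricted to the product of the $e\nmid m$ factors is a direct summand of $V(1)\otimes\TC(\ell[x]/x^e)$.

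Next I would bring in the Angeltveit--Rognes computation recorded in Proposition~\ref{pro:gradedofSS}. The groups $V(1)_*\THH(\ell)^{hC_{p^n}}$ and $V(1)_*\THH(\ell)^{tC_{p^n}}$ contain $\lambda_1$ (internal degree $2p-1$) and $\lambda_2$ (internal degree $2p^2-1$), the class $v_2$ is detected by $t\mu$, and the Frobenius carries each $\lambda_i$ to the same-named class on the Tate side. Reducing modulo $v_2=t\mu$, the associated graded of the Tate spectral sequence becomes $A\oplus B$ with $A=\Lambda_{\Ff_p}(\lambda_1,\lambda_2)\otimes P(t^{p^{2n+2}})$, and the homotopy fixed point spectral sequence reduces analogously; in particular $\lambda_1,\lambda_2$ are nonzero in $V(1)_*\THH(\ell)^{hC_{p^n}}/(v_2)$ and in $V(1)_*\THH(\ell)^{tC_{p^n}}/(v_2)$ and are matched there by both $\can$ and $\varphi$. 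Therefore, in the fiber computing the $e\nmid m$ part of $V(1)\otimes\TC(\ell[x]/x^e)/(v_2)$, the classes $\lambda_1$ and $\lambda_2$ lift from the $\TC^-$-side along the kernel of $\can_p-\varphi_p$, and the class $\lambda_2$ on the $\TP$-side, which is not in the image of $\can_p-\varphi_p$, yields $\partial\lambda_2$; a degree count in the $E_\infty$-pages of Proposition~\ref{pro:gradedofSS}, together with the horizontal vanishing line and the Leibniz rule (as in \cite{AR02}), rules out differentials destroying these classes, and tracing the suspension shifts in the decomposition produces the degrees $[\frac{m-1}{e}]+2p-1$, $[\frac{m-1}{e}]+2p^2-1$ and $[\frac{m-1}{e}]+2p^2-2$ claimed in the statement.

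Letting $m\to\infty$ through indices with $e\nmid m$, one has $[\frac{m-1}{e}]\to\infty$, so $\pi_*\bigl(\TC(\ell[x]/x^e)/(p,v_1,v_2)\bigr)=V(1)_*\TC(\ell[x]/x^e)/(v_2)$ is nonzero in unboundedly large degrees, hence $\TC(\ell[x]/x^e)/(p,v_1,v_2)$ is not bounded above. To transfer this to $\K$-theory I would invoke the Dundas--Goodwillie--McCarthy theorem twice: the truncation $(\ell[x]/x^e)^{\wedge}_p\to\pi_0=\Zz_p[x]/x^e$ is an isomorphism on $\pi_0$, and the augmentation $\Zz_p[x]/x^e\to\Zz_p$ is a nilpotent extension, so $\fib\bigl(\K((\ell[x]/x^e)^{\wedge}_p)^{\wedge}_p\to\TC((\ell[x]/x^e)^{\wedge}_p)^{\wedge}_p\bigr)$ is identified with $\fib\bigl(\K(\Zz_p)^{\wedge}_p\to\TC(\Zz_p)^{\wedge}_p\bigr)$, which is an fp-spectrum of fp-type at most $1$ by the Lichtenbaum--Quillen property for $\Zz_p$; smashing this fiber with a finite type $3$ complex yields a bounded spectrum, so $\K((\ell[x]/x^e)^{\wedge}_p)^{\wedge}_p/(p,v_1,v_2)$ is unbounded together with $\TC(\ell[x]/x^e)/(p,v_1,v_2)$.

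The hard part will be the bookkeeping in the $\TC$-spectral sequence after reducing modulo $v_2$: one has to confirm that the candidate classes $\lambda_1,\lambda_2,\partial\lambda_2$ are genuine permanent cycles which are not boundaries, and --- because the cyclotomic Frobenius shifts the summation index rather than fixing it --- that they really assemble into elements of $\pi_*\TC=\pi_*\fib(\can_p-\varphi_p)$ rather than being lost in the limit. The exceptional prime $p\mid e$ needs a separate but routine analysis of how $\varphi$ moves indices in and out of $\{m : e\nmid m\}$.
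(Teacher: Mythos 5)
Your proposal follows essentially the same route as the paper's own argument: the cyclic decomposition of $\THH(\ell[x]/x^e)$ combined with Proposition \ref{pro:Tatefix}, then the Angeltveit--Rognes computation of Proposition \ref{pro:gradedofSS} reduced modulo $v_2=t\mu$ to exhibit the classes $\lambda_1,\lambda_2,\partial\lambda_2$ in degrees growing with $m$, and the same limiting argument for unboundedness. The only added content is that you make the final passage to $\K$-theory explicit via two applications of Dundas--Goodwillie--McCarthy (reducing to the trace fiber for $\Zz_p$), which the paper leaves implicit; this is consistent with how the paper handles the analogous step in Proposition \ref{cor:LQforpoly}.
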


\bibliographystyle{alpha}
\bibliography{ref}

@article {HW22,
    AUTHOR = {Hahn, Jeremy and Wilson, Dylan},
     TITLE = {Redshift and multiplication for truncated {B}rown\textendash {P}eterson
              spectra},
   JOURNAL = {Ann. of Math. (2)},
  FJOURNAL = {Annals of Mathematics. Second Series},
    VOLUME = {196},
      YEAR = {2022},
    NUMBER = {3},
     PAGES = {1277--1351},
      ISSN = {0003-486X,1939-8980},
   MRCLASS = {55P43 (18N70 19D55)},
  MRNUMBER = {4503327},
MRREVIEWER = {Dae-Woong\ Lee},
       DOI = {10.4007/annals.2022.196.3.6},
       URL = {https://doi.org/10.4007/annals.2022.196.3.6},
}

@article {Lee-Levy,
    AUTHOR = {Lee, David Jongwon and Levy, Ishan},
     TITLE = {Topological {H}ochschild homology of the image of {$J$}},
   JOURNAL = {Adv. Math.},
  FJOURNAL = {Advances in Mathematics},
    VOLUME = {487},
      YEAR = {2026},
     PAGES = {Paper No. 110759, 51},
      ISSN = {0001-8708,1090-2082},
   MRCLASS = {55P42 (19D55)},
  MRNUMBER = {5009106},
       DOI = {10.1016/j.aim.2025.110759},
       URL = {https://doi.org/10.1016/j.aim.2025.110759},
}

@misc{exampleschromaticredshift,
      title={Examples of chromatic redshift in algebraic {K}-theory}, 
      author={Allen Yuan},
      year={2021},
      eprint={2111.10837},
      archivePrefix={arXiv},
      primaryClass={math.KT},
      url={https://arxiv.org/abs/2111.10837},
      note={\href{https://arxiv.org/abs/2111.10837}{https://arxiv.org/abs/2111.10837}}
}

@book {tmf,
     TITLE = {Topological modular forms},
    SERIES = {Mathematical Surveys and Monographs},
    VOLUME = {201},
    EDITOR = {Douglas, Christopher L. and Francis, John and Henriques,
              Andr\'e{} G. and Hill, Michael A.},
 PUBLISHER = {American Mathematical Society, Providence, RI},
      YEAR = {2014},
     PAGES = {xxxii+318},
      ISBN = {978-1-4704-1884-7},
   MRCLASS = {55N34 (14D23 55T15)},
  MRNUMBER = {3223024},
MRREVIEWER = {Paul\ G.\ Goerss},
       DOI = {10.1090/surv/201},
       URL = {https://doi.org/10.1090/surv/201},
}

@book {orangebook,
    AUTHOR = {Ravenel, Douglas C.},
     TITLE = {Nilpotence and periodicity in stable homotopy theory},
    SERIES = {Annals of Mathematics Studies},
    VOLUME = {128},
      NOTE = {Appendix C by Jeff Smith},
 PUBLISHER = {Princeton University Press, Princeton, NJ},
      YEAR = {1992},
     PAGES = {xiv+209},
      ISBN = {0-691-02572-X},
   MRCLASS = {55P42 (55N22 55Q10 57R77)},
  MRNUMBER = {1192553},
MRREVIEWER = {N.\ J.\ Kuhn},
}

@incollection {Waldhausenconj,
    AUTHOR = {Waldhausen, Friedhelm},
     TITLE = {Algebraic {$K$}-theory of spaces, localization, and the
              chromatic filtration of stable homotopy},
 BOOKTITLE = {Algebraic topology, {A}arhus 1982 ({A}arhus, 1982)},
    SERIES = {Lecture Notes in Math.},
    VOLUME = {1051},
     PAGES = {173--195},
 PUBLISHER = {Springer, Berlin},
      YEAR = {1984},
      ISBN = {3-540-12902-2},
   MRCLASS = {57N37 (18F25 19D10)},
  MRNUMBER = {764579},
MRREVIEWER = {V.\ P.\ Snaith},
       DOI = {10.1007/BFb0075567},
       URL = {https://doi.org/10.1007/BFb0075567},
}

@article{BlochKato,
    AUTHOR = {Voevodsky, Vladimir},
     TITLE = {On motivic cohomology with {$\bold Z/l$}-coefficients},
   JOURNAL = {Ann. of Math. (2)},
  FJOURNAL = {Annals of Mathematics. Second Series},
    VOLUME = {174},
      YEAR = {2011},
    NUMBER = {1},
     PAGES = {401--438},
      ISSN = {0003-486X,1939-8980},
   MRCLASS = {14F42 (19D45)},
  MRNUMBER = {2811603},
MRREVIEWER = {Matthias\ Wendt},
       DOI = {10.4007/annals.2011.174.1.11},
       URL = {https://doi.org/10.4007/annals.2011.174.1.11},
}

@article {purity,
    AUTHOR = {Land, Markus and Mathew, Akhil and Meier, Lennart and Tamme,
              Georg},
     TITLE = {Purity in chromatically localized algebraic {$K$}-theory},
   JOURNAL = {J. Amer. Math. Soc.},
  FJOURNAL = {Journal of the American Mathematical Society},
    VOLUME = {37},
      YEAR = {2024},
    NUMBER = {4},
     PAGES = {1011--1040},
      ISSN = {0894-0347,1088-6834},
   MRCLASS = {19D55 (55P43)},
  MRNUMBER = {4777639},
MRREVIEWER = {Ishan\ Levy},
       DOI = {10.1090/jams/1043},
       URL = {https://doi.org/10.1090/jams/1043},
}

@article {MaytypeSS,
    AUTHOR = {Angelini-Knoll, Gabe and Salch, Andrew},
     TITLE = {A {M}ay-type spectral sequence for higher topological
              {H}ochschild homology},
   JOURNAL = {Algebr. Geom. Topol.},
  FJOURNAL = {Algebraic \& Geometric Topology},
    VOLUME = {18},
      YEAR = {2018},
    NUMBER = {5},
     PAGES = {2593--2660},
      ISSN = {1472-2747,1472-2739},
   MRCLASS = {18G30 (19D55 55P42 55T05)},
  MRNUMBER = {3848395},
MRREVIEWER = {Tyler\ D.\ Lawson},
       DOI = {10.2140/agt.2018.18.2593},
       URL = {https://doi.org/10.2140/agt.2018.18.2593},
}

@article {ThomasonetaleK,
    AUTHOR = {Thomason, R. W.},
     TITLE = {Algebraic {$K$}-theory and \'etale cohomology},
   JOURNAL = {Ann. Sci. \'Ecole Norm. Sup. (4)},
  FJOURNAL = {Annales Scientifiques de l'\'Ecole Normale Sup\'erieure.
              Quatri\`eme S\'erie},
    VOLUME = {18},
      YEAR = {1985},
    NUMBER = {3},
     PAGES = {437--552},
      ISSN = {0012-9593},
   MRCLASS = {14F15 (11G99 14F12 18F25 19E20)},
  MRNUMBER = {826102},
MRREVIEWER = {H.\ Gillet},
       URL = {http://www.numdam.org/item?id=ASENS_1985_4_18_3_437_0},
}

@article {CMNN,
    AUTHOR = {Clausen, Dustin and Mathew, Akhil and Naumann, Niko and Noel,
              Justin},
     TITLE = {Descent and vanishing in chromatic algebraic {$K$}-theory via
              group actions},
   JOURNAL = {Ann. Sci. \'Ec. Norm. Sup\'er. (4)},
  FJOURNAL = {Annales Scientifiques de l'\'Ecole Normale Sup\'erieure.
              Quatri\`eme S\'erie},
    VOLUME = {57},
      YEAR = {2024},
    NUMBER = {4},
     PAGES = {1135--1190},
      ISSN = {0012-9593,1873-2151},
   MRCLASS = {18N60 (19D10 19L47)},
  MRNUMBER = {4773302},
}

@misc{heightforEinfty,
      title={On the {B}ousfield classes of {$H_\infty$}-ring spectra}, 
      author={Jeremy Hahn},
      year={2022},
      eprint={1612.04386},
      archivePrefix={arXiv},
      primaryClass={math.AT},
      url={https://arxiv.org/abs/1612.04386}, 
      note={\href{https://arxiv.org/abs/1612.04386}{https://arxiv.org/abs/1612.04386}}
}

@misc{fptype1,
      title={The monochromatic {H}ahn-{W}ilson conjecture}, 
      author={David Jongwon Lee and Piotr Pstragowski},
      year={2024},
      eprint={2410.08029},
      archivePrefix={arXiv},
      primaryClass={math.AT},
      url={https://arxiv.org/abs/2410.08029}, 
     note={\href{https://arxiv.org/abs/2410.08029}{https://arxiv.org/abs/2410.08029}}
}

@misc{telescope,
      title={K-theoretic counterexamples to {R}avenel's telescope conjecture}, 
      author={Robert Burklund and Jeremy Hahn and Ishan Levy and Tomer M. Schlank},
      year={2023},
      eprint={2310.17459},
      archivePrefix={arXiv},
      primaryClass={math.AT},
      url={https://arxiv.org/abs/2310.17459}, 
     note={\href{https://arxiv.org/abs/2310.17459}{https://arxiv.org/abs/2310.17459}}
}

@misc{chromaticnullstellensatz,
      title={The Chromatic Nullstellensatz}, 
      author={Robert Burklund and Tomer M. Schlank and Allen Yuan},
      year={2022},
      eprint={2207.09929},
      archivePrefix={arXiv},
      primaryClass={math.AT},
      url={https://arxiv.org/abs/2207.09929}, 
       note={\href{https://arxiv.org/abs/2207.09929}{https://arxiv.org/abs/2207.09929}}
}

@article {segalforMU,
    AUTHOR = {Lun\o e-Nielsen, Sverre and Rognes, John},
     TITLE = {The {S}egal conjecture for topological {H}ochschild homology
              of complex cobordism},
   JOURNAL = {J. Topol.},
  FJOURNAL = {Journal of Topology},
    VOLUME = {4},
      YEAR = {2011},
    NUMBER = {3},
     PAGES = {591--622},
      ISSN = {1753-8416,1753-8424},
   MRCLASS = {55P91 (55N22 55P43 55S10)},
  MRNUMBER = {2832570},
MRREVIEWER = {Birgit\ Richter},
       DOI = {10.1112/jtopol/jtr015},
       URL = {https://doi.org/10.1112/jtopol/jtr015},
}

@incollection {cusp,
    AUTHOR = {Hesselholt, Lars and Nikolaus, Thomas},
     TITLE = {Algebraic {$K$}-theory of planar cuspidal curves},
 BOOKTITLE = {{$K$}-theory in algebra, analysis and topology},
    SERIES = {Contemp. Math.},
    VOLUME = {749},
     PAGES = {139--148},
 PUBLISHER = {Amer. Math. Soc., [Providence], RI},
      YEAR = {[2020] \copyright 2020},
      ISBN = {978-1-4704-5026-7},
   MRCLASS = {19D55 (55P43)},
  MRNUMBER = {4087637},
MRREVIEWER = {Vicumpriya\ S.\ Perera},
       DOI = {10.1090/conm/749/15070},
       URL = {https://doi.org/10.1090/conm/749/15070},
}

@article {kthofthh,
    AUTHOR = {Bay\i nd\i r, Haldun \"Ozg\"ur and Moulinos, Tasos},
     TITLE = {Algebraic {$K$}-theory of {${\rm THH}(\Bbb{F}_p)$}},
   JOURNAL = {Trans. Amer. Math. Soc.},
  FJOURNAL = {Transactions of the American Mathematical Society},
    VOLUME = {375},
      YEAR = {2022},
    NUMBER = {6},
     PAGES = {4177--4207},
      ISSN = {0002-9947,1088-6850},
   MRCLASS = {55P99 (19D99)},
  MRNUMBER = {4419056},
       DOI = {10.1090/tran/8613},
       URL = {https://doi.org/10.1090/tran/8613},
}

@article {LW22,
    AUTHOR = {Liu, Ruochuan and Wang, Guozhen},
     TITLE = {Topological cyclic homology of local fields},
   JOURNAL = {Invent. Math.},
  FJOURNAL = {Inventiones Mathematicae},
    VOLUME = {230},
      YEAR = {2022},
    NUMBER = {2},
     PAGES = {851--932},
      ISSN = {0020-9910,1432-1297},
   MRCLASS = {19F99 (55P42 55T15)},
  MRNUMBER = {4493328},
MRREVIEWER = {Charles\ Weibel},
       DOI = {10.1007/s00222-022-01134-9},
       URL = {https://doi.org/10.1007/s00222-022-01134-9},
}

@article {AM21,
    AUTHOR = {Mathew, Akhil},
     TITLE = {On {$K(1)$}-local {TR}},
   JOURNAL = {Compos. Math.},
  FJOURNAL = {Compositio Mathematica},
    VOLUME = {157},
      YEAR = {2021},
    NUMBER = {5},
     PAGES = {1079--1119},
      ISSN = {0010-437X,1570-5846},
   MRCLASS = {19D55 (55P42)},
  MRNUMBER = {4256236},
MRREVIEWER = {Marco\ Varisco},
       DOI = {10.1112/S0010437X21007144},
       URL = {https://doi.org/10.1112/S0010437X21007144},
}

@incollection {Lars-zetafunction,
    AUTHOR = {Hesselholt, Lars},
     TITLE = {Topological {H}ochschild homology and the {H}asse-{W}eil zeta
              function},
 BOOKTITLE = {An alpine bouquet of algebraic topology},
    SERIES = {Contemp. Math.},
    VOLUME = {708},
     PAGES = {157--180},
 PUBLISHER = {Amer. Math. Soc., [Providence], RI},
      YEAR = {[2018] \copyright 2018},
      ISBN = {978-1-4704-2911-9},
   MRCLASS = {11S40 (14F30 19D55)},
  MRNUMBER = {3807755},
MRREVIEWER = {Cid\ Reyes-Bustos},
       DOI = {10.1090/conm/708/14264},
       URL = {https://doi.org/10.1090/conm/708/14264},
}

@misc{HLS24,
      title={Crystallinity for reduced syntomic cohomology and the mod $(p,v_1^{p^{n-2}})$ {$K$}-theory of $\mathbb{Z}/p^{n}$}, 
      author={Jeremy Hahn and Ishan Levy and Andrew Senger},
      year={2024},
      eprint={2409.20543},
      archivePrefix={arXiv},
      primaryClass={math.KT},
      url={https://arxiv.org/abs/2409.20543},
      note={\href{https://arxiv.org/abs/2409.20543}{https://arxiv.org/abs/2409.20543}}
}

@article {AN21,
    AUTHOR = {Antieau, Benjamin and Nikolaus, Thomas},
     TITLE = {Cartier modules and cyclotomic spectra},
   JOURNAL = {J. Amer. Math. Soc.},
  FJOURNAL = {Journal of the American Mathematical Society},
    VOLUME = {34},
      YEAR = {2021},
    NUMBER = {1},
     PAGES = {1--78},
      ISSN = {0894-0347,1088-6834},
   MRCLASS = {14F30 (13D03 14L05)},
  MRNUMBER = {4188814},
MRREVIEWER = {Bhargav\ Bhatt},
       DOI = {10.1090/jams/951},
       URL = {https://doi.org/10.1090/jams/951},
}

@article {GIK22,
    AUTHOR = {Gheorghe, Bogdan and Isaksen, Daniel C. and Krause, Achim and
              Ricka, Nicolas},
     TITLE = {{$\Bbb{C}$}-motivic modular forms},
   JOURNAL = {J. Eur. Math. Soc. (JEMS)},
  FJOURNAL = {Journal of the European Mathematical Society (JEMS)},
    VOLUME = {24},
      YEAR = {2022},
    NUMBER = {10},
     PAGES = {3597--3628},
      ISSN = {1435-9855,1435-9863},
   MRCLASS = {14F42 (55N34 55Q45 55S10 55T15)},
  MRNUMBER = {4432907},
MRREVIEWER = {Zhouli\ Xu},
       DOI = {10.4171/jems/1171},
       URL = {https://doi.org/10.4171/jems/1171},
}

@article {HMwitt,
    AUTHOR = {Hesselholt, Lars and Madsen, Ib},
     TITLE = {On the {$K$}-theory of finite algebras over {W}itt vectors of
              perfect fields},
   JOURNAL = {Topology},
  FJOURNAL = {Topology. An International Journal of Mathematics},
    VOLUME = {36},
      YEAR = {1997},
    NUMBER = {1},
     PAGES = {29--101},
      ISSN = {0040-9383},
   MRCLASS = {19D55 (16E40 55P91)},
  MRNUMBER = {1410465},
MRREVIEWER = {Charles\ Weibel},
       DOI = {10.1016/0040-9383(96)00003-1},
       URL = {https://doi.org/10.1016/0040-9383(96)00003-1},
}

@article {MR99,
    AUTHOR = {Mahowald, Mark and Rezk, Charles},
     TITLE = {Brown-{C}omenetz duality and the {A}dams spectral sequence},
   JOURNAL = {Amer. J. Math.},
  FJOURNAL = {American Journal of Mathematics},
    VOLUME = {121},
      YEAR = {1999},
    NUMBER = {6},
     PAGES = {1153--1177},
      ISSN = {0002-9327,1080-6377},
   MRCLASS = {55P42 (55N22 55P60 55S10 55T15)},
  MRNUMBER = {1719751},
MRREVIEWER = {J.\ P. C. Greenlees},
       URL =
              {http://muse.jhu.edu/journals/american_journal_of_mathematics/v121/121.6mahowald.pdf},
}

@article {NS18,
    AUTHOR = {Nikolaus, Thomas and Scholze, Peter},
     TITLE = {On topological cyclic homology},
   JOURNAL = {Acta Math.},
  FJOURNAL = {Acta Mathematica},
    VOLUME = {221},
      YEAR = {2018},
    NUMBER = {2},
     PAGES = {203--409},
      ISSN = {0001-5962,1871-2509},
   MRCLASS = {55U35 (16E40 18E30 19D99)},
  MRNUMBER = {3904731},
MRREVIEWER = {Geoffrey\ M. L. Powell},
       DOI = {10.4310/ACTA.2018.v221.n2.a1},
       URL = {https://doi.org/10.4310/ACTA.2018.v221.n2.a1},
}

@article {BR05,
    AUTHOR = {Baker, Andrew and Richter, Birgit},
     TITLE = {On the {$\Gamma$}-cohomology of rings of numerical polynomials
              and {$E_\infty$} structures on {$K$}-theory},
   JOURNAL = {Comment. Math. Helv.},
  FJOURNAL = {Commentarii Mathematici Helvetici. A Journal of the Swiss
              Mathematical Society},
    VOLUME = {80},
      YEAR = {2005},
    NUMBER = {4},
     PAGES = {691--723},
      ISSN = {0010-2571,1420-8946},
   MRCLASS = {55P43 (13D03 55N15)},
  MRNUMBER = {2182697},
MRREVIEWER = {Sarah\ Whitehouse},
       DOI = {10.4171/CMH/31},
       URL = {https://doi.org/10.4171/CMH/31},
}

@article {Ang08,
    AUTHOR = {Angeltveit, Vigleik},
     TITLE = {Topological {H}ochschild homology and cohomology of
              {$A_\infty$} ring spectra},
   JOURNAL = {Geom. Topol.},
  FJOURNAL = {Geometry \& Topology},
    VOLUME = {12},
      YEAR = {2008},
    NUMBER = {2},
     PAGES = {987--1032},
      ISSN = {1465-3060,1364-0380},
   MRCLASS = {55P43 (18D50 19D55 55S35)},
  MRNUMBER = {2403804},
MRREVIEWER = {Andrey\ Yu.\ Lazarev},
       DOI = {10.2140/gt.2008.12.987},
       URL = {https://doi.org/10.2140/gt.2008.12.987},
}

@book{htt,
    AUTHOR = {Lurie, Jacob},
     TITLE = {Higher topos theory},
    SERIES = {Annals of Mathematics Studies},
    VOLUME = {170},
 PUBLISHER = {Princeton University Press, Princeton, NJ},
      YEAR = {2009},
     PAGES = {xviii+925},
      ISBN = {978-0-691-14049-0; 0-691-14049-9},
   MRCLASS = {18-02 (18B25 18E35 18G30 18G55 55U40)},
  MRNUMBER = {2522659},
MRREVIEWER = {Mark\ Hovey},
       DOI = {10.1515/9781400830558},
       URL = {https://doi.org/10.1515/9781400830558},
}

@misc{ha,
    AUTHOR = {Lurie, Jacob},
     TITLE = {Higher Algebra},
      YEAR = {2017},
      note= {\href{https://www.math.ias.edu/~lurie/papers/HA.pdf}{https://www.math.ias.edu/\textasciitilde lurie/papers/HA.pdf}},
}

@misc{2000oberwolfachlecture,
    AUTHOR = {Rognes, John},
     TITLE = {Algebraic {K}-theory of finitely presented ring spectra},
      YEAR = {2000},
      note= {\href{https://www.mn.uio.no/math/personer/vit/rognes/papers/red-shift.pdf}{https://www.mn.uio.no/math/personer/vit/rognes/papers/red-shift.pdf}},
}

@misc{Rognes1999,
    AUTHOR = {Rognes, John},
     TITLE = {Two-regular number fields and algebraic {K}-theory},
      YEAR = {1999},
      note={\href{https://www.mn.uio.no/math/personer/vit/rognes/papers/lqc2.pdf}{https://www.mn.uio.no/math/personer/vit/rognes/papers/lqc2.pdf}},
}

@misc{2localfield,
    AUTHOR = {Rognes, John},
     TITLE = {Motivic truncation and arithmetic duality for some structured ring spectra},
      YEAR = {2005},
      note= {\href{https://www.mn.uio.no/math/personer/vit/rognes/papers/wuppertal.pdf}{https://www.mn.uio.no/math/personer/vit/rognes/papers/wuppertal.pdf}},
}

@misc{yang2025,
      title={Effective Redshift}, 
      author={Tristan Yang},
      year={2025},
      eprint={2505.00344},
      archivePrefix={arXiv},
      primaryClass={math.AT},
      note={\href{https://arxiv.org/abs/2505.00344}{https://arxiv.org/abs/2505.00344}}, 
}

@misc{higherarithematicduality,
    AUTHOR = {Rognes, John},
     TITLE = {Higher arithmetic duality},
      YEAR = {2025},
      note= {\href{https://www.mn.uio.no/math/personer/vit/rognes/papers/higher_arithmetic_duality.pdf}{https://www.mn.uio.no/math/personer/vit/rognes/papers/higher arithmetic duality.pdf}},
}

@inproceedings {Quillenconj,
    AUTHOR = {Quillen, Daniel},
     TITLE = {Higher algebraic {$K$}-theory},
 BOOKTITLE = {Proceedings of the {I}nternational {C}ongress of
              {M}athematicians ({V}ancouver, {B}.{C}., 1974), {V}ol. 1},
     PAGES = {171--176},
 PUBLISHER = {Canad. Math. Congr., Montreal, QC},
      YEAR = {1975},
   MRCLASS = {18F25},
  MRNUMBER = {422392},
MRREVIEWER = {M.\ J.\ Dunwoody},
}

@article {Wiles,
    AUTHOR = {Wiles, A.},
     TITLE = {The {I}wasawa conjecture for totally real fields},
   JOURNAL = {Ann. of Math. (2)},
  FJOURNAL = {Annals of Mathematics. Second Series},
    VOLUME = {131},
      YEAR = {1990},
    NUMBER = {3},
     PAGES = {493--540},
      ISSN = {0003-486X,1939-8980},
   MRCLASS = {11R42 (11F67 11R23)},
  MRNUMBER = {1053488},
MRREVIEWER = {Alexey\ A.\ Panchishkin},
       DOI = {10.2307/1971468},
       URL = {https://doi.org/10.2307/1971468},
}

@incollection {Lichtenbaum,
    AUTHOR = {Lichtenbaum, Stephen},
     TITLE = {Values of zeta-functions, \'etale cohomology, and algebraic
              {$K$}-theory},
 BOOKTITLE = {Algebraic {$K$}-theory, {II}: ``{C}lassical'' algebraic
              {$K$}-theory and connections with arithmetic ({P}roc. {C}onf.,
              {B}attelle {M}emorial {I}nst., {S}eattle, {W}ash., 1972)},
    SERIES = {Lecture Notes in Math.},
    VOLUME = {Vol. 342},
     PAGES = {489--501},
 PUBLISHER = {Springer, Berlin-New York},
      YEAR = {1973},
   MRCLASS = {12A70},
  MRNUMBER = {406981},
MRREVIEWER = {Yasumasa\ Akagawa},
}

@book {DGM13,
    AUTHOR = {Dundas, Bj\o rn Ian and Goodwillie, Thomas G. and McCarthy,
              Randy},
     TITLE = {The local structure of algebraic {K}-theory},
    SERIES = {Algebra and Applications},
    VOLUME = {18},
 PUBLISHER = {Springer-Verlag London, Ltd., London},
      YEAR = {2013},
     PAGES = {xvi+435},
      ISBN = {978-1-4471-4392-5; 978-1-4471-4393-2},
   MRCLASS = {19-02 (16E40 19D55 55-02 55N99)},
  MRNUMBER = {3013261},
MRREVIEWER = {Charles\ Weibel},
}

@book{Guido,
     TITLE = {Guido's book of conjectures},
    SERIES = {Monographies de L'Enseignement Math\'ematique [Monographs of
              L'Enseignement Math\'ematique]},
    VOLUME = {40},
      NOTE = {A gift to Guido Mislin on the occasion of his retirement from
              ETHZ June 2006,
              Collected by Indira Chatterji},
 PUBLISHER = {L'Enseignement Math\'ematique, Geneva},
      YEAR = {2008},
     PAGES = {13-15},
      ISBN = {2-940264-07-4},
   MRCLASS = {00A07 (20-02 22-02 55-02)},
  MRNUMBER = {2499538},
}

@article {AR02,
    AUTHOR = {Ausoni, Christian and Rognes, John},
     TITLE = {Algebraic {$K$}-theory of topological {$K$}-theory},
   JOURNAL = {Acta Math.},
  FJOURNAL = {Acta Mathematica},
    VOLUME = {188},
      YEAR = {2002},
    NUMBER = {1},
     PAGES = {1--39},
      ISSN = {0001-5962,1871-2509},
   MRCLASS = {19D55 (19L20)},
  MRNUMBER = {1947457},
MRREVIEWER = {Charles\ Weibel},
       DOI = {10.1007/BF02392794},
       URL = {https://doi.org/10.1007/BF02392794},
}

@misc{Bur22,
      title={Multiplicative structures on {M}oore spectra}, 
      author={Robert Burklund},
      year={2022},
      eprint={2203.14787},
      archivePrefix={arXiv},
      primaryClass={math.AT},
      note={\href{https://arxiv.org/abs/2203.14787}{https://arxiv.org/abs/2203.14787}}, 
}

@article {BGT13,
    AUTHOR = {Blumberg, Andrew J. and Gepner, David and Tabuada, Gon\c calo},
     TITLE = {A universal characterization of higher algebraic {$K$}-theory},
   JOURNAL = {Geom. Topol.},
  FJOURNAL = {Geometry \& Topology},
    VOLUME = {17},
      YEAR = {2013},
    NUMBER = {2},
     PAGES = {733--838},
      ISSN = {1465-3060,1364-0380},
   MRCLASS = {19D10 (18D20 19D25 19D55 55N15 55U40)},
  MRNUMBER = {3070515},
MRREVIEWER = {Ross\ Staffeldt},
       DOI = {10.2140/gt.2013.17.733},
       URL = {https://doi.org/10.2140/gt.2013.17.733},
}

@article {HM97,
    AUTHOR = {Hesselholt, Lars and Madsen, Ib},
     TITLE = {Cyclic polytopes and the {$K$}-theory of truncated polynomial
              algebras},
   JOURNAL = {Invent. Math.},
  FJOURNAL = {Inventiones Mathematicae},
    VOLUME = {130},
      YEAR = {1997},
    NUMBER = {1},
     PAGES = {73--97},
      ISSN = {0020-9910,1432-1297},
   MRCLASS = {19D55 (19D50 55N15)},
  MRNUMBER = {1471886},
MRREVIEWER = {Jerry\ Lodder},
       DOI = {10.1007/s002220050178},
       URL = {https://doi.org/10.1007/s002220050178},
}

\end{document}